\documentclass{amsart}
\usepackage{bbm}
\usepackage{color}
\usepackage{amscd}
\usepackage{nicefrac}
\usepackage{graphicx}
\usepackage{hyperref}
\usepackage{amssymb}
\usepackage{euscript}
\usepackage{enumitem}
\usepackage[cmtip,all]{xy}
\usepackage[export]{adjustbox}

\parindent0em
\parskip1em
\hoffset0em \oddsidemargin10pt \evensidemargin10pt \textwidth42em
\linespread{1.1}

\numberwithin{equation}{subsection}

\allowdisplaybreaks


\theoremstyle{plain}
\newtheorem{thm}{Theorem}[subsection]

\newtheorem{theorem}[thm]{Theorem}

\newtheorem{cor}[thm]{Corollary}
\newtheorem{corollary}[thm]{Corollary}

\newtheorem{lemma}[thm]{Lemma}

\newtheorem{prop}[thm]{Proposition}
\newtheorem{definition}[thm]{Definition}

\newtheorem{remark}[thm]{Remark}

\newtheorem{proposition}[thm]{Proposition}
\newtheorem{example}[thm]{Example}
\newtheorem{non-example}[thm]{Non-example}

\newtheorem{lem}[thm]{Lemma}

\newtheorem*{claim*}{Claim} 
\newtheorem*{lemma*}{Lemma}
\newtheorem*{theorem*}{Theorem}
\newtheorem*{conjecture*}{Conjecture}


\newcommand{\bC}{{\mathbb C}}

\newcommand{\bN}{{\mathbb N}}

\newcommand{\bQ}{{\mathbb Q}}
\newcommand{\bR}{{\mathbb R}}

\newcommand{\bZ}{{\mathbb Z}}

\newcommand{\scrE}{\EuScript E}
\newcommand{\scrF}{\EuScript F}

\newcommand{\frakA}{\mathfrak{A}}
\newcommand{\frakB}{\mathfrak{B}}
\newcommand{\frakC}{\mathfrak{C}}
\newcommand{\frakD}{\mathfrak{D}}
\newcommand{\frakE}{\mathfrak{E}}
\newcommand{\frakF}{\mathfrak{F}}
\newcommand{\frakG}{\mathfrak{G}}
\newcommand{\frakH}{\mathfrak{H}}

\newcommand{\frakM}{\mathfrak{M}}
\newcommand{\frakO}{\mathfrak{O}}

\newcommand{\frakS}{\mathfrak{S}}
\newcommand{\frakT}{\mathfrak{T}}
\newcommand{\frakY}{\mathfrak{Y}}


\newcommand{\half}{{\textstyle\frac{1}{2}}}

\newcommand{\iso}{\cong}
\newcommand{\htp}{\simeq}
\newcommand{\smooth}{C^\infty}


\newcommand{\AT}{\frakA\frakT}
\newcommand{\AS}{\frakA\frakS}
\newcommand{\MC}{\frakM\frakC}
\newcommand{\EX}{\frakE\frakY}
\newcommand{\frakAX}{\frakA\frakY}
\newcommand{\FM}{\frakF\frakM}

\newcommand{\LP}{P}

\begin{document}
\title[]{Symplectic cohomology relative \\ to a smooth anticanonical divisor}
\author{Daniel Pomerleano, Paul Seidel}

\maketitle

\begin{abstract}
For a monotone symplectic manifold and a smooth anticanonical divisor, there is a formal deformation of the symplectic cohomology of the divisor complement, defined by allowing Floer cylinders to intersect the divisor. We compute this deformed symplectic cohomology, in terms of the ordinary cohomology of the manifold and divisor; and also describe some additional structures that it carries.
\end{abstract}

\section{Introduction}

\subsection{Background}
Let $M$ be a closed monotone (Fano) symplectic manifold, and $D \subset M$ a symplectic divisor which is anticanonical (Poincar{\'e} dual to the first Chern class of $M$). Borman, Sheridan and Varolg\"une\c{s} \cite{borman-sheridan-varolgunes21} have studied the relation between the symplectic cohomology of the complement, $\mathit{SH}^*(M \setminus D)$, and the ordinary cohomology $H^*(M)$. In their setup, $D$ has normal crossings (they also allow the components of $D$ to have multiplicities in $(0,1]$; to simplify the discussion, we consider only the case where all the multiplicities are $1$). They introduced a filtered complex whose cohomology is $H^*(M)[q^{\pm 1}]$, with $q$ a formal variable of degree $2$; and such that each graded piece associated to the filtration are quasi-isomorphic to the standard complex underlying symplectic cohomology \cite[Theorem B]{borman-sheridan-varolgunes21}. As a consequence, they obtained a spectral sequence \cite[Theorem C]{borman-sheridan-varolgunes21}
\begin{equation} \label{eq:bsv}
\mathit{SH}^*(M \setminus D)[q^{\pm 1}] \Longrightarrow H^*(M)[q^{\pm 1}].
\end{equation}
This is meaningful in terms of mirror symmetry: there, $\mathit{SH}^*(M \setminus D)$ describes the cohomology of polyvector fields on the mirror $X$ of $M \setminus D$ (see e.g.\ \cite{pascaleff17,pomerleano21}); and the deformation to $H^*(M)$ corresponds to turning on the superpotential $W$ which produces the Landau-Ginzburg mirror $W: X \rightarrow \bC$ of $M$ relative to $D$; see e.g.\ \cite{auroux07} for an exposition. The preprints \cite{el-alami-sheridan24a, el-alami-sheridan24b}, posted simultaneously with this one, continue the approach from \cite{borman-sheridan-varolgunes21}; we will comment on the relation at various points later on (Remarks \ref{th:el-alami-sheridan}, \ref{th:completed}, \ref{th:linfty-remark}, \ref{th:other-mc}).

\subsection{Results}
We only look at the much less general situation where $D$ is a smooth anticanonical divisor. We define directly a $q$-deformation of the (telescope or homotopy direct limit) chain complex underlying symplectic cohomology. The deformation involves Floer-type cylinders that intersect $D$, while their limits are still one-periodic Hamiltonian orbits in $M \setminus D$. The cohomology of the deformed complex will be denoted by $\mathit{SH}^*_q(M,D)$. By construction, it comes with a $q$-filtration spectral sequence
\begin{equation} \label{eq:q-spectral-sequence}
\mathit{SH}^*(M \setminus D)[q] \Longrightarrow \mathit{SH}^*_q(M,D).
\end{equation}
Note that in this context, symplectic cohomology is bounded below (in the grading); hence, in any given degree, only finitely many powers of $q$ contribute to \eqref{eq:q-spectral-sequence}.
%

\begin{theorem} \label{th:main}
There is a canonical isomorphism
\begin{equation} \label{eq:main}
H^*(M)[q] \oplus \bigoplus_{w \geq 1} H^*(D)z^w\stackrel{\iso}{\longrightarrow} \mathit{SH}^*_q(M,D),
\end{equation}
where $z^w$ are formal symbols of degree $0$ (in spite of the notation, there is no $z$-linearity in this statement). The restriction of that map to $H^*(M)[q]$ is $q$-linear.
\end{theorem}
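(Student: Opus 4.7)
The plan is to build a chain-level PSS-type map realizing \eqref{eq:main} and then verify it is a quasi-isomorphism via the $q$-filtration spectral sequence \eqref{eq:q-spectral-sequence}. First, I would choose the Hamiltonians in the telescope so that, on a neighborhood of $D$, they depend only on the distance to $D$ and are linear at infinity in the symplectization direction, with slopes escaping to $\infty$. The resulting one-periodic orbits organize by winding number $w \geq 0$ around $D$: for $w = 0$ the orbits come from a $C^2$-small Morse function on $M \setminus D$, and for each $w \geq 1$ they form a Morse-Bott family $\scrN_w$ that fibers over $D$. Positivity of intersection with $D$ for Floer cylinders then shows that the deformed differential respects the filtration by winding number, and each coefficient of $q^k$ in the differential counts configurations whose total intersection number with $D$ equals $k$.

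Next I would construct the map in \eqref{eq:main}. For the $H^*(M)[q]$ summand, represent classes by Morse cocycles on the closed manifold $M$ and send them to $w = 0$ orbits by a PSS-type count of once-punctured spheres in $M$ that pass through the critical point and limit on the orbit, each weighted by $q^k$ where $k$ is the intersection number of the sphere with $D$; this map is $q$-linear by construction. For each $H^*(D)z^w$ summand, $w \geq 1$, I would use an analogous Morse-Bott PSS construction pairing Morse cocycles on $\scrN_w \iso D$ with moduli spaces of punctured half-cylinders whose positive end has winding $w$ around $D$ and may intersect $D$ transversally in the interior, again $q$-weighted. Monotonicity of $M$ and smoothness of $D$ provide the control on bubbling required to make these honest chain maps.

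The main content, and principal obstacle, is showing this chain map is a quasi-isomorphism. I would argue via the $q$-filtration spectral sequence \eqref{eq:q-spectral-sequence}: on the left-hand side of \eqref{eq:main} the filtration splits tautologically, while on the right the $E_1$ page is $\mathit{SH}^*(M\setminus D)[q]$. Via the winding-number decomposition above, $\mathit{SH}^*(M\setminus D)$ is itself the abutment of a spectral sequence whose $E_1$ page is $H^*(M\setminus D)\oplus \bigoplus_{w \geq 1}H^*(D)$ (with appropriate shifts), and the task is to identify the further differentials generated by the $q$-deformation. The leading $q$-correction ought to act on the winding-zero part as insertion of the Poincar\'e dual class of $D$, so that via the Gysin long exact sequence the term $H^*(M \setminus D)$ is promoted to $H^*(M)$ while absorbing exactly one copy of $H^*(D)$ from the $w = 1$ line; the remaining copies $H^*(D)z^w$ for $w \geq 2$ should persist because no $q$-correction can carry them across the winding filtration by degree or action considerations. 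Carrying out this moduli analysis---pinning down the leading correction as a Chern-class insertion and ruling out higher-order cancellations in the multi-indexed spectral sequence---is the delicate technical heart of the argument.
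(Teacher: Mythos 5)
Your construction of the chain map is essentially the same as the paper's (Section 3): PSS-type thimbles for $H^*(M)[q]$, $q$-weighted by intersection number with $D$, and tangency-order-$w$ thimbles paired with a Morse complex on $D$ for the $H^*(D)z^w$ summands. But your plan for verifying that this is a quasi-isomorphism contains a genuine gap, and the paper takes a different route that avoids it. You propose to run the $q$-filtration spectral sequence and, on the $E_1$ page, to further decompose $\mathit{SH}^*(M\setminus D)$ by winding number, then identify the higher $q$-differentials as Chern-class insertions via the Gysin sequence and rule out higher-order cancellations. You acknowledge this as "the delicate technical heart"; indeed it is the whole content, and the step is left undone. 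There are two difficulties it conceals. First, it requires knowing not just $\mathit{SH}^*(M\setminus D)$ as a graded group but the full $q$-module structure of the deformation, which is exactly what the theorem is supposed to produce, not presuppose. Second, even if you computed the abutment abstractly by spectral-sequence bookkeeping, you would still need a separate argument that \emph{your} geometric PSS map realizes that isomorphism, rather than, say, inducing the zero map on some graded piece.

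The paper sidesteps both problems by using a discretized \emph{action} filtration (Section \ref{sec:action}) rather than the $q$-filtration. The Hamiltonians are chosen so that orbit actions cluster near integers, $q$ is given action weight $+1$ as in \eqref{eq:q-action}, and then the action filtration cuts simultaneously across the $q$-grading and the Morse-Bott/winding decomposition. Its associated graded is quasi-isomorphic to a very small subcomplex $G^K$ (Lemma \ref{th:g-inclusion}), essentially a single slope-window of orbits of fixed approximate action. One then proves directly (Theorem \ref{th:key}) that the associated graded of the thimble map is a quasi-isomorphism onto $G^K$, using the Morse model on the real oriented blowup of $M$ along $D$ (Section \ref{sec:morse}) and reducing to the log-PSS isomorphism of Ganatra--Pomerleano. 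Because the filtration is built into the map from the beginning, the comparison with ordinary cohomology is geometric rather than inferred from the shape of the $E_\infty$ page, and the auxiliary filtered statements \eqref{eq:filtered-s}, \eqref{eq:filtered-t} drop out for free, which the paper needs later for the equivariant version. To repair your argument you would need, at minimum, to (a) make the Morse-Bott spectral sequence for $\mathit{SH}^*(M\setminus D)$ explicit enough to identify all $q$-differentials, and (b) prove that your chain map intertwines the two filtrations and induces the identified isomorphism on $E_\infty$; the paper's action-filtration shortcut replaces both.
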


The construction of the map \eqref{eq:main} uses pseudo-holomorphic thimbles, following a strategy already deployed in a series of papers: going back to \cite{piunikhin-salamon-schwarz95} for the $H^*(M)$-component, and \cite{ganatra-pomerleano21, tonkonog19, ganatra-pomerleano20, pomerleano21} for the $H^*(D)$-components. 

Let's extend the $\bZ[q]$-module structure from $H^*(M)[q]$ to the entire domain of \eqref{eq:main}, in the unique way which is compatible with that isomorphism. We will not describe this extension explicitly (see Remark \ref{th:further} below); but it's clear for degree reasons that any element of $H^*(D)z^w$ must be mapped to $H^*(M)[q]$ by a sufficiently high power of $q$. As an immediate consequence, one has:

\begin{corollary} \label{th:invert-q}
The first part of the map from \eqref{eq:main} induces a $q$-linear isomorphism
\begin{equation}
H^*(M)[q^{\pm 1}] \stackrel{\iso}{\longrightarrow} \bZ[q^{\pm 1}] \otimes_{\bZ[q]} \mathit{SH}^*_q(M,D).
\end{equation}
\end{corollary}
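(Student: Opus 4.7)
The plan is to derive the corollary as a purely formal consequence of Theorem~\ref{th:main} together with the cohomological dimension of $D$, via localization of a short exact sequence of $\bZ[q]$-modules.

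First I would fix the $\bZ[q]$-module $V := H^*(M)[q] \oplus \bigoplus_{w \geq 1} H^*(D)z^w$ whose $q$-action is pulled back from $\mathit{SH}^*_q(M,D)$ along \eqref{eq:main}. The submodule $H^*(M)[q] \subset V$ carries the standard action, by the final sentence of Theorem~\ref{th:main}. With this in hand, the corollary is equivalent to the statement that the inclusion $H^*(M)[q] \hookrightarrow V$ becomes an isomorphism after applying $\bZ[q^{\pm 1}] \otimes_{\bZ[q]}(-)$, or equivalently, that the quotient $Q := V/H^*(M)[q]$ is elementwise $q$-torsion.

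To show $Q$ is $q$-torsion I would spell out the ``degree reasons'' cited in the paragraph after Theorem~\ref{th:main}. An element $\alpha \in H^d(D) z^w \subset V$ has degree $d$, so $q^N \alpha$ has degree $d + 2N$, and its components in the various $H^*(D) z^{w'}$ summands all lie in $H^{d+2N}(D)$. Since $D$ is compact of real dimension $\dim_\bR M - 2$, this group vanishes once $N > (\dim_\bR M - 2 - d)/2$, forcing $q^N \alpha \in H^*(M)[q]$ for some $N$ depending only on $d$. A general element of $V$ has finite support in the direct sum decomposition, so a common power of $q$ sends it into $H^*(M)[q]$, which proves the torsion claim.

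Once $Q$ is known to be $q$-torsion, exactness of localization applied to $0 \to H^*(M)[q] \to V \to Q \to 0$ yields $H^*(M)[q^{\pm 1}] \stackrel{\iso}{\longrightarrow} \bZ[q^{\pm 1}] \otimes_{\bZ[q]} V$, and composing with Theorem~\ref{th:main} gives the required $q$-linear isomorphism. I do not foresee a genuine obstacle: the one point to be careful about is that the pulled-back $q$-action on $V$ genuinely mixes the $H^*(M)[q]$ and $H^*(D) z^w$ summands, so the degree bookkeeping must be phrased in terms of the total degree in $V$ rather than the individual summand that $\alpha$ starts in.
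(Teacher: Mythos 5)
Your argument matches the paper's: the paragraph preceding the corollary asserts precisely that ``for degree reasons'' a sufficiently high power of $q$ pushes any element of $H^*(D)z^w$ into $H^*(M)[q]$, and your degree bookkeeping (the pulled-back $q$-action has degree $2$ because \eqref{eq:main} preserves degree, so $q^N\alpha$ sits in total degree $d+2N$, which exceeds $\dim_\bR D$ for $N$ large, killing every $H^*(D)z^{w'}$ component) is exactly what makes that remark precise. Packaging the conclusion via the short exact sequence $0 \to H^*(M)[q] \to V \to Q \to 0$ and exactness of localization is a clean way to turn the $q$-torsion statement into the stated isomorphism, and your closing caution that the $q$-action mixes the summands, so one must track total degree in $V$ rather than the summand $\alpha$ starts in, is exactly the point one has to respect.
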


Together with \eqref{eq:q-spectral-sequence}, this recovers \eqref{eq:bsv} (even though it's by no means clear that our spectral sequence is the same as that from \cite{borman-sheridan-varolgunes21}).

\begin{remark} \label{th:el-alami-sheridan}
A theorem essentially equivalent to Corollary \ref{th:invert-q} is part of the results in \cite{el-alami-sheridan24b}. In fact, their geometric setup is substantially more general, since it allows $D$ to have normal crossings. The approach in \cite{el-alami-sheridan24b} is significantly different from ours: they construct a Maurer-Cartan element in the chain complex underlying symplectic cohomology, by a combination of geometric and indirect algebraic arguments. Then, they appeal to \cite{borman-sheridan-varolgunes21} to show that after inverting $q$, the differential deformed by this Maurer-Cartan element computes the cohomology of $M$. In particular, there is no analogue of Theorem \ref{th:main} in \cite{el-alami-sheridan24b}.
\end{remark}

Deformed symplectic cohomology has an $S^1$-equivariant analogue, involving another formal variable $u$ of degree $2$, and which we therefore denote by $\mathit{SH}^*_{u,q}(M,D)$. It is unproblematic to lift the thimble map \eqref{eq:main} to an equivariant one,
\begin{equation} \label{eq:equivariant-main}
H^*(M)[u,q] \oplus \bigoplus_{w \geq 1} H^*(D)[u]z^w \longrightarrow \mathit{SH}^*_{u,q}(M,D).
\end{equation}
The equivariant version of Theorem \ref{th:main} (which actually follows straightforwardly from the original statement) says that:

\begin{corollary} \label{th:equivariant-main}
The map \eqref{eq:equivariant-main} is an isomorphism.
\end{corollary}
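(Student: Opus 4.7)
\emph{Plan.} The equivariant thimble map \eqref{eq:equivariant-main} is defined by the usual $S^1$-equivariant device of allowing the auxiliary parameters in the thimble construction to range over the standard model $BS^1 = \bP^\infty$. Concretely, one models $\mathit{SH}^*_{u,q}(M,D)$ as the cohomology of $CF^*_q[u]$ with a deformed differential $\delta = d_q + u\,d_1 + u^2 d_2 + \cdots$, and the source of \eqref{eq:equivariant-main} as $A[u]$ with zero differential, where $A := H^*(M)[q] \oplus \bigoplus_{w\geq 1} H^*(D)z^w$. The chain-level map is $\bZ[u]$-linear by construction, and its reduction modulo $u$ recovers the thimble map \eqref{eq:main}. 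Because symplectic cohomology is bounded below and $u,q$ both have degree $2$, only finitely many $u$-powers contribute in any fixed total degree on either side, so there are no completion issues to worry about.

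\emph{Spectral sequence comparison.} Filter both chain complexes by powers of $u$. The map respects the filtration; on associated gradeds the $u$-correction terms drop out of the differential, so the source has $E_1 = A[u]$ (with trivial differentials on all pages) and the target has $E_1 = \mathit{SH}^*_q[u]$, with the induced map being the non-equivariant thimble map of Theorem~\ref{th:main} tensored with $\mathrm{id}_{\bZ[u]}$. Theorem~\ref{th:main} makes this an isomorphism at $E_1$. By the finite-length filtration in each total degree noted above, both spectral sequences converge strongly, and an isomorphism at $E_1$ forces an isomorphism of abutments, which is the content of the corollary.

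\emph{Main obstacle.} With Theorem~\ref{th:main} already in hand, the only non-formal input is the set-up of the parametrized thimble moduli spaces and the verification that the resulting operation is $\bZ[u]$-linear at the chain level and reduces correctly modulo $u$. This is by now standard $S^1$-equivariant Floer bookkeeping, using essentially the same compactness and transversality statements as in the non-equivariant case, so I expect no new geometric difficulty beyond ensuring compatibility between the equivariant parameter spaces and the thimble constructions already used to prove Theorem~\ref{th:main}.
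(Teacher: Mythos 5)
Your proposal matches the paper's own proof exactly: the paper also deduces Corollary \ref{th:equivariant-main} by a $u$-filtration argument from Theorem \ref{th:main}, using that the equivariant thimble map is $u$-linear and reduces modulo $u$ to the non-equivariant one, with convergence guaranteed because the grading is bounded below so only finitely many powers of $u$ appear in each degree.
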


The map \eqref{eq:equivariant-main} is $u$-linear by definition. On the first summand of its domain, it is also $q$-linear. For slightly more complicated reasons than before (involving the action filtration rather than just degrees), the following still holds:

\begin{lemma} \label{th:equivariant-q-torsion}
Equip the entire left hand side of \eqref{eq:equivariant-main} with the $\bZ[q]$-module structure that corresponds to the existing one on the right hand side. Then, any element will be mapped to the subspace $H^*(M)[u,q]$ by a sufficiently high power of $q$.
\end{lemma}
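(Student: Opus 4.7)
My strategy would parallel the non-equivariant degree argument sketched just above Corollary \ref{th:invert-q}, with the degree bound replaced by an action bound: while $H^*(D)[u]$ is unbounded in degree, its relevant classes remain at bounded action levels determined by the $z$-exponent.

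First I would equip the chain complex underlying $\mathit{SH}^*_{u,q}(M,D)$ with an action filtration in which $q$ is assigned weight $\lambda > 0$ (the monotonicity constant, equal to the symplectic area of a disk meeting $D$ transversally in one point) and $u$ is assigned weight $0$. The deformed equivariant differential preserves this filtration, multiplication by $u$ preserves it, and multiplication by $q$ shifts it up by $\lambda$. The chain-level thimble construction underlying \eqref{eq:equivariant-main} is compatible with this filtration, which provides the geometric input: a thimble contributing to $\Phi(\alpha u^k z^w)$ must intersect $D$ with total multiplicity $w$, so the resulting cycle has action at least $w\lambda$, while thimbles contributing to $\Phi(H^*(M)[u,q])$ have action linearly bounded by the $q$-degree.

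Since \eqref{eq:equivariant-main} is $\bZ[u]$-linear, the induced $\bZ[q]$-action on the left hand side is $\bZ[u]$-linear as well, so it suffices to treat elements of the form $x = \alpha z^w$ with $\alpha \in H^*(D)$ and $w \geq 1$. The key observation is that, modulo $u$, the induced $\bZ[q]$-action on the LHS reduces to the non-equivariant $\bZ[q]$-action on the non-equivariant LHS. I would then apply the non-equivariant Lemma together with $u$-linearity iteratively, producing a decomposition
\begin{equation*}
q^{M_n} \cdot x \;=\; h_n + u^n y_n, \qquad h_n \in H^*(M)[u,q], \quad y_n \in \mathrm{LHS},
\end{equation*}
in which each incremental power $N_i$ (with $M_n = N_0 + \cdots + N_{n-1}$) is bounded by a constant depending only on $\dim_{\bR} D$. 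Combining this with the action bound for $y_n$, which is at least $(w + M_n)\lambda$, the admissible degree-action slice of the LHS containing $y_n$ should eventually become empty, so that $y_n = 0$ and hence $q^{M_n} x \in H^*(M)[u,q]$.

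The hard part will be ensuring that the iteration actually terminates: one must track how $|y_n| = |\alpha| + 2M_n - 2n$ evolves relative to the threshold $2\dim_{\bR} D$, since above the threshold the non-equivariant Lemma gives $N_i = 0$ (the step reduces to a division by $u$), while at or below the threshold one may be forced to take $N_i \geq 1$. The delicate point is to combine the action bound on $y_n$ with a careful accounting of how $M_n$ and $n$ co-evolve, so that $n - M_n$ eventually exceeds $|\alpha|/2$; the action bound should provide exactly the extra leverage needed to close the argument beyond what degrees alone can achieve in the non-equivariant setting.
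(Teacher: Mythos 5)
Your proposal correctly identifies the action filtration as the key geometric input, and even assigns $q$ a positive filtration weight as the paper does (see \eqref{eq:q-action}, where $q$ has weight $1$). However, you then depart onto a substantially more complicated route, and the route has a genuine gap: the termination of your iteration scheme is not established. You acknowledge this yourself (``the hard part'', ``the delicate point''), but merely conjecturing that the action bound will provide the needed leverage is not a proof; it is precisely the unresolved step. Concretely: after one application of the non-equivariant lemma modulo $u$, you get some $q^{N_0}x - h_0 = u\,y_1$, but $y_1$ is an arbitrary element of the left hand side and need not be of the form $\alpha' z^{w'}$; moreover the non-equivariant degree argument gives $N_i = 0$ only once $|y_i|$ exceeds $\dim D$, but $|y_n| = |\alpha| + 2(M_n - n)$ can stall if the $N_i$ do not accumulate. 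Whether the degree accounting and the action accounting jointly force termination is exactly what would need to be proved, and your sketch leaves it open.

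The paper's proof bypasses iteration entirely, using a stronger structural fact you are missing. It establishes first (the diagram \eqref{eq:filtered-equivariant-thimble}, obtained from \eqref{eq:equivariant-nonnegative-energy} by a $u$-filtration argument built on the non-equivariant Theorem \ref{th:key}) that the thimble map restricted to $H^*(M)[u,q]$ lands in $F^{\geq 0}C_{u,q}$ and is an isomorphism onto its cohomology. Once that is known, the lemma is one sentence: multiplication by $q$ raises the action level by $1$, so any cocycle $x$ satisfies $q^r x \in F^{\geq 0}C_{u,q}$ for large enough $r$, hence $q^r[x]$ lifts to $H^*(F^{\geq 0}C_{u,q}) \cong H^*(M)[u,q]$, which is exactly the claim transported to the left column. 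In other words, the point is not that degree and action jointly squeeze an error term to zero; it is that the cohomology of the non-negative action part \emph{is} $H^*(M)[u,q]$ on the nose, so there is no error term to eliminate. Your approach would need to reproduce this identification implicitly through the iteration, which is both harder and, as written, incomplete.
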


As a consequence, we get an equivariant version of Corollary \ref{th:invert-q}:

\begin{corollary} \label{th:invert-q-2}
The first part of \eqref{eq:equivariant-main} induces a canonical $(u,q)$-linear isomorphism
\begin{equation} \label{eq:inverted-equivariant-main}
H^*(M)[u,q^{\pm 1}] \stackrel{\iso}{\longrightarrow} \bZ[q^{\pm 1}] \otimes_{\bZ[q]} \mathit{SH}^*_{u,q}(M,D).
\end{equation}
\end{corollary}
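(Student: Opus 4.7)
The plan is to deduce Corollary \ref{th:invert-q-2} from the equivariant isomorphism \eqref{eq:equivariant-main} and the $q$-torsion statement of Lemma \ref{th:equivariant-q-torsion}, by the same purely formal argument by which Corollary \ref{th:invert-q} follows from Theorem \ref{th:main}.

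First, I would transport the $\bZ[u,q]$-module structure from $\mathit{SH}^*_{u,q}(M,D)$ back to the domain
\[
\scrA := H^*(M)[u,q] \oplus \bigoplus_{w \geq 1} H^*(D)[u]z^w
\]
via the isomorphism supplied by Corollary \ref{th:equivariant-main}. By construction this structure restricts to the obvious one on the first summand, while on the second summand its precise form is left implicit.

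Next, consider the short exact sequence of $\bZ[u,q]$-modules
\[
0 \longrightarrow H^*(M)[u,q] \longrightarrow \scrA \longrightarrow \scrA / H^*(M)[u,q] \longrightarrow 0.
\]
Lemma \ref{th:equivariant-q-torsion} is exactly the statement that the quotient is $q$-torsion: every element is annihilated by some power of $q$. Applying the exact localization functor $\bZ[q^{\pm 1}] \otimes_{\bZ[q]} (-)$ therefore kills the quotient and produces a $(u,q)$-linear isomorphism
\[
H^*(M)[u, q^{\pm 1}] \xrightarrow{\iso} \bZ[q^{\pm 1}] \otimes_{\bZ[q]} \scrA.
\]
Composing with the localization of \eqref{eq:equivariant-main} now gives the desired map, which by construction is the localization of the restriction of \eqref{eq:equivariant-main} to the first summand $H^*(M)[u,q]$.

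All the genuine geometric input has already been packaged into Corollary \ref{th:equivariant-main} and Lemma \ref{th:equivariant-q-torsion}; once those are in hand, the present corollary is pure homological algebra and I do not anticipate any serious obstacle in this step. The real site of difficulty is Lemma \ref{th:equivariant-q-torsion} itself, whose proof must exhibit enough control of the action filtration on equivariant deformed symplectic cohomology to guarantee that $q$-multiplication eventually drags each $H^*(D)[u]z^w$ summand into $H^*(M)[u,q]$ — but that is the concern of that lemma, not of this corollary.
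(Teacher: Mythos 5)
Your proof is correct and follows exactly the route the paper intends: Corollary \ref{th:equivariant-main} supplies the $\bZ[u,q]$-module isomorphism, Lemma \ref{th:equivariant-q-torsion} says the cokernel of $H^*(M)[u,q]\hookrightarrow\scrA$ is $q$-torsion, and localizing at $q$ (an exact functor) collapses the short exact sequence to the stated isomorphism. The paper declares this "As a consequence" without spelling out the homological-algebra step, so your write-up is simply an explicit version of the same argument.
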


\begin{remark} \label{th:completed}
One might hope to prove Corollary \ref{th:invert-q-2} using just Corollary \ref{th:invert-q} and a $u$-filtration argument (or using \cite{el-alami-sheridan24b} as a starting point). However, it seems that such an approach only yields a weaker version. Namely, suppose one starts with the chain complex of $\bZ[q^{\pm 1}]$-modules underlying $\bZ[q^{\pm 1}] \otimes_{\bZ[q]} \mathit{SH}^*_q(M,D)$, and constructs an equivariant version. In order for filtration arguments to work, that version has to be complete with respect to $u$, which means that it allows power series in (the degree $0$ expression) $u/q$. In terms of ordinary cohomology, this corresponds to using $H^*(M)[q^{\pm 1}][[u/q^{-1}]]$, which is a completion of the domain of \eqref{eq:inverted-equivariant-main}. However, while the isomorphism statement \eqref{eq:inverted-equivariant-main} implies a completed version, the converse implication is not necessarily true.
\end{remark}

The canonical connection on $\mathit{SH}^*_{u,q}(M,D)$ is a map
\begin{equation} \label{eq:connection-on-sh}
\begin{aligned}
& \nabla_{u\partial_q}: \mathit{SH}^*_{u,q}(M,D) \longrightarrow
\mathit{SH}^*_{u,q}(M,D), \\
& \nabla_{u\partial_q}(ux) = u\nabla_{u\partial_q}(x), \\
& \nabla_{u\partial_q}(qx) = q\nabla_{u\partial_q}(x) + ux.
\end{aligned}
\end{equation}
Here is a partial statement of compatibility of this operation with \eqref{eq:equivariant-main}:

\begin{proposition} \label{th:connection}
The connection and the first part of \eqref{eq:equivariant-main} fit into a commutative diagram
\begin{equation}
\xymatrix{
H^*(M)[u,q] \ar[rr] 
\ar[d]_-{uq\partial_q + [D] \ast_q }
&&
\mathit{SH}^*_{u,q}(M,D) \ar[d]^{q\nabla_{u\partial_q}}
\\
H^{*+2}(M)[u,q]
\ar[rr]
&&
\mathit{SH}^{*+2}_{u,q}(M,D)
}
\end{equation}
where $\ast_q$ is the small quantum product (and therefore, $uq\partial_q + [D] \ast_q$ is the quantum connection).
\end{proposition}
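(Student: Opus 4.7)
The plan is to prove the commutativity via a moduli-space cobordism argument, following the standard divisor-axiom template for PSS maps coupled to the quantum connection.

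First, I would produce an explicit chain-level representative of $\nabla_{u\partial_q}$. The deformed equivariant Floer differential decomposes by $D$-intersection multiplicity as $d_{u,q} = \sum_{w \geq 0} q^w d_u^{(w)}$, so formal differentiation gives $q\partial_q d_{u,q} = \sum_w w q^w d_u^{(w)}$; geometrically, the factor $w$ corresponds to choosing one of the $w$ intersection points of each counted cylinder with $D$. I would then introduce a chain-level operator $K$ on the equivariant Floer complex defined by counting cylinders with one distinguished interior marked point constrained to $D$ (together with appropriate $S^1$-equivariant decoration), and verify the homotopy formula $[d_{u,q}, u\partial_q + K] = 0$. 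Together with the Leibniz-type rules in \eqref{eq:connection-on-sh}, this identifies $\nabla_{u\partial_q}$ with $u\partial_q + K$ at the chain level.

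Next, I would build the analogous thimble operator $\tilde K$ counting equivariant pseudo-holomorphic planes asymptotic to a Hamiltonian orbit, carrying one distinguished interior marked point constrained to $D$, weighted by $q^{w-1}$ where $w$ is the total intersection multiplicity. Analyzing the codimension-one boundary of the one-parameter moduli space underlying $q\tilde K$, I expect three classes of contributions: (a) Floer-cylinder breaking at the output end, assembling to $K \circ \mathrm{PSS}^{\mathrm{eq}}_{u,q}$, and hence to $(q\nabla_{u\partial_q} - uq\partial_q) \circ \mathrm{PSS}^{\mathrm{eq}}_{u,q}$; (b) the marked point colliding with the interior input puncture of the thimble, producing the classical contribution $\mathrm{PSS}^{\mathrm{eq}}_{u,q} \circ ([D] \cup (\cdot))$; (c) sphere bubbling at the marked point, which by the standard divisor-axiom analysis recovers the strictly $q$-positive part of $\mathrm{PSS}^{\mathrm{eq}}_{u,q} \circ ([D] \ast_q)$. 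In addition, the formal $uq\partial_q$ term on the input side arises from the $q^{w-1}$ weighting convention in the definition of $\tilde K$. Summing the boundary contributions to zero yields the claimed identity.

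The main obstacle I anticipate is step (c): carefully identifying the sphere-bubble contributions with $[D] \ast_q$ at all orders, and ensuring that bubble trees through a point on $D$ reproduce exactly the generating series defining the small quantum product. Monotonicity of $M$ and smoothness of $D$ are essential for transversality and for ruling out pathological configurations. A secondary difficulty is the $S^1$-equivariant bookkeeping: the $u$-coefficients characteristic of $\nabla_{u\partial_q}$ must arise correctly from the geometry of the equivariant parameter spaces as the marked point moves, and one must verify that the $uq\partial_q$ contribution on the input side emerges with the right sign and coefficient from the weighting convention and the boundary analysis.
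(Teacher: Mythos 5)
Your proposal captures the correct overall shape: realize $\nabla_{u\partial_q}$ at the chain level via a cap-product-type operator with a distinguished marked point through $D$, build a thimble-level homotopy, and analyze codimension-one boundaries so that sphere bubbling at the tip reproduces the quantum product. This is what the paper does, via Sections \ref{sec:operations}--\ref{sec:connection}. But there are two real gaps.

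First, the chain-level realization of the connection is under-specified, and the commutator you assert, $[d_{u,q}, u\partial_q + K] = 0$, does not follow from ``a cylinder with one distinguished interior marked point constrained to $D$, with appropriate $S^1$-equivariant decoration.'' The dimension count already forces the distinguished point to lie on a one-dimensional locus of the cylinder, and the specific locus matters. The paper constrains it to lines whose $t$-coordinate is tied to the angle-decorated circles (the operations $a_m^{l,i}$ with the constraint \eqref{eq:s-interval}), and then adds the extra operations $b_m^{l,i}$ in which the distinguished point sits on an angle-decorated circle whose angle has been lifted to $[0,1]$ (constraint \eqref{eq:b-position}). The $\partial_q d$ term appears precisely from the boundary stratum $\theta^{i,\mathit{lift}}=0$ of those $b$-spaces, because the forgetful map $\partial_{\theta^{\mathit{lift}}=0}\frakB_m^{1} \to \frakD_{m+1}$ of \eqref{eq:marked-forget} is an $(m+1)$-fold cover and therefore produces a factor of $m+1$; that cover, not a ``$q^{w-1}$ weighting convention,'' is the geometric origin of differentiation in $q$. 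Without setting this up, the identity $\nabla_{u\partial_q} = u\partial_q + K$ is unsupported.

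Second, your boundary contribution (b) conflates two different thimble operators. When the distinguished marked point reaches $+\infty$, the resulting configuration is a thimble with a \emph{first-order tangency} to $D$ at $+\infty$ together with an incidence condition with $D\cap P$ in $D$; at the chain level this is the tangency thimble $t_{1,m}(D\cap P)$ from Section \ref{section:thimbleswithtangency}, not the ordinary PSS map $s_{C_{u,q}}$ applied to the cup product $[D\cap P]$. The identification you implicitly use is exactly the content of Lemma \ref{th:equivariant-replace-t-by-s}, namely $q\,[t_{C_{u,q},1}(R)] = [s_{C_{u,q}}(P)]$ for $P$ a perturbation of $R$ out of $D$, and this holds only after multiplying by $q$. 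That multiplication is why the proposition is stated with $q\nabla_{u\partial_q}$ rather than $\nabla_{u\partial_q}$. Your boundary analysis as written would end with an extra unresolved term $t_{1}(D\cap P) - s(D\cap P)$ (which is not zero, only $q$-torsion), so the commutativity claim is not yet established without invoking this lemma.
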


While no applications are given here, the main motivation for these results is their use in \cite{pomerleano-seidel23}. The non-equivariant version, in the form of Corollary \ref{th:invert-q}, plays a minor role there; it is used only to derive certain finite generation statements. In contrast, Corollary \ref{th:invert-q-2} and Proposition \ref{th:connection} are central to the purpose of \cite{pomerleano-seidel23}, which is to study the quantum connection using the wrapped Fukaya category of $M \setminus D$ and its $q$-deformation. (For that application, one needs Corollary \ref{th:invert-q-2} as stated here; the weaker $u$-completed version mentioned in Remark \ref{th:completed} would not be sufficient.) Note that the definition of deformed symplectic cohomology in \cite{pomerleano-seidel23} is technically different from, even though philosophically closely related to, the one here; so we also have to explain how to bridge that gap.

\begin{remark} \label{th:further}
We leave a number of questions unanswered, which concern the relation with the enumerative geometry of $(M,D)$. For instance, we have not fully determined the $q$-action on the domain of \eqref{eq:main} which makes that map an isomorphism; a first piece of that is addressed by Lemma \ref{th:replace-t-by-s}, but the general answer is expected to be much more complicated, presumably involving punctured Gromov-Witten invariants (for which see e.g.\cite{abramovichetal20, fan-wu21}). The same applies to the equivariant theory, both for the $q$-action and the connection \eqref{eq:connection-on-sh}. Finally, there are other operations on symplectic cohomology, such as the pair-of-pants product, whose $q$-deformed versions we have not considered at all.
%
\end{remark}

The structure of this paper is as follows. Section \ref{sec:sh} introduces the (quite elementary) geometric arguments which we use to control the behaviour of solutions to inhomogeneous pseudo-holomorphic map equations; it then proceeds to give our definition of deformed symplectic cohomology. Section \ref{sec:thimbles} introduces two kinds of equations living on the thimble, which together give rise to the map in \eqref{eq:main}. The next two sections are preliminaries, explaining a version of the action filtration (Section \ref{sec:action}) and certain Morse-theoretic constructions, concerning the real blowup of $M$ along $D$ and its boundary (Section \ref{sec:morse}). After that, Section \ref{sec:proof} contains our main argument, showing that \eqref{eq:main} is a quasi-isomorphism; this substantially uses results from \cite{ganatra-pomerleano20}. Section \ref{sec:equivariant} adds the equivariant versions of these arguments. Section \ref{sec:operations} is again preliminary work, preparing for the argument in Section \ref{sec:connection} which both defines the connection $\nabla_{u\partial_q}$ and proves its compatibility with the quantum connection (Proposition \ref{th:connection}). Finally, as mentioned above, Section \ref{sec:pullout} mediates between the framework here (surfaces with added marked points, at which the map intersects the divisor) and that in \cite{pomerleano-seidel23} (surfaces with added punctures, at which a Maurer-Cartan element associated to the divisor is inserted).

{\em Acknowledgments.} The authors would like to thank Nick Sheridan for explaining the ideas developed by him and Borman (which ultimately led to \cite{borman-sheridan-varolgunes21} as well as \cite{el-alami-sheridan24b}). The first author received partial funding from NSF grant DMS-2306204. 

\section{Symplectic cohomology and its deformation\label{sec:sh}}

We start by explaining the basic features of Floer theory in our setup: in particular, the behaviour of trajectories that intersect the divisor, and their Gromov limits. We then use those properties to define deformed symplectic cohomology. As always, the situation is that $M^{2n}$ is a closed symplectic manifold with $[\omega_M] = c_1(M)$, and $D \subset M$ a smooth symplectic hypersurface Poincar{\'e} dual to $c_1(M)$.

\subsection{Geometric basics\label{subsec:basics}}
In a tubular neighbourhood of $D$, there is a Hamiltonian $S^1$-action $(\rho_t)$ which fixes $D$ pointwise, and rotates the normal bundle. We fix such an $S^1$-action once and for all, with the convention that $(\rho_t)_{0 \leq t \leq 1}$ is one full anticlockwise rotation; and take $h$ to be its moment map, normalized so that $h|D = 0$ ($D$ is a local Morse-Bott minimum for $h$). 
\begin{itemize} \itemsep.5em
\item 
A function $H$ on $M$ {\em respects $D$} if its derivative vanishes at each point of $D$ (hence its Hamiltonian vector field, $\omega_M(\cdot,X) = dH$, is zero on $D$). It {\em has slope} $\sigma > 0$ if, in some neighbourhood of $D$, $H + \sigma h$ is constant (note the sign: increasing the slope means rotating the normal bundle clockwise).

\item A compatible almost complex structure $J$ on $M$ {\em respects} $D$ if that is an almost complex submanifold. It is {\em locally $S^1$-invariant} if $(\rho_t)$ preserves $J$, in some neighbourhood of $D$.
\end{itemize}

Take a time-dependent Hamiltonian $\bar{H} = (\bar{H}_t)_{t \in S^1}$, which is of slope $\sigma \in (\bQ \setminus \bZ)^{>0} = (\bQ \setminus \bZ) \cap \bR^{>0}$ for all $t$, and such that all one-periodic orbits $x: S^1 = \bR/\bZ \rightarrow M$ lying outside $D$ are nondegenerate (the constant orbits in $D$ are of Morse-Bott type, because $\sigma \notin \bZ$). We will only consider orbits that are nullhomologous in $M$ (the restriction to rational slopes, and that to nullhomologous orbits, are for technical simplicity; both could be lifted with more effort). An orbit $x$ lying outside $D$ has a well-defined action $A(x) \in \bR$. Namely, take a connected oriented compact surface $S$ with an oriented identification $\partial S \iso S^1$, and a map $y: S \rightarrow M$ with $y|S^1 = x$. One then sets
\begin{equation} \label{eq:action}
A(x) \stackrel{\mathrm{def}}{=} \int_{S^1} \bar{H}_t(x(t))\, \mathit{dt} - \int_S y^*\omega_M\; + y \cdot D.
\end{equation}
There is also a well-defined degree $\mathrm{deg}(x) \in \bZ$. One takes the same map $y$, and chooses a trivialization of the symplectic vector bundle $y^*TM$. The restriction of that trivialization to $\partial S$ gives rise to a Conley-Zehnder index for $x$ (our convention is that for small time-independent Hamiltonians and constant $y$, this agrees with the Morse index). One adds $2(y \cdot D)$ to the Conley-Zehnder index to define $\mathrm{deg}(x)$. 

Choose almost complex structures $\bar{J} = (\bar{J}_t)$ which are $t$-independent in a neighbourhood of $D$, and locally $S^1$-invariant. The standard Floer equation is
\begin{equation} \label{eq:floer}
\left\{ 
\begin{aligned}
& u = u(s,t): \bR \times S^1 \longrightarrow M, \\
& \textstyle \lim_{s \rightarrow \pm\infty} u = x_{\pm}, \\
& \partial_s u + \bar{J}_t(\partial_t u - \bar{X}_t) = 0.
\end{aligned}
\right.
\end{equation}

Rather than discussing this, we pass immediately to its $s$-dependent generalization, the continuation map equation. Fix slopes $\sigma_{\pm} \in (\bQ \setminus \bZ)^{>0}$; and correspondingly Hamiltonians $\bar{H}_{\pm}$ and almost complex structures $\bar{J}_{\pm}$, with the same properties as before. Suppose we have Hamiltonians $H = (H_{s,t})$ and almost complex structures $J = (J_{s,t})$, both respecting $D$, such that $H_{s,t} = \bar{H}_{\pm, t}$ and $J_{s,t} = \bar{J}_{\pm, t}$ for $\pm s \gg 0$. The continuation map equation replaces the last line in \eqref{eq:floer} with
\begin{equation} \label{eq:continuation}
\partial_s u + J_{s,t}(\partial_t u - X_{s,t}) = 0.
\end{equation}
Since we are assuming $X_{s,t}|D = 0$, pseudo-holomorphic maps inside $D$ appear as special solutions to that equation. We recall a couple of basic facts, for solutions with limits $x_{\pm}$ outside $D$: the energy identity 
\begin{equation} \label{eq:energy}
E(u) = \int_{\bR \times S^1} \|\partial_s u\|^2 = A(x_-) - A(x_+) + (u \cdot D) + \int_{\bR \times S^1} u^*(\partial_s H_{s,t}),
\end{equation}
with its consequence
\begin{equation} \label{eq:energy-2}
A(x_-) \geq A(x_+) - (u \cdot D) - \int_{-\infty}^{\infty} \max \{\partial_s H_{s,t} \, : \, (x,t) \in M \times S^1 \} \; \mathit{ds};
\end{equation}
and the index formula for the linearized operator,
\begin{equation} \label{eq:index}
\mathrm{index}(D_u) = \mathrm{deg}(x_-) - \mathrm{deg}(x_+) + 2(u \cdot D).
\end{equation}

%

\begin{lemma} \label{th:winding}
Let $u$ be a solution of \eqref{eq:continuation} not contained in $D$.

(i) Each point $z = (s,t) \in u^{-1}(D)$ is isolated, and the local intersection number $\mu_z(u)$ is positive.

(ii) Suppose that $x_- \in D$. Then there is some integer $\mu_{-\infty}(u) \geq -\lfloor\sigma_-\rfloor$ such that for $s \ll 0$, the loop $t \mapsto u(s,t)$ is disjoint from $D$, and has winding number $\mu_{-\infty}(u)$ around $D$. (Here, the winding number is computed in a small ball around $x_-$, hence is well-defined.)

(iii) Suppose that $x_+ \in D$. Then there is some integer $\mu_{+\infty}(u) \geq \lceil\sigma_+\rceil$ such that for $s \gg 0$, the loop $t \mapsto u(s,-t)$ is disjoint from $D$, and has winding number $\mu_{+\infty}(u)$ around $D$.
\end{lemma}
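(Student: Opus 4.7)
For part (i), my plan is to use positivity of intersection via the Carleman similarity principle. Pick local coordinates near any point $z_0 = (s_0, t_0) \in u^{-1}(D)$ in which $D = \{v = 0\}$ for a complex normal coordinate $v$, and write $u = (u_\parallel, v)$. Because $J_{s,t}$ respects $D$ and $X_{s,t}$ vanishes along $D$, the normal component satisfies a perturbed Cauchy--Riemann equation
\[
\partial_s v + J_0 \partial_t v + A(s,t) v = 0
\]
on a small neighbourhood of $z_0$, with $J_0$ the complex structure on the normal bundle and $A$ a bounded linear endomorphism of $\bC$ (arising from the $t$-dependence of $J$, the normal derivative of $X$, and connection terms). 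The similarity principle then writes $v = \Phi \cdot h$ for a continuous invertible $\Phi$ and a holomorphic function $h$; since $u$ is not contained in $D$, unique continuation gives $h \not\equiv 0$, so $z_0$ is an isolated zero and $\mu_{z_0}(u)$ is the (positive) order of vanishing of $h$.

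For part (ii), I would run the standard Morse--Bott asymptotic analysis at the end $s \to -\infty$. In $S^1$-equivariant normal coordinates near $x_- = p \in D$, with $\rho_t$ acting as $v \mapsto e^{2\pi i t} v$ and $h = \pi|v|^2$, the Hamiltonian near $D$ is $\bar H_- = c - \sigma_- h$. Linearizing the Floer equation at the constant orbit in the normal direction then yields
\[
\partial_s v + i \partial_t v - 2\pi\sigma_- v = 0,
\]
whose asymptotic operator on $L^2(S^1, \bC)$ has spectrum $\{-2\pi(m+\sigma_-) : m \in \bZ\}$ with eigenfunctions $e^{2\pi i m t}$. By the Morse--Bott asymptotic expansion (for which I would cite Robbin--Salamon, Bourgeois or Siefring, adapted to a Morse--Bott orbit family $D$ with $s$-independent data at the end), the hypothesis $u \not\subset D$ forces
\[
v(s, t) = c \, e^{2\pi(m+\sigma_-)s} e^{2\pi i m t} + \text{(lower-order)} \quad \text{as } s \to -\infty,
\]
for some $c \neq 0$ and some $m \in \bZ$. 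Decay as $s \to -\infty$ requires $m + \sigma_- > 0$, i.e.\ $m \geq -\lfloor\sigma_-\rfloor$; and for $s$ sufficiently negative the leading term dominates uniformly in $t$, so $v(s, \cdot)$ is nowhere zero and has winding $m$ around $0$. This gives $\mu_{-\infty}(u) = m \geq -\lfloor\sigma_-\rfloor$. Part (iii) is symmetric: at $s \to +\infty$ decay forces $m + \sigma_+ < 0$, i.e.\ $m \leq -\lceil\sigma_+\rceil$, and the reparameterization $t \mapsto -t$ flips the winding sign, so $\mu_{+\infty}(u) = -m \geq \lceil\sigma_+\rceil$.

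The main technical obstacle is justifying the leading-order exponential expansion at a Morse--Bott end; the rest is linear algebra and bookkeeping. I would handle this either by citing the relevant asymptotic-convergence theorem directly, or by decomposing $v(s,\cdot)$ in the eigenbasis of the asymptotic operator and running the standard ODE estimate, using $s$-independence of the data near infinity together with exponential $C^\infty$-convergence $u(s,\cdot) \to p$ to isolate the first nonvanishing mode. One point worth double-checking is that the Morse--Bott degeneracy of the orbit family lies entirely in the tangential directions along $D$, so the normal-direction spectrum above is nondegenerate (thanks to $\sigma_\pm \notin \bZ$) and the tangential analysis does not interact with the winding-number computation.
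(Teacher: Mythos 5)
Your proof is correct (modulo the asymptotic expansion you explicitly defer), but it follows a genuinely different path from the paper's, particularly for parts (ii) and (iii).

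For (i), the difference is stylistic: you write out the normal Cauchy--Riemann equation near a point of $u^{-1}(D)$ and apply the similarity principle; the paper instead applies the Gromov trick \eqref{eq:Gromov}, turning $u$ into a $\tilde{J}$-holomorphic graph $\tilde{u}$ in $\bR \times S^1 \times M$ so that $\tilde{D} = \bR \times S^1 \times D$ is a $\tilde{J}$-complex submanifold, and then invokes the standard positivity of intersections of pseudo-holomorphic curves with complex submanifolds. These are the same theorem wearing two outfits.

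For (ii) and (iii) the divergence is real. You linearize the Floer equation in the normal direction near the constant Morse--Bott orbit, identify the spectrum $\{-2\pi(m+\sigma_-)\}_{m\in\bZ}$ of the normal asymptotic operator, and appeal to an exponential leading-mode expansion (Robbin--Salamon / Siefring / Bourgeois type) to extract the winding. The paper instead introduces $u^\circ(s,t) = \rho_{\sigma_- t}(u(s,t))$, which near $D$ (for $s\ll 0$, where the data are $t$-independent and locally $S^1$-invariant) is genuinely pseudo-holomorphic and has the twisted periodicity \eqref{eq:twisted-periodicity}. Passing to an $N$-fold cover of the circle with $N\sigma_- \in \bZ$ gives a periodic finite-energy pseudo-holomorphic map, and the winding number then follows from the removable-singularity theorem and positivity of intersections — no Morse--Bott asymptotic expansion required. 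The paper's route is more self-contained: it folds the Hamiltonian term into the domain, thereby substituting removable singularities (elementary) for Morse--Bott asymptotic analysis (nontrivial, and the part you flag as the main obstacle). Your route is standard and would be fine if you carefully cite the expansion theorem, verify its hypotheses in this Morse--Bott setting (including the $s$-independence of the data at the end and the nondegeneracy of the normal asymptotic operator from $\sigma_\pm \notin \bZ$, both of which you correctly identify), and check that the tangential Morse--Bott degeneracy does not pollute the leading normal mode — a point you note but leave unexamined.
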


As a consequence, the intersection number (excluding $\pm\infty$)
\begin{equation}
u \cdot D = \sum_{z \in u^{-1}(D)} \mu_z(u)
\end{equation}
is finite; nonnegative; and zero if and only if $u$ is disjoint from $D$. 

\begin{proof}
(i) We need to quickly recall the Gromov trick. On $\bR \times S^1 \times M$ consider the almost complex structure 
\begin{equation} \label{eq:Gromov}
\tilde{J} = \begin{pmatrix}
i & 0 \\
(X_{s,t}\otimes \mathit{dt}) \circ i - J_{s,t}\circ (X_{s,t}\otimes \mathit{dt}) & J_{s,t}
\end{pmatrix}.
\end{equation}
Then $u$ solves \eqref{eq:continuation} iff its graph $\tilde{u}(s,t) = (s,t,u(s,t))$ is $\tilde{J}$-holomorphic. Our assumptions on $(J,H)$ ensure that $\tilde{D} = \bR \times S^1 \times D$ is a $\tilde{J}$-complex submanifold. We now apply standard pseudo-holomorphic curve theory: since $\tilde{u}$ is not contained in $\tilde{D}$, the subset $\tilde{u}^{-1}(\tilde{D})$ is discrete, and each point in it comes with positive multiplicity. But that subset equals $u^{-1}(D)$, and the multiplicities also remain the same.

We'll only do (ii), as the proof of (iii) is parallel. Define
\begin{equation} \label{eq:tilde-u}
u^\circ(s,t) = \rho_{\sigma_- t}(u(s,t)).
\end{equation}
For $s \ll 0$, this is a pseudo-holomorphic map (near $D$, where we are working, $J_{-,t}$ is independent of $t$ and $\rho$-invariant) and satisfies 
\begin{equation} \label{eq:twisted-periodicity}
u^\circ(s,t+1) = \rho_{\sigma_-}(u^\circ(s,t)).
\end{equation}
Choose some $N \in \bN$ such that $N\sigma_- \in \bZ$, and consider $u^\circ$ as defined for $s \ll 0$, $t \in \bR/N\bZ$. As a periodic pseudo-holomorphic map of finite energy, it necessarily extends smoothly to $-\infty$, and has an isolated intersection point with $D$ there. Let $\mu_{-\infty}(u^\circ)>0$ be the intersection multiplicity; for $s \ll 0$, the loop $\bR/N\bZ \rightarrow M$, $t \mapsto u^\circ(s,t)$, has winding number $\mu_{-\infty}(u^\circ)$ around $D$. Because of the condition \eqref{eq:twisted-periodicity}, that winding number must lie in $N(\bZ + \sigma_-)$. From \eqref{eq:tilde-u}, it follows that the loop $\bR/N\bZ \rightarrow M$, $t \mapsto u(s,t)$, has winding number $\mu_{-\infty}(u^\circ) - N\sigma_-$. We therefore get the desired result, with $\mu_{-\infty}(u) = (\mu_{-\infty}(u^\circ)-N\sigma_-)/N  > -\sigma_-$.
\end{proof}

The situation for our main technical Lemmas is as follows. Fix $(\bar{H}_{\pm}, \bar{J}_{\pm})$ with slopes $\sigma_{\pm}$, as before. Consider a sequence $(H_k,J_k)$ of data defining continuation map equations, all of which agree for $\pm s \gg 0$ with $(\bar{H}_{\pm}, \bar{J}_{\pm})$ (more precisely, the bounds where this holds should be independent of $k$). As $k \rightarrow \infty$, the $(H_k,J_k)$ should converge to some $(H,J)$.

\begin{lemma} \label{th:gromov-limit-0}
Let $(u_k)$ be a sequence of solutions of the continuation map equation for $(H_k,J_k)$, which have limits $x_{\pm}$ lying outside $D$. Suppose that the sequence Gromov-converges to a broken solution with cylindrical components $(u^i)$, $i = 1,\dots,I$,
together with pseudo-holomorphic sphere bubbles $v^{ij}$ attached to $u^i$. Then, for $k \gg 0$ we have
\begin{equation} \label{eq:intersection}
u_k \cdot D = \sum_{\!\!\!\! u^i \not\subset D\!\!\!\!} \big(u^i \cdot D + \mu_{-\infty}(u^i) + \mu_{+\infty}(u^i)\big)
+ \sum_{\!\!\!\!u^i \subset D\!\!\!\!} \bar{u}^i \cdot D 
+ \sum_{ij} (v^{ij} \cdot D).
\end{equation}
Here, we have used $\mu_{\pm \infty}$ also for cases when the limit is outside $D$, but in that case it is defined to be $0$; and for those $u_i \subset D$, the $\bar{u}^i$ are the obvious compactifications to pseudo-holomorphic spheres.
\end{lemma}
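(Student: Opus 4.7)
The statement is a conservation-of-intersection-number formula under Gromov convergence. The plan is to apply the Gromov trick from the proof of Lemma \ref{th:winding}(i) so that everything becomes genuinely pseudo-holomorphic against the complex hypersurface $\tilde D = \bR \times S^1 \times D$, then to partition the domain $\bR\times S^1$ of $u_k$ (for $k$ large) into regions corresponding to the pieces of the broken limit, and finally to compute the intersection contribution from each region by a separate local argument. Since all multiplicities $\mu_z(u_k)$ are positive, it suffices to account for every intersection exactly once.

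\textbf{Setup and easy contributions.} For $k\gg 0$ I would fix a decomposition of $\bR\times S^1$ into: (a) stable regions $R_k^i$, one per cylindrical limit component $u^i$, on which $u_k\to u^i$ in $C^\infty_{\mathrm{loc}}$ (after reparametrization) away from bubble points; (b) small disks around each bubble formation point; (c) long necks $N_k^\alpha$ at each breaking; and (d) two tails near $\pm\infty$. The tails contribute nothing since $x_\pm\notin D$. Each bubble disk contributes $v^{ij}\cdot D$ by standard bubbling analysis combined with positivity of intersection for $\tilde J$-holomorphic curves with $\tilde D$. For a stable region associated to $u^i\not\subset D$, an exhaustion-by-compacts argument using local conservation of positive intersection gives the contribution $u^i\cdot D$. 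For $u^i\subset D$, the nearby map $u_k$ lies in a tubular neighborhood of $D$; since $(J,H)$ respect $D$, the normal component of $u_k$ along $u^i$ is, to leading order, a holomorphic section of $(u^i)^*N_{D/M}$, and its zero count matches the normal-bundle Euler number on the compactified sphere, namely $\bar u^i\cdot D$.

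\textbf{Neck contribution.} The main step is the count in a neck $N_k^\alpha$ at a node connecting $u^i$ at $s\to+\infty$ with $u^{i+1}$ at $s\to-\infty$. If the common limit orbit lies outside $D$, then for $k$ large the neck is disjoint from $D$ and both relevant $\mu_{\pm\infty}$ terms vanish by definition. If the limit orbit lies in $D$ (at slope $\sigma$ governing that breaking), I would pull back by the rotation $\rho_{\sigma t}$ as in the proof of Lemma \ref{th:winding}(ii)--(iii); the local $S^1$-invariance of $J$ and the fact that $H$ agrees with $-\sigma h$ up to a constant near $D$ make $u_k^\circ := \rho_{\sigma t}\circ u_k$ pseudo-holomorphic on the neck, with two boundary loops having well-defined winding numbers around $D$. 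By positivity of intersection and the same removable-singularity compactification used in Lemma \ref{th:winding}(ii), the count $u_k\cdot D$ on $N_k^\alpha$ equals the difference of these winding numbers, and this difference converges as $k\to\infty$ to $\mu_{+\infty}(u^i)+\mu_{-\infty}(u^{i+1})$ by construction of the asymptotic winding invariants.

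\textbf{Main obstacle.} The delicate part is the neck analysis: arranging the decomposition (a)--(d) consistently, checking that the rotated neck maps $u_k^\circ$ are genuinely pseudo-holomorphic (while reconciling their actual $S^1$-periodicity with the twisted periodicity \eqref{eq:twisted-periodicity}), and verifying that no intersection is lost or double-counted at the interfaces between adjacent regions. Once this is in place, summing the contributions from the four types of regions reproduces \eqref{eq:intersection}.
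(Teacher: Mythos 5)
Your approach is genuinely different from the paper's, and it is worth comparing. The paper's proof is a global topological argument: compactify each cylindrical component of the limit (adding a point where the limit orbit lies in $D$, a circle otherwise), join them and attach the bubble spheres to form a single nodal surface with two boundary circles, and observe that the resulting continuous map to $M$ represents the same relative homology class in $H_2(M, M\setminus D)$ as $u_k$ for $k\gg0$; the formula is then nothing but the identification of that class's intersection with $D$. This sidesteps all of the region-by-region bookkeeping. Your proof is instead a domain-decomposition argument (tails, stable regions, bubble disks, necks), in the spirit of a hard proof of Gromov compactness. It can be made to work, but it recomputes from scratch a quantity that is already a homotopy invariant, and it accordingly requires a lot more care; the thing it buys you is a local understanding of where each summand in \eqref{eq:intersection} comes from — which is, indeed, useful intuition for the later Lemmas \ref{th:gromov-limit-1} and \ref{th:bubble}, but unnecessary for proving \eqref{eq:intersection} itself.

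A few concrete issues in your sketch that you would need to repair if you carried it through. (1) Bubble-disk accounting: a small disk around a bubble formation point $z$ contributes $\mu_z(u^i) + \sum_j v_z^{ij}\cdot D$, not $\sum_j v_z^{ij}\cdot D$; correspondingly your stable region, having excluded those disks, contributes $u^i\cdot D - \sum_z \mu_z(u^i)$. The totals agree, but the allocation as written is off. (2) Signs at the necks: remember that $\mu_{+\infty}(u)$ is defined via $t\mapsto u(s,-t)$ (Lemma \ref{th:winding}(iii)), so the winding of the neck boundary closer to $u^i$'s $+\infty$ end is $-\mu_{+\infty}(u^i)$; your ``difference of winding numbers'' comes out to $\mu_{-\infty}(u^{i+1}) - (-\mu_{+\infty}(u^i))$, which matches \eqref{eq:intersection}, but you should make the sign bookkeeping explicit rather than invoking ``the same removable-singularity compactification'' (necks have finite modulus; there is no removable singularity, just a relative degree count). (3) Components contained in $D$: your ``leading-order holomorphic section of $(u^i)^*N_{D/M}$'' argument needs to specify what happens at the two ends of the stable region where it abuts the necks — those contributions are exactly what makes the total come out to $\bar u^i\cdot D$ rather than a non-compact zero count — and it also needs to interface with the neck analysis for the adjacent components, where $\mu_{\pm\infty}$ is not defined. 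You flagged the neck analysis as the delicate step, and you are right; but notice that all of this delicacy evaporates once one observes, as the paper does, that the entire quantity is a single relative intersection number preserved under the $C^0$-convergence.
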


\begin{proof}
This is an elementary topological fact. Compactify all the cylindrical components by adding either a point (if the limit lies in $D$) or a circle (otherwise) to the ends; then join each compactified component to the next; and attach the spheres to them. The outcome is a compact connected nodal surface with two boundary circles (corresponding to the original limits $x_\pm$) and a continuous map from that surface to $M$. By definition of $\mu_{\pm\infty}$, the right hand side of \eqref{eq:intersection} is simply the intersection of that map with $D$. Because of the nature of the convergence process, this number is the same as that for $u_k$, $k \gg 0$.
\end{proof}

\begin{lemma} \label{th:gromov-limit-1}
Take a sequence $(u_k)$, with limits $x_{\pm}$ outside $D$, and such that
\begin{equation} \label{eq:m-bound}
u_k \cdot D = m, \text{ where } m < \lceil \sigma_- \rceil - \lfloor \sigma_+ \rfloor.
\end{equation}
Suppose that $u_k^{-1}(D)$ lies in a compact subset of $\bR \times S^1$, independent of $k$; and that our sequence Gromov-converges. Then, all one-periodic orbits which occur in the limiting broken solution are outside $D$; the principal component, together with the sphere bubbles attached to it, has intersection number $m$ with $D$; while all other cylindrical components are disjoint from $D$, and have no sphere bubbles attached to them.
\end{lemma}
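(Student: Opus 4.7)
The plan is to combine the intersection-counting formula of Lemma \ref{th:gromov-limit-0} with the compactness hypothesis on $u_k^{-1}(D)$ to force every ``non-principal'' term in the sum \eqref{eq:intersection} to vanish. The guiding geometric principle is that intersection points of $u_k$ with $D$ localize in the Gromov limit according to their source: those arising from a non-principal cylindrical component $u^i$ live in a frame shifted by a translation parameter $T_k \to \infty$, hence appear in $u_k$ at $s$-values diverging to $\pm\infty$; while the ``winding'' contributions encoded by $\mu_{\pm\infty}$ correspond to intersections of $u_k$ concentrated inside the neck regions between consecutive components, whose $s$-ranges also diverge.

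Concretely, I would process \eqref{eq:intersection} term by term. For $i\neq i_0$ with $u^i\not\subset D$, any transverse intersection of $u^i$ with $D$ surfaces as an intersection of $u_k$ at diverging $s$, so compactness forces $u^i\cdot D = 0$. The same localization applied to a non-principal $u^i\subset D$ would require $\bar u^i\cdot D = 0$, which is impossible for a non-constant pseudo-holomorphic sphere in $D$ because $[D]=[\omega_M]$ is positive on such classes; hence no cylinder other than the principal can be contained in $D$. A sphere bubble $v^{ij}$ attached to a non-principal $u^i$ has its attachment point at finite position in the $u^i$-frame, hence at diverging position in the $u_k$-frame, so compactness again gives $v^{ij}\cdot D = 0$; and the same positivity argument then forces $v^{ij}$ to be trivial, so no non-principal bubbles exist. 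Finally, each internal breaking orbit lying in $D$, with both adjacent components not in $D$, would satisfy $\mu_{+\infty}(u^i)+\mu_{-\infty}(u^{i+1})\geq \lceil\sigma\rceil-\lfloor\sigma\rfloor = 1$ by Lemma \ref{th:winding} (where $\sigma\in\{\sigma_-,\sigma_+\}$ is the non-integer slope at the relevant end); by the neck-localization principle this produces at least one intersection of $u_k$ at diverging $s$, contradicting compactness. Thus all breaking orbits lie outside $D$, the limits of the principal component $u^{i_0}$ are outside $D$ so $\mu_{\pm\infty}(u^{i_0})=0$, and \eqref{eq:intersection} collapses to $m = u^{i_0}\cdot D + \sum_j v^{i_0,j}\cdot D$, which is the claim.

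The hardest step is justifying the localization claim for the $\mu_{\pm\infty}$ terms precisely: namely, that the contribution $\mu_{+\infty}(u^i)+\mu_{-\infty}(u^{i+1})$ of an internal breaking is genuinely realized by actual intersection points of $u_k$ whose $s$-coordinates diverge with $k$. I would do this by comparing the windings of the loops $t\mapsto u_k(s,t)$ at the two ends of the neck to the corresponding windings of the adjacent limit cylinders, and then invoking the local positivity statement Lemma \ref{th:winding}(i) to promote the resulting signed intersection count to an actual count of intersection points. The hypothesis $m<\lceil\sigma_-\rceil-\lfloor\sigma_+\rfloor$ enters as a quantitative safeguard: it guarantees that the Lemma \ref{th:winding} lower bounds produce strict contradictions rather than being silently absorbed, and in particular it excludes configurations where the principal's own winding numbers $\mu_{\pm\infty}(u^{i_0})$, which may be large and negative when the limits sit in $D$, could conspire to cancel otherwise positive contributions from the rest of the broken limit within the budget $m$.
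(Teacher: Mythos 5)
Your plan is in spirit close to the paper's: both arguments rest on Lemma \ref{th:gromov-limit-0} together with the fact that $u_k^{-1}(D)$ is confined to a compact set, plus the positivity coming from Lemma \ref{th:winding}. The organization is different though. The paper uses compactness in one place only — to bound the principal component's contribution from below by $m$ (inequality \eqref{eq:principal-bubbles}) — and then observes that all other terms in \eqref{eq:intersection}, after pairing the $\mu_{\pm\infty}$ summands into chains as in \eqref{eq:goes-into}, are nonnegative; since everything sums to $m$, the non-principal terms vanish. You instead try to localize each non-principal contribution directly to a region of $u_k$'s domain escaping every compact set, which in the end proves the same vanishing. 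Your remark about the $\mu_{\pm\infty}$ terms being realized by actual intersections of $u_k$ in the necks is correct in outline, but since the paper already packages all of this into the topological identity \eqref{eq:intersection}, its version of the argument is shorter.

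However, there is a genuine gap: you never rule out the possibility that the \emph{principal} component $u^{i_*}$ is contained in $D$. You explicitly excuse the principal from the ``no cylinder contained in $D$'' step (``no cylinder other than the principal can be contained in $D$''), and your final step on breaking orbits is stated only for orbits ``with both adjacent components not in $D$.'' But if $u^{i_*} \subset D$, its two limits are orbits in $D$ adjacent to the principal, and your argument never touches them; the subsequent conclusion ``all breaking orbits lie outside $D$, and the limits of the principal component $u^{i_0}$ are outside $D$'' therefore does not follow. This is precisely the case for which the hypothesis $m < \lceil\sigma_-\rceil - \lfloor\sigma_+\rfloor$ is needed: the paper shows that when $u^{i_*}\subset D$, the paired winding contributions straddling the principal yield $\mu_{+\infty}(u^i) + \mu_{-\infty}(u^{i+c+1}) \geq \lceil\sigma_-\rceil - \lfloor\sigma_+\rfloor > m$, which already overshoots the total intersection number — the third case of \eqref{eq:combine-m}. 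You mention the hypothesis as a ``quantitative safeguard'' but never actually invoke it, and indeed in the paper's argument it only enters in exactly this sub-case. Without it, your proof accidentally establishes a stronger statement (with no lower bound on $\lceil\sigma_-\rceil - \lfloor\sigma_+\rfloor$), which is false.

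A smaller inaccuracy: for a non-principal $u^i \subset D$, localization does not force $\bar u^i \cdot D = 0$ in isolation; it forces the \emph{combined} quantity $\mu_{+\infty}(u^{i-1}) + \bar u^i\cdot D + \mu_{-\infty}(u^{i+1})$ to vanish (the winding increment of $u_k$ across the corresponding neck). The contradiction then comes from the $\mu$-terms summing to at least $1$ by Lemma \ref{th:winding}, not from monotonicity applied to $\bar u^i$ alone; monotonicity is not really needed here. This is worth tightening, since your stated version invites the objection that $\bar u^i\cdot D$ could vanish for a constant $u^i$.
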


\begin{proof}
Let $u^{i_*}$ be the principal component of the limit. This is a solution of the continuation map equation for $(H,J)$. The other components $u^i$ are Floer trajectories with slope $\sigma_-$ ($i<i_*$) or $\sigma_+$ ($i>i_*$). Denote the one-periodic orbits involved by
\begin{equation} \label{eq:the-orbits}
\textstyle
x^i = \begin{cases} 
\textstyle x_- = \lim_{s \rightarrow -\infty} u^1 & i=0, \\
\textstyle \lim_{s \rightarrow +\infty} u^i = \lim_{s \rightarrow-\infty} u^{i+1} & 0<i<I, \\
\textstyle x_+ = \lim_{s \rightarrow +\infty} u^I & i = I.
\end{cases}
\end{equation}
Suppose that for some $i$ and $c \geq 0$, we have the following:
\begin{equation} \label{eq:goes-into}
\parbox{34em}{
$u^i$ does not lie in $D$, but has $+\infty$ limit $x^i$ in $D$; $u^{i+1+c}$ does not lie in $D$, but has $-\infty$ limit $x^{i+c}$ in $D$; and the intermediate components $u^{i+1},\dots,u^{i+c}$ lie in $D$. 
}
\end{equation}
By Lemma \ref{th:winding} and \eqref{eq:m-bound},
\begin{equation} \label{eq:combine-m}
\mu_{+\infty}(u^i) + \mu_{-\infty}(u^{i+c+1}) 
\geq \begin{cases} 
\lceil \sigma_- \rceil - \lfloor \sigma_- \rfloor = 1 & i+c < i_*, \\
\lceil \sigma_+ \rceil - \lfloor \sigma_+ \rfloor = 1 & i > i_*, \\
\lceil \sigma_- \rceil - \lfloor \sigma_+ \rfloor > m
& \text{in the remaining case.}
\end{cases}
\end{equation}
After this preliminary consideration, the argument is as follows.

{\em If $u^{i_*}$ is not contained in $D$,} the $\mu_{\pm\infty}$ summands from \eqref{eq:intersection} can be arranged in pairs which belong to the first two cases of \eqref{eq:combine-m}, each time contributing positively. Moreover, because of the assumption on $u_k^{-1}(D)$, we have 
\begin{equation} \label{eq:principal-bubbles}
u^{i_*} \cdot D + \sum_j v^{i_*,j} \cdot D \geq m. 
\end{equation}
We have now written \eqref{eq:intersection} as a sum of nonnegative terms, one of which is $\geq m$. As a consequence, this term must be equal to $m$, and all other terms must be zero. This means that after all, \eqref{eq:goes-into} can't happen. Note that the condition \eqref{eq:m-bound} has not entered into this part of the argument.

{\em If $u^{i_*}$ is contained in $D$,} the $\mu_{\pm\infty}$ summands from \eqref{eq:intersection} can be arranged in pairs as in \eqref{eq:combine-m}, and where the third case occurs exactly once. We have now written \eqref{eq:intersection} as a sum of contributions which are all nonnegative, and one of which is $> m$, yielding a contradiction.
\end{proof}

\begin{example} \label{th:gromov-limit-2}
The simplest instance of Lemma \ref{th:gromov-limit-1} is when $m = 0$, or equivalently the $u_k$ are disjoint from $D$. In that case, the outcome for the Gromov limit is: there are no sphere bubbles; and the cylindrical components, together with the one-periodic orbits involved, remain outside $D$. When doing Floer theory in $M \setminus D$, this argument replaces the conventional use of the maximum principle.
\end{example}

In applications, we will often encounter a generalization where the continuation map equations themselves split into $R \geq 2$ pieces. The relevant setup requires a bit of patience.
\begin{itemize} \itemsep.5em
\item Fix slopes $\sigma^r$, $r = 0,\dots,R$, and corresponding Floer data $(\bar{H}^r,\bar{J}^r)$. 
\item For $r = 1,\dots,R$ and $k \in \bN$, choose data $(H^r_k,J^r_k)$ for continuation maps, with similar properties as before: they agree with $(\bar{H}^{r-1},\bar{J}^{r-1})$ for $s \ll 0$, with $(\bar{H}^r, \bar{J}^r)$ for $s \gg 0$ (with $s$-bounds that are independent of $k$); and as $k \rightarrow \infty$, $(H^r_k,J^r_k)$ converges to some $(H^r,J^r)$. It is convenient to consider these continuation map equations as living on separate copies $C^r = \bR \times S^1$ of the cylinder.
\item Fix gluing lengths $l^r_k$, for $r = 1,\dots,R-1$, each of which is large and goes to $\infty$ as $k \rightarrow \infty$. Glue the $s \rightarrow \infty$ end of $C^r$ to the $s \rightarrow -\infty$ end of $C^{r+1}$, by identifying $(s,t) \in C^r$ with $(s-l^r_k) \in C^{r+1}$. Altogether, this yields a single cylinder $C_k \iso \bR \times S^1$, which will carry data $(H_k,J_k)$. \end{itemize}

\begin{lemma} \label{th:complicated}
Take a sequence $u_k: C_k \rightarrow M$ of continuation map solutions for $(H_k,J_k)$, with fixed limits $x_{\pm}$ lying outside $D$. Suppose that $u_k^{-1}(D)$ can be decomposed into sets-with-multiplicity $\Sigma^1_k,\dots,\Sigma^R_k$, where
\begin{equation} \label{eq:mr}
m^r = |\Sigma^r_k| < \lceil \sigma^{r-1} \rceil - \lfloor \sigma^r \rfloor,
\end{equation}
and such that $\Sigma^r_k$ lies in a bounded (independently of $k$) part of $C^r$, glued into $C_k$. If the sequence Gromov-converges, all one-periodic orbits which occur in the limiting broken solution lie outside $D$; the principal component living on each $C^r$, together with the sphere bubbles attached to it, has intersection number $m^r$ with $D$; all other cylindrical components are disjoint from $D$, and carry no sphere bubbles.
\end{lemma}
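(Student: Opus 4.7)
The plan is to iterate the argument of Lemma \ref{th:gromov-limit-1} across all $R$ regions simultaneously. Since $l^r_k \to \infty$, I expect the limiting broken solution to contain, for each $r = 1,\dots,R$, at least one principal component $u^{i_*^r}$ living on $C^r$ and solving the continuation equation for $(H^r, J^r)$; the remaining cylindrical components are Floer trajectories for the slope $\sigma^r$ dictated by their position between consecutive principals. First I would apply Lemma \ref{th:gromov-limit-0} and substitute the hypothesis $u_k \cdot D = \sum_r m^r$ to express this quantity as a sum of nonnegative terms: local intersection numbers $u^i \cdot D$ or $\bar u^i \cdot D$, winding numbers $\mu_{\pm\infty}$ at non-$D$-contained components with asymptotics in $D$, and sphere bubble intersections $v^{ij}\cdot D$.

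Next I would decompose the right hand side region by region. The boundedness hypothesis $\Sigma^r_k \subset K^r$ (a compact set of $C^r$ independent of $k$), combined with conservation of intersection under Gromov convergence, should show that the $r$-th principal block contribution $u^{i_*^r}\cdot D + \sum_j v^{i_*^r, j}\cdot D$ (or $\bar u^{i_*^r}\cdot D + \sum_j v^{i_*^r, j}\cdot D$ in the contained case) captures exactly $m^r$ units of intersection when $u^{i_*^r}\not\subset D$, and is merely $\geq 0$ otherwise. Then I would pair up the $\mu_{\pm\infty}$ terms by grouping consecutive components into maximal chains $u^i, u^{i+1},\dots,u^{i+c+1}$ ($c \geq 0$) with $u^i, u^{i+c+1}\not\subset D$ and the intermediates contained in $D$. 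Lemma \ref{th:winding} gives $\mu_{+\infty}(u^i) + \mu_{-\infty}(u^{i+c+1}) \geq \lceil\sigma^r\rceil - \lfloor\sigma^r\rfloor = 1$ for a chain with no interior principal (common asymptotic slope $\sigma^r$); for a chain spanning interior principals $u^{i_*^{r_1}},\dots,u^{i_*^{r_2}}$ (then necessarily all in $D$), a telescope using $\lceil\sigma^r\rceil-\lfloor\sigma^r\rfloor = 1$ and the strict inequality \eqref{eq:mr} yields
\begin{equation*}
\mu_{+\infty}(u^i) + \mu_{-\infty}(u^{i+c+1}) \;\geq\; \lceil\sigma^{r_1-1}\rceil - \lfloor\sigma^{r_2}\rfloor \;\geq\; \sum_{r=r_1}^{r_2} m^r + 1,
\end{equation*}
which overshoots by $+1$ the deficit $\sum_{r=r_1}^{r_2} m^r$ incurred because these spanned principal blocks now contribute only $\geq 0$.

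Monotonicity provides one further ingredient: any pseudo-holomorphic sphere bubble $v^{ij}$ is nonconstant and satisfies $v^{ij}\cdot D = c_1(M)\cdot[v^{ij}] \geq 1$, using $[\omega_M] = c_1(M)$ and $[D] = c_1(M)$. Adding all the lower bounds, the right hand side becomes $\geq \sum_r m^r$, with a strict excess of at least $+1$ for each chain (whether it spans in-$D$ principals or not) and for each sphere bubble attached to a non-principal component. Equality with $\sum_r m^r$ therefore forces all such excesses to vanish: no breaking orbit lies in $D$, every non-principal cylindrical component is disjoint from $D$, no sphere bubbles attach to non-principal components, and each principal block has total intersection exactly $m^r$ with $D$, as required. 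The hard part will be the bookkeeping in the Case B chains spanning several in-$D$ principals; the strict bound \eqref{eq:mr} furnishing the $+1$ gap between $\sum_{r=r_1}^{r_2} m^r$ and $\lceil\sigma^{r_1-1}\rceil - \lfloor\sigma^{r_2}\rfloor$ is precisely what rigidifies the limit and lets the accounting close.
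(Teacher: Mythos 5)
Your proposal is correct and follows essentially the same route as the paper: apply the intersection identity \eqref{eq:intersection}, use the position of $\Sigma^r_k$ to extract a contribution of $m^r$ from each principal block not contained in $D$, and for chains of in-$D$ components pair up the $\mu_{\pm\infty}$ contributions and telescope the strict inequality \eqref{eq:mr} across the spanned slopes to produce a surplus of at least $+1$. The telescope computation $\lceil\sigma^{r_1-1}\rceil - \lfloor\sigma^{r_2}\rfloor \geq \sum_{j=r_1}^{r_2} m^j + 1$ is exactly what the paper's displayed inequality (with a small indexing offset) records, and the final ``sum of nonnegative contributions equals the expected total, so the excesses must vanish'' step is the same. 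The only material you add that the paper leaves tacit is the monotonicity observation that a nonconstant sphere bubble satisfies $v\cdot D = c_1(M)\cdot[v] = [\omega_M]\cdot[v] \geq 1$; the paper relies on this (it is what makes \emph{all} summands in \eqref{eq:intersection} nonnegative, and what converts ``bubble has intersection $0$'' into ``no bubble''), but states it only implicitly when it asserts the right-hand side is a sum of nonnegative terms. Making it explicit is a reasonable elaboration, not a deviation.
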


\begin{proof}
The limit has cylindrical pieces $u^i$, $i = 1,\dots,I$. The principal ones, labeled by 
\begin{equation} \label{eq:principal-ones}
i = i_*^r \text{ for some }i_*^1 < \cdots < i_*^R, 
\end{equation}
solve the continuation map on $C^r$ associated to $(H^r,J^r)$. The other ones are Floer trajectories, and of course there are sphere bubbles $v^{ij}$ as well. The formula \eqref{eq:intersection} still holds.

{\em For each principal component $u^i$, $i = i_*^r$, which is not contained in $D$}, we have the counterpart of \eqref{eq:principal-bubbles}, meaning that the component together with bubbles attached to it has intersection number $\geq m^r$ with $D$; this is because of the assumption on the position of $\Sigma^r_k$.

{\em Consider a chain of successive components contained in $D$, as in \eqref{eq:goes-into}}. Let's say this chain includes $b \geq 0$ principal components, namely those with $i = i_*^r$, \dots, $i = i_*^{r+b-1}$. As in \eqref{eq:combine-m} one has
\begin{equation}
\mu_{+\infty}(u^i) + \mu_{-\infty}(u^{i+c+1}) \geq \lceil \sigma^r \rceil  - \lfloor \sigma^{r+b} \rfloor > m^r + \cdots + m^{r+b-1}.
\end{equation}

Summing over all terms of those two kinds already yields a contribution to \eqref{eq:intersection} which is at least $\sum m^r$, with equality only possible if the second situation never happens. Moreover, it follows that the only intersections with $D$ can happen on principal components and their bubbles.
\end{proof}

\subsection{Symplectic cohomology\label{subsec:sh}}
Fix an increasing sequence $\sigma_w \in (\bQ \setminus \bZ)^{>0}$, $w = 0,1,\dots$, with the following property:
\begin{equation} \label{eq:one-integer}
\text{there is at least one integer between $\sigma_w$ and $\sigma_{w+1}$.}
\end{equation}
(And therefore, $\sigma_w > w$.) For each $w$, we choose a time-dependent Hamiltonian $\bar{H}_w$ of slope $\sigma_w$, and corresponding almost complex structures $\bar{J}_w$, with suitable genericity properties. Let $\mathit{CF}(w) = \mathit{CF}(\bar{H}_w)$ be the associated Floer complex, with Floer differential $d_0$, using only one-periodic orbits and Floer trajectories in $M \setminus D$. The telescope construction is the chain complex
\begin{equation} \label{eq:telescope}
C = \big(\mathit{CF}(0) \oplus \mathit{CF}(1) \oplus \cdots\big) \oplus \eta\big(\mathit{CF}(0) \oplus \mathit{CF}(1) \oplus \cdots\big) 
\end{equation}
where $\eta$ is a symbol of degree $-1$. The differential $d_C$ is built from the following (Figure \ref{fig:telescope}):
\begin{itemize} \itemsep.5em
\item The Floer differential $d_0$ on each $\mathit{CF}(w)$; and $-d_0$ on each piece $\eta \mathit{CF}(w)$. We schematically represent this by drawing the cylinder on which the Floer equation lives:
\begin{equation}
\xymatrix{
\mathit{CF}^{*+1}(w) 
&& \ar[ll]_-{\includegraphics[valign=c]{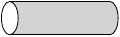}}^{\displaystyle d_0} \mathit{CF}^*(w)
}
\end{equation}
\item Continuation maps $d_0^\dag$ from each slope to the next, again using only solutions in $M \setminus D$. We draw the underlying Riemann surface as a cylinder with an additional marked circle, which reminds us that the continuation map equation breaks translation-invariance:
\begin{equation} \label{eq:c0-map}
\xymatrix{
\mathit{CF}^{*}(w+1) 
&& \ar[ll]_-{\includegraphics[valign=c]{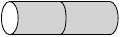}}^{\displaystyle d_0^\dag} \mathit{CF}^*(w)
}
\end{equation}
In the telescope construction, we use $d_0^\dag$ as a degree $1$ map $\eta \mathit{CF}(w) \rightarrow \mathit{CF}(w+1)$.
\item
$(-1)$ times the identity map $\eta\mathit{CF}(w) \rightarrow \mathit{CF}(w)$.
\end{itemize}
The restriction to solutions in $M \setminus D$, both for the differential and continuation map equation, works thanks to Example \ref{th:gromov-limit-2}. The cohomology of $C$ will be denoted by $\mathit{SH}^*(M \setminus D)$, as usual.

Let $q$ be a formal variable of degree $2$. Consider
\begin{equation} \label{eq:deformed-telescope}
C_q = C[[q]],
\end{equation}
with a $q$-deformed version $d_{C_q}$ of the previous differential $d_C$. Technical details will be explained later; for now, we give an informal description.
\begin{itemize} \itemsep.5em
\item For every $w \geq 0$ and $m>0$, we construct a map 
\begin{equation} \label{eq:dm-map}
\xymatrix{
\mathit{CF}^{*+1-2m}(w+m) &&
\ar[ll]^-{\displaystyle d_m}_-{\overbrace{\includegraphics[valign=c]{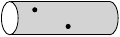}}^{\text{$m$ points}}} \mathit{CF}^*(w)
}
\end{equation}
which counts solutions of an appropriate continuation map equation, having intersection number $m$ with $D$. The precise equation will depend on the location of the intersection points (hence the picture), chosen to be compatible with degenerations in which the cylinder splits into pieces. More precisely, we use a parameter space of degree $m$ divisors on the cylinder, up to translation, and a corresponding parametrized continuation map equation. Taken together with the Floer differential $d_0$, these maps satisfy
\begin{equation} \label{eq:dm-equation}
\sum_{i+j = m} d_id_j = 0.
\end{equation}
They contribute to $d_{C_q}$ through terms $q^m d_m: \mathit{CF}(w)[[q]] \rightarrow \mathit{CF}(w+m)[[q]]$ as well as $-q^m d_m: \eta\mathit{CF}(w)[[q]] \rightarrow \eta\mathit{CF}(w+m)[[q]]$.

\item For every $w \geq 0$ and $m \geq 0$, we have a map
\begin{equation} \label{eq:cm-map}
\xymatrix{
\mathit{CF}^{*-2m}(w+m+1) &&
\ar[ll]^-{\displaystyle d_m^\dag}_-{\overbrace{\includegraphics[valign=c]{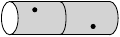}}^{\text{$m$ points}}} \mathit{CF}^*(w)
}
\end{equation}
generalizing \eqref{eq:c0-map}. These maps satisfy
\begin{equation} \label{eq:cm-equation}
\sum_{i+j=m} d_i^\dag d_j - d_i d_j^\dag = 0.
\end{equation}
The definition of $d_m^\dag$ is similar to that of $d_m$, but the parameter space is not divided by translation (in \eqref{eq:cm-map} this is represented by drawing the extra circle) and the slopes are different. In $d_{C_q}$, we use a term $q^m d_m^\dag: \eta\mathit{CF}(w)[[q]] \rightarrow \mathit{CF}(w+m+1)[[q]]$.
\end{itemize}
The relations \eqref{eq:dm-equation} and \eqref{eq:cm-equation} imply that $d_{C_q}^2 = 0$  (see Figure \ref{fig:deformed-telescope} for a summary). We denote the cohomology of $C_q$ by $\mathit{SH}^*_q(M,D)$.
\begin{figure}
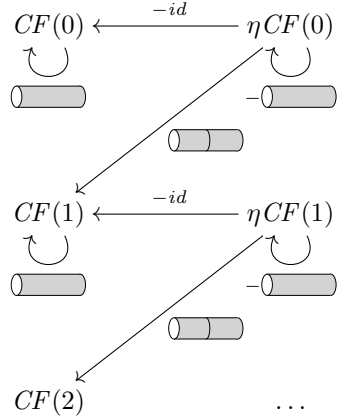

\begin{centering}
\[
\xymatrix{ \mathit{CF}(0) 
\ar@(dr,dl)[]^-{\includegraphics[scale=.5]{cylinder.pdf}}
&&
\eta \mathit{CF}(0)
\ar[ll]_{-\mathit{id}} 
\ar@(dr,dl)[]^-{-\includegraphics[scale=.5,valign=c]{cylinder.pdf}}
\ar[ddll]^-{\includegraphics[scale=.5]{cylinder-with-circle.pdf}}
\\ \\
\mathit{CF}(1)
\ar@(dr,dl)[]^-{\includegraphics[scale=.5]{cylinder.pdf}}
&&
\eta\mathit{CF}(1) 
\ar[ll]_{-\mathit{id}} 
\ar[ddll]^-{\includegraphics[scale=.5]{cylinder-with-circle.pdf}}
\ar@(dr,dl)[]^-{-\includegraphics[scale=.5,valign=c]{cylinder.pdf}}
\\ \\
\mathit{CF}(2) && \dots
}
\]
\caption{\label{fig:telescope}A schematic picture of the telescope construction \eqref{eq:telescope}.}
\end{centering}
\end{figure}%
\begin{figure}
\begin{centering}
\[
\xymatrix{ \mathit{CF}(0)[[q]] 
\ar[dd]_-{q\,\includegraphics[scale=.5,valign=c]{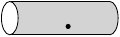}}
\ar@/_6pc/[dddd]^-{q^2\,\includegraphics[scale=.5,valign=c]{cylinder-with-dots.pdf}}
&&
\eta \mathit{CF}(0)[[q]]
\ar[ll]_{-\mathit{id}} 
\ar[dd]^-{-q\,\includegraphics[scale=.5,valign=c]{cylinder-with-dot.pdf}}
\ar[ddll]^-{\includegraphics[scale=.5,valign=c]{cylinder-with-circle.pdf}}
\ar@/^6pc/[dddd]_-{-q^2\,\includegraphics[scale=.5,valign=c]{cylinder-with-dots.pdf}}
\ar`[rr]`[ddddd]^-{q\,\includegraphics[scale=.5,valign=c]{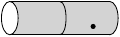}}`[dddddll][ddddll]
&& \\ \\
\mathit{CF}(1)[[q]]
\ar[dd]_-{q\,\includegraphics[scale=.5,valign=c]{cylinder-with-dot.pdf}}
&&
\eta\mathit{CF}(1)[[q]] 
\ar[ll]_{-\mathit{id}} 
\ar[ddll]^-{\includegraphics[scale=.5,valign=c]{cylinder-with-circle.pdf}}
\ar[dd]^-{-q\,\includegraphics[scale=.5,valign=c]{cylinder-with-dot.pdf}}
\\ \\
\mathit{CF}(2)[[q]] && \dots
\\
&& &&
}
\]
\caption{\label{fig:deformed-telescope}A schematic picture of the deformed telescope construction \eqref{eq:deformed-telescope}. The Floer differentials on each summand have been omitted. The total complex is $q$-complete (meaning, it is not just the direct sum of the $\mathit{CF}(m)[[q]]$ and $\eta\mathit{CF}(m)[[q]]$ pieces).}
\end{centering}
\end{figure}%

\subsection{The construction of $d_m$\label{subsec:dm}}
For $m > 0$, consider a collection of $m$ unordered points-with-multiplicity on the cylinder. 
One writes this as
\begin{equation} \label{eq:point-configuration}
\Sigma = (\Sigma_z)_{z \in \bR \times S^1} \quad \text{ with } \Sigma_z \geq 0, \textstyle \; \sum_z \Sigma_z = m.
\end{equation}
This is of course a divisor of degree $m$ on the Riemann surface $\bR \times S^1$ (not to be confused with our use of the divisor $D$ in the target space $M$). Such collections, up to translation in $\bR$-direction, are parametrized by $\frakD_m = \mathit{Sym}_m(\bR \times S^1)/\bR$. This space has a very simple compactification $\overline{\frakD}_m$ to a manifold with corners, where the cylinder can split into several ones, each carrying at least one point of our divisor. As a set,
\begin{equation} \label{eq:d-strata}
\bar\frakD_m = \coprod_{\substack{R \geq 1 \\ m^1 + \cdots + m^R = m}}
\frakD_{m^1} \times \cdots \times \frakD_{m^R}.
\end{equation}
Over the space $\frakD_m$ there is an obvious universal curve (a fibration with fibres isomorphic to $\bR \times S^1$, where the identification is unique up to translation, and containing a canonical collection of marked points). This universal curve extends to $\overline{\frakD}_m$, where its fibre over each stratum \eqref{eq:d-strata} consists of $r$ disjoint cylinders. 

To define the operation \eqref{eq:dm-map} one chooses data for continuation map equations on the universal curve over $\frakD_m$, which equal $(\bar{H}_w, \bar{J}_w)$ for $s \gg 0$, respectively $(\bar{H}_{w+m}, \bar{J}_{w+m})$ for $s \ll 0$. Here, it is understood that the region where this Floer-like behaviour holds is always disjoint from the marked points.
We ask that the data should extend smoothly to $\bar\frakD_m$, and on each stratum \eqref{eq:d-strata} should be obtained by pullback from those on the $\frakD_{m_k}$. There is an additional condition in that context, which imposes a stricter consistency with gluing. Take a sequence of points in $\frakD_m$, converging to a limit in some boundary stratum. The associated fibres $C_k$ of the universal curve can be thought of as being glued together from $R$ pieces $C^1,\dots,C^R$. Then, there should be an $S>0$ (independent of $k$) such that the continuation map datum on $C_k$ agrees with the relevant Floer data outside pieces $[-S,S] \times S^1 \subset C^1$, \dots, $[-S,S] \times S^1 \subset C^R$. Informally speaking, the datum on $C_k$ is Floer type on necks. 

Given one-periodic orbits $x_{\pm}$ outside $D$, take the space $\frakD_m(x_-,x_+)$ of pairs consisting of a point of $\frakD_m$ and a solution $u: C \rightarrow M$ of the associated continuation map equation, living on the fibre $C$ of the universal curve. This map should have limits $x_{\pm}$ and intersections
\begin{equation} \label{eq:intersection-m}
u \cdot D = m, \;\; u^{-1}(D) = \Sigma,
\end{equation}
where the second part is understood with multiplicity. Given that and the consistency conditions, one gets a uniform bound on the energy of $u$ from \eqref{eq:energy}. Let $\bar\frakD_m(x_-,x_+)$ be the standard Gromov compactification (as a stable map, where we do not treat the points of $\Sigma$ as marked points). Concerning the structure of Gromov limits, the heavy lifting has already been done by Lemma \ref{th:complicated}. We add one observation, which follows immediately by looking at the contributions to intersection numbers:

\begin{lemma} \label{th:bubble}
Take any point $z$ of a cylindrical component $u^i$ of the Gromov limit; write $(v_z^{ij})$ for the spheres in the bubble tree (if any) attached at this point. Then 
\begin{equation}
\mu_z(u^i) + \sum_j v_z^{ij} \cdot D = \Sigma_z. 
\end{equation}
In particular, bubbling can only happen at marked points. \qed
\end{lemma}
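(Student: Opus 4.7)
The identity \eqref{eq:intersection} in Lemma \ref{th:gromov-limit-0} is a \emph{global} intersection count, but what we want here is its \emph{local} refinement at each bubble point. So the plan is to choose a small disk neighborhood of $z$ in the domain of $u^i$, compare the intersection with $D$ on both sides of Gromov convergence there, and then read off the multiplicity from the compactification \eqref{eq:d-strata}. To make the intersection counts rigorous I would, as in the proof of Lemma \ref{th:winding}(i), pass to the graphs $\tilde{u}_k \subset C_k \times M$ via the Gromov trick \eqref{eq:Gromov}; these are honest pseudo-holomorphic curves meeting the pseudo-holomorphic submanifold $\tilde{D}_k = C_k \times D$ with positive algebraic multiplicities at the points of $\Sigma_k$.

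\textbf{The comparison.} Fix $z$ on a cylindrical component $u^i$, and choose a small closed disk $B$ in the domain of $u^i$ centered at $z$, small enough that $u^i(\partial B)$ is disjoint from $D$ and $B$ contains no other bubble or nodal points of the limit. By the local structure of Gromov convergence to a bubble tree, for all $k$ large enough there exist embedded disks $B_k \subset C_k$ such that $\tilde{u}_k|B_k$ converges as a current (and smoothly away from the bubble point) to $\tilde{u}^i|B$ together with the full bubble tree $\bigcup_j \tilde{v}_z^{ij}$ attached at $z$. Algebraic intersection number with $\tilde{D}_k$ on compact sets disjoint from bubble collisions is preserved in the limit, so
\[
\#\bigl(B_k \cap u_k^{-1}(D)\bigr) \;=\; \mu_z(u^i) \;+\; \sum_j v_z^{ij}\cdot D
\]
for $k$ sufficiently large. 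On the other hand, by the definition of the compactification \eqref{eq:d-strata}, the points of the divisor $\Sigma_k$ lying in $B_k$ are exactly those that converge to the marked point $z$ of the limiting configuration, and their total multiplicity is $\Sigma_z$ (which is zero if $z$ is not a marked point).

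\textbf{Conclusion and the ``in particular''.} Equating the two counts gives the desired formula $\mu_z(u^i) + \sum_j v_z^{ij}\cdot D = \Sigma_z$. For the last sentence: if $z$ is not a marked point of the limiting divisor, then $\Sigma_z = 0$; but $\mu_z(u^i)\geq 0$ by Lemma \ref{th:winding}(i), and each $v_z^{ij}\cdot D \geq 0$ by positivity of intersection of a $\bar{J}$-holomorphic sphere (not contained in $D$, since its limits lie off $D$ by Lemma \ref{th:complicated}) with the $\bar{J}$-complex divisor $D$, so every summand is forced to vanish and no sphere bubbles can occur there. I do not anticipate a real obstacle: the only nontrivial inputs are continuity of algebraic intersection numbers under Gromov convergence (standard after the Gromov trick) and the explicit stratification \eqref{eq:d-strata}, which tells us how the divisor $\Sigma$ distributes onto the principal components in the limit.
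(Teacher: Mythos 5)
Your proof of the displayed identity is exactly what the paper has in mind when it writes that the lemma ``follows immediately by looking at the contributions to intersection numbers'': you localize the argument behind Lemma~\ref{th:gromov-limit-0}, passing to graphs via the Gromov trick so that preservation of algebraic intersection numbers under Gromov convergence of pseudo-holomorphic curves in a compact region applies directly. That part is sound.

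There is one flaw in the ``in particular'' step. You justify $v_z^{ij}\cdot D\geq 0$ by claiming the sphere bubbles are ``not contained in $D$, since its limits lie off $D$ by Lemma~\ref{th:complicated}''. Lemma~\ref{th:complicated} says nothing of the kind: it guarantees only that the one-periodic orbits appearing as asymptotic limits of the cylindrical components lie outside $D$; it allows sphere bubbles to be contained in $D$, and the paper's convention (see the remark after \eqref{eq:intersection} about the compactifications $\bar u^i$ when $u^i\subset D$) is precisely to take the homological pairing $\langle[D],[v]\rangle$ in that case. Moreover, spheres have no asymptotic ``limits'' in the sense of Lemma~\ref{th:complicated}, so the phrase doesn't parse for bubbles. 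The correct and simpler reason that every summand is strictly positive (not merely $\geq 0$) is monotonicity together with the anticanonical condition: since $[D]=c_1(M)=[\omega_M]$, any nonconstant pseudo-holomorphic sphere $v$ satisfies
\begin{equation}
v\cdot D \;=\; \langle[D],[v]\rangle \;=\; \int_v\omega_M \;=\; E(v) \;>\;0,
\end{equation}
regardless of whether $v\subset D$. With that substitution, nonnegativity of $\mu_z(u^i)$ from Lemma~\ref{th:winding}(i) plus strict positivity of each bubble contribution forces $\mu_z(u^i)=0$ and the bubble tree to be empty whenever $\Sigma_z=0$, completing the ``in particular''.
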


As far as transversality is concerned, we never look at the entire space $\frakD_m$, but only at the strata where the marked points coincide according to a fixed partition of $m$. If $\Pi$ is the partition, consisting of $1 \leq |\Pi| \leq m$ summands, then that stratum is a finite cover of the unordered configuration space $\mathit{Conf}_{|\Pi|}(\bR \times S^1)/\bR$ (finite cover, because the points come with locally constant multiplicities). Take the fibre of the universal curve $(C,\Sigma)$ at a point of such a stratum, and a map $u$ as in \eqref{eq:intersection-m}. If we look at the data for the continuation map at points $(z,u(z))$, they are constrained when $z$ lies on a neck (by consistency), and when $u(z) \in D$ (because of the condition that the divisor be preserved), which means $z \in \Sigma$; that always leaves an open subset of points on our curve where they can be chosen freely, which is sufficient for all transversality arguments (see \cite{cieliebak-mohnke07} for background on transversality with tangency constraints). With that in mind, a generic choice achieves the following regularity properties:
\begin{enumerate}[label=(D\arabic*)] \itemsep.5em
\item \label{item:d-main}
{\em (Main stratum)} Take the subset of $\frakD_m(x_-,x_+)$ where the marked points are pairwise distinct; we can assume that this is regular. By \eqref{eq:index}, its dimension is $\mathrm{deg}(x_-) - \mathrm{deg}(x_+) + 2m-1$.

\item \label{item:d-collision}
{\em (Collision, ignore bubbles)} Consider the stratum of $\frakD_m$ where the coincidence of marked points is described by a partition $\Pi$ with $|\Pi|<m$ (in other words, it's not the partition $m = 1+\cdots+1$). We require that $u$ may not intersect $D$ outside $\Sigma$; but at the points of $\Sigma$, it does not have to intersect $D$, and if does, the intersection multiplicity can be arbitrary. The resulting moduli spaces will have different connected components, depending on the multiplicities. Regularity implies that this space is of dimension $\mathrm{deg}(x_-) - \mathrm{deg}(x_+) + 2|\Pi| - 1$. 

\item \label{item:d-simple-bubble}
{\em (Bubbling without collision)} Let's return to the situation where the marked points are pairwise distinct. We assume as in \ref{item:d-collision} that $u$ may not intersect $D$ outside $\Sigma$. Additionally, there should be one point $z$ in $\Sigma$ and a sphere bubble $v$ (pseudo-holomorphic for the almost complex structure governing the continuation map equation of $u$ at $z$), such that $v \cdot D = 1$ and $u(z)$ lies on the image of $v$.
The coincidence condition with a Chern number $1$ sphere is of codimension $2$ if the sphere is not contained in $D$, and of codimension $4$ (inside $D$) if it is. Therefore, the resulting space has dimension $\leq \mathrm{deg}(x_-) - \mathrm{deg}(x_+) + 2m-3$.
\end{enumerate}

We can now complete the standard description of low-dimensional moduli spaces.

\begin{proposition} \label{th:floer}
Under the regularity assumptions imposed above, consider spaces $\frakD_m(x_-,x_+)$ of dimension $\mathrm{deg}(x_-) - \mathrm{deg}(x_+) + 2m - 1 \leq 1$.

(i) All points in such a space consist of curves where the marked points are pairwise distinct.

(ii) If the dimension is $0$, we have $\bar\frakD_m(x_-,x_+) \setminus \frakD_m(x_-,x_+) = \emptyset$. Hence, the space $\frakD_m(x_-,x_+)$ is itself compact.

(iii) If the dimension is $1$, the points in $\bar\frakD_m(x_-,x_+) \setminus \frakD_m(x_-,x_+)$ consist of the cylinder splitting into exactly two pieces, where the marked points are still pairwise distinct, and there are no other components or bubbles. Hence, $\bar\frakD_m(x_-,x_+)$ is a compact one-manifold, and the points of $\bar\frakD_m(x_-,x_+) \setminus \frakD_m(x_-,x_+)$ are its boundary.
\end{proposition}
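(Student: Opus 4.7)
The strategy is to combine the transversality conditions \ref{item:d-main}--\ref{item:d-simple-bubble} with the Gromov compactness result of Lemma \ref{th:complicated}. The first step is to verify the slope hypothesis of that lemma in the $d_m$ setup. Any Gromov limit in $\bar\frakD_m(x_-,x_+)$ organizes itself into a chain of principal continuation pieces with adjacent pieces separated by asymptotic orbits: if the $r$th principal piece carries $m^r$ marked points, then its asymptotic slopes are $\sigma^{r-1} = \sigma_{w + \sum_{j \geq r} m^j}$ and $\sigma^r = \sigma_{w + \sum_{j > r} m^j}$, and \eqref{eq:one-integer} supplies at least $m^r$ integers strictly inside the interval $(\sigma^r, \sigma^{r-1})$, so $\lceil \sigma^{r-1}\rceil - \lfloor\sigma^r\rfloor \geq m^r + 1$. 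Intermediate Floer-trajectory breakings have equal slopes on both ends, and so must carry no marked points. Combining Lemma \ref{th:complicated} with Lemma \ref{th:bubble}, one concludes that in any such limit all asymptotic orbits lie outside $D$, non-principal components are Floer trajectories in $M \setminus D$, and sphere bubbles appear only at marked points on principal pieces.

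Part (i) is then immediate from \ref{item:d-collision}: a collision stratum described by a partition $\Pi$ of $m$ with $|\Pi| < m$ parts carries a moduli space of dimension $\deg(x_-) - \deg(x_+) + 2|\Pi| - 1 = (\dim \frakD_m(x_-, x_+)) - 2(m - |\Pi|)$. Since $m - |\Pi| \geq 1$, this is at most $-1$ whenever $\dim \frakD_m(x_-, x_+) \leq 1$, hence generically empty. So marked points must be pairwise distinct in the dimension range under consideration.

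For (ii) and (iii), I enumerate the boundary strata of $\bar\frakD_m(x_-, x_+)$. A Gromov limit is specified by a splitting of $\frakD_m$ into $R \geq 1$ principal continuation pieces carrying $m^r \geq 1$ marked points (contributing codimension $R-1$ in $\bar\frakD_m$) together with $b \geq 0$ additional Floer-trajectory breakings at the $\pm\infty$ ends or between principal pieces (each contributing codimension $1$). Summing the standard index formula across pieces, such a stratum has dimension $\deg(x_-) - \deg(x_+) + 2m - R - b$, i.e., codimension $(R-1) + b$ in the main stratum. By \ref{item:d-simple-bubble}, every sphere bubble adds codimension at least $2$.

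Now (ii) follows directly: when $\dim \frakD_m(x_-, x_+) = 0$, any nontrivial limit has codimension $\geq 1$ and any bubble has codimension $\geq 2$, so there are no boundary points and $\bar\frakD_m(x_-, x_+) = \frakD_m(x_-, x_+)$ is compact. For (iii), when $\dim \frakD_m(x_-, x_+) = 1$, the codimension-$1$ boundary strata are exactly $(R, b) = (2, 0)$ (a genuine $\bar\frakD_m$-splitting into two continuation cylinders) and $(R, b) = (1, 1)$ (a single continuation cylinder with a Floer trajectory broken off at one end); both describe exactly two cylindrical components with no further components or bubbles, and with marked points pairwise distinct by part (i). The only point requiring genuine care is the additivity of virtual dimensions across breakings in the presence of the parametric family, which is routine given the regularity provided by \ref{item:d-main}--\ref{item:d-simple-bubble}; the rest is bookkeeping.
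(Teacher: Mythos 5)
Your proof is correct and follows essentially the same route as the paper's: invoke Lemma \ref{th:complicated} to constrain Gromov limits, then count dimensions against the regularity assumptions (D1)--(D3) to show all breakings beyond a single cylinder split are of codimension $\geq 2$. The one place where you add substance that the paper leaves implicit is the explicit verification that the slope hypothesis \eqref{eq:mr} holds for the pieces of a degenerating configuration in $\overline{\frakD}_m$: from \eqref{eq:one-integer} you correctly deduce that the interval $(\sigma^r, \sigma^{r-1})$ separating the two ends of the $r$-th principal component contains at least $m^r$ integers, giving $\lceil\sigma^{r-1}\rceil - \lfloor\sigma^r\rfloor \geq m^r + 1 > m^r$; this is a worthwhile check. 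The rest (the codimension bookkeeping by $R$ and $b$, where the paper works directly with $I = R+b$, and the reduction of bubbling to (D3) via discarding excess bubbles) is the same argument reorganized.
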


\begin{proof}
(i) is clear: the other strata are as in \ref{item:d-collision} (in the special case where $\mu_z(u) = \Sigma_z$), hence of codimension $\geq 2$. For (ii) and (iii), look at a point in the Gromov compactification, using Lemma \ref{th:complicated}. We label the cylindrical components as usual by $u^i$; their limits as in \eqref{eq:the-orbits}; the principal components as in \eqref{eq:principal-ones}; and the number of marked points on each principal component is as in \eqref{eq:mr}. Every non-principal component $u^i$ is a Floer trajectory in $M \setminus D$, sitting in a moduli space of dimension $\mathrm{deg}(x^{i-1})-\mathrm{deg}(x^i) - 1$. Next, consider a principal component $u^i$, $i = i_*^r$. 
\begin{itemize} \itemsep.5em
\item
{\em If on this component, all marked points are distinct and there is no bubbling}, we are in the situation of \ref{item:d-main} above, and the dimension is $\mathrm{deg}(x^{i-1}) - \mathrm{deg}(x^i) + 2m^r-1$. 
\item
{\em If at least two marked points have collided}, we ignore any bubbles attached. By Lemma \ref{th:bubble}, the map may not intersect $D$ outside the marked points. Therefore, find that $u^i$ belongs to a space of dimension $\leq \mathrm{deg}(x^{i-1})-\mathrm{deg}(x^i) + 2m^r- 3$, following \ref{item:d-collision}. 
\item
{\em Suppose that there is no collision of marked points, but that bubbling has occurred.} By Lemma \ref{th:bubble}, bubbling can only happen at marked points, and all bubbles must have intersection number $1$ with $D$. We forget all but one bubble, and see from \ref{item:d-simple-bubble} that $u^i$ again lies in a space of dimension $\leq \mathrm{deg}(x^{i-1}) - \mathrm{deg}(x^i) + 2m^r - 3$.
\end{itemize}
If we take the moduli spaces to which the cylindrical components belong, then their dimensions add up to $\leq \mathrm{deg}(x_+) - \mathrm{deg}(x_-) + 2m - I$ (where $I$ is the number of components); and the sum is $\leq \mathrm{deg}(x_-) - \mathrm{deg}(x_-) + 2m - 2 - I$ if at least one component is subject to collision of marked points or bubbling. This implies the desired result.
\end{proof}

\begin{remark}
Our discussion of transversality assumptions included more situations than actually occur in the compactifications of low-dimensional moduli spaces. For instance, when \ref{item:d-simple-bubble} appears in the proof of Proposition \ref{th:floer}, we additionally know that $\mu_z(u) \leq 1$ at points of $\Sigma$, and $\mu_z(u) = 0$ at the point where the bubble is attached. It has been an expository choice not to carry those restrictions over into \ref{item:d-simple-bubble}, since the more general context leads to the same dimension bounds.
\end{remark}

Given that, one defines $d_m$ by counting elements of the zero-dimensional spaces $\frakD_m(x_-,x_+)$. Proposition \ref{th:floer}(ii) then implies that \eqref{eq:dm-equation} will hold.

\subsection{The construction of $d_m^\dag$}
This construction is very similar to the previous one, so we'll be very brief. The starting point is the space $\frakD_m^\dag = \mathit{Sym}_m(\bR \times S^1)$, where we do not divide by translation. Quotients by translation appears in the limit as points go to $\pm\infty$, which means that the compactification has the form
\begin{equation} \label{eq:c-strata}
\bar\frakD_m^\dag = \coprod_{\substack{R \geq 1,\; c \in \{1,\dots,R\} \\ m^1 + \cdots + m^R = m}}
\frakD_{m^1} \times \cdots \times \frakD_{m^c}^\dag \times \cdots \times \frakD_{m^R}.
\end{equation}
We choose a family of continuation map data on the universal curve, which equals $(\bar{H}_m,\bar{J}_m)$ for $s \gg 0$, and $(\bar{H}_{m+w+1},\bar{J}_{m+w+1})$ for $s \ll 0$. In the limit as a curve splits into pieces, one wants to get the corresponding data on the cylinder corresponding to the $\frakD_{m^c}^\dag$ factor, and the previously chosen data underlying the maps $d_m$ on the other cylinders. We then again define $d_m^\dag$ by counting points.

\section{Thimbles\label{sec:thimbles}}
Pseudo-holomorphic maps on a genus zero surface with one end (the thimble) are a standard tool used to relate ordinary cohomology and Floer theory. To apply that strategy to our problem, we need to revisit the previous discussion of Floer trajectories, updating it to the thimble situation. After that, we introduce two constructions using such thimbles, where the second one has the divisor taking on a more central role. 

\subsection{Morse theory notation\label{subsec:morse}}
Suppose we have a Morse function $f_M: M \rightarrow \bR$ and gradient or pseudo-gradient vector field $X_M$. (A pseudo-gradient is a vector field such $X_M.f_M > 0$ outside the critical point set, and which is the actual gradient, for some metric, near each critical point.) We usually think in terms of the negative vector field $-X_M$. Hence, if $c_+$ is a critical point, the stable manifold $W^s(M,c_+)$ consists of those point that flow downwards into $c_+$. Explicitly, a point lies in $W^s(M,c_+)$ if it is the starting point $b(0)$ of a half-trajectory
\begin{equation} \label{eq:half-trajectory}
\left\{
\begin{aligned}
& b: [0,\infty) \longrightarrow M, \\
& db/ds = -X_M, \\
& \textstyle \lim_{s \rightarrow +\infty} b(s) = c_+;
\end{aligned}
\right.
\end{equation}
correspondingly, the unstable manifold $W^u(M,c_-)$ is the space of endpoints $b(0)$ for negative half-trajectories
\begin{equation} \label{eq:negative-half-trajectory}
\left\{
\begin{aligned}
& b: (-\infty,0] \longrightarrow M, \\
& db/ds = -X_M, \\
& \textstyle \lim_{s \rightarrow -\infty} b(s) = c_-;
\end{aligned}
\right.
\end{equation}
If $(f_M,X_M)$ is Morse-Smale (the stable and unstable manifolds intersect transversally), we write $\mathit{CM}^*(M) = \mathit{CM}^*(f_M)$ for the resulting Morse complex, with cohomological conventions. This means that a flow line $b: \bR \rightarrow M$ with asymptotics $c_{\pm}$ as before, contributes to the coefficient of the differential which takes $c_+$ to $c_-$.

\subsection{Basics}
Take the Riemann surface $T = (\bR \times S^1) \cup \{+\infty\} \iso \bC$. Suppose we have Floer data $(\bar{H}_-,\bar{J}_-)$. Choose $(H_{s,t}, J_{s,t})$ which agree with $(\bar{H}_-,\bar{J}_-)$ for $s \ll 0$, but such that on the opposite end, both the family $J_{s,t}$ and the $\smooth(M)$-valued one-form $H_{s,t}\, \mathit{dt}$ extend smoothly over $+\infty \in T$. One then looks at maps
\begin{equation} \label{eq:thimble-map}
\left\{
\begin{aligned}
& u: T \longrightarrow M, \\
& \textstyle \lim_{s \rightarrow -\infty} u = x_-, 
\end{aligned}
\right.
\end{equation}
satisfying the same equation as in \eqref{eq:continuation} (that also extends over $+\infty$). The counterpart of \eqref{eq:energy} for such solutions just omits the $x_+$ term,
\begin{equation} \label{eq:thimble-energy}
E(u) = \int_{\bR \times S^1} \|\partial_s u\|^2 = A(x_-) + (u \cdot D) + \int_{\bR \times S^1} u^*(\partial_s H_{s,t}).
\end{equation}
Lemma \ref{th:gromov-limit-0} carries over, with a slight change of interpretation. We have components $u^i$, $i = 1,\dots,I$, of which all but the last one are Floer cylinders. The last one (the principal component) is defined on the thimble; there, the $\mu_{+\infty}(u^I)$ term is an ordinary intersection multiplicity with $D$, hence nonnegative. The counterpart of Lemma \ref{th:gromov-limit-1} is:

\begin{lemma} \label{th:thimble-1}
Take a sequence of continuation map data $(H_k,J_k)$, all of which agree with $(\bar{H}_-, \bar{J}_-)$ for $s \ll 0$ (on a subset that's independent of $k$), and which as $k \rightarrow \infty$ converge to some $(H,J)$. Take a sequence $(u_k: T \rightarrow M)$ of solutions, which have limit $x_-$ lying outside $D$, and such that
\begin{equation} \label{eq:m-left}
u_k \cdot D = m, \text{ where } m < \lceil \sigma_- \rceil.
\end{equation}
Suppose that $u_k^{-1}(D)$ lies in a compact subset of $T$, independent of $k$; and that our sequence Gromov-converges. Then, all one-periodic orbits which occur in the broken solution are outside $D$; the principal component, together with the sphere bubbles attached to it, has intersection number $m$ with $D$; and all other components are disjoint from $D$.
\end{lemma}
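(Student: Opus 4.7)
My plan is to mimic the proof of Lemma \ref{th:gromov-limit-1}, with the thimble component playing the role of the principal cylinder. The thimble version of \eqref{eq:intersection} holds with the same formula, the only shifts in interpretation being (a) for the principal thimble $u^I$ the quantity $\mu_{+\infty}(u^I)$ is an ordinary interior intersection multiplicity at $+\infty \in T$, hence automatically nonnegative, and (b) near $s=-\infty$ the thimble still satisfies the Floer equation with data $(\bar H_-,\bar J_-)$, so Lemma \ref{th:winding}(ii) gives $\mu_{-\infty}(u^I) \geq -\lfloor \sigma_-\rfloor$ whenever $u^I \not\subset D$ has its $-\infty$ limit in $D$. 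In addition, because $u_k^{-1}(D)$ is trapped in a fixed compact subset of $T$, all $m$ intersection points must localize in the Gromov limit onto the principal thimble or the sphere bubbles rooted there rather than escape into the cylindrical neck at $-\infty$; this gives the bound $u^I \cdot D + \mu_{+\infty}(u^I) + \sum_j v^{Ij}\cdot D \geq m$.

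With these ingredients I split the analysis into two cases. \emph{If $u^I \not\subset D$}, I would pair $\mu_{+\infty}(u^i)$ with $\mu_{-\infty}(u^{i+c+1})$ across each chain of the form \eqref{eq:goes-into}. For chains lying strictly below the principal this pairing is $\geq \lceil \sigma_-\rceil - \lfloor \sigma_-\rfloor = 1$ exactly as in \eqref{eq:combine-m}, and the same bound persists when the upper end of the chain is the thimble itself, using $\mu_{+\infty}(u^i) \geq \lceil \sigma_-\rceil$ together with $\mu_{-\infty}(u^I) \geq -\lfloor \sigma_-\rfloor$. Adding the localization bound $\geq m$ from the principal plus its bubbles, and noting that all remaining terms in \eqref{eq:intersection} are nonnegative, forces the sum to exceed $m$ unless every chain pairing vanishes; equality then gives exactly the stated conclusions.

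\emph{If $u^I \subset D$}, I argue by contradiction. Since $x^0 = x_- \notin D$, not every component can lie in $D$, so there is a smallest index $i \geq 1$ with $u^{i+1} \subset D$. Then $u^i \not\subset D$ has $+\infty$-limit $x^i \in D$, and the block $u^{i+1},\ldots,u^I$ is a one-sided "half chain" that never re-emerges above. Only the single term $\mu_{+\infty}(u^i) \geq \lceil \sigma_-\rceil$ enters \eqref{eq:intersection} from this block (no compensating $\mu_{-\infty}$ from above), and by the hypothesis \eqref{eq:m-left} this already exceeds $m$. Every remaining term is nonnegative: other chain pairings by the argument above, the interior intersection multiplicities $u^j \cdot D$, the intersection counts $\bar u^j \cdot D \geq 0$ (using positivity of intersection together with the Fano hypothesis that any pseudo-holomorphic sphere in $M$ pairs nonnegatively with $D$), and bubble contributions. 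The total therefore strictly exceeds $m$, contradicting $u_k \cdot D = m$; so this case does not occur.

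The only step specific to the thimble setting, and the one I would want to pin down carefully, is the localization claim that the principal component plus its bubbles absorb all $m$ intersections. This is where the hypothesis "$u_k^{-1}(D)$ compact in $T$" is used essentially rather than "compact in $\bR\times S^1$": it permits intersection points to accumulate toward the interior point $+\infty \in T$ while ruling out their escape down the neck at $-\infty$ onto lower Floer cylinders or bubbles attached to them. Once this is set up, the rest reduces to the pairing bookkeeping of Lemma \ref{th:gromov-limit-1}.
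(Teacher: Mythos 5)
Your argument follows the same approach as the paper's (which is itself quite terse, only spelling out the key new step), namely: treat the full chains as in \eqref{eq:combine-m}, handle the intersections trapped near the thimble and its bubbles by the compactness hypothesis, and single out the ``half chain'' ending at a $D$-contained thimble component as the new phenomenon, using $\mu_{+\infty}(u^i) \geq \lceil \sigma_- \rceil > m$ together with nonnegativity of the remaining terms.

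One small slip to fix: in Case~2 you take the \emph{smallest} index $i$ with $u^{i+1} \subset D$, and then assert that the entire block $u^{i+1},\dots,u^I$ lies in $D$. That does not follow from minimality of $i$; the chain starting at $u^{i+1}$ might re-emerge from $D$ at some $u^j$ with $i+1 < j \leq I$, in which case $\mu_{+\infty}(u^i)$ pairs against a possibly negative $\mu_{-\infty}(u^j)$ and your single-term estimate $\geq \lceil \sigma_- \rceil$ is not what enters the total. What you actually want is the \emph{largest} index $i<I$ with $u^i \not\subset D$; then $u^{i+1},\dots,u^I$ are all contained in $D$, there is genuinely no compensating $\mu_{-\infty}$, and the rest of your bookkeeping applies to every other chain pairing. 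This is a local wording error rather than a conceptual gap: you clearly identified the correct mechanism (a one-sided chain terminating inside $D$ at the thimble), and the fix is a one-word change.

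Two further small remarks. Your observation about why one needs compactness in $T$ rather than in $\bR\times S^1$ is a good one, and the paper does not spell this out. When you justify $\bar u^j \cdot D \geq 0$ for $u^j \subset D$, the precise reason is that $[D]=[\omega_M]$, so the compactified $J$-holomorphic sphere has nonnegative (indeed positive if nonconstant) symplectic area; attributing it to ``the Fano hypothesis that any sphere pairs nonnegatively with $D$'' is an accurate paraphrase of this, but worth stating via the symplectic area to make clear it has nothing to do with $c_1$ of the ambient manifold pairing with arbitrary spheres.
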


\begin{proof}
The argument is essentially the same as before, so we'll only cover one step, which is where \eqref{eq:m-left} enters. Suppose that for some $i<I$, we have:
\begin{equation} \label{eq:thimble-in-d}
\parbox{34em}{$u^i$ does not lie in $D$, but has $+\infty$ limit $x^i$ in $D$; and all subsequent components $u^{i+1},\dots$, including $u^I$, are contained in $D$.}
\end{equation}
In that case, we know from Lemma \ref{th:winding} that $\mu_{+\infty}(u^i) \geq \lceil \sigma_- \rceil > m$. All the other contributions to the total intersection number \eqref{eq:intersection} are nonnegative, so \eqref{eq:thimble-in-d} is after all impossible.
\end{proof}

There is also an analogue of Lemma \ref{th:gromov-limit-2}, where as $k \rightarrow \infty$, the equation itself degenerates to ones defined on a finite collections of cylinders, together with the thimble. Because there are no new ingredients, we will not write down the statement in detail.

\subsection{Classical thimbles\label{subsec:classical-thimble}}
The first part of our construction is closest to its classical origin in \cite{piunikhin-salamon-schwarz95}. It consists of a map
\begin{equation} \label{eq:pss-map}
H^*(M) \longrightarrow \mathit{SH}^*_q(M,D)
\end{equation}
which, extended $q$-linearly, forms the simpler part of \eqref{eq:main}. Here's a summary of the construction.
\begin{itemize}
\item We can define a chain map 
\begin{equation} \label{eq:s0}
\xymatrix{
\mathit{CF}^*(0)
&&& \ar[lll]_-{\includegraphics[valign=c]{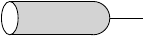}}^{\displaystyle s_0} \mathit{CM}^*(M)
}
\end{equation}
as follows. Fix a one-periodic orbit $x_-$ of our Hamiltonian, and a critical point $c_+$ of the Morse function. The corresponding coefficient of \eqref{eq:s0} counts pairs $(u,b)$ consisting of a map \eqref{eq:thimble-map} in $M \setminus D$ and a half-trajectory \eqref{eq:half-trajectory}, joined together by the assumption that
\begin{equation} \label{eq:uv}
u(+\infty) = b(0). 
\end{equation}
\end{itemize}
From now on, let's impose the following assumption:
\begin{equation}
\label{eq:stable-transverse}
\parbox{34em}{All stable manifolds $W^s(c_+)$ are transverse to $D$.}
\end{equation}

\begin{itemize} \itemsep.5em
\item 
For all $m > 0$ one can define maps
\begin{equation} \label{eq:d-t}
\xymatrix{
\mathit{CF}^{*-2m}(m) 
&&& \ar[lll]_-{\overbrace{\includegraphics[valign=c]{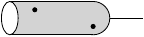}\hspace{-2em}}^{\text{$m$ points}}\hspace{2em}}^{\displaystyle s_m} \mathit{CM}^*(M)
}
\end{equation}
by looking at pairs $(u,b)$ as before, where this time $u$ has intersection number $m$ with $D$. The basic property of these maps (including $s_0$) is that
\begin{equation} \label{eq:d-s-morse}
\sum_{i+j = m} d_i s_j = s_m d_M.
\end{equation}
\end{itemize}
The chain map underlying \eqref{eq:pss-map} is 
\begin{equation} \label{eq:total-s-map}
s_{C_q} = \sum_m q^m s_m: \mathit{CM}^*(M) \longrightarrow C_q.
\end{equation}

\begin{remark}
In the definition of $s_0$, one can choose the Morse function so that it restricts to a Morse function on $D$, with negative Hessian in normal direction at critical points (meaning, the Morse indices in $M$ are $2$ larger than those in $D$). In that case, the critical points lying in $D$ form a subcomplex of $\mathit{CM}^*(M)$, and $s_0$ factors through the quotient. The induced map on that quotient realizes the ordinary Piunikhin-Salamon-Schwarz map 
\begin{equation}
H^*(M \setminus D) \longrightarrow \mathit{HF}^*(0),
\end{equation}
which is an isomorphism if one has chosen $\sigma_0 \in (0,1)$. There is no such factorization for the higher $s_m$.
\end{remark}

To carry out the actual construction, we use the parameter space $\frakS_m = \mathit{Sym}_m(T)$ and its compactification where cylinders can split off at $-\infty$,
\begin{equation} \label{eq:bar-s}
\bar\frakS_m = \coprod_{\substack{R\geq 1 \\ m^1+\cdots+m^R = m}} 
\frakD_{m^1} \times \cdots \times \frakD_{m^{R-1}} \times \frakS_{m^R}.
\end{equation}
Over $\frakS_m$, we choose data on the universal curve to define an equation \eqref{eq:thimble-map}, equaling $(\bar{H}_m,\bar{J}_m)$ for $s \ll 0$; again, this behaviour on the end should only be enforced away from marked points. The data should satisfy consistency constraints similar to those in Section \ref{subsec:dm}, which involve the data previously chosen on the $\frakD$ spaces. The associated moduli space $\frakS_m(x_-,c_+)$ consists of a point $\Sigma \in \frakS_m$ and a pair $(u,b)$ as before, such that 
\begin{equation} \label{eq:intersect-sigma}
u^{-1}(D) = \Sigma
\end{equation}
(considered as usual with multiplicities). There is a standard compactification $\bar{\frakS}_m(x_-,c_+)$, which combines Gromov convergence (considering $T$ as a Riemann surface with a distinguished point at $+\infty$) and breaking of Morse trajectories. More precisely, a point in the compactification consists of: cylindrical components $u^1,\dots,u^{I-1}$, of which those with $i = i_*^1,\dots,i_*^{R-1}$ are principal components (they carry marked points and satisfy continuation map equations, corresponding in \eqref{eq:bar-s} to the first $R-1$ factors), and the rest Floer components; a thimble component $u^I$; a pseudo-gradient half-line $b^1$; further trajectories $b^2,\dots$; and bubbles. The condition $u^I(+\infty) = b^1(0)$ may not be satisfied in such a limit, but then the two points must be joined by a component of the bubble tree. In that context, we need an elementary estimate for the dimension of certain bubble configurations.

\begin{lemma} \label{th:chain-dimension}
Take a simple chain $(v^1,\dots,v^K)$, $K>0$, of pseudo-holomorphic spheres. This means that each component is a simple (not multiply covered) pseudo-holomorphic sphere, $v^k: S = (\bR \times S^1) \cup \{\pm \infty\} \rightarrow M$; their images are pairwise distinct (no two are reparametrizations of each other); and
\begin{equation} \label{eq:chain-incidence}
v^1(+\infty) = v^2(-\infty), \dots, v^{K-1}(+\infty) = v^K(-\infty).
\end{equation}

(i) If it is regular (which holds for generic almost complex structures), the space of such chains, modulo the action of $\bC^* = \bR \times S^1$ on each component, is of dimension 
\begin{equation} \label{eq:leq}
\leq 2n - 2K + 2\sum_{k=1}^K (v^k \cdot D).
\end{equation}

(ii) If we additionally require that $v^1(-\infty) \in D$ or $v^K(+\infty) \in D$, the dimension bound goes down by $2$ (or by $4$ if we impose both constraints at the same time).
\end{lemma}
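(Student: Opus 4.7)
I would prove this by dimension counting, summing the dimensions of the moduli spaces of the individual components $v^k$ and subtracting the codimensions imposed by the chain-incidence conditions \eqref{eq:chain-incidence}, with a case distinction on whether each $v^k$ is contained in $D$ or not. First I would recall that for a simple pseudo-holomorphic sphere $v^k$ not contained in $D$, representing a class $A_k = [v^k]$, the moduli space of such spheres modulo the $\bC^*$-action fixing $0$ and $\infty$ is regular for generic $J$ and has dimension $2n - 2 + 2c_1(A_k) = 2n - 2 + 2(v^k \cdot D)$, the last equality using the anticanonical hypothesis $[D] = c_1(M)$. For $v^k \subset D$, regarded as a simple sphere in $D$ with $J|_D$ (generically regular there), the analogous moduli has dimension $2(n-1) - 2 + 2c_1(TD)(A_k) = 2n - 4$, where $c_1(TD) = 0$ by the adjunction $c_1(TD) = c_1(TM)|_D - c_1(N_D)$ together with $c_1(N_D) = c_1(M)|_D$. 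In particular, both contributions are bounded above by $2n - 2 + 2(v^k \cdot D)$, since monotonicity forces $v^k \cdot D = c_1(A_k) \geq 1$ for any simple pseudo-holomorphic sphere.

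Next I would analyze the incidence conditions. A condition $v^k(+\infty) = v^{k+1}(-\infty)$ is cut out by the diagonal $\Delta_M$ acting on the evaluation pair, and generically has codimension $2n$ when at least one of the two components is not contained in $D$, but codimension $2(n-1)$ when both lie in $D$ (the evaluation then takes values in $D \times D$, and the diagonal to cut is $\Delta_D$). Letting $\beta$ be the number of components contained in $D$ and $e_0$ the number of consecutive pairs with both ends contained, summing everything yields
\[
\dim \,=\, 2n - 2K - 2\beta + 2\sum_{v^k \not\subset D}(v^k \cdot D) + 2e_0.
\]
The claimed bound \eqref{eq:leq} is then equivalent to $e_0 - \beta \leq \sum_{v^k \subset D}(v^k \cdot D)$. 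This holds because $e_0 \leq \max(\beta - 1, 0)$ (it counts edges in a linear chain among $\beta$ marked vertices) and $\sum_{v^k \subset D}(v^k \cdot D) \geq \beta$ (each contained sphere has positive Chern number in the monotone setting).

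For part (ii), the constraint $v^1(-\infty) \in D$ imposes codimension $2$ on the moduli of $v^1$ when $v^1 \not\subset D$, giving the bound directly. When $v^1 \subset D$ the constraint is automatic, but then $\beta \geq 1$ already provides slack $\geq 2$ in the inequality above; more precisely, the overall slack in (i) equals $2\bigl(\sum_{v^k \subset D}(v^k \cdot D) + \beta - e_0\bigr) \geq 2(\beta + 1) \geq 4$, easily absorbing the required deficit. The two-sided case is handled the same way. Transversality throughout can be arranged by a generic choice of compatible $J$ preserving $D$ and locally $S^1$-invariant, following the setup from Section \ref{subsec:basics}. I expect the main obstacle to be the bookkeeping in the containment case, where the drop in $\dim \mathcal{M}^k$ coming from $c_1(TD) = 0$ has to be traded against the smaller codimension of the $\Delta_D$-incidence for adjacent contained pairs; the positivity $v^k \cdot D \geq 1$ is exactly what makes the combinatorial inequality work out.
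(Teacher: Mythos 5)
Your proposal is correct but takes a genuinely different route from the paper. You compute everything in one shot: sum the component dimensions, subtract the codimensions of the incidence conditions, and reduce the bound to the combinatorial inequality $e_0 - \beta \leq \sum_{v^k \subset D}(v^k \cdot D)$, which you verify using $e_0 \leq \beta - 1$ (for $\beta \geq 1$) and $v^k \cdot D \geq 1$ for $v^k \subset D$. The paper instead argues by induction on $K$, adding one sphere at a time to the right end of the chain; the inductive invariant tracks whether the current chain endpoint lies in $D$, and the three cases ($v(+\infty) \notin D$; $v(+\infty) \in D$ but $v \not\subset D$; $v \subset D$) show that both (i) and (ii) propagate. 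What the paper's induction buys: it never needs to make the global counts $\beta$ and $e_0$ explicit, and it generalizes cleanly to the refined Lemma~\ref{th:chain-dimension-2}, where the incidence data on each component are a more delicate invariant and a one-step-at-a-time analysis is easier to control. What your direct tally buys: it makes the quantitative slack transparent (you exhibit the full excess $2(\sum_{v^k \subset D}(v^k \cdot D) + \beta - e_0)$ explicitly), which is arguably a more informative statement. Two minor stylistic points: the ``$\dim =$'' in your tally should really be ``the expected/virtual dimension is,'' since both component regularity and transversality of the evaluation-to-diagonal maps are part of the generic-$J$ hypothesis; and the claim that the contained-component dimension $2n-4$ is ``bounded above by $2n-2+2(v^k\cdot D)$'' is true for the trivial reason $v^k \cdot D \geq -1$, so invoking monotonicity there is unnecessary (though you do genuinely need $v^k \cdot D \geq 1$ for $v^k \subset D$ in the final combinatorial step).
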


\begin{proof}
Start with just one simple sphere $v$. If $v$ is not contained in $D$, it belongs to a space of dimension $2n - 2 + 2(v \cdot D)$, so \eqref{eq:leq} is an equality. Imposing constraints $v(-\infty) \in D$ or $v(+\infty) \in D$ lowers the dimension by $2$ each. Finally, if $v$ is contained in $D$, it belongs to a space of dimension $\mathrm{dim}(D) - 2 \leq 2n + 2(v \cdot D) - 6$.

Take a chain of the kind described in the Lemma, and add an extra component $v$, with $v(-\infty) = v^K(+\infty)$, to its end. We go through the effect on the dimension case-by-case:
\begin{itemize} \itemsep.5em
\item {\em If $v(+\infty) \notin D$,} the dimension of the moduli space increases by $2(v \cdot D) - 2$. Note that if the original chain had its endpoint $v^K(+\infty) \in D$, then we lose that property after adding the extra component.

\item {\em If $v(+\infty) \in D$ but $v$ is not contained in $D$,} the dimension increases only by $2(v \cdot D) - 4$, due to the extra constraint. The new chain also has endpoint in $D$, irrespective of whether that was true for the original one.

\item {\em If $v$ is contained in $D$}, the dimension decreases by $2$, and of course both the original and new chain have endpoints in $D$.
\end{itemize}
This means that the inequalities from (i) and (ii) are inherited by the longer chain.
\end{proof}

Returning to our main moduli space, the relevant transversality requirements are as follows:
\begin{enumerate}[label=(S\arabic*)] \itemsep.5em
\parindent0em \parskip.5em
\item {\em (Main stratum)}
Consider the subspace of $\frakS_m(x_-,c_+)$ where the marked points are pairwise distinct, and none are equal to $+\infty$. We assume that this is regular. Its dimension is then $\mathrm{deg}(x_-) - \mathrm{deg}(c_+) + 2m$, where $\mathrm{deg}(c_+)$ is the Morse index. 

\item \label{item:s-collision}
{\em (Collision, no marked point at $+\infty$)}
This is the analogue of \ref{item:d-collision}, adding the condition that no marked point should lie at $+\infty$; the dimension is $\mathrm{deg}(x_-) - \mathrm{deg}(c_+) + 2|\Pi|$.

\item \label{item:s-simple-bubbling}
{\em (Bubbling without collision, no marked point at $+\infty$)}
This is the analogue of \ref{item:d-simple-bubble}, see Section \ref{subsec:dm}, again with the added requirements that no marked point should lie at $+\infty$; one gets dimension $\leq \mathrm{deg}(x_-) - \mathrm{deg}(c_+) + 2m - 2$.

\item \label{item:s-goes-to-infinity}
{\em (Marked point at $+\infty$)} 
We again assume that the marked points should collide according to a partition $\Pi$ (which can be the trivial one where they are all distinct), and that no intersections with $D$ should occur outside the set of such points. We also require that $\Sigma_{+\infty} > 0$, meaning that some marked points should lie at $+\infty$. This yields dimension $\mathrm{deg}(x_-) - \mathrm{deg}(c_+) + 2|\Pi| - 2$. To be precise, there are actually two sub-cases here. On the stratum where $\mu_{+\infty}(u) = 0$, so that $u(+\infty) \notin D$, the requirement that $u(+\infty) = b(0)$ is an intersection condition with the stable manifold (inside $M$); if on the other hand $\mu_{+\infty}(u) > 0$, meaning that $u(+\infty) \in D$, then $u(+\infty)$ must lie on the intersection of the stable manifold and $D$ (inside $D$). Because of the assumption that the stable manifolds are transverse to $D$, one gets the same dimension in both cases.

\item \label{item:s-messy-dimension}
{\em (Incidence condition switches to bubble chain)} We have a partition as before. In addition we have a simple chain $(v^1,\dots,v^K)$, $K>0$, of pseudo-holomorphic bubbles, for the almost complex structure which appears in our thimble equation at the point $+\infty$, connecting $u(+\infty)$ and $b(0)$. This means that we have \eqref{eq:chain-incidence} as well as
\begin{equation} \label{eq:endpoint-incidence}
u(+\infty) = v^1(-\infty), \;\; v^K(+\infty) = b(0).
\end{equation}
Finally, the bubble chain should satisfy
\begin{equation} \label{eq:degree-of-chain}
\sum_{k=1}^K (v^k \cdot D) \leq \Sigma_{+\infty}.
\end{equation}
This situation is complicated enough to warrant spelling out the dimension count:
\begin{equation} \label{eq:dimension-array}
\begin{array}{l|l}
\text{index of the linearized operator for $u$} & \mathrm{deg}(x_-) \\
\hspace{1em} -\text{codimension of the incidence with $D$} & 
 \\ \hline
\hline
\text{dimension of the parameter space} & \leq 2(m - \Sigma_{+\infty})
\\ \hline
\text{dimension of the simple chain} 
 & \leq -\mathrm{deg}(c_+) - 2K + 2\sum_k (v^k \cdot D)
\\ 
\hspace{1em} \text{$-$codimension of \eqref{eq:endpoint-incidence}}
& 
\\ \hline
\text{total dimension} & 
\leq \mathrm{deg}(x_-) - \mathrm{deg}(c_+) 
\\ & \hspace{1em} + 2\big(m - \Sigma_{+\infty} - K + \sum_k (v_k \cdot D)\big).
\end{array}
\end{equation}
Here, we have used Lemma \ref{th:chain-dimension} (actually both parts of the Lemma, because for $\mu_{+\infty}(u) > 0$ the incidence $u(+\infty) = v^1(-\infty)$ takes place in $D$). From \eqref{eq:degree-of-chain} we conclude that the total dimension from \eqref{eq:dimension-array} is 
\begin{equation} \label{eq:complicated-dimension}
\leq \mathrm{deg}(x_-) - \mathrm{deg}(c_+) + 2(m - K).
\end{equation}
\end{enumerate}


The analogue of Proposition \ref{th:floer} is this:

\begin{proposition} \label{th:pss}
Under the regularity assumptions imposed above, consider spaces $\frakS_m(x_-,c_+)$ of dimension $\mathrm{deg}(x_-) - \mathrm{deg}(c_+) + 2m \leq 1$.

(i) All points in such a space consist of curves where the marked points are pairwise distinct, and no marked point lies at $+\infty$.

(ii) If the dimension is $0$, we have $\bar\frakS_m(x_-,c_+) \setminus \frakS_m(x_-,c_+) = \emptyset$.

(iii) If the dimension is $1$, the points in $\bar\frakS_m(x_-,c_+) \setminus \frakS_m(x_-,c_+)$ are of two kinds. In one kind, the Riemann surface splits into two pieces, one a cylinder and the other a thimble. The marked points are still pairwise distinct; none of them lie at $+\infty$; and there are no further components or bubbles. In the other kind, a pseudo-gradient trajectory splits off from $b$.
\end{proposition}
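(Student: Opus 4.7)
The argument mirrors the proof of Proposition \ref{th:floer} with the transversality catalogue (S1)--(S5) in place of (D1)--(D3), and the thimble version of Gromov compactness Lemma \ref{th:complicated} (obtained by coupling the splitting-of-continuation-data machinery used there with Lemma \ref{th:thimble-1}).

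For (i), each of the non-main strata described in (S2)--(S5) has codimension $\geq 2$ in the main stratum: (S2) gives codimension $2(m-|\Pi|)\geq 2$, (S3) codimension $\geq 2$, (S4) codimension $2(m-|\Pi|)+2\geq 2$, and (S5) codimension at least $2K\geq 2$. Under the regularity hypothesis, each such stratum has expected dimension $\leq \dim\frakS_m(x_-,c_+)-2$ and is therefore empty once $\dim\frakS_m(x_-,c_+)\leq 1$. Every point must lie in the main stratum, so marked points are pairwise distinct and none lies at $+\infty$.

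For (ii) and (iii), take a point of $\bar\frakS_m(x_-,c_+)\setminus\frakS_m(x_-,c_+)$. Label the cylindrical components $u^1,\dots,u^{I-1}$ (with principal continuation-cylinder components at positions $i_*^1<\cdots<i_*^{R-1}$ carrying $m^r$ marked points), the thimble $u^I$ with $m^R$ marked points, and the Morse pieces $b^1,\dots,b^B$ (with $b^1$ the half-trajectory). The thimble version of Lemma \ref{th:complicated} guarantees that every one-periodic orbit in the limit lies outside $D$, that each principal cylinder and the thimble absorbs its prescribed $D$-intersection multiplicity $m^r$ together with attached bubbles, and that the remaining Floer cylinders are disjoint from $D$ and bubble-free. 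Summing dimensions across the components via (D1)--(D3) on the principal cylinders and (S1)--(S5) on the thimble, and telescoping Conley--Zehnder and Morse indices, yields
\begin{equation*}
\dim \;\leq\; \mathrm{deg}(x_-)-\mathrm{deg}(c_+)+2m-(I-1)-(B-1)-2N,
\end{equation*}
where $N$ counts the occurrences of collision, bubbling on a principal component or the thimble, and bubble-chain configurations at $+\infty$.

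Part (ii) is then immediate: for $\dim\frakS_m(x_-,c_+)=0$, any boundary configuration has $I+B\geq 3$, so the right-hand side is $\leq -1$ and the stratum is empty by regularity. For (iii), with $\dim\frakS_m(x_-,c_+)=1$, a nonempty dim-$0$ boundary stratum forces exactly one event and it must be a standard breaking: either $I=2$, $B=1$, $N=0$ (the Riemann surface splits into a cylinder, possibly carrying marked points, plus a thimble, with all marked points distinct, none at $+\infty$, and no bubbling) or $I=1$, $B=2$, $N=0$ (a single pseudo-gradient trajectory splits off). The one nontrivial ingredient is the bubble-chain case (S5), where the incidence $u(+\infty)=b(0)$ is realized only through a chain of spheres meeting $D$; showing this contributes codimension $\geq 2$ relies on the dimension bound \eqref{eq:complicated-dimension} and both branches of Lemma \ref{th:chain-dimension}, which supply the extra loss of two whenever an endpoint of the chain is forced onto $D$.
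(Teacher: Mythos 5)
Your proposal is correct and tracks the paper's own proof closely: both arguments reduce the boundary analysis to the transversality catalogue (S1)--(S5), invoke a thimble version of Lemma~\ref{th:complicated} to control the Gromov limit, and then sum component dimensions (telescoping degrees) exactly as in the proof of Proposition~\ref{th:floer}, so that breaking events contribute $-1$ each and bad events (collision, bubbling, bubble chains) contribute $-2$. Your explicit inequality $\dim \leq \mathrm{deg}(x_-)-\mathrm{deg}(c_+)+2m-(I-1)-(B-1)-2N$ is just a compressed bookkeeping form of what the paper does component-by-component, and the remark that the $u(+\infty)=b(0)$-via-bubble-chain case needs both branches of Lemma~\ref{th:chain-dimension} is exactly the subtlety the paper flags.

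One minor imprecision: in your argument for part (i) you treat (S2)--(S5) uniformly as ``non-main strata'' of $\frakS_m(x_-,c_+)$. In fact (S3) and (S5) involve sphere bubbles and so describe configurations in the Gromov compactification, not points of $\frakS_m(x_-,c_+)$ itself; the only strata of the interior moduli space that violate the conclusion of (i) are (S2) (collision) and (S4) (marked point at $+\infty$), and these are what the paper cites. The codimension estimates you quote for (S3) and (S5) are still needed, but only in parts (ii) and (iii). This does not invalidate the argument, but the framing should distinguish strata of the moduli space from degenerate configurations in its compactification.
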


\begin{proof}
Part (i) is again easy, since the points which violate that condition fall under \ref{item:s-collision} or \ref{item:s-goes-to-infinity}. The proof of (ii), (iii) is based on Lemma \ref{th:thimble-1}, or rather its generalization along the same lines as in Lemma \ref{th:complicated}. Let's look at what can happen to the thimble component $u^I$ of the Gromov limit, and its incidence condition with the half-line $b^1$:
\begin{itemize} \itemsep.5em
\item 
{\em Suppose that $u^I(+\infty) \neq b^1(0)$}. Take the bubble tree connecting $u^I(+\infty)$ and $b^1(0)$, and apply a standard simplifying operation to get a simple chain. Forget all bubbles attached to other points of this component. By \ref{item:s-messy-dimension} above, the result (consisting of $u^I$, the simplified bubble chain, and $b^1$) belongs to a space of codimension $\geq 2$, compared to what happens for a generic thimble-and-half-line with the same limits and intersection number with $D$.

\item
{\em Suppose that there is at least one marked point at $+\infty$, and that $u^I(+\infty) = b^1(0)$.} We then ignore all bubbles attached to $u^I$, and end up in the situation \ref{item:s-goes-to-infinity}, which yields codimension $\geq 2$.

\item
{\em Suppose that there is no marked point at $+\infty$, and that $u^I(+\infty) = b^1(0)$, but some marked points collide.} We again ignore all bubbles, and then \ref{item:s-collision} yields codimension $\geq 2$.

\item
{\em Finally, suppose there is no marked point at $+\infty$, that the marked points remain pairwise distinct, that $u^I(+\infty) = b^1(0)$, but that we have bubbling.} We forget all bubbles except one, and then codimension $2$ follows from \ref{item:s-simple-bubbling}.
\end{itemize}
Given that, the rest of the argument involves doing the same for the cylindrical components, and then adding up the resulting dimensions; both those parts follow the proof of Lemma \ref{th:floer} exactly, so we omit them.
\end{proof}

\subsection{Thimbles with tangency constraints\label{section:thimbleswithtangency}} 
The next construction follows \cite{ganatra-pomerleano21, ganatra-pomerleano20, pomerleano21} (the last one being closest to the argument here) by looking at thimbles with a fixed order of tangency to the divisor. We start with a Morse-Smale pair $(f_D,X_D)$ for the divisor $D$. Write $(\mathit{CM}^*(D),d_D)$ for the Morse complex.
\begin{itemize}
\item Fix $w \geq 1$. We consider maps from the thimble to $M$, which at $+\infty$ have $w$-fold intersection multiplicity with $D$, and elsewhere are disjoint from $D$. These are coupled with half-infinite flow lines $b$ by the incidence condition \eqref{eq:uv}, but taking place in $D$. The outcome is a chain map
\begin{equation}
\xymatrix{
\mathit{CF}^*(w)
&&& \ar[lll]_-{\includegraphics[valign=c]{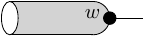}}^{\displaystyle t_{w,0}} \mathit{CM}^*(D).
}
\end{equation}

\item
More generally, for $w>0$ and $m \geq 0$, we can consider thimbles which have additional intersection points with $D$, amounting to a total $m+w$ intersection number, and where the $s \ll 0$ behaviour of the equation on the thimble now follows the Floer equation for $H_{w+m}$. 
This gives maps
\begin{equation} \label{eq:tt}
\xymatrix{
\mathit{CF}^{*-2m}(w+m)
&&& \ar[lll]_-{\!\!\!\overbrace{\includegraphics[valign=c]{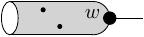}\hspace{-3em}}^{\text{$m$ added points}}\hspace{3em}}^{\displaystyle t_{w,m}} \mathit{CM}^*(D),
}
\end{equation}
satisfying
\begin{equation} \label{eq:t-map-equation}
\sum_{i+j=m} d_i t_{w,j} = t_{w,m} d_D.
\end{equation}
\end{itemize}
One assembles these into chain maps
\begin{equation} \label{eq:t-chain}
t_{C_q,w} = \sum_m q^m t_{w,m}: \mathit{CM}^*(D) \longrightarrow C_q.
\end{equation}
The induced map $H^*(D) \rightarrow \mathit{SH}^*_q(M,D)$ is the $z^w$ component of \eqref{eq:main}.

The parameter space used here is the same as for $s_m$, but since the continuation maps it parametrizes are a priori different, we choose to distinguish it notationally, as $\frakT_{w,m} = \mathit{Sym}_m(T)$. When considering maps $u: T \rightarrow M$, we now require that
\begin{equation} \label{eq:sigma-plus}
u^{-1}(D) = \Sigma + w\{+\infty\} \;\;\Leftrightarrow\;\; \mu_z(u) = \begin{cases}
\Sigma_z & z \neq +\infty, \\
\Sigma_{+\infty} + w & z = +\infty.
\end{cases}
\end{equation}
Denote the resulting space by $\frakT_{w,m}(x_-,c_+)$, and its Gromov compactification by $\bar\frakT_{w,m}(x_-,c_+)$. We need an observation in the spirit of Lemma \ref{th:bubble}:

\begin{lemma} \label{th:bubble-2}
Consider a Gromov-convergent sequence in $\frakT_{w,m}(x_-,c_+)$. Suppose that in the limit of the associated sequence in $\frakT_{w,m}$, there are $G-1$ marked points at $+\infty$. In our Gromov limit, take the domains of all bubbles attached to $+\infty$, and glue them to a single nodal Riemann surface. Then, the preimage of $D$ in that nodal surface has at most $G$ connected components.
\end{lemma}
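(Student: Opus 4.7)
The strategy is to match connected components of $u^{-1}(D) \cap T_\infty$ to distinct intersection points of $u_k$ with $D$ in a small neighborhood of $+\infty$, using positivity of intersection.

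First, fix a small neighborhood $\Delta$ of $+\infty$ in $T$. For $k$ sufficiently large, the marked points of $\Sigma_k$ lying in $\Delta$ have total multiplicity $G-1$ (the content of ``$G-1$ marked points at $+\infty$'' in the limit) and hence are supported on at most $G-1$ distinct points. Together with $+\infty$ itself, at which $u_k$ has intersection multiplicity $w$ by \eqref{eq:sigma-plus}, the set $u_k^{-1}(D) \cap \Delta$ is supported on at most $G$ distinct points, which I label $z_1^k, \ldots, z_{N_k}^k$. After passing to a subsequence, $N_k = N \leq G$ is constant and each sequence $(z_i^k)_k$ has a well-defined Gromov limit $p_i$ in the nodal surface $T_\infty$ (with the root attachment point of $T_\infty$ identified with $u^I(+\infty)$); the iterative nature of rescaling in Gromov convergence keeps the $p_i$ pairwise distinct. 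Continuity applied to $u_k(z_i^k) \in D$ gives $p_i \in u^{-1}(D) \cap T_\infty$.

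Next, I would show that every connected component $C$ of $u^{-1}(D) \cap T_\infty$ contains at least one $p_i$. When $C$ is contained in bubbles not lying in $D$, pick a small open neighborhood $U \subset T_\infty$ of $C$ whose closure meets no other component, so that $u(\partial U) \cap D = \emptyset$. Under Gromov convergence $U$ corresponds to an open subset $U_k \subseteq \Delta$ with $u_k(\partial U_k) \cap D = \emptyset$, and the topological intersection number of $u_k|_{U_k}$ with $D$ equals the positive sum of local multiplicities comprising $C$. This forces some $z_i^k \in U_k$ for $k$ large, and hence $p_i \in C$. This step uses the Gromov-trick positivity from Lemma \ref{th:winding}(i).

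When $C$ contains bubbles entirely in $D$, I would trace a path in the tree from $C$ toward the root of $T_\infty$. If this path first crosses into a bubble $v'$ not in $D$ at some node, that node is in $D$ (one side is an in-$D$ bubble) and lies in $C$; the previous positivity argument applied on the $v'$-side of the node yields $p_i \in C$. If the path stays in $C$ all the way to the root, then the root bubble lies in $D$ and is contained in $C$; the constant sequence $z^k = +\infty$ forces $u_k(+\infty) \in D$ for every $k$, so its Gromov limit is a point of the root bubble, which therefore lies in $C$. Either way $C$ contains some $p_i$, so the number of components is bounded by $N \leq G$.

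The main obstacle is the case of components containing in-$D$ bubbles, since the standard transverse-intersection positivity argument does not apply directly to curves inside $D$. The tree structure of $T_\infty$ together with the distinguished intersection at $+\infty$ of multiplicity $w$ resolve this: every in-$D$ subtree either interfaces with a not-in-$D$ bubble at a node (detectable by standard positivity on the not-in-$D$ side) or extends all the way to the root and therefore contains the Gromov limit of $+\infty$.
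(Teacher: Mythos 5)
Your argument is correct in its essentials and rests on the same underlying idea as the paper's proof: positivity of intersection forces each connected component of $u^{-1}(D)$ in the bubble tree to be "witnessed" by a region near $+\infty$ in the original cylinders, and these regions must absorb the $G$ special points ($G-1$ marked points plus $+\infty$). However, you take a more circuitous route. The paper's proof does not track limit points $p_i$ at all, and more importantly does not need your case distinction. It simply chooses, for every connected component $K$ (whether or not $K$ contains bubbles lying inside $D$), a small neighbourhood $U$ with $u(\partial U) \cap D = \emptyset$, observes that the topological intersection number $(u|U) \cdot D$ is well-defined and positive, transfers $U$ to disjoint regions $U_k \subset T$ near $+\infty$, and concludes. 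The positivity you worried about for in-$D$ bubbles is actually automatic here: a nonconstant pseudo-holomorphic sphere $v$ inside $D$ contributes $v \cdot D = c_1(\nu D)\cdot [v] = [\omega_D]\cdot[v] > 0$ since $D$ is Poincar\'e dual to $[\omega_M]$, so $(u|U)\cdot D$ is positive for every nonempty component $K$. Your path-tracing and the appeal to the root of the tree in Case 2b are thus superfluous. Finally, the claim that "the iterative nature of rescaling... keeps the $p_i$ pairwise distinct" is not correct in general (two distinct $z_i^k$ can perfectly well converge to the same limit point if the bubbling pattern of $u_k$ does not resolve their relative scale), but fortunately this claim is also unnecessary: different components cannot contain the same point $p_i$, so the indexing already gives the bound. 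In short, the proposal reaches the right conclusion, but the paper's uniform treatment using $(u|U)\cdot D > 0$ and disjoint regions is both shorter and avoids the fragile pairwise-distinctness claim.
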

\begin{figure}
\begin{centering}
\includegraphics{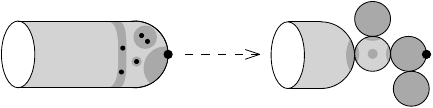}
\caption{\label{fig:bubble-tree}Four of the regions $U$ (shaded more darkly) from the proof of Lemma \ref{th:bubble-2}, with the corresponding $U_k$.}
\end{centering}
\end{figure}

\begin{proof}
In the Gromov limit, take the thimble, together with all the bubbles, and consider them as a nodal Riemann surface $C$, with a map $u: C \rightarrow M$. Let $K \subset C$ be a connected component of $u^{-1}(D)$ lying inside the bubble tree at $+\infty$. Choose a small open neighbourhood $K \subset U \subset C$. By assumption, the image of $\partial U$ avoids $D$, hence the intersection number $(u|U) \cdot D$ is well-defined, and positive. 

Let's look at our original sequence, consisting of divisors $\Sigma_k \subset T$ and maps $u_k: T \rightarrow M$. To the region $U$ corresponds (not uniquely, but sufficiently so for the subsequent argument; see Figure \ref{fig:bubble-tree}) a sequence of regions $U_k \subset T$, for $k \gg 0$, such that $(u_k|U_k) \cdot D = (u|U) > 0$. Hence, $U_k$ must contain either a point of $\Sigma_k$ or the point $+\infty$. Moreover, because of the definition of $U$, the subsets $U_k$ will be contained in a neighbourhood of $+\infty \in T$, which can be made arbitrarily small.

If we have several different connected components $K$, we get pairwise disjoint $U$, hence also $U_k$. Each such region, for $k \gg 0$, must contain either $+\infty$, or one of the marked points which go to $+\infty$ in the limit; which explains the bound.
\end{proof}

Lemma \ref{th:bubble-2} constrains the pattern of intersections between a bubble tree and $D$. To apply this, we need to show that such constraints survive the simplification process which enters into transversality arguments.

\begin{lemma} \label{th:simplify}
Let $(v^1,\dots,v^K)$ be a chain of $J$-holomorphic spheres, meaning that it satisfies \eqref{eq:chain-incidence}, and such that $v^1(-\infty) \neq v^K(+\infty)$. Then there is a simple chain $(\bar{v}^1,\dots,\bar{v}^{\bar{K}})$ with $\bar{v}^1(-\infty) = v^1(-\infty)$, $\bar{v}^{\bar{K}}(+\infty) = v^K(+\infty)$, such that the following holds. Think of $(v^1,\dots,v^K)$ as a single map $v$ defined on the nodal surface obtained by gluing $+\infty$ in each sphere to $-\infty$ in the next one, and similarly $\bar{v}$ for the simple chain. Then, the number of connected components of $\bar{v}^{-1}(D)$ is less or equal than that for $v^{-1}(D)$.
\end{lemma}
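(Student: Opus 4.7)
The strategy is to simplify the chain through a sequence of elementary reductions, each of which weakly decreases $\#\pi_0(v^{-1}(D))$ while strictly decreasing some overall complexity.

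I would first observe a combinatorial formula for the count. Gluing the spheres of the chain at the nodes $v^k(+\infty) \sim v^{k+1}(-\infty)$ produces a tree of spheres; hence the graph whose vertices are the connected components of the various $(v^k)^{-1}(D)$ and whose edges are the nodes that lie in $D$ (namely those $k$ for which $v^k(+\infty) \in D$) is a forest. Writing $N_k := \#\pi_0((v^k)^{-1}(D))$ for the components on the source of $v^k$, the number of connected components of $v^{-1}(D)$ is therefore
\[
\#\pi_0(v^{-1}(D)) \;=\; \sum_{k=1}^K N_k \;-\; \#\{k \in \{1,\dots,K-1\} : v^k(+\infty) \in D\},
\]
and the same formula applies to $\bar v$.

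Next I would define three reduction moves. \emph{(a) Multi-cover replacement.} If $v^k = \bar{v}^k \circ \phi$ for a branched cover $\phi$ of degree $>1$ with $\phi(-\infty) \neq \phi(+\infty)$, I reparametrize $\bar{v}^k$ so that $\bar{v}^k(\pm \infty) = v^k(\pm \infty)$ and replace $v^k$ by $\bar{v}^k$. \emph{(b) Repeated-image contraction.} If $i < j$, the spheres $v^i, v^j$ are simple with the same image, and $v^i(-\infty) \neq v^j(+\infty)$, I replace the sub-chain $v^i, \ldots, v^j$ by a single reparametrization of $v^i$ sending $-\infty, +\infty$ to $v^i(-\infty), v^j(+\infty)$ respectively. \emph{(c) Cycle removal.} If $v^i(-\infty) = v^j(+\infty)$ for some $i \leq j$ (the case $i = j$ being a loop), I remove $v^i, \dots, v^j$ and join $v^{i-1}$ directly to $v^{j+1}$ at the common point.

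Each move's effect on $\#\pi_0(v^{-1}(D))$ can be read off the formula. For (a), the branched cover $\phi$ is surjective, so $\#\pi_0(\phi^{-1}(A)) \geq \#\pi_0(A)$ for any closed $A$, giving $N_k^{\text{new}} \leq N_k^{\text{old}}$ while the endpoints and hence the nodes-in-$D$ count are preserved. For (b) and (c) the key ingredient is the sphere-level estimate $N_k \geq 1$ whenever $v^k(-\infty) \in D$ or $v^k(+\infty) \in D$. For instance, in case (c) with $p := v^i(-\infty) = v^j(+\infty)$ a direct computation yields
\[
\Delta \#\pi_0(v^{-1}(D)) \;=\; -\sum_{k=i}^{j} N_k \;+\; [p \in D] \;+\; \sum_{k=i}^{j-1} [v^k(+\infty) \in D],
\]
which is $\leq 0$ via the pairing $N_i \geq [p \in D]$ together with $N_{k+1} \geq [v^k(+\infty) \in D]$ for $k = i,\ldots,j-1$; case (b) is analogous, and the boundary cases $i=1$ or $j=K$ of (c) are treated by the same pairing after dropping the terms coming from the absent node $n_{i-1}$ or $n_j$.

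Finally I would verify termination and endpoint preservation. The total covering degree $\sum_k \deg v^k$ strictly decreases under (a) and (c), and $K$ strictly decreases under (b), so the process halts. At any non-simple chain one of the moves applies: (a) handles any multi-cover with distinct endpoint images, (c) handles any multi-cover with $v^k(-\infty) = v^k(+\infty)$ (as a one-element cycle), and (b) or (c) handles any pair of simple spheres with coincident images. Upon termination the chain is simple. The hypothesis $v^1(-\infty) \neq v^K(+\infty)$ rules out the only forbidden cycle removal, namely the one that would swallow the entire chain; in particular, the original endpoints survive all reductions. The main obstacle is the degenerate self-folded multi-cover, for which no reparametrization of the underlying simple cover respects the chain incidence; the resolution is to recognize such a component as a length-one cycle and remove it via (c) rather than attempting (a).
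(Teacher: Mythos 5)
Your three reduction moves (a), (b), (c) are exactly the same moves the paper uses (in the paper they are applied in the fixed order cycle-removal, then multi-cover-replacement, then repeated-image contraction; you allow an arbitrary order and supply a separate termination argument). The genuine addition in your write-up is the explicit forest-Euler-characteristic formula
$\#\pi_0(v^{-1}(D)) = \sum_k N_k - \#\{k : v^k(+\infty) \in D\}$,
which makes rigorous the step the paper dismisses with ``one easily sees.'' The formula is correct: the incidence graph whose vertices are components of the $(v^k)^{-1}(D)$ and whose edges are the nodes lying in $D$ is a forest because the chain is a path and adjacent spheres share only one node, so vertices minus edges counts components. Your bookkeeping in cases (b) and (c), pairing $N_{k+1} \geq [v^k(+\infty) \in D]$ and $N_i \geq [p \in D]$, is correct, including the boundary cases $i=1$ and $j=K$; and the observation that a multi-cover with coincident endpoints must be treated as a one-element cycle rather than reparametrized is exactly the subtlety the paper handles implicitly by performing cycle removal first.

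One small imprecision: the total covering degree $\sum_k \deg v^k$ does \emph{not} strictly decrease under (c) if the removed cycle consists of constant (ghost) spheres, which have degree $0$. Since such components can in principle occur in a chain extracted from a bubble tree, your stated termination measure fails there. The fix is immediate: use the lexicographic pair $(\sum_k \deg v^k, K)$ — (a) strictly decreases the first coordinate, while (b) and (c) never increase it and strictly decrease $K$. With that adjustment the argument is complete.
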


\begin{proof}
There's an explicit construction, which proceeds in the following steps.
\begin{itemize} \itemsep.5em
\item If $v^i(-\infty) = v^j(+\infty)$ for some $i \leq j$, we remove $v^i,\dots,v^j$ from our chain. Having repeated that as often as necessary, the outcome is that the nodal points and endpoints in our chain will map to pairwise distinct points in $M$. 
\item With the previous step in mind, assume that  $v^1(-\infty),v^2(-\infty),\dots,v^K(-\infty),v^K(+\infty)$ were already pairwise different (which also implies that no component can be constant). At this point, we replace each multiply-covered sphere $v^i$ by the underlying simple one $\bar{v}^i$, parametrized in such a way that $\bar{v}^i(-\infty) = v^i(-\infty)$, $\bar{v}^i(+\infty) = v^i(+\infty)$ (by assumption, there are distinct points on the domain of the simple curve which map to $v^i(-\infty)$ and $v^i(\infty)$, so the required parametrization always exists).
\item As before, let's suppose that the previous steps are already done. This means that $v^1(-\infty),v^2(-\infty),\dots,v^K(-\infty),v^K(+\infty)$ are pairwise different, and each $v^i$ is a simple pseudo-holomor\-phic map. Suppose that for some $i<j$, the maps $v^i$ and $v^j$ have the same image. We then remove $v^{i+1},\dots,v^j$ from our chain, and replace $v^i$ by a reparametrized version $\tilde{v}^i$, such that $\tilde{v}^i(-\infty) = v^i(-\infty)$, $\tilde{v}^i(+\infty) = v^j(+\infty)$ (this is possible because $v^i(-\infty) \neq v^j(+\infty)$, as before). One repeats that until the simple chain condition is satisfied.
\end{itemize}
One easily sees that the number of connected components of the preimage of $D$ cannot increase at any step of the algorithm.
\end{proof}

\begin{remark}
The simplification process we have described does not necessarily lead to the shortest possible simple chain with given endpoints. As an illustration, suppose we start with simple curves $(v^1,v^2,v^3)$, where $(v^1)^{-1}(D) = +\infty$, $(v^3)^{-1}(D) = -\infty$, and such that $v^1,v^3$ intersect at some point $p$ outside $D$; also, $v^2 \subset D$; otherwise, we assume the situation is as generic as possible (no other intersection or selfintersection points). One could remove $v^2$ from the chain, and take $p$ to be the image of the nodal point of the newly simplified chain. In that case $v^{-1}(D)$ has one connected component (the domain of $v^2$), whereas the corresponding preimage for the simplified chain has $2$ ($+\infty$ in the domain of $v^1$, and $-\infty$ in the domain of $v^3$). However, the procedure from the proof of Lemma \ref{th:simplify} would not remove $v^2$, hence avoids that problem.
\end{remark}

We use the arguments above for dimension calculations, via the following variant of Lemma \ref{th:chain-dimension}:

\begin{lemma} \label{th:chain-dimension-2}
Let $(v^1,\dots,v^K)$ be a simple chain of pseudo-holomorphic spheres. Suppose that when we join together the domains (each to the next) to a nodal Riemann surface, the preimage of $D$ in that nodal surface has $G$ connected components. 

(i) Assuming regularity, that chain belongs to a moduli space of dimension
\begin{equation} \label{eq:g-formula}
\leq 2n - 2K + 2G.
\end{equation}
More precisely, in this moduli space, we keep the nature of the intersections with $D$ fixed (which components lie inside it, and the orders of tangency for the other components), and again divide by $\bC^*$ acting on each component.

(ii) If one additionally requires that $v^1(-\infty) \in D$ or $v^K(+\infty) \in D$, the dimension bound decrease by $2$ (or by $4$ if we impose both constraints at the same time).
\end{lemma}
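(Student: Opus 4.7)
I follow the inductive strategy of Lemma \ref{th:chain-dimension}, adding one component at a time, but refined to reflect the fixed tangency profile. The bookkeeping is cleanest if I introduce endpoint flags $E_L, E_R \in \{0,1\}$ indicating whether $v^1(-\infty)$ or $v^K(+\infty)$ lies in $D$, and prove the uniform invariant $\dim \leq 2n - 2K + 2G - 2E_L - 2E_R$; parts (i) and (ii) are then two specializations of the same inequality.

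For the base case $K=1$ with $v^1$ not contained in $D$, I start from the parametrized $(2n + 2(v^1 \cdot D))$-dimensional moduli space of simple $J$-spheres and impose the tangency constraints: a tangency of order $\mu$ at an unmarked interior point is codimension $2(\mu-1)$, while one at a fixed marked point $\pm\infty$ is codimension $2\mu$. Summing over all $G$ intersection clusters telescopes the total intersection contribution and leaves $2n + 2G - 2(E_L + E_R)$, which becomes the target bound after the $\bC^*$ quotient. The case $v^1 \subset D$ uses the Calabi--Yau property of $D$ (since $D$ anticanonical gives $c_1(TD) = c_1(TM)|_D - c_1(N_{D/M}) = 0$), yielding parametrized dimension $2n-2$ and hence $2n-4$ after $\bC^*$, matching the bound with $G=1$ and $E_L = E_R = 1$.

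For the inductive step I glue $v^{K+1}$ onto a chain already satisfying the invariant with data $(G, E_L, E_R^{\mathrm{old}})$. The node incidence $v^{K+1}(-\infty) = v^K(+\infty)$ has codimension $2n$ or $2n-2$ depending on whether the shared point lies outside or inside $D$, i.e.\ codimension $2n - 2E_R^{\mathrm{old}}$. When $v^{K+1}$ is not contained in $D$, its own contribution modulo $\bC^*$ is $2n - 2 + 2G'' - 2G''_\partial$ by the base case computation, where $G''_\partial = E_R^{\mathrm{old}} + E_R^{\mathrm{new}}$; and the global count satisfies $G' = G + G'' - E_R^{\mathrm{old}}$, since a node in $D$ merges the cluster at $v^K(+\infty)$ with the one at $v^{K+1}(-\infty)$. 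When $v^{K+1} \subset D$, one must have $E_R^{\mathrm{old}} = 1$ for the incidence to be possible; the new component contributes dimension $2n-4$ against incidence codimension $2n-2$, while $G' = G$ and $E_R^{\mathrm{new}} = 1$. A straightforward substitution shows that both cases preserve the invariant.

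The main delicate point is the precise cancellation in the case of a node in $D$: the incidence codimension drops by $2$ (from $2n$ to $2n-2$), and this is exactly offset by the merging that reduces $G'$ by $1$ relative to the naive addition. Once this is checked the rest of the induction is bookkeeping, and part (ii) follows by specializing $E_L$ and/or the final $E_R$ to $1$ in the common inequality.
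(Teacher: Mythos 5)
Your proof is correct and follows the same add-a-sphere induction that the paper invokes from Lemma \ref{th:chain-dimension}; the $E_L, E_R$ flag bookkeeping is just a clean way to run parts (i) and (ii) simultaneously. The cancellation you flag as the delicate point --- the incidence codimension drops by $2$ when the node lies in $D$, exactly offset by the merging of the two intersection clusters meeting at the node, which reduces $G'$ by $1$ --- is indeed the crux of why the invariant closes under gluing.
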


\begin{proof}
Take a simple sphere $v$ not contained in $D$. Consider its domain as a sphere with $G$ marked points, and fixed intersection multiplicities with $D$ at those points, up to $\bC^*$ reparametrizations. The moduli space correspondingly has dimension $2n - 2 + 2G$. If one requires that $v(-\infty) \in D$, that constrains the position of one of the marked points, lowering the dimension by $2$, and the same for $v(+\infty)$. Finally, for a sphere that is contained in $D$, we have $G = 1$ and dimension $(2n-2)-2 = 2n - 6 + 2G$. The rest of the proof is the same add-a-sphere inductive argument as in Lemma \ref{th:chain-dimension}; we omit the details.
\end{proof}


Transversality provides the following properties, for generic choices:
\begin{enumerate}[label=(T\arabic*)] \itemsep.5em
\item \label{item:t-main}
{\em (Main stratum)} 
The subspace of $\frakT_{w,m}(x_-,c_+)$ where the marked points are pairwise distinct, and none are equal to $+\infty$, is of dimension $\mathrm{deg}(x_-) - \mathrm{deg}(c_+) + 2m$.

\item \label{item:t-collision} 
{\em (Collision, no marked point at $+\infty$)} 
This is as in \ref{item:s-collision}. We get the usual dimension $\mathrm{deg}(x_-) - \mathrm{deg}(c_+) + 2|\Pi|$, where $|\Pi| < m$.

\item \label{item:t-bubbling}
{\em (Bubbling at marked point, without collision, and with no marked point at $+\infty$)} This is as in \ref{item:s-simple-bubbling}, of dimension $\mathrm{deg}(x_-) - \mathrm{deg}(c_+) + 2m-2$.

\item \label{item:t-bubbling-2}
{\em (Bubbling at $+\infty$, without collision, and with no marked point at $+\infty$)} Here, we consider our map satisfying $u(+\infty) = b(0)$; in addition there is a simple sphere bubble $v$ going through $u(+\infty)$, and such that either the bubble is contained in $D$, or $v^{-1}(D)$ consists of a single point. The first case has dimension $\mathrm{deg}(x_-) - \mathrm{deg}(c_+) + 2m-4$, and the second one is of dimension $2$ higher.

\item \label{item:t-goes-to-infinity}
{\em (Marked point goes to $+\infty$, incidence condition is preserved)} This is the analogue of \ref{item:s-goes-to-infinity}, with $\Sigma_{+\infty} > 0$. It yields a space of dimension $\mathrm{deg}(x_-) - \mathrm{deg}(c_+) - 2|\Pi| - 2$.

\item \label{item:t-messy-dimension}
{\em (Incidence condition switches to bubble chain)} This is the analogue of \ref{item:s-messy-dimension}, with a simple bubble chain \eqref{eq:chain-incidence}, \eqref{eq:endpoint-incidence}. We additionally require that the number of components in the sense of Lemma \ref{th:chain-dimension-2} should be
\begin{equation} \label{eq:g-bound}
G \leq \Sigma_{+\infty} + 1. 
\end{equation}
The appropriate version of \eqref{eq:dimension-array}, using Lemma \ref{th:chain-dimension-2}(ii) since the endpoint of the chain always lies in $D$, is:
\begin{equation} \label{eq:dimension-array-2}
\begin{array}{l|l}
\text{index of the linearized operator for $u$} & \mathrm{deg}(x_-) \\
\hspace{1em} -\text{codimension of the incidence with $D$} & 
 \\ \hline
\hline
\text{dimension of the parameter space} & \leq 2(m - \Sigma_{+\infty})
\\ \hline
\text{dimension of the simple chain} 
 & \leq -\mathrm{deg}(c_+) - 2K + 2G - 2
\\ 
\hspace{1em} \text{$-$codimension of \eqref{eq:endpoint-incidence}}
& 
\\ \hline
\text{total dimension} & 
\leq \mathrm{deg}(x_-) - \mathrm{deg}(c_+) 
\\ & \hspace{1em} + 2\big(m - \Sigma_{+\infty} - K + G - 1\big).
\end{array}
\end{equation}
From \eqref{eq:g-bound} we then get the same inequality \eqref{eq:complicated-dimension} as in our previous discussion.
\end{enumerate}

\begin{proposition} \label{th:bs}
Under the regularity assumptions imposed above, consider spaces $\frakT_{w,m}(x_-,c_+)$ of dimension $\mathrm{deg}(x_-) - \mathrm{deg}(c_+) + 2m \leq 1$.

(i) All points in such a space consist of curves where the marked points are pairwise distinct, and no marked point lies at $+\infty$.

(ii) If the dimension is $0$, we have $\bar\frakT_{w,m}(x_-,c_+) \setminus \frakT_{w,m}(x_-,c_+) = \emptyset$.

(iii) If the dimension is $1$, the points in $\bar\frakT_{w.m}(x_-,c_+) \setminus \frakT_{w,m}(x_-,c_+)$ are of the same kind as in Proposition \ref{th:pss}.
\end{proposition}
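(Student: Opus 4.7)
The plan is to follow the template of Proposition \ref{th:pss} essentially verbatim, with the new ingredient being that the bubbling and tangency bookkeeping at $+\infty$ is controlled by Lemma \ref{th:bubble-2} together with the simplification algorithm of Lemma \ref{th:simplify}, which feed into the dimension estimates \ref{item:t-main}--\ref{item:t-messy-dimension}.

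For (i), any point of $\frakT_{w,m}(x_-,c_+)$ where the marked points are not pairwise distinct, or where some marked point lies at $+\infty$, is covered by \ref{item:t-collision} or \ref{item:t-goes-to-infinity}, hence sits in a moduli space of codimension $\geq 2$. Since a generic choice of data renders all the relevant strata regular, and we assume the total dimension $\mathrm{deg}(x_-) - \mathrm{deg}(c_+) + 2m \leq 1$, these loci are empty.

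For (ii) and (iii), we analyze a sequence in $\frakT_{w,m}(x_-,c_+)$ Gromov-converging to a stable limit. By the thimble analogue of Lemma \ref{th:complicated} (based on Lemma \ref{th:thimble-1}), the one-periodic orbits of the broken limit all lie outside $D$, the principal components carry the marked points with the prescribed intersection numbers, and all other cylindrical components are Floer trajectories disjoint from $D$ without sphere bubbles. It remains to examine the thimble component $u^I$ together with its bubble tree at $+\infty$ and the half-trajectory $b^1$. We split into cases exactly as in the proof of Proposition \ref{th:pss}:
\begin{itemize}
\item If $u^I(+\infty) \neq b^1(0)$, the bubble tree joining them is simplified by Lemma \ref{th:simplify} to a simple chain $(v^1,\dots,v^K)$; Lemma \ref{th:bubble-2} guarantees the bound $G \leq \Sigma_{+\infty}+1$ on connected components of the preimage of $D$, so Lemma \ref{th:chain-dimension-2}(ii) and \ref{item:t-messy-dimension} yield a codimension $\geq 2K \geq 2$ stratum.
\item If $u^I(+\infty)=b^1(0)$ but some marked points go to $+\infty$, use \ref{item:t-goes-to-infinity}; if they instead collide away from $+\infty$, use \ref{item:t-collision}; if there is bubbling at a marked point, use \ref{item:t-bubbling}; if bubbling happens at $+\infty$ with no marked point there and no collision, use \ref{item:t-bubbling-2}.
\end{itemize}
Each of these situations has codimension $\geq 2$ on the thimble component alone. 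On the cylindrical components one runs the identical case analysis as in the proof of Proposition \ref{th:floer}, and on $b$ one has the standard Morse-theoretic dimension count. Adding the contributions, strata of the Gromov--Floer--Morse compactification not matching those listed in (iii) contribute codimension $\geq 2$, which is incompatible with total dimension $\leq 1$. This rules out everything except cylinder-thimble splittings (with disjoint marked points not at $+\infty$ and no bubbles) and breaking off of a pseudo-gradient trajectory.

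The main obstacle is the case $u^I(+\infty) \neq b^1(0)$ in \ref{item:t-messy-dimension}: the naive dimension count for a bubble tree attached at $+\infty$ could grow with the number of tangency points absorbed there, and one needs \eqref{eq:g-bound} to close the count. The content of Lemmas \ref{th:bubble-2} and \ref{th:simplify} is exactly to show that the geometric constraint ``the bubbles at $+\infty$ absorb the marked points that went there'' is preserved under the simplification to a simple chain required for transversality, so the bound $G \leq \Sigma_{+\infty}+1$ remains valid after simplification. Once that point is granted, the remainder of the argument is a bookkeeping exercise modeled on the proofs of Propositions \ref{th:floer} and \ref{th:pss}.
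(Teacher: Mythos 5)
Your proposal is correct and follows the paper's argument essentially verbatim: same appeal to \ref{item:t-collision}/\ref{item:t-goes-to-infinity} for (i), and the same five-way case split on the thimble component for (ii)--(iii), invoking \ref{item:t-messy-dimension}, \ref{item:t-goes-to-infinity}, \ref{item:t-collision}, \ref{item:t-bubbling}, \ref{item:t-bubbling-2} respectively, with cylindrical components and the half-flow-line handled as in Propositions \ref{th:floer} and \ref{th:pss}. One small imprecision: in the case $u^I(+\infty)\neq b^1(0)$ you apply Lemma \ref{th:simplify} directly to the bubble tree, but that lemma is stated for chains, so one must first discard the branches of the tree not separating $u^I(+\infty)$ from $b^1(0)$ (this step also preserves the component count bound from Lemma \ref{th:bubble-2}); similarly, in the last case you land in \ref{item:t-bubbling-2} only after observing that the innermost nonconstant bubble is either contained in $D$ or meets it at a single point and then passing to its underlying simple map.
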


\begin{proof}
(i) follows from \ref{item:t-collision} and \ref{item:t-goes-to-infinity}. Concerning (ii) and (iii), as in the proof of Proposition \ref{th:pss}, we will consider only the principal component $u^I$ of the limit, together with its bubbles and the pseudo-gradient half-line $b^1$.
\begin{itemize} \itemsep.5em 
\item {\em Suppose that $u^I(+\infty) \neq b^1(0)$.} Take the bubble tree joining $u^I$ and $b^1$. Lemma \ref{th:bubble-2} says that if we consider the bubble tree as a single nodal curve, then the preimage of $D$ in that curve has $\leq \Sigma_{+\infty} + 1$ connected components. By keeping only those components that separate the thimble and half-line, one can reduce the tree to a chain of spheres, without increasing the number of connected components of the preimage of $D$. Finally, apply Lemma \ref{th:simplify} to the chain. The outcome is that one has a simple chain connecting $u^I(+\infty)$ and $b^1(0)$, still with the same bound on the preimage of $D$. We are now in the situation of \ref{item:t-messy-dimension}, which yields codimension $2$.

\item {\em Suppose that at least one of the marked points lies at $+\infty$, and that $u^I(+\infty) = b^1(0)$.} We ignore bubbles and get \ref{item:t-goes-to-infinity}.

\item {\em Suppose that no marked point lies at $+\infty$, that $u^I(+\infty) = b^1(0)$, but that some marked points collide.} This leads to \ref{item:t-collision}.

\item {\em Suppose that no marked point lies at $+\infty$; that $u^I(+\infty) = b^1(0)$; that the marked points remain pairwise distinct; but that bubbling occurs at a marked point.} This is \ref{item:t-bubbling}.

\item {\em Suppose that no marked point lies at $+\infty$; that $u^I(+\infty) = b^1(0)$; that the marked points remain pairwise distinct; and bubbling occurs at $+\infty$.} It follows from Lemma \ref{th:bubble-2} that the preimage in $D$ in the resulting bubble tree must be connected. Because $u^I(+\infty) \in D$, the nonconstant component in the tree closest to the thimble is either contained in $D$, or else intersects $D$ exactly at the point where it's attached as part of the tree. After replacing that component with the underlying simple map, we are in situation \ref{item:t-bubbling-2}.
\end{itemize}
\end{proof}

\begin{remark}
As the reader will have noticed, our compactifications are low-tech (compared to the ones from relative Gromov-Witten theory, used for a similar purpose in \cite{pomerleano21}); on top of that, we use the process of simplifying bubble chains (Lemma \ref{th:bubble-2}), which loses a lot of information. In spite of that, keeping track of the number of connected components of the preimage of $D$ has allowed us to retain just enough control to reach the necessary dimension bounds.
\end{remark}

\section{The action filtration\label{sec:action}}
The Floer differential always increases the action, but a general continuation map may decrease it by a bounded amount. We will discretize the action filtration, by arranging that the action values of one-periodic orbits cluster near integers. The gaps between those clusters afford enough flexibility to construct filtered continuation maps. The maps obtained from thimbles can also be shown to be compatible with the filtration.

\subsection{Constructing the filtrations}
We begin by defining the (entirely elementary) classes of Hamiltonians which give rise to filtered Floer complexes.

\begin{definition}
Fix some $\epsilon \leq 1/2$. A time-dependent Hamiltonian $\bar{H}$ 
is called $\epsilon$-bounded if it has the following additional properties. First, 
\begin{equation} \label{eq:h-bound}
|\bar{H}| < \epsilon/2
\end{equation}
everywhere. Secondly, every one-periodic orbit $x$ not lying in $D$ bounds a surface 
\begin{equation} \label{eq:area-bound}
y: S \rightarrow M, \;\; \Big|\int_S y^*\omega_M\Big| < \epsilon/2. 
\end{equation}
\end{definition}

As a consequence of \eqref{eq:h-bound} and \eqref{eq:area-bound}, the actions \eqref{eq:action} satisfy
\begin{equation} \label{eq:clustered-action}
A(x) \in (k-\epsilon,k+\epsilon), \;\;  \text{ where } k = y \cdot D \in \bZ \;\; \text{for $y$ as in \eqref{eq:area-bound}.}
\end{equation}
We say that $x$ has action {\em approximately $k$}. Suppose that $u$ is a solution of Floer's equation, with limits $x_{\pm}$ outside $D$. From the relevant special case of \eqref{eq:energy-2},
\begin{equation} \label{eq:approximate-action-increase}
\begin{aligned}
& x_+ \text{ has action approximately $k_+$} 
\\ & \qquad \Longrightarrow 
x_- \text{ has action approximately $k_-$, for some $k_- \geq k_+ - (u \cdot D)$.}
\end{aligned}
\end{equation}

\begin{definition} \label{th:epsilon-continuation}
Suppose that $\bar{H}_{\pm}$ 
are $\epsilon_{\pm}$-bounded, for $\epsilon_- + \epsilon_+ \leq 2/3$. Take
\begin{equation} \label{eq:continuation-epsilon}
\epsilon \in [(\epsilon_- + \epsilon_+)/2, 1 - \epsilon_+ - \epsilon_-]. 
\end{equation}
A continuation map Hamiltonian $H = (H_{s,t})$ relating $\bar{H}_{\pm}$ is called $\epsilon$-bounded if 
\begin{equation} \label{eq:almost-monotone}
\int_{-\infty}^{\infty} \max \{\partial_s H_{s,t} \, : \, (x,t) \in M \times S^1 \} \; \mathit{ds} < \epsilon.
\end{equation}
\end{definition}

Suppose that \eqref{eq:almost-monotone} holds, and consider the associated continuation map equation. The counterpart of \eqref{eq:approximate-action-increase}, again using \eqref{eq:energy-2}, says that
\begin{equation} \label{eq:approximate-action-increase-2}
\begin{aligned}
& x_+ \text{ has action approximately $k_+$} \Longrightarrow A(x_+) > k_+ - \epsilon_+ \\
& \qquad \Longrightarrow A(x_-) > (k_+ - \epsilon_+) - (u \cdot D) - \epsilon
\geq 
k_+ - (u \cdot D) - 1 + \epsilon_-
\\ & \qquad 
\Longrightarrow x_- \text{ has action approximately $k_-$, for some $k_- \geq k_+ - (u \cdot D)$.}
\end{aligned}
\end{equation}

\begin{lemma} \label{th:satisfy-epsilon-bound}
(i) For (any slope $\sigma$ and) any $\epsilon>0$, there are $\epsilon$-bounded Hamiltonians, for which the one-periodic orbits lying outside $D$ are nondegenerate.

(ii) In (i) one can additionally achieve that: the discs $y$ from \eqref{eq:area-bound} have $y \cdot D \in [-\lfloor \sigma \rfloor,0]$; and the one-periodic orbits lying outside $D$ have $\mathrm{deg}(x) \in \{0,\dots,2n-1\}$.

(iii) For (any slopes, any $\epsilon_{\pm}$-bounded $\bar{H}_{\pm}$, and any) $\epsilon$ as in \eqref{eq:continuation-epsilon}, there is an $\epsilon$-bounded continuation Hamiltonian.
\end{lemma}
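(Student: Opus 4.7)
My plan is to construct these Hamiltonians explicitly: for \emph{(i)} and \emph{(ii)}, use a radial profile near $D$ together with a small Morse function outside; for \emph{(iii)}, use a straight-line interpolation. Fix a tubular neighborhood $U$ of $D$ on which the moment map $h$ is defined (with $h|_D = 0$) and choose a smooth profile $f\colon \bR_{\geq 0}\to\bR$ with $f'(0)=-\sigma$, with $f'$ identically zero outside $[0, h_{\max}]$, and with $f'$ strictly increasing on $[0,h_{\max}]$ from $-\sigma$ to $0$. By choosing $U$ thin enough, we can arrange $|f| < \epsilon/3$ throughout and $n\cdot\mathrm{area}(\text{disc of radius } r_n) < \epsilon/2$ for every $n \in \{1,\ldots,\lfloor\sigma\rfloor\}$, where $r_n$ is the radius at which $f'(h) = -n$. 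Set $\bar H = f \circ h$ on $U$, extend by the constant $f(h_{\max})$ outside, and add a time-dependent Morse perturbation $P_t$ supported in $M \setminus U$ with $\|P_t\|_{C^0} < \epsilon/6$, chosen to make the orbits outside $D$ nondegenerate.

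The orbits outside $D$ split into two types. \emph{Constant} orbits at critical points of $P_t$ in $M \setminus U$ are filled by $y = \{\mathrm{pt}\}$, giving $y \cdot D = 0$, zero area, and $\mathrm{deg}(x)$ equal to the Morse index; since $\bar H$ attains its maximum on $D$, one can choose $P_t$ without interior maxima (e.g., a ``height'' function peaked on $\partial U$), yielding $\mathrm{deg}(x) \leq 2n-1$. \emph{Winding} orbits in $U$ at the radii $r_n$ sit in Morse-Bott families, each isomorphic to an $S^1$-bundle over $D$ of dimension $2n-1$; a further time-dependent perturbation near these families produces finitely many nondegenerate orbits. Filling by a disc in the normal fiber yields $y \cdot D = -n \in [-\lfloor\sigma\rfloor, -1]$ and $|\int_S y^* \omega_M| < \epsilon/2$ by our choice of $h_{\max}$. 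A standard Conley-Zehnder computation with this trivialization places the perturbed degrees in $\{0, 1, \ldots, 2n-1\}$, completing \emph{(i)} and \emph{(ii)}.

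For \emph{(iii)}, take
\begin{equation*}
H_{s,t} = \bar H_{-,t} + \chi(s) \bigl(\bar H_{+,t} - \bar H_{-,t}\bigr),
\end{equation*}
with $\chi\colon \bR \to [0,1]$ smooth, monotone, equal to $0$ for $s \ll 0$ and to $1$ for $s \gg 0$. Then $H$ respects $D$ since both $\bar H_\pm$ do, it has the correct asymptotics, and
\begin{equation*}
\int_{-\infty}^{\infty} \max_{x,t} \partial_s H_{s,t}(x) \, ds \;=\; \max_{x,t}\bigl(\bar H_{+,t}(x) - \bar H_{-,t}(x)\bigr) \;<\; \tfrac{1}{2}(\epsilon_+ + \epsilon_-) \;\leq\; \epsilon,
\end{equation*}
using $|\bar H_\pm| < \epsilon_\pm/2$ strictly.

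The subtlest step is the degree bound in \emph{(ii)}: showing $\mathrm{deg}(x) \leq 2n-1$ for orbits obtained by perturbing the Morse-Bott families requires a careful Conley-Zehnder calculation in the fiber-disc trivialization, using crucially that each Morse-Bott family has dimension $2n-1$ (one less than $\dim M$). Everything else in the proof reduces to suitable scaling choices (of the radial profile, the tubular neighborhood, and the perturbations).
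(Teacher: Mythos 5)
Your overall strategy matches the paper's for all three parts: a radial profile near $D$ plus a small perturbation for (i)--(ii), and a monotone straight-line interpolation for (iii), with the same bookkeeping for actions, intersection numbers, and Conley--Zehnder indices. Part (iii) is correct as stated (the key point, that $\int \chi'(s)\,ds = 1$ and the max is achieved so the strict bound $|\bar H_\pm|<\epsilon_\pm/2$ survives, is handled correctly).

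There is, however, a genuine error in the profile used for (i)--(ii). You take $f'(0)=-\sigma$ with $f'$ \emph{strictly increasing} on $[0,h_{\max}]$, so $f'(h)>-\sigma$ for every $h>0$. But the definition of ``has slope $\sigma$'' in Section \ref{subsec:basics} requires $H+\sigma h$ to be constant on a \emph{neighbourhood} of $D$, i.e.\ $f'(h)=-\sigma$ identically for small $h$, not just at $h=0$. The paper's profile $\psi$ correspondingly has $\psi'(r)=-\sigma$ for all sufficiently small $r$, and is then only required to be nondecreasing, with strict convexity imposed precisely at the levels where $-\psi'$ is a positive integer (so the Morse--Bott families at those levels are transversally nondegenerate). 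The slope condition is not cosmetic: it is what makes the rotation trick in the proof of Lemma \ref{th:winding}(ii)--(iii) work (twisting by $\rho_{\sigma t}$ turns the Floer solution into a genuinely pseudo-holomorphic map near $D$ for $s\ll 0$), and that winding-number estimate drives the compactness arguments throughout Section \ref{sec:sh}. As written, your $\bar H$ simply does not belong to the class of Hamiltonians the rest of the paper uses. The fix is easy --- require $f'\equiv -\sigma$ on an initial interval $[0,h_0]$ and only then let it increase to $0$ --- but it does need to be made.

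A smaller stylistic point: you first say the perturbation $P_t$ supported in $M\setminus U$ makes the orbits outside $D$ nondegenerate, and only later add that the winding orbits in $U$ require a further perturbation near the Morse--Bott families. The paper subsumes both steps under a single ``generic time-dependent perturbation within the class of Hamiltonians of slope $\sigma$,'' which is cleaner and also makes clear that the perturbation near $D$ must itself respect $D$ and preserve the slope.
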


\begin{proof}
(i) It is a standard fact that (with our normalization) an orbit of the $S^1$-action lying in a given level set of $h$ bounds a disc whose symplectic area is that value of $h$. Fix some function $\psi$ such that:
\begin{equation} \label{eq:slope-function}
\left\{
\begin{aligned}
& \psi(0) < \epsilon/2, \\
& \psi'(r) = -\sigma \text{ for sufficiently small $r$}, \\
& \psi''(r) \geq 0 \text{ everywhere, and $>0$ at all points where $-\psi'(r)$ is a positive integer},  \\
& \psi(r) = 0 \text{ for $r \geq \epsilon/2\sigma$.}
\end{aligned}
\right.
\end{equation}
Set $H = \psi(\mu)$, extended by zero away from $\{h \leq \epsilon/2\sigma\}$, which clearly satisfies \eqref{eq:h-bound}. The one-periodic orbits of $H$ lying outside $D$ are: constant orbits in $h^{-1}(0)$; and for $w \in \{1,\dots,\lfloor \sigma \rfloor\}$, $(-w)$-fold multiples of $S^1$-orbits, lying on the level set where $\psi'(h) = -w$. The latter bound discs of symplectic area $-wh \in (-w \epsilon/2\sigma,0)$, hence satisfy \eqref{eq:area-bound}. A generic time-dependent perturbation, within the class of Hamiltonians with slope $\sigma$, makes the one-periodic orbits nondegenerate, without losing any of the desired properties.

(ii) For an $(-w)$-fold multiple of an $S^1$-orbit lying on a given level set of $h$, the bounding disc of symplectic area $-kh$ has intersection number $-w$ with $D$. The rest is standard Morse-Bott perturbation theory.

(iii) At every point $(t,x)$, the values of $\bar{H}_{\pm}$ differ by less than $(\epsilon_+ + \epsilon_-)/2$. So one can just interpolate $s$-dependently between the two. 
%
%
\end{proof}

Let's first consider only Floer trajectories disjoint from $D$. Then, the special case $u \cdot D = 0$ of \eqref{eq:approximate-action-increase} says that for any $\epsilon$-bounded Hamiltonian, the Floer differential ($d_0$ in the notation from Section \ref{subsec:sh}) preserves the decreasing filtration
\begin{equation} \label{eq:k-filtration}
F^{\geq K} \mathit{CF}(\bar{H}) = \{\text{subspace generated by $x$ with action approximately $k \geq K$}\}, \;\; K \in \bZ.
\end{equation}
Similarly, if $H$ is $\epsilon$-bounded, the resulting continuation map $d_0^\dag$ is filtered. 

\begin{lemma} \label{th:quarter}
(i) Consider filtered Floer chain complexes defined using $(1/3)$-bounded Hamiltonians, for some $\epsilon \leq 1/3$. If we have two such complexes, with slopes $\lfloor \sigma_- \rfloor \geq \lfloor \sigma_+ \rfloor$, one can define a filtered continuation map relating them, in a way which is unique up to filtered chain homotopy.

(ii) If we use $(1/4)$-bounded Hamiltonians, the class of continuation maps from (i) is closed under composition, again up to filtered chain homotopy.
\end{lemma}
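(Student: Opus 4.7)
For existence, I would invoke Lemma \ref{th:satisfy-epsilon-bound}(iii) with $\epsilon_- = \epsilon_+ = 1/3$. Note that the interval \eqref{eq:continuation-epsilon} collapses to the single value $\epsilon = 1/3$, so an $\epsilon$-bounded continuation Hamiltonian exists (after generic perturbation). The resulting continuation map $d_0^{\dagger}$ counts only Floer trajectories in $M \setminus D$, by Example \ref{th:gromov-limit-2}; on such solutions $u \cdot D = 0$, so the estimate \eqref{eq:approximate-action-increase-2} reads $k_- \geq k_+$, which is exactly filtration preservation of \eqref{eq:k-filtration}.

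For uniqueness up to filtered chain homotopy, take two such continuation Hamiltonians $H^0, H^1$ (with their almost complex structures) and form the convex interpolation $H^r := (1-r)H^0 + r H^1$ for $r \in [0,1]$. Since $\max_x \partial_s(\cdot)$ is subadditive under convex combinations, the condition \eqref{eq:almost-monotone} is preserved, so each $H^r$ remains $(1/3)$-bounded. After generic perturbation within this class, counting isolated solutions of the parametrized continuation equation defines a chain homotopy $K$ with $dK + Kd = d_0^{\dagger}(H^1) - d_0^{\dagger}(H^0)$. Applying \eqref{eq:approximate-action-increase-2} slice by slice at each $r$ shows that $K$ itself is filtered.

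\textbf{Plan for part (ii).} With $(1/4)$-bounded endpoint Hamiltonians $\bar{H}^-, \bar{H}^0, \bar{H}^+$, Definition \ref{th:epsilon-continuation} permits continuation Hamiltonians with $\epsilon \in [1/4, 1/2]$. The composition $d_0^{\dagger}(H^{02}) \circ d_0^{\dagger}(H^{01})$ of two such continuation maps is filtered by applying \eqref{eq:approximate-action-increase-2} twice: an intermediate orbit of $\bar{H}^0$ has approximate action $k_0 \geq k_+$, hence $k_- \geq k_0 \geq k_+$. To identify this composition, up to filtered homotopy, with a single continuation map, I would use the stretched family $H^R_{s,t}$ obtained by inserting a segment of length $2R$ (on which the Hamiltonian equals $\bar{H}^0_t$ and all $s$-derivatives vanish) between the given continuations $H^{01}, H^{02}$. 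The decisive observation is that
\[
\int \max_x \partial_s H^R_{s,t}\, ds \;=\; \int \max_x \partial_s H^{01}_{s,t}\, ds \;+\; \int \max_x \partial_s H^{02}_{s,t}\, ds \;<\; \tfrac{1}{4} + \tfrac{1}{4} \;=\; \tfrac{1}{2},
\]
uniformly in $R$, so every $H^R$ is $\epsilon$-bounded with $\epsilon < 1/2 = 1 - \epsilon_- - \epsilon_+$. For $R \to \infty$, standard Gromov compactness identifies $d_0^{\dagger}(H^R)$ with the composition; for $R = 0$, $d_0^{\dagger}(H^0)$ is a single filtered continuation map. Counting isolated solutions of the parametrized continuation equation over $R \in [0,\infty]$ gives a chain homotopy between $d_0^{\dagger}(H^0)$ and the composition, filtered by the same slice-by-slice argument. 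Combining with the uniqueness from part (i), applied in the wider class $\epsilon \leq 1/2$, completes the argument.

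\textbf{Main obstacle.} All the geometric input (Gromov compactness restricted to $M \setminus D$, standard chain-homotopy construction, pregluing) is routine. The substantive issue is bookkeeping: ensuring that every intermediate Hamiltonian in the interpolating families remains $\epsilon$-bounded with $\epsilon \leq 1 - \epsilon_- - \epsilon_+$, so that \eqref{eq:approximate-action-increase-2} continues to force $k_- \geq k_+$ at every slice. In part (i), this hinges on convexity of the condition \eqref{eq:almost-monotone}; in part (ii), on its additivity across neck insertions, with the tighter $(1/4)$-boundedness being exactly what makes the sum $\tfrac{1}{4} + \tfrac{1}{4}$ fit inside the allowed range $[\tfrac{1}{4}, \tfrac{1}{2}]$ for a single continuation.
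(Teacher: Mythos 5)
Your argument is correct and follows essentially the same route as the paper: part (i) via existence from Lemma \ref{th:satisfy-epsilon-bound}(iii) plus a parametrized (convex-interpolation) family of continuation data, and part (ii) via neck-stretching/gluing together with the observation that $\tfrac14+\tfrac14 = \tfrac12$ still lies in the interval \eqref{eq:continuation-epsilon}. Your write-up merely fills in details the paper leaves implicit (the subadditivity of \eqref{eq:almost-monotone} under convex combinations, the explicit stretched family $H^R$); one small imprecision is the claim that the interval \eqref{eq:continuation-epsilon} ``collapses to a single point,'' which holds only in the extremal case $\epsilon_\pm=1/3$, but this does not affect the argument since $\epsilon=1/3$ is always admissible when $\epsilon_\pm\leq 1/3$.
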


\begin{proof}
(i) We use $(1/3)$-bounded continuation map equations, which exist by Lemma \ref{th:satisfy-epsilon-bound}; the same idea, applied in a parametrized way, yields a filtered chain homotopy between any two such continuation maps.

(ii) Given two such Hamiltonians, we first use $(1/4)$-bounded continuation map equations to relate them. Now suppose we are given three Hamiltonians $\bar{H}_-$, $\bar{H}_0$, $\bar{H}_+$ with slopes $\lfloor \sigma_- \rfloor \geq \lfloor \sigma_0 \rfloor \geq \lfloor \sigma_+ \rfloor$. Use $(1/4)$-bounded continuation map equations to define maps 
\begin{equation} \label{eq:filtered-composition}
\mathit{CF}(\bar{H}_+) \longrightarrow \mathit{CF}(\bar{H}_0) \longrightarrow \mathit{CF}(\bar{H}_-)
\end{equation}
using $(1/4)$-bounded Hamiltonians. Gluing those together yields a $(1/2)$-bounded continuation Hamiltonian which equals $\bar{H}_{\pm}$ at the ends. This still falls into the interval \eqref{eq:continuation-epsilon}, hence the composition of the two maps \eqref{eq:filtered-composition} is filtered chain homotopic to the continuation map $\mathit{CF}(\bar{H}_+) \rightarrow \mathit{CF}(\bar{H}_-)$ obtained directly from a $(1/4)$-bounded Hamiltonian.
\end{proof}

In particular, up to filtered chain homotopy equivalence, the Floer complex $\mathit{CF}(\bar{H})$ defined using a $(1/4)$-bounded Hamiltonian depends only on $\lfloor \sigma \rfloor$.

\begin{lemma} \label{th:parts-of-the-filtration}
Considered filtered Floer complexes, and continuation maps, as in Lemma \ref{th:quarter}.

(i) $F^{\geq K}\mathit{CF}(\bar{H})$ is contractible, for every $K>0$.

(ii) the inclusion $F^{\geq K}\mathit{CF}(\bar{H}) \hookrightarrow \mathit{CF}(\bar{H})$ is a chain homotopy equivalence, for every $K \leq -\lfloor \sigma \rfloor$.

(ii) The filtered continuation map restricts to a chain homotopy equivalence $F^{\geq K}\mathit{CF}(\bar{H}_+) \longrightarrow F^{\geq K}\mathit{CF}(\bar{H}_-)$, for every $K \geq -\lfloor \sigma_+ \rfloor$.
\end{lemma}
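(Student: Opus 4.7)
The plan is to reduce all three statements to a concrete, $S^1$-invariant model via the filtered-chain-homotopy uniqueness of Lemma~\ref{th:quarter}, so that it suffices to verify (i)--(iii) for one convenient choice of Hamiltonian in each slope class and one convenient choice of filtered continuation map. I would work throughout with $\bar H_\sigma = \psi_\sigma(h)$ (time-dependently perturbed to achieve nondegeneracy) as constructed in the proof of Lemma~\ref{th:satisfy-epsilon-bound}(ii). For this model every one-periodic orbit outside $D$ is an $(-w)$-fold cover of an $S^1$-orbit of $h$ for some $w\in\{1,\ldots,\lfloor\sigma\rfloor\}$, living near the level set $\{-\psi_\sigma'=w\}$ with approximate action $-w$.

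Parts (i) and (ii) are then immediate: for this model, $F^{\geq K}\mathit{CF}(\bar H_\sigma)$ is literally zero when $K>0$ (no orbit has action in that range), and literally equal to $\mathit{CF}(\bar H_\sigma)$ when $K\leq-\lfloor\sigma\rfloor$ (every orbit is above the cutoff), so both assertions hold tautologically in the model.

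For (iii) I set $w_0 = -K\in\{0,\ldots,\lfloor\sigma_+\rfloor\}$ and arrange $\psi_+$ and $\psi_-$, together with their time-dependent perturbations, to coincide on the ``outer region'' $\{h\,:\,-\psi_+'(h)\leq w_0\}$. This region contains precisely the one-periodic orbits of winding number $\leq w_0$, which are the generators of both $F^{\geq K}\mathit{CF}(\bar H_+)$ and $F^{\geq K}\mathit{CF}(\bar H_-)$; they are thereby canonically identified as the same set of loops in $M$. I take the continuation Hamiltonian $H_s = (1-\lambda(s))\psi_+(h) + \lambda(s)\psi_-(h)$ for a monotone cut-off $\lambda$ going from $1$ at $s=-\infty$ to $0$ at $s=+\infty$. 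Since $\psi_-\geq\psi_+$ pointwise and the two functions agree outside a compact set, $\max_{x,t}\partial_sH_s\equiv 0$; hence the continuation equation is $(1/4)$-bounded in the sense of Definition~\ref{th:epsilon-continuation}, and is $s$-independent on the outer region.

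To conclude, I would refine the action filtration on $F^{\geq K}\mathit{CF}(\bar H_\pm)$ to the finite filtration indexed by individual approximate-action values $k\in\{K,\ldots,0\}$; by \eqref{eq:approximate-action-increase-2} the continuation map respects this refinement. On the $k$-th associated graded piece, only continuation solutions with matching limits $k_+=k_-=k$ contribute; these have total energy bounded by $2\epsilon$ via \eqref{eq:energy}, so a Morse--Bott/compactness analysis near the $S^1$-orbit family on $\{-\psi_+'=-k\}\subset$ outer region, where the continuation datum is genuinely $s$-independent, forces each isolated such solution to be a trivial cylinder. The induced map on associated graded pieces is therefore the identity, and the bounded filtration spectral sequence upgrades this to a filtered chain homotopy equivalence $F^{\geq K}\mathit{CF}(\bar H_+)\simeq F^{\geq K}\mathit{CF}(\bar H_-)$, as required. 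The main obstacle will be justifying the identification with trivial cylinders: one must rule out low-energy ``excursions'' into the inner region where $H_s$ becomes $s$-dependent, which will require combining the $L^2$-smallness of $\partial_s u$ coming from $E(u) \leq 2\epsilon$ with the Gromov-trick control developed in Section~\ref{subsec:basics}.
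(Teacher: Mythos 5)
Parts (i) and (ii) are exactly the paper's argument: reduce to the model Hamiltonian from Lemma~\ref{th:satisfy-epsilon-bound}(ii) via the uniqueness-up-to-filtered-homotopy from Lemma~\ref{th:quarter}, and observe the statements hold tautologically in the model.

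For (iii), you are on the right track: you reduce to a concrete model, arrange $\psi_-\geq\psi_+$ agreeing outside the inner region, and take a monotone homotopy so that $\partial_s H_s\leq 0$. However, your proposed conclusion has a genuine gap. You pass to the coarse approximate-action graded pieces, observe only $E(u)\leq 2\epsilon$, and then want to invoke a ``Morse--Bott/compactness analysis'' to force trivial cylinders, while yourself flagging the ``low-energy excursions into the inner region'' as the main obstacle. That concern is real, and the analysis you sketch is neither carried out nor needed. The efficient observation, which the paper uses, is sharper: since $u\cdot D=0$ and $\partial_s H_s\leq 0$, the energy identity \eqref{eq:energy} gives $E(u)\leq A(x_-)-A(x_+)$ \emph{exactly}. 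Ordering the generators of $F^{\geq K}\mathit{CF}(\bar H_\pm)$ by exact action (not just approximate action), the continuation map is therefore upper-triangular; and on the diagonal (same orbit, $A(x_-)=A(x_+)$) one gets $E(u)=0$, i.e.\ $s$-independent solutions, which are regular and contribute $1$. Hence the map is an isomorphism of chain complexes on $F^{\geq -\lfloor\sigma_+\rfloor}$, and being filtered with filtered inverse, it is an isomorphism on each $F^{\geq K}$ with $K\geq -\lfloor\sigma_+\rfloor$. In short: replace the Morse--Bott compactness step by the pointwise monotonicity $\partial_s H_s \leq 0$ fed into \eqref{eq:energy}; the $2\epsilon$ bound and the excursion worry both evaporate.
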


\begin{proof}
(i) By Lemma \ref{th:satisfy-epsilon-bound}(ii), there is a particular choice of Hamiltonian such that this subcomplex is zero. By Lemma \ref{th:quarter}, it is therefore a chain homotopy equivalence for all Hamiltonians.

(ii) The argument is the same as in (i), except that Lemma \ref{th:satisfy-epsilon-bound}(ii) is now used to show that for some choice of Hamiltonian, the inclusion is an isomorphism.

(iii) As before, Lemma \ref{th:quarter} tells us that if this true for some choice of $\bar{H}_{\pm}$, then it is true for all with the same values of $\lfloor \sigma_{\pm} \rfloor$. The statement is trivial if those values are the same, so we may assume $\lfloor \sigma_- \rfloor > \lfloor \sigma_+ \rfloor$.
The rest is a modification of the proof of Lemma \ref{th:satisfy-epsilon-bound}(ii). Start with the slope $\sigma_-$ and fix a function $\psi_-$ as in \eqref{eq:slope-function}, with the added property that there $\psi'_-(r) = -\sigma_+$ in some interval around a small value $r = r_0$. Consider the modified function $\psi_+$ which satisfies
\begin{equation}
\left\{
\begin{aligned}
& \psi_+(r) = \psi_-(r) \text{ for $r \geq r_0$,} \\
& \psi_+'(r) = -\sigma_+ \text{ for $r \leq r_0$.}
\end{aligned}
\right.
\end{equation}
Starting with those, one can define $\epsilon$-bounded Hamiltonians $\bar{H}_{\pm}$ with slopes $\sigma_{\pm}$, such that:
\begin{itemize} \itemsep.5em
\item $\bar{H}_- \geq \bar{H}_+$ everywhere, with equality outside the region $\{\mu < r_0\}$.
\item The one-periodic orbits of $\bar{H}_+$ have $y \cdot D \in [-\lfloor \sigma_+ \rfloor,0]$.
\item The one-periodic orbits of $\bar{H}_-$ lying in $\{\mu < r_0\}$ have $y \cdot D \in [-\lfloor \sigma_- \rfloor, -\lceil \sigma_+ \rceil]$.
\end{itemize}
Because of the first property, one can find a continuation map Hamiltonian $H$ with $\partial_s H \leq 0$. By \eqref{eq:energy} any solution $u$ of the resulting continuation map equation, which remains outside $D$, must satisfy $A(x_-) \geq A(x_+)$, with equality iff it is $s$-independent. Those $s$-independent solutions are regular, and imply that the induced map 
\begin{equation}
F^{\geq -\lfloor \sigma_+ \rfloor} \mathit{CF}(\bar{H}_+) \longrightarrow
F^{\geq -\lfloor \sigma_+ \rfloor} \mathit{CF}(\bar{H}_-)
\end{equation}
is an isomorphism. Since that map is filtered, the result for all $K \geq -\lfloor \sigma_+ \rfloor$ follows.
\end{proof}

%
%

For the version of Floer complexes with an extra variable $q$, we extend the notion of action by
\begin{equation} \label{eq:q-action}
A(x q^j) = A(x) + j.
\end{equation}

\begin{lemma} \label{th:filtered-q-telescope}
In the deformed telescope construction $C_q$, one can make the following choices:
\begin{itemize} \itemsep.5em
\item
For each $w \geq 0$, choose $\bar{H}_w$ to be $2^{-w}/6$-bounded. Additionally, it should satisfy the properties $y \cdot D \leq 0$ and $\mathrm{deg}(x) \geq 0$ from Lemma \ref{th:satisfy-epsilon-bound}(ii).

\item
Choose all continuation map Hamiltonians relating $\bar{H}_{w_-}$ and $\bar{H}_{w_+}$, for $w_- > w_+$, to be $(2^{-w_+} - 2^{-w_-})/3$-bounded.
\end{itemize}
Then the approximate action filtration $F^{\geq K} C_q$, defined according to \eqref{eq:q-action}, is compatible with the differential; it is exhaustive, and bounded below in each degree.
\end{lemma}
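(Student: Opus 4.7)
The plan is to verify the three claims—compatibility with the differential, exhaustiveness, and boundedness below in each degree—separately, with the compatibility being the main technical content. I will decompose $d_{C_q}$ into its constituent pieces: the Floer differentials $d_0$ on each $\mathit{CF}(w)[[q]]$ and $\eta\mathit{CF}(w)[[q]]$, the identity maps $-\mathrm{id}: \eta\mathit{CF}(w)[[q]]\to \mathit{CF}(w)[[q]]$, and the $q$-deformed continuation-type maps $q^m d_m$ and $q^m d_m^\dag$ (for $m\geq 1$ in the first case, $m\geq 0$ in the second). The Floer differentials preserve the approximate action filtration by the $u\cdot D=0$ special case of \eqref{eq:approximate-action-increase}, while the identity maps do so trivially.

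The crux is to show that $q^m d_m$ and $q^m d_m^\dag$ are filtered. For a solution $u$ contributing to $d_m: \mathit{CF}(w)\to\mathit{CF}(w+m)$, one has $u\cdot D=m$, and the governing continuation Hamiltonian is $\epsilon$-bounded with $\epsilon=(2^{-w}-2^{-(w+m)})/3$, interpolating between $\bar H_w$ and $\bar H_{w+m}$ (which are respectively $2^{-w}/6$- and $2^{-(w+m)}/6$-bounded). I would first confirm that this $\epsilon$ lies in the admissible range of Definition \ref{th:epsilon-continuation}; the inequalities $\epsilon_++\epsilon_-\leq 2/3$ and $(\epsilon_++\epsilon_-)/2\leq \epsilon \leq 1-\epsilon_+-\epsilon_-$ hold comfortably given the geometric decay of the bounds. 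Then the $m$-intersection generalization of \eqref{eq:approximate-action-increase-2}, applied to limits $x_\pm$ of approximate actions $k_\pm$, yields
\begin{equation*}
k_- \;>\; k_+ - m - \Bigl(\tfrac{2^{-w}}{6}+\tfrac{2^{-(w+m)}}{6}+\tfrac{2^{-w}-2^{-(w+m)}}{3}\Bigr) \;=\; k_+ - m - \Bigl(\tfrac{2^{-w}}{2}-\tfrac{2^{-(w+m)}}{6}\Bigr),
\end{equation*}
with the parenthesized quantity strictly less than $1/2$. Since $k_\pm\in\bZ$, this forces $k_-\geq k_+-m$; combined with \eqref{eq:q-action} (which says $q^m$ raises the approximate action by $m$), it shows that $q^m d_m$ preserves the filtration. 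The argument for $q^m d_m^\dag: \eta\mathit{CF}(w)[[q]]\to\mathit{CF}(w+m+1)[[q]]$ is identical, with $w+m$ replaced by $w+m+1$ throughout.

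Exhaustiveness is immediate, since every generator of $C_q$ has a well-defined approximate action. For boundedness below in each degree, Lemma \ref{th:satisfy-epsilon-bound}(ii) guarantees $y\cdot D\leq 0$ (so every one-periodic orbit has approximate action $k\leq 0$) and $\mathrm{deg}(x)\geq 0$. A generator of the form $q^jx$ or $\eta q^jx$ in total degree $d$ therefore has $j\leq (d+1)/2$, so its approximate action $k+j$ is at most $(d+1)/2$; hence $F^{\geq K}C_q^d=0$ whenever $K>(d+1)/2$. The main (indeed, only substantive) obstacle is the arithmetic bookkeeping in the compatibility step, where the choices $2^{-w}/6$ for the Hamiltonian bounds and $(2^{-w_+}-2^{-w_-})/3$ for the continuation bounds are engineered precisely so that the total action loss falls strictly below $1$, permitting the integer rounding that yields $k_-\geq k_+-m$.
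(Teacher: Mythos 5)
Your proposal is correct and follows essentially the same route as the paper: exhaustiveness and boundedness below come from the same degree and sign constraints, and filtration-compatibility is precisely the integer-rounding argument built into \eqref{eq:approximate-action-increase-2}. The one point the paper attends to that you only wave at is the \emph{existence} and \emph{gluing-consistency} of the chosen continuation data: besides checking that $(2^{-w_+}-2^{-w_-})/3$ lies in the interval \eqref{eq:continuation-epsilon}, one must note that the bounds add under composition, $(2^{-w_+}-2^{-w_0})/3 + (2^{-w_0}-2^{-w_-})/3 = (2^{-w_+}-2^{-w_-})/3$, so that the families over $\bar\frakD_m$ and $\bar\frakD_m^\dag$ can be chosen consistently on their boundary strata where cylinders are glued.
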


Here, exhaustive means any element of $C_q$ belongs to one of the subspaces of the filtration; and (awkwardly, because our filtrations are decreasing) bounded below means that in each degree, $F^{\geq K} C_q = 0$ for some $K$ (where the specific $K$ depends on the degree). From an algebraic point of view, having a bounded below filtration is slightly stronger than necessary (completeness would be sufficient in order standard filtration arguments), but it comes for free in our context.

\begin{proof}
First, we have to check that continuation map Hamiltonians with the required property exists, meaning that the bound from \eqref{eq:continuation-epsilon} is satisfied:
\begin{equation}
(2^{-w_+} + 2^{-w_-})/12 \leq 2^{-w_+}/6 \leq (2^{-w_+}-2^{-w_-})/3
\leq 1/3 \leq 1-(2^{w_+} + 2^{w_-})/6.
\end{equation}
We also have to ensure that the continuation map Hamiltonians can be picked compatibly with composition. This follows from the additivity of our chosen bound, $(2^{-w_+} - 2^{-w_0})/3 + (2^{-w_0} - 2^{w_-})/3 = (2^{w_+} - 2^{w_-})/3$.

Explicitly, an element of $F^{\geq K} C_q$ is of the form
\begin{equation} \label{eq:filtered-q-element}
\sum_{j=0}^\infty \Big(\sum_{\substack{\text{finitely}\\\text{many }w}} x_{w,j} + \eta x_{w,j}^\dag \Big) q^j \;\;\text{ where }  x_{w,j}, x_{w,j}^\dag
\in F^{\geq K-j} \mathit{CF}(w).
\end{equation}
Compatibility of the differential with the filtration follows from \eqref{eq:approximate-action-increase-2}. Now we'll use the additional assumptions from Lemma \ref{th:satisfy-epsilon-bound}. A priori, elements of $C_q$ are infinite sums in $q$; but because the degrees of one-periodic orbits are nonnegative, and $q$ has degree $2$, any such sum (describing an element in some given degree) is actually finite, which means that it can involve only finitely many one-periodic orbits. If one takes $K$ sufficiently negative, then all those orbits will satisfy the action bounds in \eqref{eq:filtered-q-element}, which shows that the filtration is exhaustive. On the other hand, the condition $y \cdot D \geq 0$ ensures that $F^{\geq 1}\mathit{CF}(w) = 0$, which means that the nonzero entries in an element of $F^{\geq 2j+1} C_q$ must involve powers $q^j$ or higher. By the same degree argument as before, it follows that in any given degree, the groups $F^{\geq K}C_q$ become zero for $K \gg 0$.
\end{proof}

\begin{remark}
It is instructive to look at the situation where $c_1(M) = m[D]$ for some $m \geq 2$ (we still take $[\omega_M] = [D]$, so that the discussion of action remains the same as before). To have a $\bZ$-grading, one needs to give $q$ degree $2m$. Suppose that we use $\epsilon$-bounded Hamiltonians produced by the Morsification process from the 
proof of Lemma \ref{th:satisfy-epsilon-bound}(ii). The orbits with winding number $-km$ around $D$ have action approximately $-km$ and degree in $2km(1-m) + [0,2n-1]$. Hence, the expression $q^{k(m-1)}x$ has approximate action $(-k)$ and degree in $[0,2n-1]$. The definition of $C_q$ allows infinite sums of such expressions with increasing $k$, which means that the action filtration is no longer exhaustive.
Indeed, as already pointed out in \cite{borman-sheridan-varolgunes21}, there can be no convergent spectral sequence \eqref{eq:bsv} in that context, as the example $(M,D) = (\bC P^n, \bC P^{n-1})$ shows. If on the other hand one assumes $[D] = m c_1(M)$ for some $m \geq 2$, exhaustivity still works (modulo introducing rational gradings); this case is considered in  \cite{borman-sheridan-varolgunes21} but we have not pursued it here, for lack of immediate applications.
\end{remark}

\begin{remark}
In Lemma \ref{th:filtered-q-telescope}, we have used bounds that decrease exponentially in $w$, in order to make the choices compatible with composing arbitrarily many continuation map equations. Readers who find that cumbersome can instead opt to use both sides $0 \leq \mathrm{deg}(x) \leq 2n-1$ of the degree bound from Lemma \ref{th:satisfy-epsilon-bound}(ii). This a priori implies that $d_m = 0$ for $m>n$, and $d_m^\dag = 0$ for $m \geq n$, which means we do not have to worry about filtration aspects for those operations. As a consequence, it is then sufficient to work with Hamiltonians that are $\epsilon$-bounded for a single sufficiently small $\epsilon$ (which depends on the dimension $n$, but is independent of $w$).
\end{remark}

%
%

\subsection{The associated graded space}
Throughout the following discussion, we assume that the slopes are chosen so that the following more precise version of \eqref{eq:one-integer} holds:
\begin{equation} \label{eq:in-between}
\sigma_w \in (w,w+1).
\end{equation}
Take $C_q$ with its filtration, as in Lemma \ref{th:filtered-q-telescope}. The associated graded spaces, for $K \in \bZ$, are
\begin{equation} \label{eq:gr-cq}
\mathit{Gr}^K C_q = \prod_{k=0}^{\infty} q^k \Big( \bigoplus_{w=0}^{\infty} \mathit{Gr}^{K-k}\mathit{CF}(w) \oplus \eta\mathit{Gr}^{K-k}\mathit{CF}(w) \Big).
\end{equation}
The differential on $\mathit{Gr}^KC_q$ retains only the lowest energy parts of $d_m$ and $d_m^\dag$: it consists of
\begin{equation} \label{eq:graded-differential}
\begin{aligned}
\mathit{Gr}^{-m} d_m:\; & q^k\mathit{Gr}^{K-k}\mathit{CF}(w) \longrightarrow q^{k+m}\mathit{Gr}^{K-k-m}\mathit{CF}(w+m), \\
-\mathit{Gr}^{-m} d_m:\; & \eta q^k\mathit{Gr}^{K-k}\mathit{CF}(w) \longrightarrow \eta q^{k+m}\mathit{Gr}^{K-k-m}\mathit{CF}(w+m), \\
-\mathit{id}:\; & \eta q^k\mathit{Gr}^{K-k}\mathit{CF}(w) \longrightarrow q^k\mathit{Gr}^{K-k}\mathit{CF}(w), \\
\mathit{Gr}^{-m} d_m^\dag: &\; \eta q^k\mathit{Gr}^{K-k} \mathit{CF}(w) \longrightarrow q^{k+m}\mathit{Gr}^{K-k-m}\mathit{CF}(w+m+1).
\end{aligned}
\end{equation}
As a direct consequence, 
\begin{equation} \label{eq:g-complex}
G^K = \!\! \prod_{k = \mathrm{max}(0,-K)}^{\infty} \!\!\!\! q^{k+K}\mathit{Gr}^{-k} \mathit{CF}(k)
\end{equation}
is a subcomplex of $\mathit{Gr}^K C_q$.

\begin{lemma} \label{th:g-inclusion}
The inclusion $G^K \hookrightarrow \mathit{Gr}^K C_q$ is a quasi-isomorphism.
\end{lemma}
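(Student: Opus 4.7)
My plan is to prove this by comparison of spectral sequences, filtering both $G^K$ and $\mathit{Gr}^K C_q$ by total $q$-power. Let $F^k$ denote the subcomplex of elements whose $q$-expansion starts at $q^k$ or later; since both complexes are direct products over this filtration (rather than direct sums), they are complete and Hausdorff, and the inclusion preserves the filtration. Among the differential components on $\mathit{Gr}^K C_q$, only the $m=0$ terms are $q$-power preserving, so the $E_0$-page at level $k$ for $\mathit{Gr}^K C_q$ is $q^k\,\mathit{Gr}^{K-k} C$, the graded piece of the undeformed telescope, carrying $d_0$, $-\mathit{id}$, and $d_0^\dag$; whereas for $G^K$ it is $q^k\,\mathit{Gr}^{-(k-K)}\mathit{CF}(k-K)$ with only the Floer differential. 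The inclusion on $E_0$ picks out the first non-$\eta$ summand at slope $w = k-K$ inside the telescope.

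The essential step is that this inclusion induces an isomorphism on the $E_1$-page, which I would deduce from Lemma \ref{th:parts-of-the-filtration}(iii). Setting $\ell = k - K$, that Lemma provides filtered chain homotopy equivalences $F^{\geq -\ell}\mathit{CF}(w) \to F^{\geq -\ell}\mathit{CF}(w+1)$ for all $w \geq \ell$; applying the five-lemma to the short exact sequence $0 \to F^{\geq -\ell + 1} \to F^{\geq -\ell} \to \mathit{Gr}^{-\ell} \to 0$ upgrades these to quasi-isomorphisms $\mathit{Gr}^{-\ell}\mathit{CF}(w) \to \mathit{Gr}^{-\ell}\mathit{CF}(w+1)$. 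Since orbits at approximate action $-\ell$ exist only for slope $\geq \ell$, the graded telescope $\mathit{Gr}^{-\ell} C$ is precisely the mapping telescope of this sequence starting at $w = \ell$, and the standard telescope lemma for quasi-isomorphisms then shows that the inclusion of the first summand $\mathit{Gr}^{-\ell}\mathit{CF}(\ell) \hookrightarrow \mathit{Gr}^{-\ell} C$ is a quasi-isomorphism, as required.

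To wrap up, I would invoke strong convergence of both spectral sequences: the lower bound $\mathrm{deg}(x) \geq 0$ ensured by Lemma \ref{th:filtered-q-telescope} forces $0 \leq k \leq d/2$ in any fixed cohomological degree $d$, so each $F^k$ becomes zero in that degree for $k > d/2$, making the filtrations finite and the spectral sequences bounded in every degree. A standard comparison theorem then upgrades the $E_1$-iso to a quasi-isomorphism of total complexes. The main point requiring careful checking is that the chain homotopies produced by Lemma \ref{th:parts-of-the-filtration}(iii) are genuinely filtered, so that the passage to the quotient $\mathit{Gr}^{-\ell}$ is valid; this should follow from the $\epsilon$-bounded continuation data constructions of Section \ref{sec:action}, but is the main technical point to verify explicitly.
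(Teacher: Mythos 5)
Your proposal is correct and follows the same approach as the paper: filter $\mathit{Gr}^K C_q$ by $q$-power, observe the $k$-th associated graded piece is a telescope complex built from $\mathit{Gr}^{K-k}\mathit{CF}(w)$ under the graded continuation maps, invoke Lemma~\ref{th:parts-of-the-filtration}(iii) to see those continuation maps become quasi-isomorphisms once $w \geq k-K$, and conclude that the inclusion of the first relevant term (which is what $G^K \hookrightarrow \mathit{Gr}^K C_q$ induces on associated graded) is a quasi-isomorphism. The paper states the cohomology of the telescope as a direct limit; you reach the same conclusion via the "telescope lemma," and you usefully make explicit the five-lemma step that upgrades Lemma~\ref{th:parts-of-the-filtration}(iii) from $F^{\geq -\ell}$ to $\mathit{Gr}^{-\ell}$. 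Your degree-bound justification of convergence ($\mathrm{deg}(x) \geq 0$ forces $k$ to be bounded in each total degree) is also sound, and somewhat cleaner than simply citing completeness. One small thing worth stating explicitly: for $k<K$, i.e.\ $\ell = k-K < 0$, the pieces $\mathit{Gr}^{-\ell}\mathit{CF}(w)$ vanish for all $w$ (approximate action is always $\leq 0$ with these Hamiltonians), so the associated graded of $\mathit{Gr}^K C_q$ at those levels is zero just like that of $G^K$; this is the case the paper handles via Lemma~\ref{th:parts-of-the-filtration}(i), and your phrase about orbits at approximate action $-\ell$ only covers it implicitly (your "inclusion of the first summand $\mathit{Gr}^{-\ell}\mathit{CF}(\ell)$" literally makes no sense for $\ell<0$, so the degenerate case should be separated out).
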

\begin{figure}
\begin{centering}
\[
\xymatrix{ \mathit{Gr}^{K-k}\mathit{CF}(0) 
\ar@(dr,dl)[]^-{d_0}
&&
\eta \mathit{Gr}^{K-k} \mathit{CF}(0)
\ar[ll]_{-\mathit{id}} 
\ar@(dr,dl)[]^-{-d_0}
\ar[ddll]^-{d_0^\dag}
\\ \\
\mathit{Gr}^{K-k} \mathit{CF}(1)
\ar@(dr,dl)[]^-{d_0}
&&
\eta \mathit{Gr}^{K-k}\mathit{CF}(1) 
\ar[ll]_{-\mathit{id}} 
\ar[ddll]^-{d_0^\dag}
\ar@(dr,dl)[]^-{-d_0}
\\ \\
\mathit{Gr}^{K-k} \mathit{CF}(2) && \dots
}
\]
\caption{\label{fig:filtered-telescope}The associated graded of the $q$-filtration on $\mathit{Gr}^K C_q$, drawn in analogy with Figure \ref{fig:telescope}. Here, $K \in \bZ$ and $k \geq 0$.}
\end{centering}
\end{figure}%

\begin{proof}
Take the (decreasing, bounded above, complete, bounded above) filtration of $\mathit{Gr}^K C_q$ by powers of $q$. The $k$-th associated graded space is the $q^k$ factor in \eqref{eq:gr-cq}, with the differential formed by the $m = 0$ pieces in \eqref{eq:graded-differential} (including the identity map). That graded space is itself a telescope construction (see Figure \ref{fig:filtered-telescope}), with cohomology the limit under continuation maps,
\begin{equation}
 \underrightarrow{\lim}_w\, H(\mathit{Gr}^{K-k} \mathit{CF}(w)).
\end{equation}
For $k < K$, this cohomology is trivial, by Lemma \ref{th:parts-of-the-filtration}(i) and \eqref{eq:in-between}. Similarly, Lemma \ref{th:parts-of-the-filtration}(iii) shows that the continuation map induces a homotopy equivalence $\mathit{Gr}^{K-k} \mathit{CF}(w) \rightarrow \mathit{Gr}^{K-k} \mathit{CF}(w+1)$ for all $w \geq k-K$. Hence, if $k \geq K$, the inclusion of $\mathit{Gr}^{K-k}\mathit{CF}(k-K)$ into the $k$-th associated graded of the $q$-filtration is a quasi-isomorphism. Those are precisely the maps (on the graded of the $q$-filtration) induced by $G^K \hookrightarrow \mathit{Gr}^KC_q$. Therefore, that inclusion is a quasi-isomorphism.
\end{proof}

\begin{lemma} \label{th:nonnegative-k}
For $K \geq 0$, multiplication with $q$ gives a quasi-isomorphism (of degree $2$) $\mathit{Gr}^K C_q \rightarrow \mathit{Gr}^{K+1} C_q$.
\end{lemma}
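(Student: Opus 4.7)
The approach is to use Lemma \ref{th:g-inclusion} to reduce to the much simpler subcomplexes $G^\bullet$, where multiplication by $q$ is visibly an isomorphism. Specifically, consider the square
\[
\xymatrix@C=2em{ G^K \ar[r]^-{q} \ar@{^{(}->}[d] & G^{K+1} \ar@{^{(}->}[d] \\ \mathit{Gr}^K C_q \ar[r]^-{q} & \mathit{Gr}^{K+1} C_q }
\]
whose vertical arrows are quasi-isomorphisms by Lemma \ref{th:g-inclusion}. Once the top arrow is shown to be a chain isomorphism, the bottom arrow is a quasi-isomorphism by the two-out-of-three property.

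For $K \geq 0$ one has $G^K = \prod_{k \geq 0} q^{k+K}\mathit{Gr}^{-k}\mathit{CF}(k)$ and $G^{K+1} = \prod_{k \geq 0} q^{k+K+1}\mathit{Gr}^{-k}\mathit{CF}(k)$, so both product ranges start at $k = 0$ and multiplication by $q$ sends the $k$-th factor of the former bijectively to the $k$-th factor of the latter, simply raising the exponent of $q$ by one. This is where $K \geq 0$ enters: for $K < 0$, the range for $G^K$ would begin above $0$ while that of $G^{K+1}$ still includes $k=0$, so the lowest factor of $G^{K+1}$ would have no preimage, consistent with the conclusion of the lemma failing in that regime.

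To confirm the top arrow is a chain map, inspect the four terms of the differential in \eqref{eq:graded-differential}: only $\mathit{Gr}^{-m} d_m$ both starts and ends on the non-$\eta$ part, and it preserves the defining condition $w = k - K$ of $G^K$ inside $\mathit{Gr}^K C_q$, since it shifts the power of $q$ and the slope index $w$ by the same amount $m$. The induced differential on $G^K$ is therefore $\sum_m \mathit{Gr}^{-m} d_m$, and the analogous formula holds on $G^{K+1}$. Each $\mathit{Gr}^{-m} d_m$ is $q$-linear, so multiplication by $q$ commutes with the differential, completing the verification. The genuine work has already been done in Lemma \ref{th:g-inclusion}; here there is no serious obstacle beyond matching index ranges.
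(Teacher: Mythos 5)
Your argument is correct and follows exactly the same strategy as the paper's one-line proof: multiplication by $q$ visibly gives a chain isomorphism $G^K \to G^{K+1}$ when $K \geq 0$, and the conclusion then follows from the quasi-isomorphism of Lemma \ref{th:g-inclusion} applied to the vertical arrows of your square. (One small aside: your explanation of why $K<0$ fails — that $G^{K+1}$ "still includes $k=0$" — is literally true only for $K=-1$; for more negative $K$ the correct statement is that the starting index of $G^{K+1}$ is one lower than that of $G^K$, so the bottom factor is missed; this does not affect the proof of the stated lemma.)
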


\begin{proof} 
This is clearly what happens with $G^K$, so it follows from Lemma \ref{th:g-inclusion}.
\end{proof}

\subsection{Filtered maps from thimbles\label{subsection:filteredthimbles}}
The analogue of Definition \ref{th:epsilon-continuation} for the thimble is:

\begin{definition} \label{th:epsilon-thimble}
Suppose that $\bar{H}_-$ is $(\epsilon_-)$-bounded. A Hamiltonian $H$ on the thimble, which equals $\bar{H}_-$ for $s \ll 0$, is called $\epsilon$-bounded for 
\begin{equation} \label{eq:thimble-epsilon-bound}
\epsilon \in [\epsilon_-/2, 1-\epsilon_-]
\end{equation}
if it satisfies \eqref{eq:almost-monotone}.
\end{definition}

These exist for any choice of $\epsilon$ in \eqref{eq:thimble-epsilon-bound}, and the limits $x_-$ of any solution of the associated continuation map must satisfy
\begin{equation}
x_- \text{ has action approximately $k_-$, for some $k_- \geq -(u \cdot D)$.}
\end{equation}

\begin{lemma} \label{th:filtered-thimble}
Take the filtered version of $C_q$ defined in Lemma \ref{th:filtered-q-telescope}. 
Correspondingly, when defining the thimble maps from Section \ref{sec:thimbles}, one can make the following choices:
\begin{itemize}
\item For any equation on the thimble which has slope $\sigma_{w}$ at $-\infty$, use a $(1-2^{-w}/3)$-bounded Hamiltonian.
\end{itemize}
Then, the resulting maps satisfy
\begin{align}
\label{eq:filtered-s}
& s_{C_q}: \mathit{CM}(M) \longrightarrow F^{\geq 0} C_q, \\
\label{eq:filtered-t}
& t_{C_q,w}: \mathit{CM}(D) \longrightarrow F^{\geq -w} C_q.
\end{align}
\end{lemma}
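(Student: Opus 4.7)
The plan is to reduce everything to the thimble energy identity \eqref{eq:thimble-energy}, the definition of approximate action, and the observation that the chosen bounds $2^{-w}/6$ for the Hamiltonians $\bar{H}_w$ and $(2^{-w_+}-2^{-w_-})/3$ for the continuation maps are designed so that every total monotonicity integral on a glued curve telescopes to $1 - 2^{-w}/3$.

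First I would verify admissibility of the choice. With $\bar{H}_w$ being $2^{-w}/6$-bounded, Definition \ref{th:epsilon-thimble} requires $\epsilon \in [2^{-w}/12,\, 1 - 2^{-w}/6]$; the value $\epsilon = 1 - 2^{-w}/3$ sits in this interval (with a bit of room to spare). Next I would check compatibility with the compactifications $\bar\frakS_m$ and $\bar\frakT_{w,m}$. On a boundary stratum the thimble is glued to continuation map data from Lemma \ref{th:filtered-q-telescope}, say $\frakD_{m^1} \times \cdots \times \frakD_{m^{R-1}} \times \frakS_{m^R}$ (with slopes running through $\sigma_m, \sigma_{m-m^1}, \dots, \sigma_{m^R}$). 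Summing the integrals of $\max \partial_s H$ one gets
\[
\bigl(1 - 2^{-m^R}/3\bigr) + \sum_{k=1}^{R-1}\tfrac{1}{3}\bigl(2^{-(m^R+\cdots+m^{k+1})} - 2^{-(m^R+\cdots+m^k)}\bigr)
\;=\; 1 - 2^{-m}/3,
\]
by telescoping. The analogous computation for $\bar\frakT_{w,m}$ (with the thimble slope $\sigma_{w+m^R}$ and final telescope value $1 - 2^{-(w+m)}/3$) shows that the choices on boundary strata, inherited from the factor-wise Hamiltonians, are exactly what is needed to extend continuously to a bulk thimble of the prescribed $\epsilon$-bound. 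So a consistent family of $(1-2^{-w}/3)$-bounded thimble data exists.

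With admissibility in hand, the action estimate is immediate. Let $u$ be a solution of the thimble equation counted in either $s_m$ or $t_{w,m}$, with slope $\sigma_w$ at $-\infty$ (so $w = m$ in the $s$-case and $w = w+m$ in the $t$-case). By \eqref{eq:thimble-energy} and the $\epsilon$-bound,
\[
A(x_-) \;\geq\; -(u\cdot D) - \int_{-\infty}^{\infty}\!\!\max \partial_s H_{s,t}\,ds
\;>\; -(u \cdot D) - 1 + 2^{-w}/3.
\]
Since $\bar{H}_w$ is $2^{-w}/6$-bounded, $A(x_-) \in (k_- - 2^{-w}/6,\, k_- + 2^{-w}/6)$ for some $k_- \in \bZ$. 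Combining the two inequalities gives $k_- > -(u \cdot D) - 1 + 2^{-w}/6$, and since $k_-$ is an integer, $k_- \geq -(u\cdot D)$.

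Specializing: for $s_m$ the thimble has $u \cdot D = m$, so $k_- \geq -m$, and the contribution $q^m x_-$ to $s_{C_q}$ has approximate action $k_- + m \geq 0$, proving \eqref{eq:filtered-s}. For $t_{w,m}$ we have $u \cdot D = w + m$ (the intersection number in \eqref{eq:thimble-energy} includes the forced $w$-fold tangency at $+\infty$, as is evident from the proof of Lemma \ref{th:winding}), hence $k_- \geq -(w+m)$ and the contribution $q^m x_-$ to $t_{C_q,w}$ has approximate action $k_- + m \geq -w$, proving \eqref{eq:filtered-t}. The only real subtlety is the telescoping check in the second paragraph, which is why the exponentially decreasing bounds were introduced in Lemma \ref{th:filtered-q-telescope} in the first place; everything else is bookkeeping.
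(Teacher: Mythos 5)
Your proof is correct and follows essentially the same route as the paper: check that $1-2^{-w}/3$ lies in the admissible interval from Definition~\ref{th:epsilon-thimble}, verify compatibility with the continuation-map bounds (the paper does the one-step version $(1-2^{-w_+}/3) + (2^{-w_+}-2^{-w_-})/3 = 1-2^{-w_-}/3$, which you unfold into the $R$-step telescope), derive $k_- \geq -(u\cdot D)$ from \eqref{eq:thimble-energy}, and then insert the powers of $q$. The only difference is that the paper cites the bound $k_- \geq -(u\cdot D)$ as already established in the paragraph before the lemma, whereas you rederive it — a matter of exposition, not substance.
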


\begin{proof}
Recall that the Hamiltonians for $\mathit{CF}(w)$ are $(2^{-w}/6)$-bounded. It is ok to choose $(1-2^{-w}/3)$-bounded Hamiltonians on the thimble, since that clearly lies in \eqref{eq:thimble-epsilon-bound}. Moreover, this choice is compatible with the bounds for the continuation map Hamiltonians, since $(1-2^{-w_+}/3) + (2^{-w_+} - 2^{w_-})/3 = (1-2^{-w_-}/3)$. With these choices
\begin{equation} \label{eq:filtered-s-t}
\begin{aligned}
& s_m: \mathit{CM}(M) \longrightarrow F^{\geq -m} \mathit{CF}(m), \\
& t_{w,m}: \mathit{CM}(D) \longrightarrow F^{\geq -m-w}\mathit{CF}(m+w),
\end{aligned} 
\end{equation}
which after inserting the necessary powers of $q$, implies the result as stated.
\end{proof}

Let's compose the maps \eqref{eq:filtered-s}, \eqref{eq:filtered-t} with projection to the associated graded space of the filtration. The outcome takes values in the subcomplexes \eqref{eq:g-complex}, more precisely
\begin{align}
\label{eq:graded-s}
&
\mathit{Gr}^0 s_{C_q}: \mathit{CM}(M) \longrightarrow G^0 \subset \mathit{Gr}^0 C_q, \\
\label{eq:graded-t}
&
\mathit{Gr}^{-w} t_{C_q,w}: \mathit{CM}(D) \longrightarrow G^{-w} \subset \mathit{Gr}^{-w} C_q.
\end{align}

\begin{theorem} \label{th:key}
The maps \eqref{eq:graded-s} and \eqref{eq:graded-t} are quasi-isomorphisms.
\end{theorem}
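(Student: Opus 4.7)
The strategy is a spectral sequence argument combined with a Morse--Bott degeneration of the Floer complex. For the concrete Hamiltonians $\bar{H}_w = \psi(h)$ from Lemma \ref{th:satisfy-epsilon-bound}, perturbed by a small Morse function compatible with the real blowup of $M$ along $D$ (in the spirit of Section \ref{sec:morse}), I will first establish chain-level identifications
\[
\mathit{Gr}^{-k}\mathit{CF}^*(k) \simeq \begin{cases} \mathit{CM}^*(M) & \text{if } k = 0, \\ \mathit{CM}^*(D) & \text{if } k \geq 1, \end{cases}
\]
under which the leading-order differential $\mathit{Gr}^0 d_0$ becomes the Morse differential. For $k=0$ this is the standard Morse--Bott analysis of a small Hamiltonian; for $k \geq 1$ the winding-$(-k)$ orbits live on a single level set of $h$ and form a Morse--Bott family parametrized by $D$. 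The degree bookkeeping works out so that these identifications are degree-preserving, consistent with $s_0$ and $t_{w,0}$ having degree zero. Moreover, under these identifications the low-energy limits of $s_0$ and $t_{w,0}$ are chain-homotopic to the identities on $\mathit{CM}^*(M)$ and $\mathit{CM}^*(D)$, since the underlying thimbles degenerate to Morse half-lines attached to Morse--Bott orbit families.

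Next, I filter each $G^K$ by powers of $q$ (decreasing). By Lemma \ref{th:filtered-q-telescope} this filtration is complete and bounded below in each total degree, so the associated spectral sequence converges strongly. Using the chain-level identifications above, the $E_1$ page reads
\[
E_1(G^0) = H^*(M) \oplus \bigoplus_{k \geq 1} q^k H^{*-2k}(D), \qquad E_1(G^{-w}) = \bigoplus_{k \geq w} q^{k-w} H^{*-2k}(D),
\]
and the thimble maps $\mathit{Gr}^0 s_{C_q} = \sum_m q^m \mathit{Gr}^{-m} s_m$ and $\mathit{Gr}^{-w} t_{C_q,w} = \sum_m q^m \mathit{Gr}^{-m-w} t_{w,m}$ are $q$-filtered; on $E_1$ they induce the inclusions of $H^*(M)$ (resp.\ $H^*(D)$) into the $k = 0$ (resp.\ $k = w$) summand. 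To conclude, it remains to show that all the extra $H^{*-2k}(D)$ summands die at higher pages, so that $E_\infty(G^0) = H^*(M)$ and $E_\infty(G^{-w}) = H^*(D)$.

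The main obstacle is this cancellation in the spectral sequence. The higher differentials come from the operations $q^m \mathit{Gr}^{-m} d_m$ for $m \geq 1$, which at the Morse--Bott level count pseudoholomorphic cylinders having $m$ intersection points with $D$ (possibly with bubbling through $D$) glued to Morse flow lines on each side. The key geometric input --- and this is where I would substantially draw on the analysis of \cite{ganatra-pomerleano20} --- is that these operations implement a Gysin-type boundary coming from the normal bundle of $D$, and organise the excess $q^k H^{*-2k}(D)$ summands into an acyclic subcomplex at higher pages. Granting this, only $H^*(M)$ (respectively the $k=w$ copy of $H^*(D)$) survives on $E_\infty$, and the Morse-theoretic description of $s_0$ and $t_{w,0}$ ensures the thimble maps realise the surviving isomorphisms; hence $\mathit{Gr}^0 s_{C_q}$ and $\mathit{Gr}^{-w} t_{C_q,w}$ are quasi-isomorphisms.
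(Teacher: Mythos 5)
Your plan starts from a Morse--Bott identification that is incorrect, and this error removes exactly the content that makes the theorem nontrivial. For a Hamiltonian of the form $\psi(h)$, the one-periodic orbits of winding number $-k$ on the level set $\{\psi'(h) = -k\}$ form, as \emph{parametrized} loops (which is what Hamiltonian Floer theory uses), a Morse--Bott family diffeomorphic to that level set --- i.e.\ to the circle bundle $\partial N \to D$, \emph{not} to $D$. Likewise, the winding-zero orbits of a Hamiltonian of slope $<1$ are constants in $M \setminus D$, so $\mathit{Gr}^0\mathit{CF}(0)$ computes $H^*(M\setminus D)$, not $H^*(M)$. The correct $E_1$ page for your $q$-filtration is therefore
\[
E_1(G^0) = H^*(M \setminus D) \oplus \bigoplus_{k \geq 1} q^k H^{*-2k}(\partial N),
\qquad
E_1(G^{-w}) = \bigoplus_{k \geq w} q^{k-w} H^{*-2k}(\partial N),
\]
which is precisely what \cite[Theorem~4.30]{ganatra-pomerleano20} gives (that result identifies the graded Floer groups with cohomology of the circle bundle, not of $D$). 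With your incorrect $E_1$, there would be nothing left for the higher differentials to do, and the theorem would be false; with the correct $E_1$, the entire point of the theorem is that the $q$-differentials realise the Gysin sequence for $\partial N \to D$ and the transfer $H^*(M\setminus D) \leftrightarrow H^*(M)$ via the real blowup.

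Your last paragraph correctly identifies that something of ``Gysin type'' must force the cancellation, but it does not supply a mechanism, and \cite{ganatra-pomerleano20} cannot supply it: that reference only gives the $E_1$ isomorphisms. What is actually needed is new geometry. The paper's proof replaces the spectral-sequence convergence argument by an explicit chain-level comparison: it introduces auxiliary moduli spaces $\AT_{w,m}$ and $\AS_m$ (Section~\ref{sec:proof}), in which thimbles are coupled to Morse half-trajectories on $\partial N$ or on the blowup $N$ via a jet/normal-vector incidence at $+\infty$. These produce chain maps $a_w: \mathit{CM}(\partial N)_q \to G^{-w}$ and $a_0: \mathit{CM}(N)_q \to G^0$ whose $q$-graded pieces are the log PSS maps from \cite{ganatra-pomerleano20}; and the $q$-deformed Morse complexes are shown, by elementary linear algebra (Lemmas~\ref{qdeformedisD} and~\ref{lem:NqdeformedisM}), to be quasi-isomorphic to $\mathit{CM}(D)$ and $\mathit{CM}(M)$ respectively. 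The commutative triangles \eqref{eq:negative-diagram} and \eqref{eq:zero-diagram}, constructed via the bijections of Lemmas~\ref{lem:bijectionATT} and~\ref{lem:bijectionATT2}, then transfer these quasi-isomorphisms to $\mathit{Gr}^{-w}t_{C_q,w}$ and $\mathit{Gr}^{0}s_{C_q}$. Without something like those auxiliary moduli spaces (or an equivalent way to realize the Gysin maps Floer-theoretically), your argument does not close.
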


This is our core result, and its proof will extend over the next two sections (the argument for \eqref{eq:graded-t} is completed Section \ref{subsec:negative-action}, and that for \eqref{eq:graded-s} correspondingly in Section \ref{subsec:zero-action}). For now, we summarize how it leads to the theorem stated at the beginning of the paper.

\begin{proof}[Proof of Theorem \ref{th:main}]
Extend \eqref{eq:filtered-s} $q$-linearly, and combine it with \eqref{eq:filtered-t} to get a chain map
\begin{equation} \label{eq:big-map}
s \oplus \bigoplus_{w=1}^{\infty} t_w:
\mathit{CM}(M)[[q]] \oplus \bigoplus_{w=1}^\infty \mathit{CM}(D) z^w \longrightarrow C_q.
\end{equation}
Here, the symbol $z$ merely serves to label the summand on which we apply $t_w$. This is a filtered map, where the domain carries the filtration (exhaustive, and bounded below in each degree)
\begin{equation} \label{eq:filtered-total-thimble}
F^{\geq K}\Big( \mathit{CM}(M)[[q]] \oplus \bigoplus_{w=1}^\infty \mathit{CM}(D) z^w \Big)
= \begin{cases} q^K \mathit{CM}(M)[[q]] & K \geq 0, \\
\mathit{CM}(M)[[q]] \oplus \bigoplus_{w \leq -K} \mathit{CM}(D) z^w & K<0.
\end{cases}
\end{equation}
Theorem \ref{th:key} for \eqref{eq:graded-s} (combined with Lemma \ref{th:g-inclusion}) shows that the associated graded at $K = 0$  is a quasi-isomorphism, which via Lemma \ref{th:nonnegative-k} generalizes to $K \geq 0$; and the corresponding statement for \eqref{eq:graded-t} yields the same conclusion for $K<0$. By the standard filtration argument, it follows that \eqref{eq:big-map} itself is a quasi-isomorphism.
\end{proof}

\section{Morse theory\label{sec:morse}}
Let $N$ be the real-oriented blow up of $M$ along $D$. This is a compact manifold with boundary, equipped with a canonical map $\pi_N: N \rightarrow M$, which restricts to a diffeomorphism $N \setminus \partial N \cong M \setminus D$ on the interior, and to a circle bundle on the boundary, $\pi_{\partial N}: \partial N \rightarrow D$. 
There are natural operations
\begin{align} 
i_{\partial N}^*: \; & H^*(N) \longrightarrow H^*(\partial N) && \text{restriction,} \\
\pi_N^*: \; & H^*(M) \longrightarrow H^*(N) && \text{pullback,} \\
\pi_{\partial_N}^*:\;& H^*(D) \longrightarrow H^*(\partial N) && \text{pullback,} \\
\pi_{\partial N,*}:\;& H^*(\partial N) \longrightarrow H^{*-1}(D) && \text{integration along the fibres.}
\end{align} 
The goal of this section is to develop chain level models for these operations, using Morse theory; this replaces the use of de Rham theory in \cite[Section 7.3]{pomerleano-seidel23}, which is less technically convenient for interfacing with Floer theory.

\subsection{The circle bundle\label{section:Morsecircle}}
We begin by spelling out the geometric setup. Let $\nu D$ be the normal bundle of $D \subset M$. As a set, the real oriented blowup is
\begin{equation}
N = (M \setminus D) \cup \{\text{oriented real lines in $\nu D$}\},
\end{equation}
with the obvious map $\pi_N$. The local model is the map $[0,\infty) \times S^1 \times \bR^{2n-2} \rightarrow \bR^{2n}$, $(t,v,w) \mapsto (tv,w)$. Diffeomorphisms of $\bR^{2n}$ which preserve $\{0\} \times \bR^{2n-2}$ lift to diffeomorphisms of $[0,\infty) \times S^1 \times \bR^{2n-2}$ (see e.g.\ \cite[p.~824]{arone-kankaanrinta10} or \cite[Lemma 2.5.1]{kronheimer-mrowka}). Hence, lifting charts from $(M,D)$ leads to a canonical smooth structure on $N$. 
Choose the following data:
\begin{itemize} \itemsep.5em
\item on $D$, a Morse function $f_D$ and pseudo-gradient vector field $X_D$, which is Morse-Smale (see Section \ref{subsec:morse} for terminology and conventions);
\item a hermitian metric and connection on $\nu D$, where the connection is flat on a small ball $U_c \subset D$ around each critical point $c$;
\item for each critical point, an isomorphism $(\nu D)_c \iso \bC$, compatible with the hermitian metric and orientation;
\item a Morse function $g_{S^1}$ on the circle, with two critical points $q^{\mathrm{min}}$ and $q^{\mathrm{max}}$. Let $X_{S^1}$ be its gradient for the standard metric.
\end{itemize}
The unit circle bundle of $\nu D$ is canonically identified with $\partial N$. Using the flat connection, and the chosen isomorphisms $(\nu D)_c \iso \bC$, one obtains diffeomorphisms
\begin{equation} \label{eq:trivialization} 
\pi_{\partial N}^{-1}(U_c) \cong S^1 \times U_c. 
\end{equation} 
Choose a function $g_{\partial N}$ on $\partial N$ which, in each trivialization \eqref{eq:trivialization}, is (the pullback of) $g_{S^1}$. For a small constant $\delta>0$, set 
\begin{equation} \label{eq:fpartialN}
f_{\partial N} =  \delta \cdot g_{\partial N} + \pi_{\partial N}^* f_D.
\end{equation}
Take the horizontal lift $X_{\partial N}^h$ of $X_D$ to $\partial N$, determined by our connection. Choose a vector field $X_{\partial N}^v$ tangent to the circle fibres, and which in each local trivialization \eqref{eq:trivialization} is $(X_{S^1},0)$. Set
\begin{equation}
X_{\partial N} =  \delta \cdot X_{\partial N}^{v} + X_{\partial N}^{h}. 
\end{equation}

\begin{lemma}
(i) All critical points of $f_{\partial N}$ lie in the fibres over critical points $c$ of $f_D$. There are two in each fibre: $c^{\mathrm{min}} = (c,q^{\mathrm{min}})$ and $c^{\mathrm{max}} = (c,q^{\mathrm{max}})$, in the trivialization \eqref{eq:trivialization}.

(ii) For sufficiently small $\delta$, $X_{\partial N}$ is a pseudo-gradient vector field for $f_{\partial N}$.
\end{lemma}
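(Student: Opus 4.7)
The proof is essentially a calculation in two regions: near the preimage of the critical set of $f_D$, where the local product trivialization \eqref{eq:trivialization} makes everything explicit, and away from it, where $\pi_{\partial N}^* f_D$ dominates.

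For part (i), I would compute $df_{\partial N} = \delta\, dg_{\partial N} + \pi_{\partial N}^* df_D$ and evaluate separately on vertical and horizontal vectors. Since $\pi_{\partial N}^* df_D$ annihilates vertical vectors, the vertical component of $df_{\partial N}$ is $\delta$ times the vertical part of $dg_{\partial N}$. For the horizontal part, fix a Riemannian metric on $D$ and lift $\nabla f_D$ horizontally: evaluating $df_{\partial N}$ on that lift gives $|\nabla f_D|^2 + O(\delta)$, which stays positive away from any fixed neighborhood of $\mathrm{Crit}(f_D)$ once $\delta$ is small enough (all relevant quantities are bounded on the compact $\partial N$). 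Hence any critical point of $f_{\partial N}$ must lie in $\pi_{\partial N}^{-1}(\mathrm{Crit}(f_D))$. Over a critical point $c \in D$, the trivialization \eqref{eq:trivialization} identifies $\pi_{\partial N}^{-1}(U_c)$ with $S^1 \times U_c$, and by construction $f_{\partial N}$ becomes $\delta\, g_{S^1}(\theta) + f_D(x)$; its critical points in this chart are exactly $(q^{\mathrm{min}},c)$ and $(q^{\mathrm{max}},c)$, giving the two claimed points in each fiber.

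For part (ii), I expand
\[
X_{\partial N}.f_{\partial N} = \delta^2\, X_{\partial N}^v(g_{\partial N}) + \delta\, X_{\partial N}^h(g_{\partial N}) + \delta\, X_{\partial N}^v(\pi_{\partial N}^* f_D) + X_{\partial N}^h(\pi_{\partial N}^* f_D).
\]
The third term vanishes because $X_{\partial N}^v$ is vertical, and the fourth equals $(X_D.f_D)\circ \pi_{\partial N}$ by the definition of horizontal lift. Away from $\pi_{\partial N}^{-1}(\mathrm{Crit}(f_D))$, the last term has a uniform positive lower bound while the first two are $O(\delta)$, so $X_{\partial N}.f_{\partial N}>0$ for small $\delta$. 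In a neighborhood $\pi_{\partial N}^{-1}(U_c)$ of a fiber over a critical point, the flatness of the connection means we can choose the trivialization \eqref{eq:trivialization} to make horizontal vectors tangent to $\{*\}\times U_c$; then $X_{\partial N}^h = (0,X_D)$ while $X_{\partial N}^v = (X_{S^1},0)$ and $g_{\partial N} = g_{S^1}(\theta)$, so the cross term $X_{\partial N}^h(g_{\partial N})$ vanishes identically. The remaining expression becomes $\delta^2\, X_{S^1}.g_{S^1} + X_D.f_D$, a sum of two nonnegative terms that vanish simultaneously precisely at $c^{\mathrm{min}}$ and $c^{\mathrm{max}}$.

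It remains to verify the actual gradient condition in a neighborhood of each critical point $c^{\mathrm{min}},c^{\mathrm{max}}$. In the flat trivialization $\pi_{\partial N}^{-1}(U_c)\cong S^1\times U_c$, we have $f_{\partial N} = \delta g_{S^1} + f_D$ and $X_{\partial N} = (\delta X_{S^1}, X_D)$, and by hypothesis $X_{S^1}$ is the gradient of $g_{S^1}$ for the standard metric on $S^1$, while $X_D$ is the gradient of $f_D$ for some metric near $c$. Taking the product metric on $S^1\times U_c$ therefore makes $X_{\partial N}$ the honest gradient of $f_{\partial N}$ locally.

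The main (small) subtlety is simply to choose $\delta$ uniformly: the compactness of $\partial N$ and of the complement of a fixed neighborhood of $\pi_{\partial N}^{-1}(\mathrm{Crit}(f_D))$ ensure a single $\delta>0$ works for both the no-extra-critical-points argument and the pseudo-gradient positivity, and this common $\delta$ suffices for the lemma.
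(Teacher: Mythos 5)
Your proof is correct and follows essentially the same approach as the paper's: expand $X_{\partial N}.f_{\partial N}$, observe that away from the critical fibres the term $df_D(X_D)$ dominates the $O(\delta)$ remainder, and near the critical fibres use the flat trivialization \eqref{eq:trivialization} to get an exact product splitting where both claims become explicit. The only cosmetic difference is that you prove (i) directly rather than noting it follows from the positivity established in (ii); the substance is identical.
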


\begin{proof}
We'll only prove (ii), since the nontrivial part of (i) (the absence of critical points in other circle fibres) follows from that argument. By construction, 
\begin{equation}
df_{\partial N}(X_{\partial N})= df_D(X_{D})+ O(\delta). 
\end{equation}
Outside the union of $\pi_{\partial N}^{-1}(U_c)$, the first term is positive and bounded away from zero; therefore, the entire expression will be positive if $\delta$ is sufficiently small. On $\pi^{-1}_{\partial N}(U_c)$, the function and vector field are split:
\begin{equation} \label{eq:split-vector-field}
\begin{aligned}
& f_{\partial N} = \delta \cdot g_{\partial S^1} + f_D, \\
& X_{\partial N} = (\delta \cdot X_{S^1}, X_D);
\end{aligned}
\end{equation}
hence $X_{\partial N}$ is the gradient vector field, in the product metric.
\end{proof}

\begin{lemma} \label{th:stable-unstable}
For each critical point $c$ in $D$, and its preimages $c^{\mathrm{min}}$, $c^{\mathrm{max}}$ in $\partial N$, we have:

(i) $\pi_{\partial N}$ induces diffeomorphisms
\begin{equation} \label{eq:section1}
\begin{aligned} 
& W^u(\partial N, c^{\mathrm{min}}) \cong W^{u}(f_D, c), \\ 
& W^s(\partial N, c^{\mathrm{max}}) \cong W^{s}(f_D, c). 
\end{aligned}
\end{equation}
In other words, $W^u(\partial N, c^{\mathrm{min}})$ and $W^s(\partial N, c^{\mathrm{max}})$ are sections of the circle bundle over $W^{u}(f_D,c)$ and $W^{s}(f_D,c)$, respectively. 

(ii) The other stable and unstable manifolds in $\partial N$ are
\begin{equation} \label{eq:minussection1} 
\begin{aligned} 
& W^u(\partial N, c^{\mathrm{max}}) = \pi_{\partial N}^{-1}(W^u(D, c)) 
\setminus W^u(\partial N, c^{\mathrm{min}}), \\ 
& W^s(\partial N, c^{\mathrm{min}}) = \pi_{\partial N}^{-1}(W^s(D, c))
\setminus W^s(\partial N, c^{\mathrm{max}}).
\end{aligned}
\end{equation}
\end{lemma}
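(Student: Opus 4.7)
The proof reduces to local analysis in the trivializations \eqref{eq:trivialization} near each critical point of $f_D$. Two global facts drive everything: $\pi_{\partial N,*} X_{\partial N} = X_D$, so every $-X_{\partial N}$-integral curve projects to a $-X_D$-integral curve; and on $\pi_{\partial N}^{-1}(U_c)$ the vector field splits as in \eqref{eq:split-vector-field}, which determines the possible backward asymptotics of trajectories converging to $c^{\mathrm{min}}$ or $c^{\mathrm{max}}$.

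For (i), the plan is to construct an explicit inverse to $\pi_{\partial N}|_{W^u(\partial N,\, c^{\mathrm{min}})} : W^u(\partial N, c^{\mathrm{min}}) \to W^u(f_D, c)$. Given $p \in W^u(f_D, c)$, take the $-X_D$-trajectory $b_D:(-\infty,0]\to D$ with $b_D(0)=p$ and $b_D(s)\to c$ as $s \to -\infty$, pick $s_0$ with $b_D(s)\in U_c$ for $s\leq s_0$, and define $\tilde b(s) = (q^{\mathrm{min}}, b_D(s))$ for $s\leq s_0$ in the trivialization. Since $X_{S^1}(q^{\mathrm{min}})=0$, the path $\tilde b$ genuinely solves $d\tilde b/ds = -X_{\partial N}$ by \eqref{eq:split-vector-field}, and extending forward by the flow for $s\in[s_0,0]$ yields a trajectory with $\tilde b(s)\to c^{\mathrm{min}}$. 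Uniqueness of the lift comes from a Morse-theoretic observation on the fibre: along any $-X_{S^1}$-trajectory, $g_{S^1}$ is nonincreasing in $s$, so a trajectory with backward limit $q^{\mathrm{min}}$ must have $g_{S^1}$ identically equal to its minimum and hence be constant at $q^{\mathrm{min}}$. The resulting smooth bijection is a diffeomorphism by comparing Morse indices ($c^{\mathrm{min}}$ in $\partial N$ has the same index as $c$ in $D$). The claim about $W^s(\partial N, c^{\mathrm{max}})$ is formally identical after reversing the flow, which exchanges $q^{\mathrm{min}}$ and $q^{\mathrm{max}}$.

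For (ii), the inclusion $W^u(\partial N, c^{\mathrm{max}}) \subseteq \pi_{\partial N}^{-1}(W^u(f_D,c))$ is immediate from the projection property, and disjointness from $W^u(\partial N, c^{\mathrm{min}})$ is automatic. For the reverse inclusion, pick $p \in \pi_{\partial N}^{-1}(W^u(f_D,c)) \setminus W^u(\partial N, c^{\mathrm{min}})$, let $\tilde b$ be its $-X_{\partial N}$-trajectory, and note that its projection converges to $c$, so $\tilde b(s)\in \pi_{\partial N}^{-1}(U_c)$ for $s\ll 0$ and there factors as $(b_{S^1}(s), b_D(s))$. Every $-\delta X_{S^1}$-trajectory on $S^1$ has a backward limit at one of the two critical points of $g_{S^1}$; the uniqueness argument from (i) rules out $q^{\mathrm{min}}$ (which would force $p \in W^u(\partial N, c^{\mathrm{min}})$), so $b_{S^1}(s)\to q^{\mathrm{max}}$ and therefore $p \in W^u(\partial N, c^{\mathrm{max}})$. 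The statement for $W^s(\partial N, c^{\mathrm{min}})$ follows again by flow reversal.

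The only mild subtlety is that the product structure \eqref{eq:split-vector-field} holds only on $\pi_{\partial N}^{-1}(U_c)$, and the chosen $X_{\partial N}^v$ is not canonical outside these neighbourhoods. This is harmless: the local splitting is used only to identify backward asymptotic behaviour, which is determined once $b_D(s)$ reaches and stays in $U_c$; the global projection $\pi_{\partial N,*} X_{\partial N} = X_D$ handles everything else. No genuinely analytic difficulty arises, since the argument reduces to one-dimensional Morse theory on the fibre $S^1$.
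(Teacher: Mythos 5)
Your proof is correct and follows essentially the same route as the paper: the key inputs in both are that $\pi_{\partial N}$ conjugates the flow of $-X_{\partial N}$ to that of $-X_D$, and that the local product form \eqref{eq:split-vector-field} over $U_c$ pins down the $S^1$-component's backward (or forward) limit. You phrase things dually — constructing the lift over $W^u(f_D,c)$ and pinning down its fibre component via one-dimensional Morse theory on $S^1$, rather than the paper's direct identification of $W^s(\partial N, c^{\mathrm{max}}) \cap (S^1 \times \tilde U_c)$ in \eqref{eq:localsectionUc} — but this is the same geometric argument viewed from the other end of the flow. The only cosmetic imprecision is the remark that the map is a diffeomorphism ``by comparing Morse indices''; what actually makes it a diffeomorphism is that your lift is itself smooth (being obtained from the smooth flow), which you have in hand.
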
 

\begin{proof} 
(i) We prove $W^s(\partial N, c^{\mathrm{max}}) \cong W^s(D, c)$, the other case being similar. Choose a neighbourhood $\tilde{U}_c$ of $c$ with the following property: for every point of $W^s(c) \cap \tilde{U}_c$, the half-flow line from that point to $c$ lies inside $U_c$. In the local trivialization \eqref{eq:trivialization}, we have \eqref{eq:split-vector-field} and therefore
\begin{equation}
\label{eq:localsectionUc} 
W^s(\partial N, c^{\mathrm{max}}) \cap (S^1 \times \tilde{U}_c) = q^{\mathrm{max}} \times (W^s(D, c) \cap \tilde{U}_c).
\end{equation}
Let $(\phi_D^s)$ be the flow of $-X_D$, and similarly $(\phi_{\partial N}^s)$ for $-X_{\partial N}$. By construction, the flow on $\partial N$ covers that on $D$:
\begin{equation} \label{eq:flow-projection}
\pi_{\partial N} \circ \phi_{\partial N}^s = \phi_D^s \circ \pi_{\partial N}.
\end{equation}
Given $x \in W^s(D, c)$, there is some $s \gg 0$ such that $\phi^s(x) \in \tilde{U}_c$. By \eqref{eq:localsectionUc} the fibre of $\pi_{\partial N}$ over $\phi^s_D(x)$ contains a unique point $y \in W^s(\partial N, c^{\mathrm{max}})$. Therefore, the fibre over $x$ also contains a unique point, namely $(\phi_{\partial N}^s)^{-1}(y)$.

(ii) Again, we'll only do the second case. Because of \eqref{eq:flow-projection},
\begin{equation} \label{eq:stable-sub}
W^s(\partial N, c^{\mathrm{min}}) \subset \pi_{\partial N}^{-1}(W^s(D, c)) \setminus W^s(\partial N, c^{\mathrm{max}}). 
\end{equation}
Take $x \in W^s(D,c)$, and a point $y \in \pi_{\partial N}^{-1}(x)$ 
which does not lie on $W^s(\partial N, c^{\mathrm{max}})$. Taking $s$ as before, we find that 
\begin{equation}
\phi_{\partial N}^s(y) \in (S^1 \setminus q^{\mathrm{max}}) \times (W^s(D,c) \cap \tilde{U}_c).
\end{equation}
But then, continuing the flow will take the $D$ component asymptotically to $c$, and the $S^1$ component to $q^{\mathrm{min}}$, which means that $y \in W^s(\partial N, c^{\mathrm{min}})$; hence equality holds in \eqref{eq:stable-sub}.
\end{proof}

A transversality argument \cite[\S 6.1]{hutchings08} shows that one can achieve the Morse-Smale condition within the class of $(f_{\partial N}, X_{\partial N})$ we have constructed. The resulting Morse complex can be written as
\begin{equation} \label{eq:decomposition} 
\mathit{CM}^*(\partial N) = \mathit{CM}^*(D)^{\mathrm{min}} \oplus CM^{*-1}(D)^{\mathrm{max}}.
\end{equation}

\begin{lemma} \label{lem:matrixmorse}
With respect to \eqref{eq:decomposition}, the Morse differentials $d_{\partial N}$ and $d_D$ are related by
\begin{equation} \label{eq:d-dn}
d_{\partial N} = \begin{pmatrix}
d_D & \chi \\
0 & -d_D
\end{pmatrix}.
\end{equation}
\end{lemma}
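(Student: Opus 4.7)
The plan is to compute each of the four matrix blocks of \eqref{eq:d-dn} corresponding to the decomposition \eqref{eq:decomposition} separately, using Lemma~\ref{th:stable-unstable} to pin down the relevant moduli spaces of $-X_{\partial N}$-trajectories. Label these blocks $(\mathrm{min},\mathrm{min})$, $(\mathrm{min},\mathrm{max})$, $(\mathrm{max},\mathrm{min})$, $(\mathrm{max},\mathrm{max})$ by (input, output). For the $(\mathrm{min},\mathrm{min})$-block, every flow line from $c'^{\mathrm{min}}$ to $c^{\mathrm{min}}$ lies on the $q^{\mathrm{min}}$-section $W^u(\partial N, c'^{\mathrm{min}}) \cong W^u(f_D, c')$ of Lemma~\ref{th:stable-unstable}(i), and must terminate in $W^s(\partial N, c^{\mathrm{min}}) = \pi_{\partial N}^{-1}(W^s(f_D, c)) \setminus W^s(\partial N, c^{\mathrm{max}})$ by \eqref{eq:minussection1}. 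Generic Morse--Smale data in our class (available by \cite[\S 6.1]{hutchings08}) ensures the forbidden $q^{\mathrm{max}}$-section misses $W^u(\partial N, c'^{\mathrm{min}})$ in the relevant zero-dimensional loci. Projection by $\pi_{\partial N}$ then gives an orientation-preserving bijection between these lifts and the $D$-trajectories $c'\to c$, producing the entry $d_D$.

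For the $(\mathrm{min},\mathrm{max})$-block: such a flow line would lie in the intersection of the two sections from Lemma~\ref{th:stable-unstable}(i), over $W^u(f_D, c')$ and $W^s(f_D, c)$ respectively. Zero-dimensionality (mod $\bR$) forces $\mathrm{ind}(c)=\mathrm{ind}(c')$; Morse--Smale transversality in $D$ then rules out all cases except $c=c'$, and in that case the two sections lie over $q^{\mathrm{min}}$ and $q^{\mathrm{max}}$ in the single fiber over $c$, hence are disjoint. So this block vanishes. For the $(\mathrm{max},\mathrm{max})$-block, the same scheme applies with $W^s(\partial N, c^{\mathrm{max}})$ as the section over $W^s(f_D, c)$: the lift of each $D$-trajectory $\gamma: c'\to c$ through this section converges at $-\infty$ to $c'^{\mathrm{max}}$ rather than $c'^{\mathrm{min}}$, because in the local trivialization \eqref{eq:trivialization} near $c'$ the backward flow of $-\delta X_{S^1}$ attracts every $S^1$-coordinate distinct from $q^{\mathrm{min}}$ to $q^{\mathrm{max}}$, and Morse--Smale genericity rules out the forbidden $q^{\mathrm{min}}$-section. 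The sign is the delicate point: in the local product chart \eqref{eq:trivialization} the Morse complex factors as $\mathit{CM}^*(S^1, g_{S^1}) \otimes \mathit{CM}^*(U_c, f_D|_{U_c})$, and the Koszul tensor convention places the factor $(-1)^{|q^{\mathrm{max}}|}=-1$ on the $q^{\mathrm{max}}\otimes(\cdot)$ summand of the differential. Assembling these local identifications coherently using Lemma~\ref{th:stable-unstable} globalizes this to $-d_D$.

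The remaining $(\mathrm{max},\mathrm{min})$-block is by definition the operator $\chi$, with no further claim being made about it in this lemma. A useful sanity check: the relation $d_{\partial N}^2=0$ forces $d_D\chi = \chi d_D$, so $\chi$ is a genuine chain map of degree $+1$, as one would expect from its eventual cohomological interpretation as cup product with the Euler class of the circle bundle $\partial N\to D$. The principal obstacle is the sign in the $(\mathrm{max},\mathrm{max})$-block: establishing $-d_D$ rather than $+d_D$ requires the local Künneth/Koszul analysis sketched above; the other three blocks follow routinely from Lemma~\ref{th:stable-unstable} together with standard Morse--Smale transversality.
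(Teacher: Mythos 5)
Your blocks $(\mathrm{min},\mathrm{min})$, $(\mathrm{max},\mathrm{max})$, and $(\mathrm{max},\mathrm{min})$ are treated essentially as in the paper: the first two via the section identifications from Lemma \ref{th:stable-unstable}(i) combined with Morse--Smale transversality to rule out the other lift, and the last one by definition of $\chi$; the sign discussion for $(\mathrm{max},\mathrm{max})$ is the standard Koszul argument the paper also invokes.

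The argument for the $(\mathrm{min},\mathrm{max})$-block — the zero entry — is wrong. A flow line contributing to this block runs from $c'^{\mathrm{max}}$ at $-\infty$ to $c^{\mathrm{min}}$ at $+\infty$, so it lies in $W^u(\partial N, c'^{\mathrm{max}}) \cap W^s(\partial N, c^{\mathrm{min}})$; by Lemma \ref{th:stable-unstable}(ii) both of these are the full circle-bundle preimages minus a section, \emph{not} the two sections from part (i) as you wrote. Consequently the flow lines are not automatically confined to disjoint sets: when $c = c'$ the intersection restricted to the fibre $\pi_{\partial N}^{-1}(c) \cong S^1$ is $S^1 \setminus \{q^{\mathrm{min}}, q^{\mathrm{max}}\}$, which consists of \emph{two} arcs — two actual Morse trajectories. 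The zero entry does not come from the moduli space being empty; it comes from those two trajectories cancelling in sign, i.e.\ the usual Morse theory of $S^1$. This cancellation mechanism is entirely absent from your proposal, and the ``disjoint sections'' conclusion you draw cannot be salvaged — indeed, with your stated manifolds the expected dimension comes out as $\mathrm{ind}(c) - \mathrm{ind}(c') - 1$, forcing $\mathrm{ind}(c) = \mathrm{ind}(c') + 2$ rather than $\mathrm{ind}(c) = \mathrm{ind}(c')$, another indication that you are analysing the wrong intersection. (Minor: $\chi$ has degree $+2$ as a map $\mathit{CM}^*(D)\to\mathit{CM}^{*+2}(D)$, not $+1$.)
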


Here, $\chi$ is a chain map $CM^*(D) \rightarrow CM^{*+2}(D)$ (on cohomology, it describes the cup product with the Chern class of $\partial N \rightarrow D$); and the switch to $-d_D$ is a standard Koszul sign (which, in our context, comes from writing \eqref{eq:trivialization} with the $S^1$ factor first).

\begin{proof} 
Take critical points $c_{\pm}$ on $D$ with $\mathrm{deg}(c_-) = \mathrm{deg}(c_+) + 1$.
Then $c_-^{\mathrm{min}}$ and $c_+^{\mathrm{max}}$ have the same Morse index, and by the Morse-Smale condition we must therefore have
\begin{equation} \label{eq:emptystableunstable}
W^u(\partial N, c_-^{\mathrm{min}}) \cap W^s(\partial N, c_+^{\mathrm{max}}) = \emptyset.
\end{equation}
Take a point $x \in W^u(D, c_-) \cap W^s(D, c_+)$. By Lemma \ref{th:stable-unstable}(i) there is a unique preimage $y \in W^u(\partial N, c_-^{\mathrm{min}})$. By \eqref{eq:emptystableunstable} $y$ does not lie in $W^s(\partial N, c_+^{\mathrm{max}})$, which by Lemma \ref{th:stable-unstable}(ii) implies that $y \in W^s(\partial N, c_+^{\mathrm{min}})$. A similar argument produces a unique preimage of $x$ in $W^u(\partial N, c_-^{\mathrm{max}}) \cap W^s(\partial N, c_+^{\mathrm{max}})$. To summarize, we have shown that $\pi_{\partial N}$ induces bijections
\begin{equation}
\left.
\begin{aligned}
& W^u(\partial N, c_-^{\mathrm{min}}) \cap W^s(\partial N, c_+^{\mathrm{min}}) \\
& W^u(\partial N, c_-^{\mathrm{max}}) \cap W^s(\partial N, c_+^{\mathrm{max}}) 
\end{aligned} \right\}
\iso W^u(D, c_-) \cap W^s(D, c_+).
\end{equation}
When combined with sign considerations (which we omit here), this explains the two occurrences of $d_D$ in \eqref{eq:d-dn}.

Finally, consider two points with $\mathrm{deg}(c_-) = \mathrm{deg}(c_+)$. If $c_- \neq c_+$, the Morse-Smale assumption implies that $W^u(D, c_-) \cap W^s(D, c_+) = \emptyset$. Hence, even though $\mathrm{deg}(c_-^{\mathrm{max}}) = \mathrm{deg}(c_+^{\mathrm{min}}) + 1$, there are no flow lines in $\partial N$ connecting those critical points. In the remaining case $c_- = c_+$, there are two flow lines in the fibre of $\partial N$ over that point, but those cancel by the usual Morse theory for $S^1$. This explains the $0$ entry in \eqref{eq:d-dn}. 
\end{proof}

We record the following immediate consequence of Lemma \ref{lem:matrixmorse}:

\begin{cor} \label{cor:pidN}
(i) The inclusion $\mathit{CM}(D) = \mathit{CM}(D)^{\mathrm{min}} \hookrightarrow \mathit{CM}(\partial N)$ is a chain map. We denote it by $\pi_{\partial N}^*$.

(ii) The projection $\mathit{CM}(\partial N) \twoheadrightarrow \mathit{CM}(D)^{\mathrm{max}} = \mathit{CM}(D)$ is a chain map of degree $-1$. We denote it by $\pi_{\partial N,*}$.
\end{cor}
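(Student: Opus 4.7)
The proof will be essentially a two-line verification once Lemma \ref{lem:matrixmorse} is in hand; the real work has already been done in establishing the block-triangular form of $d_{\partial N}$. My plan is simply to read off both statements from that matrix.

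For part (i), I would write an element of $\mathit{CM}(D)^{\mathrm{min}}$ as a pair $(a,0)$ under the decomposition \eqref{eq:decomposition} and compute
\begin{equation}
d_{\partial N}\pi_{\partial N}^*(a) = d_{\partial N}(a,0) = (d_D a, 0) = \pi_{\partial N}^*(d_D a),
\end{equation}
using that the lower-left entry of the matrix in \eqref{eq:d-dn} is zero. This shows $\pi_{\partial N}^*$ intertwines $d_D$ and $d_{\partial N}$ as a degree $0$ chain map.

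For part (ii), write an element of $\mathit{CM}^*(\partial N)$ as $(a,b)$ with $a \in \mathit{CM}^*(D)^{\mathrm{min}}$ and $b \in \mathit{CM}^{*-1}(D)^{\mathrm{max}}$; then
\begin{equation}
d_{\partial N}(a,b) = (d_D a + \chi b,\, -d_D b),
\end{equation}
so projection to the max summand gives $\pi_{\partial N,*}(d_{\partial N}(a,b)) = -d_D b = -d_D \pi_{\partial N,*}(a,b)$. The sign is precisely the Koszul sign expected for a chain map of odd degree, so $\pi_{\partial N,*}$ qualifies as a chain map of degree $-1$ (the shift comes from the fact that $\mathit{CM}(D)^{\mathrm{max}}$ sits in $\mathit{CM}^{*-1}$ of $\partial N$).

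There is no obstacle here beyond bookkeeping; the content is entirely encoded in Lemma \ref{lem:matrixmorse}, and in particular in the vanishing of the lower-left entry of $d_{\partial N}$ (which in turn relied on the Morse-Smale analysis of Lemma \ref{th:stable-unstable} and the cancellation of the two $S^1$-fibre trajectories over a critical point of $f_D$). The only minor point worth flagging is the sign convention: one could equivalently absorb the minus sign into a redefinition of the isomorphism $\mathit{CM}(D)^{\mathrm{max}} \cong \mathit{CM}(D)$, but keeping it explicit aligns with the standard Koszul convention for degree $-1$ chain maps, which is what will be needed in later sections.
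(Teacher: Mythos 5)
Your proposal is correct and takes the same route as the paper, which records the corollary as an immediate consequence of Lemma~\ref{lem:matrixmorse} without further argument: both statements are read directly off the block-upper-triangular form \eqref{eq:d-dn} of $d_{\partial N}$. Your remark on the sign in (ii) — that $\pi_{\partial N,*}\,d_{\partial N} = -d_D\,\pi_{\partial N,*}$ is precisely the Koszul relation for an odd-degree map, and could equivalently be absorbed into the identification $\mathit{CM}(D)^{\mathrm{max}} \cong \mathit{CM}(D)$ — is a useful clarification, and is consistent with how $\pi_{\partial N}^*\pi_{\partial N,*}$ is later used as an odd-degree anticommuting operator in \eqref{eq:q-deform-dn}.
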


Clearly $\pi_{\partial N,*} \pi_{\partial N}^* = 0$. Composition in the other order yields an endomorphism of degree $-1$,
\begin{align} 
\pi_{\partial N}^* \pi_{\partial N,*}: \mathit{CM}(\partial N) \longrightarrow \mathit{CM}(\partial N),
\end{align}
whose square is zero. One can use it to introduce a $q$-deformed version of the Morse complex (with $q$ of degree $2$ as usual),
\begin{equation} \label{eq:q-deform-dn}
\begin{aligned}
& \mathit{CM}(\partial N)_q = \mathit{CM}(\partial N)[q], \\
& d_{\partial N,q} = d_{\partial N} +q\,\pi_{\partial N}^* \pi_{\partial N,*}.
\end{aligned}
\end{equation}

\begin{lem} \label{qdeformedisD} 
$\mathit{CM}(\partial N)_q$ is quasi-isomorphic to $\mathit{CM}(D)$, by the map
\begin{equation} \label{eq:qdefo}
\mathit{CM}(D) \xrightarrow{\pi_{\partial N}^*} \mathit{CM}(\partial N)q^0 \hookrightarrow \mathit{CM}(\partial N)_q.
\end{equation}
\end{lem}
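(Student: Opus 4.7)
The plan is to exploit the decomposition \eqref{eq:decomposition} from Lemma~\ref{lem:matrixmorse} in order to present $\mathit{CM}(\partial N)_q$ as an extension whose cohomology can be read off directly. First I would verify (using \eqref{eq:d-dn} together with the definition \eqref{eq:q-deform-dn}) that the ``min'' summand $\mathit{CM}(D)^{\mathrm{min}}[q]$ is a subcomplex of $\mathit{CM}(\partial N)_q$ on which $d_{\partial N,q}$ restricts to $d_D$, and that the quotient $\mathit{CM}(D)^{\mathrm{max}}[q]$ carries the induced differential $-d_D$. This produces a short exact sequence of complexes whose long exact sequence in cohomology has the form
\begin{equation*}
\cdots \to H^{k-2}(D)[q] \xrightarrow{(\chi + q)_*} H^k(D)[q] \to H^k(\mathit{CM}(\partial N)_q) \to H^{k-1}(D)[q] \xrightarrow{(\chi + q)_*} H^{k+1}(D)[q] \to \cdots,
\end{equation*}
where the connecting homomorphism combines the off-diagonal $\chi$ from \eqref{eq:d-dn} with the deformation term $q\,\pi_{\partial N}^* \pi_{\partial N,*}$ from \eqref{eq:q-deform-dn}.

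The core step is to prove that $(\chi + q)_*$ is injective on $H^*(D)[q]$. Writing $v = \sum_j v_j q^j$ and matching coefficients, $(\chi + q)_* v = 0$ becomes the recursion $v_{j-1} = -\chi(v_j)$ for all $j \geq 0$ (with $v_{-1} = 0$). In any fixed total degree only finitely many $v_j$ can be nonzero, since $H^*(D)$ is bounded and $q$ has positive degree; choosing $N$ maximal with $v_N \neq 0$ gives $v_N = -\chi(v_{N+1}) = 0$, a contradiction. With this injectivity, the long exact sequence collapses into
\begin{equation*}
H^k(\mathit{CM}(\partial N)_q) \;\cong\; H^k(D)[q] \,\big/\, (\chi + q)_* H^{k-2}(D)[q].
\end{equation*}

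The final step is to identify this cokernel with $H^*(D)$ via ``evaluation at $q = -\chi$'', namely the map $\phi : \sum_j \beta_j q^j \mapsto \sum_j (-\chi)^j \beta_j$. One checks directly that $\phi$ kills the image of $\chi + q$ and is surjective (sending $\beta_0 \in H^*(D)$ to itself); injectivity is seen by the reduction procedure of iteratively subtracting $(\chi + q)(-\beta_N q^{N-1})$ to eliminate the top power of $q$ in a representative, which produces precisely $\phi$ of the original class. The composition $H^*(D) \hookrightarrow H^*(D)[q] \twoheadrightarrow \mathrm{coker}(\chi + q)_* \xrightarrow{\phi} H^*(D)$ is then the identity, which is exactly the statement that \eqref{eq:qdefo} is a quasi-isomorphism. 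I do not anticipate a serious obstacle; the only item that needs genuine care is reading off the connecting homomorphism from \eqref{eq:d-dn} and \eqref{eq:q-deform-dn} with the correct signs and degree conventions (in particular, the Koszul sign hidden in the $-d_D$ on the max summand).
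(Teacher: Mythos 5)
Your proposal is correct and follows essentially the same route as the paper: you both exploit the decomposition from Lemma \ref{lem:matrixmorse} to write $\mathit{CM}(\partial N)_q$ as an extension of $\mathit{CM}(D)[q]$ by $\mathit{CM}(D)[q]$, reduce to injectivity of $\chi + q$, and identify the cokernel with $\mathit{CM}(D)$ via evaluation at $q = -\chi$. The paper phrases the extension as a mapping cone of the cochain-level injection $\chi + q\,\mathit{id}$ and skips the long exact sequence, which is marginally slicker, but the content is the same.
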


\begin{proof} 
The analogue of \eqref{eq:decomposition} is
\begin{equation}
\begin{aligned}
& \mathit{CM}^*(\partial N)_q = \mathit{CM}^*(D)^{\mathrm{min}}[q] \oplus
\mathit{CM}^{*-1}(D)^{\mathrm{max}}[q], \\
& d_{\partial N,q} = \begin{pmatrix} d_D & \chi + q\,\mathit{id} \\ 0 & -d_D \end{pmatrix},
\end{aligned}
\end{equation}
which is the mapping cone of the degree $2$ map $\chi + q\,\mathit{id}: \mathit{CM}(D)[q] \rightarrow \mathit{CM}(D)[q]$. That map is injective,
so the projection 
\begin{equation}
\mathit{CM}(\partial N)_q \longrightarrow \mathit{CM}(D)^{\mathrm{min}}[q]/\mathit{im}(\chi + q \,\mathit{id})
\end{equation}
is a quasi-isomorphism. Composing \eqref{eq:qdefo} with that projection yields an isomorphism.
\end{proof}

\subsection{The closed manifold}
Fix a tubular neighbourhood of $D$,
\begin{equation} \label{eq:tubular}
\nu D \supset \{\|\xi\| \leq \epsilon\} \longrightarrow M.
\end{equation}
On $M$, choose a Morse function $f_M$ and pseudo-gradient $X_M$, whose restrictions to \eqref{eq:tubular} are
\begin{equation} \label{eq:fmxm}
f_M = \half \|\xi\|^2 + \pi_{\nu D}^*f_D, \quad X_M = \xi\partial_\xi + X_{\nu D}^h;
\end{equation}
Here, $\pi_{\nu D}: \nu D \rightarrow D$ is the projection; $X_{\nu D}^h$ is the horizontal lift of $X_M$ for our connection; and $\xi \partial_\xi$ is the infinitesimal radial expansion vector field on the fibres. In particular, the critical points of $f_M$ lying in $D$ are precisely those of $f_D$. Near such a critical point, one can use the flatness of the connection to get local charts $\{z \in \bC\;:\;|z| \leq \epsilon\} \times U_c \hookrightarrow M$, in which
\begin{equation} \label{eq:near-critical-points}
f_M = \half |z|^2 + f_D, \quad X_M = (z\partial_z, X_D).
\end{equation}

\begin{lemma} \label{th:tangent}
Let $c$ be a critical point of $f_D$. Then $W^u(D,c) = W^u(M,c)$; and $W^s(M,c)$ intersects $D$ transversally, with the intersection being $W^s(D,c)$.
\end{lemma}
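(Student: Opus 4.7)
My plan is to reduce everything to the local product structure \eqref{eq:near-critical-points} around $c$, then propagate the conclusions by using that the negative pseudo-gradient flow of $-X_M$ preserves $D$ (which is clear from \eqref{eq:fmxm}, since $X_M|_D = X_D$ is tangent to $D$).

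First, I would set up the local picture. In a chart $\{|z|\le\epsilon\}\times U_c$, the flow $\phi_M^s$ of $-X_M$ decouples as
\[
\phi_M^s(z,x) = \bigl(e^{-s}z,\,\phi_D^s(x)\bigr),
\]
where $\phi_D^s$ is the flow of $-X_D$. From this explicit formula I read off:
\begin{itemize}
\item $(z,x)\to(0,c)$ as $s\to+\infty$ iff $x\in W^s(D,c)\cap U_c$ (the factor $e^{-s}z$ always goes to $0$). Hence locally
\[
W^s(M,c)\cap(\{|z|\le\epsilon\}\times U_c)=\{|z|\le\epsilon\}\times\bigl(W^s(D,c)\cap U_c\bigr).
\]
\item $(z,x)\to(0,c)$ as $s\to-\infty$ requires $e^{-s}z\to0$ as $s\to-\infty$, forcing $z=0$; together with $x\in W^u(D,c)\cap U_c$. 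Hence locally
\[
W^u(M,c)\cap(\{|z|\le\epsilon\}\times U_c)=\{0\}\times\bigl(W^u(D,c)\cap U_c\bigr).
\]
\end{itemize}
Both identifications are clear from the explicit formula.

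Next I would globalize. For the unstable manifold, every point of $W^u(M,c)$ is obtained by flowing forward (under $-X_M$) from arbitrarily small neighbourhoods of $c$ in $W^u(M,c)$, which by the local computation lie in $D$. Since $D$ is invariant under $\phi_M^s$ (because $X_M|_D=X_D$), the entire trajectory stays in $D$, and on $D$ the flow agrees with $\phi_D^s$. Therefore $W^u(M,c)\subseteq W^u(D,c)$; the reverse inclusion is obvious from the same identity of flows on $D$. For the stable manifold: take $p\in W^s(M,c)\cap D$. Flowing forward keeps $p$ in $D$ (invariance) and takes it into the local chart for large $s$, where by the local formula the $D$-component must land in $W^s(D,c)\cap U_c$; pulling back by the flow on $D$ shows $p\in W^s(D,c)$. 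The opposite inclusion $W^s(D,c)\subseteq W^s(M,c)\cap D$ is again immediate from $X_M|_D=X_D$.

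Finally, I would address transversality. At a point $p'=(0,x')$ in the chart with $x'\in W^s(D,c)\cap U_c$, the tangent space $T_{p'}W^s(M,c)=\bC\oplus T_{x'}W^s(D,c)$, which spans the normal direction $\bC$ to $D$; so $W^s(M,c)\pitchfork D$ at $p'$, with intersection $\{0\}\times (W^s(D,c)\cap U_c)=W^s(D,c)\cap U_c$. For an arbitrary $p\in W^s(M,c)\cap D$, pick $s\gg 0$ so $\phi_M^s(p)$ lies in the chart; since $\phi_M^s$ is a diffeomorphism preserving both $D$ and $W^s(M,c)$, transversality at $\phi_M^s(p)$ pulls back to transversality at $p$.

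The main (minor) obstacle is just being careful that $D$ is preserved by the flow so that the local product description is enough; once that is pinned down, everything follows from the explicit formula for $\phi_M^s$ in the chart. No serious technical difficulty is expected.
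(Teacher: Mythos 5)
Your proof is correct and follows the same strategy as the paper: use the product form of $(f_M,X_M)$ near $c$ from \eqref{eq:near-critical-points} to analyze the local stable/unstable manifolds explicitly, then propagate both the containment and the transversality statements along the flow using that $X_M$ is tangent to $D$. The paper's exposition is more compressed, but your explicit local flow formula and the resulting identifications of the local stable and unstable manifolds are exactly what it refers to by "the split form."
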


\begin{proof}
From the split form of \eqref{eq:near-critical-points} one sees that if $b: (-\infty,0] \rightarrow M$ is a flow line with asymptotics $c$, then necessarily
$b(s) \in D$ for $s \ll 0$. Since $X_M$ is tangent to $D$, the same must hold for all $s$, which proves the desired statement about $W^u(D,c)$. The same local analysis shows that near $c$, the manifold $W^s(M,c)$ intersects $D$ transversally, and the flow then allows one to carry over that insight to the entire manifold. The statement about the intersection again just follows from the fact that $X_M$ is tangent to $D$.
\end{proof}

As one consequence, the Morse-Smale condition can be achieved within this class of $(f_M,X_M)$. Write the resulting Morse cochain space as
\begin{equation} \label{eq:m-d-splitting}
\mathit{CM}^*(M) = \mathit{CM}^*(D) \oplus \mathit{CM}^*(M \setminus D),
\end{equation}
where the second summand contains the critical points lying outside $D$. Lemma \ref{th:tangent} implies:

\begin{corollary} \label{th:restrict-to-d}
The projection $\mathit{CM}(M) \rightarrow \mathit{CM}(D)$ is a chain map.
\end{corollary}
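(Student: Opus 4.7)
The plan is to show that $\mathit{CM}(M \setminus D)$ is a subcomplex of $\mathit{CM}(M)$ under $d_M$; the projection to the quotient $\mathit{CM}(D)$ is then automatically a chain map. For this, I need to verify that for any critical point $c_+$ lying outside $D$, the differential $d_M c_+$ has no $c_-$-component for any $c_- \in D$. Equivalently, there are no Morse trajectories $b:\bR\to M$ with $b(-\infty)=c_-\in D$ and $b(+\infty)=c_+\notin D$.

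The key input is Lemma \ref{th:tangent}, which tells us that $W^u(M,c_-)=W^u(D,c_-)\subset D$ whenever $c_-$ is a critical point of $f_D$. So any trajectory $b$ with $b(-\infty)=c_-$ must have $b(0)\in W^u(M,c_-)\subset D$. From the local form \eqref{eq:fmxm}, the vector field $X_M$ is tangent to $D$ (the normal component $\xi\partial_\xi$ vanishes on $D$), so $D$ is flow-invariant. Thus $b(s)\in D$ for all $s$, and hence $b(+\infty)=c_+\in D$, contradicting the assumption $c_+\notin D$. Therefore the off-diagonal block of $d_M$ sending $\mathit{CM}(M\setminus D)$ to $\mathit{CM}(D)$ vanishes.

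Concretely, write
\begin{equation}
d_M=\begin{pmatrix}d_D & \ast \\ 0 & d_{M\setminus D}\end{pmatrix}
\end{equation}
with respect to the splitting \eqref{eq:m-d-splitting}. The vanishing of the lower-left entry is exactly the statement that the projection $\mathit{CM}(M)\twoheadrightarrow\mathit{CM}(D)$ intertwines $d_M$ with $d_D$ (and incidentally, the diagonal block corresponding to $\mathit{CM}(D)$ is the Morse differential $d_D$, again by Lemma \ref{th:tangent} applied to trajectories between two critical points in $D$, since such trajectories are forced to lie in $D$).

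There is essentially no obstacle here: the whole argument is an immediate consequence of Lemma \ref{th:tangent} together with the tangency of $X_M$ to $D$. The only subtlety worth flagging is that one should note that the Morse-Smale property we have assumed can indeed be achieved within the class of $(f_M,X_M)$ satisfying \eqref{eq:fmxm}, so the moduli spaces of trajectories are well-defined and the $0$ in the matrix above is a genuine (rather than counted-with-sign-cancellation) consequence of the absence of such trajectories.
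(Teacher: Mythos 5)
Your argument is correct, and it fills in exactly the deduction from Lemma \ref{th:tangent} that the paper leaves implicit. However, the displayed matrix is transposed relative to the verbal argument. With the ordering $(\mathit{CM}(D),\mathit{CM}(M\setminus D))$ of \eqref{eq:m-d-splitting}, the block you have shown to vanish --- the component $\mathit{CM}(M\setminus D)\to\mathit{CM}(D)$, i.e.\ the coefficient of $c_-\in D$ in $d_M c_+$ for $c_+\notin D$ --- sits in the \emph{upper-right} corner, so the display should read
\begin{equation}
d_M=\begin{pmatrix}d_D & 0 \\ \ast & d_{M\setminus D}\end{pmatrix},
\end{equation}
and the sentence ``the vanishing of the lower-left entry\ldots'' should say ``upper-right.'' (As a sanity check, the lower-left block $\ast$ cannot be expected to vanish: $-X_M$ points into the tubular neighbourhood, so trajectories starting outside $D$ and converging to critical points inside $D$ exist in abundance.) Your prose reasoning --- that any flow line with $b(-\infty)=c_-\in D$ lies entirely in $D$ because $W^u(M,c_-)\subset D$ and $X_M$ is tangent to $D$, hence $\mathit{CM}(M\setminus D)$ is a subcomplex and projection to the quotient $\mathit{CM}(D)$ is a chain map --- is exactly right, and your observation that the zero block reflects a genuinely empty moduli space (not just sign cancellation) is a worthwhile remark.
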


On cohomology, this map gives the restriction $H^*(M) \rightarrow H^*(D)$; the kernel $\mathit{CM}(M \setminus D)$ correspondingly computes $H^*(M,D) = H^*_c(M \setminus D)$. 

\subsection{The blowup\label{subsection:Morseblowup}}
The tubular neighbourhood \eqref{eq:tubular} canonically determines a collar neighbourhood on the blowup, 
\begin{equation} \label{eq:collar}
[0,\epsilon] \times \partial N \hookrightarrow N,
\end{equation}
where the coordinate $\rho \in [0,\epsilon]$ corresponds to the previous $\|\xi\|$. Take $g_{\partial N}$ and a small $\delta>0$, as in \eqref{eq:fpartialN}. Moreover, take a cutoff function $\kappa: [0,\epsilon] \rightarrow \bR$ with $\kappa(\rho) = 1$ for $\rho$ close to $0$, and $\kappa(\rho) = 0$ near $\rho = \epsilon$. Define a function $f_N$ on $N$ by
\begin{equation} \label{eq:fNdefinition}
f_N = 
\begin{cases}
\half\rho^2 + \delta\cdot \kappa(\rho) g_{\partial N} + \pi_{\partial N}^* f_D & \text{on \eqref{eq:collar}}, \\
f_M & \text{outside that.}
\end{cases}
\end{equation}
Similarly, consider the vector field 
\begin{equation} \label{eq:xn}
X_N = \begin{cases}
 \rho\partial_\rho + \delta \cdot \kappa(\rho)X_{\partial N}^v + X_{\partial N}^h
& \text{on \eqref{eq:collar}}, \\
X_M & \text{outside that.}
\end{cases}
\end{equation}
Iit follows from the definition that $(f_N,X_N)$ restricts to $(f_{\partial N},X_{\partial N})$ on the boundary. In particular, the flow of $X_N$ is defined for all times; and the critical points of $f_N$ lying on the boundary are exactly those of $f_{\partial N}$. If $c$ is a critical point of $f_D$, then near $\pi_{\partial N}^{-1}(c) \iso S^1$ we have a local chart $[0,\epsilon] \times S^1 \times U_c \hookrightarrow N$, in which
\begin{equation}
f_N = \half\rho^2 + \delta\cdot \kappa(\rho)g_{S^1} + f_D, \;\;
X_N = (\rho\partial_\rho, X_{S_1}, X_D).
\end{equation}
Critical points lying in the interior of $N$ must be disjoint from the collar, hence $X_N = X_M$ near them. This shows that $X_N$ is a pseudo-gradient for $f_N$. The analogue of Lemma \ref{th:tangent}, with essentially the same proof, is:

\begin{lemma} \label{th:n0}
Let $c$ be a critical point of $f_N$ lying in $\partial N$. Then $W^u(N,c) = W^u(\partial N,c)$. Moreover, $W^s(N,c)$ intersects $\partial N$ transversally, and the 
intersection is $W^s(\partial N,c)$.
\end{lemma}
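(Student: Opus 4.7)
The plan is to proceed in direct analogy with Lemma \ref{th:tangent}, using the split local form of $(f_N,X_N)$ near each critical point $c \in \partial N$ given just before the statement. On the chart $[0,\epsilon]\times S^1 \times U_c$ (with $\kappa(\rho)=1$ near $\rho=0$), the vector field reads $X_N = (\rho\partial_\rho, \delta X_{S^1}, X_D)$, so the linearization of $-X_N$ at $c$ splits as the direct sum of a $\rho$-direction piece with eigenvalue $-1$, the linearization of $-\delta X_{S^1}$ at $q^{\min}$ or $q^{\max}$, and the linearization of $-X_D$ at $c$. The key observation is that the $\rho$-direction is \emph{always} stable for $-X_N$, regardless of whether $c = c^{\min}$ or $c = c^{\max}$.

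For $W^u(N,c)$: since the $\rho$-direction is stable, the unstable subspace of $D(-X_N)|_c$ lies inside $T_c\partial N$. Now $W^u(\partial N, c)$ is a submanifold of $\partial N$ whose tangent space at $c$ coincides with that subspace (a dimension count against Lemma \ref{th:stable-unstable} confirms this in both cases $c^{\min}$, $c^{\max}$). By the local unstable manifold theorem, $W^u_{\mathrm{loc}}(\partial N,c) = W^u_{\mathrm{loc}}(N,c)$ inside a small neighbourhood of $c$. Since $X_N$ restricts to $X_{\partial N}$ on $\partial N$, the boundary is invariant under the flow of $-X_N$; applying that flow forward in time to $W^u_{\mathrm{loc}}(N,c) \subset \partial N$ keeps us inside $\partial N$, so $W^u(N,c) \subset \partial N$, and on $\partial N$ the flow agrees with that of $-X_{\partial N}$. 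Hence $W^u(N,c) = W^u(\partial N,c)$.

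For $W^s(N,c)$: the stable subspace of $D(-X_N)|_c$ now contains the extra $\rho$-direction on top of $T_c W^s(\partial N, c)$, and $\partial_\rho$ is transverse to $T_c\partial N$; this yields transversality of $W^s(N,c)$ and $\partial N$ at $c$, and by equivariance under the $-X_N$ flow, at every point of $W^s(N,c) \cap \partial N$. The containment $W^s(\partial N,c) \subset W^s(N,c) \cap \partial N$ is automatic from the invariance of $\partial N$ under the flow, and comparing tangent spaces at $c$ shows the two sides have the same dimension; the local stable manifold theorem then forces equality near $c$, and applying forward $-X_N$ flow gives equality globally.

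The arguments above contain no novel analytic difficulty: the entire proof is bookkeeping with the split form \eqref{eq:xn}. The only mild care needed is separately handling $c^{\min}$ and $c^{\max}$, since the $S^1$-eigenvalue changes sign, but in both cases the $\rho$-direction supplies exactly one extra stable eigenvalue, which is precisely what is needed to match the dimension counts of Lemma \ref{th:stable-unstable} and to produce the transverse intersection with $\partial N$.
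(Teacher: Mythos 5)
Your proof is correct and follows essentially the same route the paper takes: the paper dispenses with the proof of Lemma \ref{th:n0} by declaring it ``the analogue of Lemma \ref{th:tangent}, with essentially the same proof,'' and Lemma \ref{th:tangent} is proved exactly by exploiting the split local form near critical points together with tangency of the vector field to the boundary/divisor and flow propagation. You rephrase the local step in terms of eigenvalues and the stable/unstable manifold theorem, whereas the paper argues more directly from the ODE $\dot\rho = -\rho$ for negative half-trajectories, but these are two dressings of the same argument and the dimension bookkeeping is identical.
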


It again follows that there are no issues with achieving the Morse-Smale condition in this framework (by which we include perturbing $X_M$, within the same class of vector fields).

\begin{lemma} \label{th:n1}
Take a critical point $c$ of $f_D$, and the corresponding critical points $c^{\mathrm{min}}$, $c^{\mathrm{max}}$ of $f_{\partial N}$, hence of $f_N$. The diffeomorphism $\pi_N: (N \setminus \partial N) \rightarrow (M \setminus D)$ induces an identification
\begin{equation} \label{eq:MNequal} 
\big( W^s(N, c^{\mathrm{min}}) \cup 
W^s(N, c^{\mathrm{max}}) \big)
\cap (N \setminus \partial N) =
W^s(M, c) \cap (M \setminus D).
\end{equation}
\end{lemma}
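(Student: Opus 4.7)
The plan is to verify \eqref{eq:MNequal} first locally near $c$, where everything splits into a product, and then globalize by comparing the flows of $X_N$ and $X_M$ under $\pi_N$.

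For the local step, use the chart $[0,\epsilon]\times S^1\times U_c\hookrightarrow N$ near $\pi_{\partial N}^{-1}(c)$, on which $X_N$ is the split vector field $(\rho\partial_\rho,\,\delta\kappa(\rho)X_{S^1},\,X_D)$ (and is split into the sum of the three corresponding gradients). The stable manifolds are products: $W^s(N,c^{\mathrm{min}})=[0,\epsilon]\times W^s(S^1,q^{\mathrm{min}})\times W^s(D,c)$ and similarly for $c^{\mathrm{max}}$. Since $W^s(S^1,q^{\mathrm{min}})\cup W^s(S^1,q^{\mathrm{max}})=S^1$, their union is $[0,\epsilon]\times S^1\times W^s(D,c)$. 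Intersecting with $N\setminus\partial N$ and applying $\pi_N:(\rho,v,x)\mapsto(\rho v,x)$, this maps bijectively to $\{0<|z|\leq\epsilon\}\times W^s(D,c)$ in the matching chart for $M$, which by \eqref{eq:near-critical-points} is exactly $W^s(M,c)\cap(M\setminus D)$ in that chart.

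To globalize, compare $X_N$ with the pullback $\pi_N^*X_M$ on $N\setminus\partial N$. Outside the collar they coincide by definition of $X_N$; on the collar, writing $\xi=\rho v$, one has $\xi\partial_\xi=\rho\partial_\rho$, so $\pi_N^*X_M=\rho\partial_\rho+X_{\partial N}^h$ and hence
\begin{equation}
X_N-\pi_N^*X_M=\delta\kappa(\rho)X_{\partial N}^v,
\end{equation}
which is tangent to the circle fibres of $\pi_{\partial N}$. Consequently, along a trajectory of $-X_N$ starting at $y\in N\setminus\partial N$, the coordinate $\rho$ and the image under $\pi_{\partial N}$ (extended to the collar) evolve exactly as they do along the trajectory of $-X_M$ starting at $\pi_N(y)$; the two trajectories differ only by a time-dependent rotation in the circle fibre.

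For the conclusion: $\pi_N(y)\in W^s(M,c)$ iff the $-X_M$-trajectory of $\pi_N(y)$ eventually enters the tubular neighbourhood with $\pi_{\partial N}$-image converging to $c$ (in which case $\rho\to 0$ automatically by the $\rho\partial_\rho$ contribution). By the previous paragraph this is equivalent to the $-X_N$-trajectory of $y$ entering the collar with $\pi_{\partial N}$-image converging to $c$. Once it is deep in the local chart near $\pi_{\partial N}^{-1}(c)$ with $\kappa\equiv 1$, the $S^1$-coordinate satisfies $\dot v=-\delta X_{S^1}$, hence converges to $q^{\mathrm{min}}$ or $q^{\mathrm{max}}$; combined with $\rho\to 0$ and convergence of the $D$-projection to $c$, this gives $y\in W^s(N,c^{\mathrm{min}})\cup W^s(N,c^{\mathrm{max}})$. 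The main obstacle is precisely the reconciliation between the two flows: they are genuinely distinct (differing by $\delta\kappa(\rho)X_{\partial N}^v$), but this difference is vertical, so it leaves invariant both pieces of data ($\rho$ and the $D$-projection) that determine whether a trajectory ends at $c$ — and at the endpoint the resulting ambiguity in the circle coordinate is absorbed by the fact that $W^s(N,c^{\mathrm{min}})\cup W^s(N,c^{\mathrm{max}})$ fills out an entire circle fibre.
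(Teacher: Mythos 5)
Your proof is correct and takes essentially the same route as the paper's. The paper's version is phrased in terms of projections to $[0,\epsilon]\times D$: it observes that $X_N$ projects to $(\rho\partial_\rho, X_D)$ on the collar (and analogously for $X_M$ on the tubular neighbourhood), hence identifies both sides of \eqref{eq:MNequal} with preimages of $W^s(D,c)$, and then carries that identification outside the collar by noting $X_N = X_M$ there. Your formulation instead makes the vertical discrepancy $X_N - \pi_N^*X_M = \delta\kappa(\rho)X_{\partial N}^v$ explicit and tracks trajectories directly, observing that the $\rho$-coordinate and $D$-projection match while the $S^1$-coordinate ambiguity is absorbed by the fact that the two stable manifolds in $N$ jointly fill the circle fibres. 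These are the same underlying facts, just organised slightly differently — the paper suppresses the step about the $S^1$-coordinate converging to $q^{\mathrm{min}}$ or $q^{\mathrm{max}}$ once a trajectory enters the local chart, which you spell out.
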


\begin{proof}
On the collar, $X_N$ maps to $(\rho\partial_\rho, X_D)$ under projection to $[0,\epsilon] \times D$. Therefore,
\begin{equation} \label{eq:ws0}
\big( W^s(N, c^{\mathrm{min}}) \cup 
W^s(N, c^{\mathrm{max}}) \big) \cap ([0,\epsilon] \times \partial N)
= [0,\epsilon] \times \pi_{\partial N}^{-1}(W^s(D,c)).
\end{equation}
From the definition of $X_M$ in \eqref{eq:fmxm} it follows that in the tubular neighbourhood \eqref{eq:tubular}, 
\begin{equation} \label{eq:ws1}
W^s(M,c) \cap \{\|\xi\| \leq \epsilon\} = \pi_{\nu D}^{-1}(W^s(D,c)) \cap \{\|\xi\| \leq \epsilon\}.
\end{equation}
By comparing the right hand sides of \eqref{eq:ws0} and \eqref{eq:ws1}, one sees that the desired equality \eqref{eq:MNequal} holds in $(0,\epsilon] \times \partial N$, respectively its image in $M$. But outside those neighbourhoods, $X_N = X_M$ by definition, so the equality carries over by by applying the flow.
\end{proof}

We summarize part of our discussion as follows:

\begin{corollary} \label{th:project-stable}
Let $c$ be a critical point of $f_M$. 

(i) If $c$ lies in $M \setminus D$, then $W^s(M,c) = W^s(N,c)$. More precisely, these submanifolds are contained in $N \setminus \partial N$ respectively $M \setminus D$, and $\pi_N$ induces an isomorphism between them.

(ii) If $c$ lies in $D$, then $\pi_N^{-1}(W^s(M,c)) = W^s(N,c^{\mathrm{min}}) \cup W^s(N,c^{\mathrm{max}})$.
\end{corollary}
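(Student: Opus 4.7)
The plan is to prove the two parts separately, in each case by tracing pseudo-gradient flow lines and applying the lemmas established earlier in this section.

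For part (i), the first step is to show $W^s(M, c) \subset \{\|\xi\| \geq \epsilon\}$ for $c \in M \setminus D$. By the construction \eqref{eq:fmxm}, every critical point of $f_M$ inside $\{\|\xi\| \leq \epsilon\}$ lies on $D$, so $c$ is outside that neighbourhood. Moreover, along $-X_M$-trajectories inside the tubular neighbourhood one has $\partial_s(\|\xi\|^2) = -2\|\xi\|^2$; hence $\|\xi\|$ is strictly decreasing, the region $\{\|\xi\| < \epsilon\}$ is forward-invariant, and any trajectory entering it asymptotes to $D$, ruling out the limit $c \notin D$. The analogous argument on $N$ uses \eqref{eq:xn}: along $-X_N$-trajectories on the collar one has $\partial_s \rho = -\rho$, so $W^s(N, c) \subset \{\rho \geq \epsilon\}$. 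The second step is that $\kappa$ vanishes near $\rho = \epsilon$, so for $\rho \geq \epsilon$ the vector field reduces to $X_N = \rho\partial_\rho + X_{\partial N}^h$, which is carried by $\pi_N$ to $X_M = \xi\partial_\xi + X_{\nu D}^h$; and further outside the collar, $X_N = X_M$ by definition. Hence $\pi_N$ identifies the flows of $-X_N$ and $-X_M$ on the regions containing $W^s(N, c)$ and $W^s(M, c)$, and therefore identifies the two stable manifolds.

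For part (ii), decompose the preimage as
\begin{equation*}
\pi_N^{-1}(W^s(M, c)) = \pi_{\partial N}^{-1}\big(W^s(M, c) \cap D\big) \;\cup\; \pi_N^{-1}\big(W^s(M, c) \setminus D\big),
\end{equation*}
using $\pi_N^{-1}(D) = \partial N$. By Lemma \ref{th:tangent}, $W^s(M, c) \cap D = W^s(D, c)$; and combining the two parts of Lemma \ref{th:stable-unstable} gives
\begin{equation*}
\pi_{\partial N}^{-1}(W^s(D, c)) = W^s(\partial N, c^{\mathrm{min}}) \cup W^s(\partial N, c^{\mathrm{max}}).
\end{equation*}
By Lemma \ref{th:n0} these subsets of $\partial N$ coincide with the boundary traces of $W^s(N, c^{\mathrm{min}})$ and $W^s(N, c^{\mathrm{max}})$. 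For the interior contribution, Lemma \ref{th:n1} states exactly that
\begin{equation*}
\big(W^s(N, c^{\mathrm{min}}) \cup W^s(N, c^{\mathrm{max}})\big) \cap (N \setminus \partial N) = \pi_N^{-1}\big(W^s(M, c) \setminus D\big).
\end{equation*}
Assembling the boundary and interior pieces yields the claimed identity.

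The main obstacle is the argument in part (i): because of the vertical perturbation $\delta\kappa(\rho)X_{\partial N}^v$ in \eqref{eq:xn}, the vector field $X_N$ does \emph{not} agree with $\pi_N^* X_M$ on the interior of the collar, so flows and stable manifolds are not obviously identified there. The resolution is the $\rho$-monotonicity observation, which confines $W^s(N, c)$ to $\{\rho \geq \epsilon\}$ where the perturbation vanishes; the parallel monotonicity of $\|\xi\|$ on the $M$-side makes the resulting identification consistent.
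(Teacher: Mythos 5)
Your proposal is correct and follows essentially the same route as the paper's proof: for (i), the paper's argument is precisely that $W^s(M,c)$ is disjoint from the tubular neighbourhood and $W^s(N,c)$ from the collar (so that the vector fields agree on the relevant regions), and for (ii) the paper also combines Lemma \ref{th:n0}, the stable-manifold relation in $\partial N$ versus $D$ (part of Lemma \ref{th:stable-unstable}), and Lemma \ref{th:n1}. The only nitpick is that your containment $W^s(N,c)\subset\{\rho\geq\epsilon\}$ should really be disjointness from the whole closed collar (the same radial monotonicity you use shows the boundary $\{\rho=\epsilon\}$ is also excluded), but since $\kappa$ vanishes there anyway this does not affect the argument.
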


\begin{proof}
(i) follows from the fact that $W^s(M,c)$ is disjoint from the tubular neighbourhood \eqref{eq:tubular}, and $W^s(N,c)$ correspondingly from the collar \eqref{eq:collar}.

(ii) is obtained by combining: Lemma \ref{th:n0}; the corresponding relation between stable manifolds in $\partial N$ and $D$, which follows from \eqref{eq:flow-projection} (or if you like, is part of Lemma \ref{th:stable-unstable}); and Lemma \ref{th:n1}.
\end{proof}

Inside the cochain space $\mathit{CM}(N)$ defined by $(f_N,X_N)$, consider the subspace $\mathit{CM}(N \setminus \partial N) \subset \mathit{CM}(N)$ generated by critical points lying in the interior (as a reminder, all such points automatically lie outside the collar; and for $M$, one similarly has that critical points of $f_M$ not lying in $D$ must lie outside the tubular neighbourhood). One can identify
\begin{equation}
\mathit{CM}(N \setminus \partial N) \iso \mathit{CM}(M \setminus D),
\end{equation}
and therefore write
\begin{align}
\label{eq:n-1}
& \mathit{CM}^*(N) = \mathit{CM}^*(N \setminus \partial N) \oplus \mathit{CM}^*(\partial N);\;\;\text{ or } \\
\label{eq:n-2}
& \mathit{CM}^*(N) = \mathit{CM}^*(M \setminus D) \oplus \mathit{CM}^*(D)^{\mathrm{min}} \oplus \mathit{CM}^{*-1}(D)^{\mathrm{max}}.
\end{align}
So far, this concerned just the spaces of cochains; now we'll proceed to the differential.

\begin{lemma} \label{th:n2}
(i) In \eqref{eq:n-1}, $\mathit{CM}^*(N \setminus \partial N)$ is a subcomplex; and the induced differential on the quotient agrees with that previously defined on $C^*(\partial N)$. We denote the projection by 
\begin{equation} \label{eq:ipartialN}
i^*_{\partial N}: \mathit{CM}^*(N) \longrightarrow \mathit{CM}^*(\partial N). 
\end{equation}

(ii) In \eqref{eq:n-2}, $\mathit{CM}^*(M \setminus D) \oplus \mathit{CM}^*(D)^{\mathrm{min}}$ is a subcomplex; and the differential on that agrees with that on $\mathit{CM}^*(M)$, written as in \eqref{eq:m-d-splitting}. We denote the inclusion by
\begin{equation} \label{eq:mapMN} 
\pi_N^*: \mathit{CM}^*(M) \longrightarrow \mathit{CM}^*(N).
\end{equation}
\end{lemma}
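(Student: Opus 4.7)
\emph{Part (i).} The plan is to exploit Lemma \ref{th:n0}: for every critical point $c \in \partial N$, $W^u(N,c) = W^u(\partial N, c) \subset \partial N$. Any Morse trajectory starting at a $\partial N$-critical point is therefore confined to $\partial N$; contrapositively, a trajectory arriving at an interior critical point must originate at one, which proves that $\mathit{CM}(N \setminus \partial N)$ is closed under $d_N$. By the same confinement, a trajectory between two $\partial N$-critical points is literally a trajectory of $(f_{\partial N}, X_{\partial N})$, so the induced quotient differential is precisely $d_{\partial N}$.

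\emph{Part (ii), subcomplex property.} In the splitting \eqref{eq:n-2}, the only obstruction to $\mathit{CM}(M\setminus D) \oplus \mathit{CM}(D)^{\mathrm{min}}$ being closed under $d_N$ is a potential $\mathit{CM}(D)^{\mathrm{max}}$-contribution to $d_N(c^{\mathrm{min}})$. Such a contribution would come from a trajectory from some $c_-^{\mathrm{max}}$ to $c^{\mathrm{min}}$; by Lemma \ref{th:n0} it is trapped in $\partial N$ and hence counted by $d_{\partial N}$. But Lemma \ref{lem:matrixmorse} shows the max-to-min block of $d_{\partial N}$ vanishes, so no such contribution exists.

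\emph{Matching with $d_M$.} Under the identification with \eqref{eq:m-d-splitting}, both $d_M$ and the restriction of $d_N$ take the upper-triangular block form $\bigl(\begin{smallmatrix} d_{M\setminus D} & \beta \\ 0 & d_D \end{smallmatrix}\bigr)$. The $d_D$-diagonal matches by Lemma \ref{lem:matrixmorse}; the $d_{M\setminus D}$-diagonal matches because a trajectory between two interior critical points cannot enter the tube of $M$ or the collar of $N$ (in either case the radial ODE $\dot\rho = -\rho$ traps the trajectory and forces it to asymptote to $D$ or $\partial N$), and outside the tube/collar $X_N = X_M$ via $\pi_N$. The delicate point -- which I expect to be the main obstacle -- is equality of the off-diagonal block $\beta$, since here $X_M$ and $X_N$ genuinely differ inside the tube/collar by the vertical rotation $\delta\kappa(\rho)\, X^v_{\partial N}$. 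The plan is an explicit bijection: an $M$-trajectory enters the tube at some $(\epsilon e^{i\theta_0}, y_0)$ and, because $-X_M$ has zero $\theta$-component there, approaches $c$ along the fixed angular direction $\theta_\infty = \theta_0$. A transversality argument rules out $\theta_0 = q^{\mathrm{max}}$ in the zero-dimensional moduli, since that locus has codimension $1$ inside $W^s(M,c)$. Pair each such $M$-trajectory with the $N$-trajectory sharing its portion outside the collar (where $X_N = X_M$); inside the collar, the radial and $D$-base equations are identical, while $\dot\theta = -\delta\kappa(\rho)\, X_{S^1}(\theta)$ drives $\theta$ to $q^{\mathrm{min}}$ whenever $\theta_0 \neq q^{\mathrm{max}}$, so the $N$-trajectory asymptotes to $c^{\mathrm{min}}$. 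The correspondence is bijective and orientation-preserving (the linearized problems and their determinant lines agree on the region outside the collar, where all of the deformation data lives), giving $\beta_M = \beta_N$ and completing the identification of the two differentials.
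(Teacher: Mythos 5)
Your proof follows the paper's proof in outline: part (i), the subcomplex property in (ii), and the splitting of the differential comparison into diagonal-block and off-diagonal-block cases all match the paper's argument, and the $d_{M\setminus D}$- and $d_D$-diagonals are handled with the same reasoning. The point that needs more care is the off-diagonal block $\beta$. You track the angular coordinate of an $M$-trajectory inside the tube and then say ``a transversality argument rules out $\theta_0 = q^{\mathrm{max}}$, since that locus has codimension $1$ inside $W^s(M,c_+)$.'' That claim does not follow from the Morse--Smale condition for $(f_M, X_M)$ alone: Morse--Smale makes $W^u(M,c_-)$ transverse to $W^s(M,c_+)$, not to an arbitrary codimension-1 subset of it, so the zero-dimensional moduli have no a priori reason to avoid that locus. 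The paper sidesteps this entirely: by Lemma \ref{th:n1} your codimension-1 locus is exactly $\pi_N\bigl(W^s(N,c_+^{\mathrm{max}}) \cap (N\setminus\partial N)\bigr)$, and since $\mathrm{deg}(c_-) = \mathrm{deg}(c_+^{\mathrm{max}})$, the Morse--Smale condition for $(f_N, X_N)$ gives $W^u(N,c_-)\cap W^s(N,c_+^{\mathrm{max}})=\emptyset$ outright (using also that $W^u(N,c_-)=W^u(M,c_-)$ avoids the collar). So the transversality you want is genuinely available, but it should be named as the Morse--Smale condition on $N$ rather than left as an unspecified appeal; once you make that identification, the explicit ODE tracking of $\theta$ in the collar becomes unnecessary, as the paper's argument shows.
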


\begin{proof}
(i) is a straightforward consequence of Lemma \ref{th:n0}.

(ii) We already know that $\mathit{CM}(M \setminus D)$ is a subcomplex. The component of the differential sending $\mathit{CM}(D)^{\mathrm{min}}$ to $\mathit{CM}(D)^{\mathrm{max}}$ can be computed entirely inside $\partial N$, and is therefore zero by Lemma \ref{lem:matrixmorse}. This shows that $\mathit{CM}(M \setminus D) \oplus \mathit{CM}(D)^{\mathrm{min}}$ is indeed a subcomplex.

Consider a critical point $c_+$ on $D$, and its preimages $c_+^{\mathrm{min}}, c_+^{\mathrm{max}}$. Take a critical point $c_-$ in $M \setminus D = N \setminus \partial N$, with $\mathrm{deg}(c_-) = \mathrm{deg}(c_+) + 1$. Since $c_-$ and $c_+^{\mathrm{max}}$ have the same index, the Morse-Smale condition says that
\begin{equation}
W^u(N, c_-) \cap W^s(N, c_+^{\mathrm{max}}) = \emptyset.
\end{equation}
Therefore, by Lemma \ref{th:n1} and the fact that $W^u(N, c_-) \subset N \setminus \partial N$, we have
\begin{equation}
W^u(N, c_-) \cap W^s(N, c_+^{\mathrm{min}}) = W^u(N,c_-) \cap W^s(M,c).
\end{equation}
Together with the necessary sign considerations (which we omit), this shows that the $(c_-,c_+^{\mathrm{min}})$-coefficient of $d_{\partial N}$ agrees with the $(c_-,c_+)$-coefficient of $d_M$.

By definition \eqref{eq:xn}, $X_N$ points in positive $\rho$-direction along the boundary $\{\rho = \epsilon\}$ of our collar; hence, if $c$ is a critical point in $N \setminus \partial N$, then $W^s(N,c)$ is disjoint from the collar, and therefore agrees with $W^s(M,c)$. In particular, if $c_{\pm}$ are two such points, the $(c_-,c_+)$-coefficient of $d_N$ is equal to its counterpart in $d_M$. 

Finally, for two critical points $c_{\pm}$ on $D$, the $(c_-^{\mathrm{min}},c_+^{\mathrm{min}})$-component of $d_N$ is computed inside $\partial N$. By Lemmas \ref{lem:matrixmorse} and Corollary \ref{th:restrict-to-d}, this is the same as the $(c_-,c_+)$-component of $d_M$. Together, the previous three computations demonstrate the claim concerning the differential.
\end{proof}

We also want to introduce a complex involving both $N$ and its boundary, which is a more complicated version of \eqref{eq:q-deform-dn}:
\begin{equation} \label{eq:n-q}
\begin{aligned} 
& \mathit{CM}(N)_q = \mathit{CM}(N) \oplus q \mathit{CM}(\partial N)[q], \\
& d_{N,q}(c) = 
\begin{cases}
d_Nc + q(\pi_{\partial N}^* \pi_{\partial N,*} i_{\partial N}^* c) & c \in \mathit{CM}(N), \\
d_{\partial N,q}c  =d_{\partial N}c + q\pi_{\partial N}^* \pi_{\partial N,*} & c \in q\mathit{CM}(\partial N)[q].
\end{cases}
\end{aligned}
\end{equation}
In this context, the analogue of Lemma \ref{qdeformedisD}, with a similar proof, is:

\begin{lem} \label{lem:NqdeformedisM}
The map 
\begin{equation} \label{eq:qdef1} 
\mathit{CM}(M) \xrightarrow{\pi_{N}^*} \mathit{CM}(N)q^0 \hookrightarrow \mathit{CM}(N)_q 
\end{equation} 
is a quasi-isomorphism. 
\end{lem}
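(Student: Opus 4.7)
The plan is to realize the map \eqref{eq:qdef1} as the inclusion of a subcomplex of $\mathit{CM}(N)_q$, and then show the quotient is acyclic by reducing to Lemma \ref{qdeformedisD}.

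The first step is to verify that $\pi_N^*\mathit{CM}(M) \subset \mathit{CM}(N)_q$ is closed under $d_{N,q}$, not just under $d_N$. By Lemma \ref{th:n2}(ii), this subspace corresponds to $\mathit{CM}(M\setminus D) \oplus \mathit{CM}(D)^{\mathrm{min}}$ under the splitting \eqref{eq:n-2}, and is already a subcomplex for $d_N$. The deformation term $q\pi_{\partial N}^*\pi_{\partial N,*} i_{\partial N}^*$ vanishes on it: elements of $\mathit{CM}(M\setminus D)$ are killed by $i_{\partial N}^*$, and elements of $\mathit{CM}(D)^{\mathrm{min}}$ are sent by $i_{\partial N}^*$ to $\mathit{CM}(D)^{\mathrm{min}} \subset \mathit{CM}(\partial N)$, which is annihilated by $\pi_{\partial N,*}$.

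Let $Q = \mathit{CM}(N)_q / \pi_N^*\mathit{CM}(M)$; as a graded module, $Q = \mathit{CM}(D)^{\mathrm{max}}[-1] \oplus q\mathit{CM}(\partial N)[q]$. The summand $q\mathit{CM}(\partial N)[q]$ is preserved by $d_{N,q}$, with induced differential $d_{\partial N,q}$. The further quotient $\mathit{CM}(D)^{\mathrm{max}}[-1]$ inherits the differential $-d_D$, via Lemma \ref{th:n2}(i) and the matrix description in Lemma \ref{lem:matrixmorse}. The coupling between the two pieces arises solely from lifting $c^{\mathrm{max}}$ to $\mathit{CM}(N) \subset \mathit{CM}(N)_q$ and applying $d_{N,q}$: the $\mathit{CM}(M\setminus D)$ and $\mathit{CM}(D)^{\mathrm{min}}$ components of $d_N c^{\mathrm{max}}$ vanish in the quotient, leaving the $-d_D c^{\mathrm{max}}$ component, while the deformation term evaluates to $qc^{\mathrm{min}} \in q\mathit{CM}(\partial N)[q]$, since $i_{\partial N}^* c^{\mathrm{max}} = c^{\mathrm{max}}$ and $\pi_{\partial N}^*\pi_{\partial N,*}(c^{\mathrm{max}}) = c^{\mathrm{min}}$.

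To conclude, consider the long exact sequence associated to
\begin{equation}
0 \to q\mathit{CM}(\partial N)[q] \to Q \to \mathit{CM}(D)^{\mathrm{max}}[-1] \to 0.
\end{equation}
The computation above gives $\delta([c^{\mathrm{max}}]) = [qc^{\mathrm{min}}]$. Multiplication by $q$ identifies $(\mathit{CM}(\partial N)_q, d_{\partial N,q})$ with $(q\mathit{CM}(\partial N)[q], d_{\partial N,q})$, shifting degrees by $+2$, so Lemma \ref{qdeformedisD} yields an isomorphism $H^{n+1}(q\mathit{CM}(\partial N)[q]) \iso H^{n-1}(D)$ sending $[c]$ to $[qc^{\mathrm{min}}]$. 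Under this identification and the canonical $H^n(\mathit{CM}(D)^{\mathrm{max}}[-1]) = H^{n-1}(D)$, the connecting map $\delta$ becomes the identity on $H^{n-1}(D)$. The long exact sequence then forces $H^*(Q) = 0$, so \eqref{eq:qdef1} is a quasi-isomorphism. The only subtlety lies in matching up signs and degree shifts between the matrix description \eqref{eq:d-dn} and the quasi-isomorphism of Lemma \ref{qdeformedisD}; everything else is formal.
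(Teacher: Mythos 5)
Your proof is correct, and it is essentially the approach the paper has in mind: the paper declines to give details, saying only that the argument is ``similar'' to that of Lemma \ref{qdeformedisD}, and your identification of $\pi_N^*\mathit{CM}(M)$ as a $d_{N,q}$-subcomplex with acyclic quotient is exactly how such a proof would be organized. The minor stylistic difference is that you establish acyclicity of the quotient $Q$ via a two-step filtration and its long exact sequence, invoking Lemma \ref{qdeformedisD} as a black box on the subcomplex $q\,\mathit{CM}(\partial N)[q]$, whereas one could alternatively mimic the mapping-cone computation of Lemma \ref{qdeformedisD} directly on $Q$ (it is the cone of an injective-and-surjective map $\mathit{CM}(D)[q] \rightarrow q\,\mathit{CM}(D)[q]$, built out of $\chi$ and $q$-multiplication); both routes use the same inputs and the same amount of bookkeeping.
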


\section{Filtered quasi-isomorphism\label{sec:proof}}
In a Morse-Bott picture, the symplectic cohomology of a smooth divisor complement \cite{diogo-lisi19} is constructed from chains on $M \setminus D$ and on the normal circle bundle to $D$ (which is $\partial N$ in our notation). In contrast, our description of the deformed symplectic cohomology only involves the cohomology of $M$ and $D$. In this section, we explain how to reconcile those two pictures, using thimbles with constraints in normal direction to $D$ (this is partly motivated by the toy model from \cite[Section 7.3]{pomerleano-seidel23}). The construction comes in two versions, corresponding to the moduli spaces from Sections \ref{section:thimbleswithtangency} and \ref{subsec:classical-thimble}; those two combine to form the proof of Theorem \ref{th:key}.

\subsection{Background}
We begin by reviewing the local picture (since there are several versions on the literature, under different assumptions on the almost complex structures).
Let $J$ be an almost complex structure on $\bC \times \bR^{2n}$ with the following properties: 
\begin{equation} \label{eq:general-almost-complex}
\parbox{36em}{
$J$ preserves $\bR^{2n} = \{0\} \times \bR^{2n}$; and along that submanifold, it splits as a product of the standard complex structure $i$ in normal direction and some almost complex structure $J_{\bR^{2n}}$ in tangent direction.}
\end{equation}
Take a $J$-holomorphic map $v = (v_{\bC},v_{\bR^{2n}}): \bC \rightarrow \bC \times \bR^{2n}$. Here, the domain carries its standard complex structure and coordinate $z = s+it$. We assume throughout the following discussion that at $z = 0$, the map intersects $\{0\} \times \bR^{2n}$ with multiplicity $\geq w$. Consider the derivative
\begin{equation} \label{eq:jetdef} 
\mathit{ev}^w_0(v) \stackrel{\mathrm{def}}{=} \frac{\partial^w v_{\bC}}{\partial s^{w}} (0) \in \bC.
\end{equation} 
Suppose we have a diffeomorphism $\phi: \bC \times \bR^{2n} \rightarrow \bC \times \bR^{2n}$ which preserves $\{0\} \times \bR^{2n}$; and such that the derivative $D\phi$ at any point of that submanifold is compatible with the splitting $\bC \times \bR^{2n}$, and is complex-linear on the first factor. Hence, $\phi$ takes $J$ to another almost complex structure in class \eqref{eq:general-almost-complex}. Then \cite[Corollary 6.3]{cieliebak-mohnke07}
\begin{equation}
\label{eq:normalvector}
\mathit{ev}^w_0(\phi \circ v) = D\phi(v(0)) \mathit{ev}^w_0(v). 
\end{equation}
The part of $v$ in normal direction to $\{0\} \times \bR^{2n}$ satisfies a differential equation
\begin{equation} \label{eq:a-equation}
(\partial_s + i\partial_t + A_{s,t})v_{\bC} = 0,
\end{equation}
where $A_{s,t} \in \mathit{Hom}_{\bR}(\bC,\bC)$ is smooth \cite[Proof of Theorem 2.88]{wendl}. By writing down the Taylor expansion \cite[Lemma 2.82]{wendl}, and comparing that with the definition \eqref{eq:jetdef}, one sees that
\begin{equation} \label{eq:wendl}
v_{\bC}(z) = (\mathit{ev}_0^w(v)/w!) z^w + O(|z|^{w+1}).
\end{equation}
In particular:
\begin{align}
\label{eq:nonvanishing}
&
\parbox{36em}{
$\mathit{ev}^w_0(v) \neq 0$ iff the local intersection multiplicity is exactly $w$. 
}
\\
& \label{eq:continuous-extension}
\parbox{36em}{$v_{\bC}(z)/z^w$ extends continuously to $z = 0$, and the value at that point is $(\mathit{ev}_0^w(v)/w!)$.}
\end{align}

The theory can be simplified by restricting the class of almost complex structures under considerations. Given an almost complex structure $J_{\bR^{2n}}$ on $\bR^{2n}$ and a one-form $\alpha \in \Omega^1(\bR^{2n})$, one can define an almost complex structure $J_{\alpha}$ on $\bC \times \bR^{2n}$ by \cite[Lemma 2.2]{zinger-basic}
\begin{equation} \label{eq:j-alpha}
J^{\alpha} = \begin{pmatrix} i & x(\alpha + i\alpha \circ J_{\bR^{2n}}) \\ 0 & J_{\bR^{2n}} \end{pmatrix},
\end{equation}
where $x$ is the $\bC$-coordinate. 

\begin{remark} \label{th:compatible}
If $J_{\bR^{2n}}$ is compatible with some $\omega_{\bR^{2n}}$, and $d\alpha$ is of type $(1,1)$ with respect to $J_{\bR^{2n}}$, then $J^\alpha$ is compatible with respect to a similarly constructed symplectic form, $\omega^{\alpha} = \omega_{\bC} + \omega_{\bR^{2n}} + d(\half |x|^2 \alpha)$. 
\end{remark}

Going through the argument from \cite[Theorem 2.88]{wendl} shows that \eqref{eq:a-equation} now has a complex-linear order zero term:
\begin{equation}
\text{for $J = J^\alpha$,}\;\;
A_{s,t} = \alpha(\partial_t u_{\bR^{2n}}) + i\alpha(J_{\bR^{2n}}\partial_t u_{\bR^{2n}})
\in \bC.
\end{equation}
As a consequence, one gets the following sharpening of \eqref{eq:continuous-extension} (by Taylor expansion, or alternatively by arguing as in \cite[Remark 2.80]{wendl}):
\begin{equation} \label{eq:smooth-wendl}
\parbox{36em}{for $J = J^\alpha$, the continuous extension of the map $v_{\bC}(z)/z^w$ to $z = 0$ is smooth.}
\end{equation}

\begin{remark}
There is also a range of intermediate possibilities: one can fix $d \in \{1,2,\dots,\infty\}$ and allow those $J$ which agree with some $J^\alpha$ to $d$-th order along $\{0\} \times \bR^{2n}$. The effect is that $A_{s,t}$ is complex-linear to $(d-1)$-st order around $z = 0$; and via Taylor expansion, that the extension of $v_{\bC}(z)/z^w$ is a $C^d$-function. For simplicity, we will not consider those intermediate options further, even though they could be practical: for instance, taking $d = \infty$ yields the same outcome as in \eqref{eq:smooth-wendl}, without constraining the almost complex structure away from $\{0\} \times \bR^{2n}$.
\end{remark}

Our next result is a technical one, used later to describe the local structure for certain points in a moduli space. We will first give the version for almost complex structures in \eqref{eq:general-almost-complex}, and then describe how it is affected by the more specific choice \eqref{eq:j-alpha}.

\begin{lemma} \label{th:winding-number-argument}
Take a family of almost complex structures $J_{\zeta,r}$ as in \eqref{eq:general-almost-complex}, smoothly depending on parameters $\zeta \in \bC$, $r \geq 0$. Let $v_{\zeta,r} = (v_{\zeta,r}^{\bC},v_{\zeta,r}^{\bR^{2n}}): \bC \rightarrow \bC \times \bR^{2n}$ be $J_{\zeta,r}$-holomorphic maps, again smooth in $(\zeta,r)$, such that 
\begin{equation}
\text{at $z = 0$, $v_{\zeta,r}$ intersects $\{0\} \times \bR^{2n}$ with multiplicity }
\begin{cases}
w & r>0, \\
w+1 & r = 0.
\end{cases}
\end{equation}
Fix a small $\epsilon>0$. Suppose that there are $r>0$ arbitrarily close to $0$, such that the solutions of
\begin{equation} \label{eq:zeta-zeta}
v_{\zeta,r}(\zeta) \in \{0\} \times \bR^{2n},\quad 0 < |\zeta| < \epsilon,
\end{equation}
are regular. For a small $r$ with that property, there are finitely many $\zeta$ satisfying \eqref{eq:zeta-zeta}; and algebraically (counting with signs) their number is $+1$.
\end{lemma}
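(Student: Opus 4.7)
The plan is a degree-of-map / winding number argument applied to the smooth family
\[
F_r \colon \bC \longrightarrow \bC, \qquad F_r(\zeta) = v^\bC_{\zeta,r}(\zeta),
\]
whose zeros in $0 < |\zeta| < \epsilon$ are exactly the solutions being counted. The first ingredient I would assemble is the leading-order behavior of $F_r$ near $\zeta = 0$. Applying \eqref{eq:wendl} to each $v_{\zeta,r}$ and using that the intersection multiplicity at $z = 0$ is constant in $\zeta$ (namely $w+1$ at $r = 0$ and $w$ at $r > 0$), the jets $\mathit{ev}^j_0(v_{\zeta,r})$ vanish identically in $\zeta$ for every $j$ below that multiplicity. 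Combined with the smooth dependence of the family on $(z,\zeta,r)$, this gives, uniformly on a neighborhood of $\zeta = 0$,
\begin{align*}
F_0(\zeta) &= \tilde c(0)\,\zeta^{w+1} + O(|\zeta|^{w+2}), \\
F_r(\zeta) &= c(0,r)\,\zeta^w + O(|\zeta|^{w+1}),
\end{align*}
the second expansion being for any fixed small $r > 0$, with leading coefficients $\tilde c(0) = \mathit{ev}^{w+1}_0(v_{0,0})/(w+1)!$ and $c(0,r) = \mathit{ev}^w_0(v_{0,r})/w!$, both nonzero by \eqref{eq:nonvanishing}.

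Second, I would run the standard winding number argument. Shrink $\epsilon$ if necessary so that $F_0$ is nonvanishing on the punctured disc $0 < |\zeta| \leq \epsilon$, which is automatic from the expansion for $F_0$. Smoothness of the family then produces $r_1 > 0$ such that $F_r$ is nonvanishing on the circle $|\zeta| = \epsilon$ for every $r \in [0, r_1]$, so the integer-valued winding number $W(r) = \deg(F_r/|F_r|)\big|_{|\zeta| = \epsilon}$ is continuous, hence constant, in $r$; from the $F_0$ expansion one reads $W(0) = w + 1$. Now pick $r \in (0, r_1]$ satisfying the regularity hypothesis. Using the $F_r$ expansion, for a sufficiently small radius $\delta = \delta(r)$ the term $c(0,r)\zeta^w$ dominates, so $F_r$ is nonvanishing on $0 < |\zeta| \leq \delta$ with winding $w$ on $|\zeta| = \delta$. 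The algebraic count of zeros of $F_r$ in the annulus $\delta \leq |\zeta| < \epsilon$ is therefore $W(r) - w = 1$, and finiteness follows from regularity (the zeros are isolated) together with the fact that they are bounded away from both boundaries of the annulus.

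The step I expect to be the main obstacle is the justification of those initial expansions, specifically upgrading the crude Taylor-in-parameter bound $F_r(\zeta) = v^\bC_{0,r}(\zeta) + O(|\zeta|)$ to the sharp form with remainder $O(|\zeta|^{w+1})$. The point is that the parameter derivatives $\partial_\zeta^\alpha \partial_{\bar\zeta}^\beta v^\bC_{\zeta,r}(z)\big|_{\zeta = 0}$ themselves vanish to order $w$ in $z$, because $v^\bC_{\zeta,r}(z) = O(z^w)$ uniformly in $\zeta$; setting $z = \zeta$ converts those extra factors of $z^w$ into remainder terms of the required order, so that the contributions of varying the parameter get absorbed and the leading behavior in $|\zeta|$ is controlled purely by $v^\bC_{0,r}(\zeta)$. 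Smoothness in $(\zeta, r)$ then makes all implicit constants uniform for $r$ in a neighborhood of $0$, as is needed to combine the two expansions in the winding step.
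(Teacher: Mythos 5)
Your proposal follows essentially the same winding number argument as the paper: compare the winding of $\zeta \mapsto v^{\bC}_{\zeta,r}(\zeta)$ on a fixed outer circle $|\zeta| = \epsilon$ (which equals $w+1$ by continuity from $r=0$) with the winding on a small $r$-dependent inner circle $|\zeta| = \delta(r)$ (which equals $w$ since for $r > 0$ the multiplicity is $w$), and conclude the signed count in the annulus is $+1$. The extra paragraph you add — explaining why the naive Taylor-in-$\zeta$ remainder can be upgraded, because $\zeta$-derivatives of $v^\bC_{\zeta,r}(z)$ also vanish to order $w$ (resp. $w+1$) in $z$ when the multiplicity is locally constant — is a correct elaboration of what the paper compresses into the phrase ``a parametrized version of \eqref{eq:wendl},'' and is a sensible thing to flag.
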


\begin{proof}
Let's start by looking at the situation for $r = 0$. From \eqref{eq:wendl} (or rather a parametrized version of it) we see that $\zeta \mapsto v_{\zeta,0}^{\bC}(\zeta)/\zeta^{w+1}$ extends continuously over $\zeta = 0$, and is nonzero there. This implies $\zeta = 0$ is an isolated solution of $v_{\zeta,0}^{\bC}(\zeta) = 0$; and that it has multiplicity $(w+1)$, in the sense that for sufficiently small $\epsilon>0$, the loop
\begin{equation} \label{eq:winding-number-w-plus-one}
\theta \longmapsto v_{\epsilon e^{i\theta},0}^{\bC}(\epsilon e^{i\theta}) \in \bC^*
\end{equation}
has winding number $(w+1)$ around the origin. Now let's look at some small $r>0$. One can carry out the analogue of the argument above: the solution $\zeta = 0$ of $v_{\zeta,r}^{\bC}(\zeta) = 0$ is isolated; and it has multiplicity $w$, in the sense that for any sufficiently small $\delta$, the loop
\begin{equation}
\theta \longmapsto v_{\delta e^{i\theta},r}^{\bC}(\delta e^{i\theta}) \in \bC^*
\end{equation}
has winding number $w$. Note that here, $\delta$ depends on $r$. We want to take it to be less than the previous $\epsilon$, which is unproblematic. By continuity starting with \eqref{eq:winding-number-w-plus-one}, the loop
\begin{equation}
\theta \longmapsto v_{\epsilon e^{i\theta},r}^{\bC}(\epsilon e^{i\theta}) \in \bC^*
\end{equation}
still has winding number $(w+1)$. If $r$ is such that the nonzero solutions of $v_{\zeta,r}^{\bC}(\zeta) = 0$ are regular, then it follows from a comparison of winding numbers that the signed number of such solutions in the region $\delta < |\zeta| < \epsilon$ equals $+1$. By construction, there are no solutions with $0 < |\zeta| \leq \delta$, so the same signed count applies to solutions with $0 < |\zeta| < \epsilon$.
\end{proof}

\begin{lemma} \label{th:winding-number-argument-2}
In the situation of Lemma \ref{th:winding-number-argument}, suppose that the almost complex structures are of type \eqref{eq:j-alpha}. Then, for each sufficiently small $r$, there is a unique solution of \eqref{eq:zeta-zeta}. Moreover, those solutions, together with $(\zeta,r) = (0,0)$, form a family smoothly depending on $r$.
\end{lemma}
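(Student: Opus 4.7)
The plan is to recast \eqref{eq:zeta-zeta} as the zero set of a smooth map $G:\bC\times\bR\to\bC$ and apply the implicit function theorem. Using the $J^{\alpha}$ hypothesis, a parametric version of \eqref{eq:smooth-wendl} gives that $\tilde v^{\bC}_{\zeta,r}(z):=v^{\bC}_{\zeta,r}(z)/z^w$ extends smoothly across $z=0$, jointly in all of $(z,\zeta,r)$ on a neighbourhood of $(0,0,0)$. Set
\[
G(\zeta,r) \;:=\; \tilde v^{\bC}_{\zeta,r}(\zeta),
\]
so that for $\zeta\neq 0$, \eqref{eq:zeta-zeta} is equivalent to $G(\zeta,r)=0$. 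By \eqref{eq:nonvanishing}, $G(0,r)\neq 0$ for $r>0$ (multiplicity exactly $w$), while $G(0,0)=0$ (multiplicity $w+1$). Moreover $\tilde v^{\bC}_{\zeta,0}(0)=0$ for every $\zeta$, since the $r=0$ hypothesis is uniform in $\zeta$.

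The heart of the argument is to show that $dG|_{(0,0)}$, restricted to $\zeta$-variations, is an $\bR$-linear isomorphism of $\bC$. Writing $G(\zeta,r)=F(\zeta,\zeta,r)$ with $F(z,\zeta,r)=\tilde v^{\bC}_{\zeta,r}(z)$, the chain rule splits
\[
\partial_\zeta G|_{(0,0)} = \partial_z F|_{(0,0,0)} + \partial^{(2)}_\zeta F|_{(0,0,0)}, \qquad
\partial_{\bar\zeta}G|_{(0,0)} = \partial_{\bar z} F|_{(0,0,0)} + \partial^{(2)}_{\bar\zeta} F|_{(0,0,0)},
\]
where $\partial^{(2)}$ denotes differentiation in the family slot. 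Both family-slot terms vanish because $F(0,\zeta,0)\equiv 0$. Expanding $v^{\bC}_{0,0}(z)$ with multiplicity $w+1$ via \eqref{eq:wendl} yields $\tilde v^{\bC}_{0,0}(z)=Bz+O(|z|^2)$ with $B=\mathit{ev}^{w+1}_0(v_{0,0})/(w+1)!\neq 0$, so $\partial_z\tilde v^{\bC}_{0,0}(0)=B$. For the $\bar z$-derivative, I would divide the equation $(\partial_s+i\partial_t+A)v^{\bC}_{0,0}=0$ from \eqref{eq:a-equation} by the holomorphic factor $z^w$; the $J^{\alpha}$ form \eqref{eq:j-alpha} renders $A$ complex-linear, and $\tilde v^{\bC}_{0,0}$ therefore satisfies the same shape of linear Cauchy--Riemann-type PDE across $z=0$. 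Combined with $\tilde v^{\bC}_{0,0}(0)=0$, this forces $\partial_{\bar z}\tilde v^{\bC}_{0,0}(0)=0$. Hence $d_\zeta G|_{(0,0)}$ is multiplication by $B\in\bC^{*}$, and the implicit function theorem produces a unique smooth $\zeta(r)$ on a neighbourhood of $r=0$ with $\zeta(0)=0$ and $G(\zeta(r),r)=0$; for small $r>0$, automatically $\zeta(r)\neq 0$ because $G(0,r)\neq 0$.

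Local uniqueness from the implicit function theorem can be promoted to uniqueness in the whole region $0<|\zeta|<\epsilon$ either by shrinking $\epsilon$, or by noting that the IFT solution carries local intersection degree $+1$ (multiplication by $B\in\bC^{*}$ is orientation-preserving), so the total signed count $+1$ supplied by Lemma \ref{th:winding-number-argument} excludes any further zero for regular $r$, and a continuity-compactness argument handles the remaining small $r$. The main obstacle is the parametric upgrade of \eqref{eq:smooth-wendl}, i.e.\ joint smoothness of $\tilde v^{\bC}$ in $(z,\zeta,r)$: this is a standard consequence of elliptic regularity for $\bar\partial+\tfrac{1}{2}A$ applied with parameters, but it is the step most easily glossed over, and together with the short PDE argument for $\partial_{\bar z}\tilde v^{\bC}_{0,0}(0)=0$ is where the $J^{\alpha}$ hypothesis does the work that distinguishes this statement from the purely degree-theoretic Lemma \ref{th:winding-number-argument}.
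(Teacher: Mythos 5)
Your proof is correct and rests on the same mechanism as the paper's: an implicit-function-theorem argument whose input is the parametrized version of \eqref{eq:smooth-wendl} for almost complex structures of type $J^\alpha$, together with the observation that the relevant $\zeta$-differential at $(\zeta,r)=(0,0)$ is complex-linear multiplication by the nonzero number $B = \mathit{ev}^{w+1}_0(v_{0,0})/(w+1)!$. The only real difference is presentational, in how that invertibility is established: you split $\partial_\zeta G|_{(0,0)}$ by the chain rule, kill the family-slot terms using $\tilde v^{\bC}_{\zeta,0}(0)\equiv 0$, read off $\partial_z\tilde v^{\bC}_{0,0}(0)=B$ from the order-$(w+1)$ expansion \eqref{eq:wendl}, and get $\partial_{\bar z}\tilde v^{\bC}_{0,0}(0)=0$ by dividing the $\bar\partial$-equation by $z^w$ and letting the complex-linearity of $A$ carry the equation across $z=0$; the paper instead invokes \eqref{eq:smooth-wendl} one power higher, writing $v^{\bC}_{\zeta,0}(\zeta)/\zeta^w = \zeta\cdot\big(v^{\bC}_{\zeta,0}(\zeta)/\zeta^{w+1}\big)$ with the second factor smooth and nonvanishing, so that the regular zero and the vanishing of the $\bar\zeta$-derivative both drop out of the product rule. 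Your remark on upgrading IFT-local uniqueness to uniqueness on all of $\{0<|\zeta|<\epsilon\}$ is a genuine loose end that the paper leaves implicit (it tacitly shrinks $\epsilon$), and your resolution is correct.
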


\begin{proof}
Unlike the previous proof, this one relies on the inverse function theorem rather than a topological (winding number) argument. Let's again start with $r = 0$. By a parametrized version of \eqref{eq:smooth-wendl}, the extension of $\zeta \mapsto v_{\zeta,0}^{\bC}(\zeta)/\zeta^{w+1}$ to $\zeta = 0$ is smooth, and its value at $\zeta = 0$ is nonzero. Hence, the map $\zeta \mapsto v_{\zeta,0}^{\bC}(\zeta)/\zeta^w$ has a regular zero at $\zeta = 0$. It follows that $\zeta \mapsto v_{\zeta,r}^{\bC}(\zeta)/\zeta^w$, for any small $r>0$, also has a unique zero close to $\zeta = 0$; these zeros are again regular and depend smoothly on $r$. By \eqref{eq:continuous-extension} the extension of $v_{\zeta,r}^\bC(\zeta)/\zeta^w $ at $\zeta = 0$ is nonzero for all $r>0$, so the solutions we've found must have nonzero $\zeta$.
\end{proof}

\subsection{Negative approximate action\label{subsec:negative-action}}
With this in place, we turn to our application. Throughout the section, we assume that the $q$-deformed telescope construction $C_q$ has been defined using slopes as in \eqref{eq:in-between}, and that the Hamiltonians involved satisfy the bounds from Lemma \ref{th:filtered-q-telescope}; similarly, Cauchy-Riemann equations on the thimble should be chosen as in Lemma \ref{th:filtered-thimble}. 

Fix $w>0$, $m \geq 0$. We use almost complex structures and inhomogeneous terms as in Section \ref{section:thimbleswithtangency}, with an extra consistency condition:
\begin{enumerate}[label=(AT\arabic*)] \itemsep.5em
\parindent0em \parskip.5em
\item \label{item:at-forget}
{\em (Marked point at $+\infty$)}
Take the embedding $\frakT_{w+1,m-1} \rightarrow \frakT_{w,m}$ given by adding a marked point at $+\infty$ to the divisor. (If one identifies $T \iso \bC$ so that $z= +\infty \in T$ corresponds to $\zeta = 0 \in \bC$, one can think of elements of the symmetric products as monic polynomials $p$; then, the embedding just multiplies a polynomial $p(\zeta)$ by $\zeta$.) The data defining the Cauchy-Riemann equations should be chosen compatibly with this embedding.
\end{enumerate}
For our thimbles, we will only allow the situation where the limiting one-periodic orbit $x_-$ has approximate action $-(w+m)$, which by \eqref{eq:thimble-energy} corresponds to $u$ having low energy. As a consequence, we do not have to worry about bubbling of holomorphic spheres. This limited setup is exactly what enters into the definition of the graded piece \eqref{eq:graded-t} of $t_{C_q,w}$. With that in mind, the Floer-theoretic part of our construction consists of pairs $(\Sigma,u)$ as in Section \ref{section:thimbleswithtangency}, which in particular means that the map $u$ satisfies \eqref{eq:sigma-plus}. The Gromov trick turns $u$ into a pseudo-holomorphic map $v: T \rightarrow T \times M$, for the almost complex structure \eqref{eq:Gromov}, and we can compute the $w$-th derivative \eqref{eq:jetdef} of $v$ in suitable local coordinates around $T \subset D \subset T \times M$. From \eqref{eq:normalvector} one sees that this yields a well-defined normal vector to $T \times D \subset T \times M$, independent of local coordinates. Since that normal bundle is the same as that of $D \subset M$, we can write the outcome as
\begin{equation} \label{eq:u-jet}
\mathit{ev}_{+\infty}^w(u) \in (\nu D)_{u(+\infty)}.
\end{equation}

Take a Morse function $f_{\partial N}$ and a corresponding pseudo-gradient vector field, of the kind studied in Section \ref{section:Morsecircle}. We generally denote the critical points of $f_{\partial N}$ by $c^{\partial N}$. Recall that each such point lies in the preimage of some critical point $c$ of $f_D$, and is fibrewise either a local minimum or maximum; we accordingly write $c^{\partial N} = c^{\mathrm{min}}$ or $c^{\partial N} = c^{\mathrm{max}}$. The Morse-theoretic part of our construction involves a negative half-flowline $b$ for $f_{\partial N}$, with limit $c_+^{\partial N}$. There is an additional variable $r \geq 0$, which enters into the jet incidence condition
\begin{equation} \label{eq:jet-incidence}
\mathit{ev}_{+\infty}^w(u) = r \cdot b(0).
\end{equation}
If no point of $\Sigma$ lies at $+\infty$, the intersection multiplicity is $\mu_{+\infty}(u) = w$, which by \eqref{eq:nonvanishing} means that $\mathit{ev}_{+\infty}^w(u) \neq 0$. As a consequence, one can then rewrite \eqref{eq:jet-incidence} in the more familiar form
\begin{equation} \label{eq:jet-incidence-2}
\frac{\mathit{ev}_{+\infty}^w(u)}{\|\mathit{ev}_{+\infty}^w(u)\|} = b(0), \;\;
r = \|\mathit{ev}_{+\infty}^w(u)\| > 0.
\end{equation}
On the other hand, if a marked point does lie at $+\infty$, the condition becomes
\begin{equation} \label{eq:jet-incidence-3}
\mathit{ev}_{+\infty}^r(u) = 0, \;\; u(+\infty) = \pi_{\partial N}(b(0)) \in D, \;\; r = 0.
\end{equation}

The space of $(\Sigma,u,b,r)$ satisfying \eqref{eq:jet-incidence} will be denoted by $\AT_{w,m}(x_-,c_+^{\partial N})$. By projecting $b$ to $D$, and forgetting $r$, one gets a map to the moduli space from Section \ref{section:thimbleswithtangency}:
\begin{equation} \label{eq:project-half-flow-line}
\AT_{w,m}(x_-,c_+^{\partial N}) \longrightarrow \frakT_{w,m}(x_-,c_+).
\end{equation}
We will need two regularity conditions which refine those in the definition of \eqref{eq:graded-t}:
\begin{enumerate}[label=(AT\arabic*)] \itemsep.5em
\parindent0em \parskip.5em \setcounter{enumi}{1}
\item \label{item:at-main}
{\em (Main stratum)}
Consider the subspace of $\AT_{w,m}(x_{-},c_{+}^{\partial N})$ where the points of $\Sigma$ are pairwise distinct, and none of them is equal to $+\infty$. We assume that this moduli space is regular; its dimension will then be $\operatorname{deg}(x_{-})-\operatorname{deg}(c_{+}^{\partial N})+2m$.

\item \label{item:at-collision}
{\em (Collision, no marked point at $+\infty$)}
This is the analogue of \ref{item:s-collision}, adding the condition that no marked point should lie at $+\infty$; the dimension is $\mathrm{deg}(x_-) - \mathrm{deg}(c_+^{\partial N}) + 2|\Pi|$.
\end{enumerate}
 
 

\begin{lem} \label{th:at0}
Consider spaces $\AT_{w,m}(x_{-},c_{+}^{\partial N})$ of dimension $\leq 1$. 

(i) Everywhere in that space, $\Sigma$ consists of $m$ pairwise distinct points.

(ii) If the dimension is $0$ or $c_+^{\partial N} = c_+^{\mathrm{min}}$ is a fibrewise minimum, no point of $\Sigma$ can lie at $+\infty$.
\end{lem}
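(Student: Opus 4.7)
The plan is to stratify $\AT_{w,m}(x_-, c_+^{\partial N})$ by the coincidence pattern and location of the marked points, and to bound the codimension of every non-main stratum by a dimension count. The consistency \ref{item:at-forget} lets me identify the loci on which $j$ marked points pile up at $+\infty$ (forcing $r=0$ and reducing \eqref{eq:jet-incidence} to \eqref{eq:jet-incidence-3}) with the corresponding main strata of $\AT_{w+j,m-j}$, modified so that the full direction-and-magnitude jet condition is replaced by the weaker position incidence $u(+\infty) = \pi_{\partial N}(b(0))$.

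For part (i), coincidences of marked points inside $T \setminus \{+\infty\}$ are exactly \ref{item:at-collision}, hence codimension $\geq 2$. Coincidences purely at $+\infty$, with $j \geq 2$ marked points landing there and the other $m-j$ distinct and elsewhere, will be controlled by the same count used for part (ii); the answer is codimension $2j-1 \geq 3$. Mixed degenerations (some collision at $+\infty$, some elsewhere) combine codimensions additively, so also $\geq 2$. Hence no such stratum can intersect a moduli space of dimension $\leq 1$.

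Part (ii) reduces to the one remaining stratum: exactly one marked point at $+\infty$, all others pairwise distinct and in $T \setminus \{+\infty\}$. On this stratum $\mathit{ev}^w_{+\infty}(u) = 0$ is automatic, so \eqref{eq:jet-incidence} forces $r = 0$ and collapses to \eqref{eq:jet-incidence-3}. The stratum is then a fibre bundle over
\begin{equation*}
B = \bigl\{(\Sigma',u) \;:\; \Sigma' \in \mathit{Sym}_{m-1}(T \setminus \{+\infty\})\text{ is distinct},\; u\text{ has tangency }w+1\text{ at }+\infty,\; u(+\infty) \in W^s(D,c_+)\bigr\},
\end{equation*}
with fibre over $(\Sigma',u)$ equal to $\pi_{\partial N}^{-1}(u(+\infty)) \cap W^s(\partial N, c_+^{\partial N})$. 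By the natural identification of $B$ with $\frakT_{w+1,m-1}(x_-,c_+)$ (via assigning to each $u(+\infty) \in W^s(D,c_+)$ its unique half-flow line in $D$ ending at $c_+$), one has $\dim B = \mathrm{deg}(x_-) - \mathrm{deg}(c_+) + 2(m-1)$; Lemma \ref{th:stable-unstable} identifies the fibre as a single point when $c_+^{\partial N} = c_+^{\mathrm{max}}$ and a circle minus a point when $c_+^{\partial N} = c_+^{\mathrm{min}}$.

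Writing $d := \mathrm{deg}(x_-) - \mathrm{deg}(c_+^{\partial N}) + 2m$ for the main-stratum dimension, and using $\mathrm{deg}(c_+^{\mathrm{min}}) = \mathrm{deg}(c_+)$, $\mathrm{deg}(c_+^{\mathrm{max}}) = \mathrm{deg}(c_+) + 1$ from Lemma \ref{lem:matrixmorse}, arithmetic gives $\dim B = d-1$ in the $c_+^{\mathrm{max}}$ case and $\dim B = d-2$ in the $c_+^{\mathrm{min}}$ case. The \emph{main obstacle} --- and the source of the asymmetry in part (ii) --- is to recognize that although the total stratum dimension $\dim B + \dim\mathrm{fibre}$ equals $d-1$ in both cases, the extra circle of fibre directions available when $c_+^{\partial N} = c_+^{\mathrm{min}}$ cannot populate a base that is generically empty. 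Concretely, when $d \leq 0$ (either case) or when $d \leq 1$ and $c_+^{\partial N} = c_+^{\mathrm{min}}$, one has $\dim B \leq -1$; transversality applied to the base problem (a constrained $\frakT_{w+1,m-1}$-moduli space with evaluation into $W^s(D,c_+)$) then forces $B = \emptyset$, and hence the entire stratum is empty. This is exactly the content of (ii).
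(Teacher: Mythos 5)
Your argument is correct and follows essentially the same route as the paper's: reduce to the observation that the locus with marked points at $+\infty$ maps (by projecting the half-flow line to $D$) into the space $\frakT_{w+k,m-k}(x_-,c_+)$, whose dimension is negative under the stated hypotheses, so the space — and a fortiori any stratum mapping into it — is empty by the transversality already built into Section~\ref{section:thimbleswithtangency}. The fibre-bundle description over $B \cong \frakT_{w+1,m-1}(x_-,c_+)$ with circle-arc versus point fibres is a helpful gloss on why the extra fibre direction in the $c_+^{\mathrm{min}}$ case doesn't rescue the stratum, but it does not change the logical content of the dimension count.
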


\begin{proof}
(ii) If at least one the points of $\Sigma$ lies at $+\infty$, the local intersection multiplicity at that point increases. The condition \ref{item:at-forget} implies that after projecting the flow half-line to $D$, we get an element of $\frakT_{w+k,m-k}(x_-,c_+)$ for some $k>0$. That space satisfies
\begin{equation} \label{eq:diff-dim}
\begin{aligned}
& \mathrm{dim} \, \frakT_{w+k,m-k}(x_-,c_+) = 
\mathrm{deg}(x_-) - \mathrm{deg}(c_+) + 2m - 2k
\\ & \quad = \mathrm{dim} \, \AT_{w,m}(x_-,c_+^{\partial N}) - 2k +
\begin{cases} 
0 & \text{if } c_+^{\partial N} = c_+^{\mathrm{min}} \\
1 & \text{if } c_+^{\partial N} = c_+^{\mathrm{max}}.
\end{cases}
\end{aligned}
\end{equation}
Under the assumptions we have imposed, this dimension is negative (and a fortiori, so is that of the strata in $\frakT_{w+k,m-k}(x_-,c_+)$ where other marked points coincide). Hence, those spaces will be empty, as part of the transversality assumptions underlying the construction in Section \ref{section:thimbleswithtangency}.

(i) As long as no marked point lies at $+\infty$, the result follows immediately from \ref{item:at-collision}. For the remaining cases, one instead appeals to the same strategy as in (ii).
\end{proof}

\begin{lem} \label{lem:bijectionATT}
Take a space $\AT_{w,m}(x_-,c_+^{\mathrm{min}})$ of dimension $0$. Then \eqref{eq:project-half-flow-line} is an isomorphism.
\end{lem}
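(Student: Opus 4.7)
The plan is to exhibit a two-sided inverse to \eqref{eq:project-half-flow-line}, with the only nontrivial point being that the naturally constructed flow line has to land in the correct stable manifold.

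First I would apply Lemma \ref{th:at0}(ii): since the moduli space has dimension $0$ and $c_+^{\mathrm{min}}$ is a fibrewise minimum, no marked point can lie at $+\infty$, so the local intersection multiplicity of $u$ with $D$ at $+\infty$ is exactly $w$. By \eqref{eq:nonvanishing} the jet $\mathit{ev}_{+\infty}^w(u)$ is then a nonzero vector in $(\nu D)_{u(+\infty)}$. The incidence condition \eqref{eq:jet-incidence}, rewritten as in \eqref{eq:jet-incidence-2}, therefore forces $r = \|\mathit{ev}_{+\infty}^w(u)\|$ and $b(0) = \mathit{ev}_{+\infty}^w(u)/r$, viewed as a point on the unit circle bundle $\partial N$; since a negative half-trajectory is uniquely determined by its starting point, this pins down $(b,r)$ completely. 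This gives injectivity of \eqref{eq:project-half-flow-line} and tells me what the candidate inverse has to be.

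The main obstacle is then to verify that this $b$ actually limits to $c_+^{\mathrm{min}}$ for every $(\Sigma, u) \in \frakT_{w,m}(x_-, c_+)$. By \eqref{eq:flow-projection} the trajectory $b$ projects under $\pi_{\partial N}$ to the half-flow line in $W^s(D, c_+)$ starting at $u(+\infty)$, so Lemma \ref{th:stable-unstable}(ii) shows that $b$ must limit either to $c_+^{\mathrm{min}}$ or, precisely when $b(0)$ coincides with the unique section point of $W^s(\partial N, c_+^{\mathrm{max}})$ in the fibre over $u(+\infty)$, to $c_+^{\mathrm{max}}$. The bad case is exactly the condition for the same data $(\Sigma, u, b, r)$ to define an element of $\AT_{w,m}(x_-, c_+^{\mathrm{max}})$.

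The closing step is a dimension count eliminating the bad case. Using that $\mathrm{deg}(c_+^{\mathrm{max}}) = \mathrm{deg}(c_+^{\mathrm{min}}) + 1$ from \eqref{eq:decomposition}, the expected dimension of $\AT_{w,m}(x_-, c_+^{\mathrm{max}})$ per \ref{item:at-main} is $\mathrm{dim}\, \AT_{w,m}(x_-, c_+^{\mathrm{min}}) - 1 = -1$. Applying the regularity conventions in \ref{item:at-main}, \ref{item:at-collision} to $c_+^{\mathrm{max}}$ as well (which is part of the ambient generic choice of data) forces this moduli space to be empty, so the bad case never occurs. The candidate inverse is thus defined everywhere, establishing the bijection.
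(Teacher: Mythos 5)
Your argument is correct and follows essentially the same route as the paper's own proof: Lemma \ref{th:at0}(ii) together with \eqref{eq:jet-incidence-2} gives both injectivity and the candidate inverse, and surjectivity is then reduced to the emptiness of $\AT_{w,m}(x_-,c_+^{\mathrm{max}})$ by the same dimension count. The only difference is that you supply explicit references (Lemma \ref{th:stable-unstable}(ii) for where the lifted half-trajectory can converge, and \ref{item:at-main} for the regularity that makes the negative-dimensional space empty) where the paper leaves these implicit.
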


\begin{proof}
From Lemma \ref{th:at0}(ii) we see that $u$ determines $r$ and $b$ uniquely, which means that \eqref{eq:project-half-flow-line} is injective. Take an element of $\frakT_{w,m}(x_-,c_+)$. By \eqref{eq:diff-dim} this has $\mu_{+\infty}(u) = w$, hence we can lift the half-flow line to $\partial N$ so that \eqref{eq:jet-incidence-2} is satisfied. For dimension reasons, we have $\AT_{w,m}(x_-,c_+^{\mathrm{max}}) = \emptyset$, so the lifted half-flow-line must converge to $c_+^{\mathrm{min}}$. Hence, \eqref{eq:project-half-flow-line} is surjective as well.
\end{proof}

We will need to spend some time discussing the case omitted in Lemma \ref{th:at0}, of one-dimensional spaces $\AT_{w,m}(x_-,c_+^{\mathrm{max}})$. In such spaces, one can have points where \eqref{eq:jet-incidence-3} occurs. For dimension reasons, $\Sigma$ will still consist of pairwise distinct points, let's say $\Sigma = (z_1,\dots,z_m)$ with $z_m = +\infty$, and therefore $\mu_{+\infty}(u) = w+1$. We call these ``expected boundary points'', even though it is not a priori what the local structure of the moduli space is (in the standard Banach space setup for $\AT_{w,m}(x_-,c_+^{\mathrm{max}})$, the ``expected boundary points'' are not regular).

\begin{lemma} \label{th:expected-boundary}
There is a bijection between ``expected boundary points'' and points of the zero-dimensional space $\frakT_{w+1,m-1}(x_-,c_+)$ (hence also with $\AT_{w+1,m-1}(x_-,c_+^{\mathrm{min}})$, by Lemma \ref{lem:bijectionATT}).
\end{lemma}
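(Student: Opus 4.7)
The plan is to construct two explicit maps between these sets and verify they are mutual inverses; the whole argument is essentially a bookkeeping exercise combining the parameter-space consistency condition \ref{item:at-forget} with the structural description of stable manifolds from Section \ref{section:Morsecircle}.

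For the forward map, I would take an expected boundary point $(\Sigma,u,b,0) \in \AT_{w,m}(x_-,c_+^{\mathrm{max}})$ with $+\infty \in \Sigma$, set $\Sigma' = \Sigma \setminus \{+\infty\}$, and simply discard $(b,0)$. The consistency condition \ref{item:at-forget} identifies the Cauchy-Riemann data on $\frakT_{w+1,m-1}$ with the restriction of the data on $\frakT_{w,m}$ to the stratum where one marked point is at $+\infty$; since $u$ has $\mu_{+\infty}(u) = w+1$ and multiplicity one at each point of $\Sigma'$, the pair $(\Sigma',u)$ lies in $\frakT_{w+1,m-1}(x_-,c_+)$.

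For the inverse map, I would start from $(\Sigma',u) \in \frakT_{w+1,m-1}(x_-,c_+)$, set $\Sigma = \Sigma' \cup \{+\infty\}$ and $r = 0$, and lift the associated half-flow line. Again by \ref{item:at-forget}, $u$ still satisfies the Cauchy-Riemann equation after reinterpretation. The given element comes with a negative half-flow line $b'$ for $f_D$ satisfying $b'(0) = u(+\infty)$ and converging to $c_+$. The key (and only nontrivial) step is lifting $b'$ uniquely to a half-flow line $b$ in $\partial N$ with limit $c_+^{\mathrm{max}}$: here I would invoke Lemma \ref{th:stable-unstable}(i), which says that $\pi_{\partial N}$ restricts to a diffeomorphism $W^s(\partial N,c_+^{\mathrm{max}}) \to W^s(f_D,c_+)$. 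This produces a unique preimage of $b'(0)$ in $W^s(\partial N,c_+^{\mathrm{max}})$, and lifting the flow of $-X_{\partial N}$ from there yields $b$.

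To complete the inverse map, I would verify that the quadruple $(\Sigma,u,b,0)$ satisfies the jet condition \eqref{eq:jet-incidence-3}: the vanishing $\mathit{ev}^w_{+\infty}(u) = 0$ is automatic from $\mu_{+\infty}(u) = w+1 > w$ by \eqref{eq:nonvanishing}, and $u(+\infty) = \pi_{\partial N}(b(0))$ holds by construction of the lift, while $b$ converges to $c_+^{\mathrm{max}}$ by the choice of section. The two maps are visibly mutually inverse. I do not anticipate any serious obstacles; the consistency condition takes care of the Floer-theoretic side, and the Morse-theoretic side reduces to Lemma \ref{th:stable-unstable}(i). The bijection also extends to $\AT_{w+1,m-1}(x_-,c_+^{\mathrm{min}})$ by composing with Lemma \ref{lem:bijectionATT}, as indicated in the statement.
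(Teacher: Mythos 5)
Your argument follows the paper's proof: the paper also defines the forward map by projecting the half-flow line via $\pi_{\partial N}$ (cf.\ \eqref{eq:project-half-flow-line}) and invokes Lemma \ref{th:stable-unstable}(i) for bijectivity; you spell out the inverse explicitly, which is where the content of that Lemma is used. One small slip in the forward map: you write that you ``discard $(b,0)$'' and then conclude that the pair $(\Sigma',u)$ lies in $\frakT_{w+1,m-1}(x_-,c_+)$; but that moduli space consists of triples $(\Sigma',u,b')$ with $b'$ a half-flow line in $D$, so you should not discard $b$ but rather replace it by its projection $\pi_{\partial N}\circ b$ — consistent with what your inverse construction implicitly assumes. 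The rest of the argument (in particular the unique lift of $b'$ using the diffeomorphism $W^s(\partial N,c_+^{\mathrm{max}})\to W^s(D,c_+)$, and the observation that $\mu_{+\infty}(u)=w+1$ forces $\mathit{ev}^w_{+\infty}(u)=0$) is correct.
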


\begin{proof}
The map to $\frakT_{w+1,m-1}(x_-,c_+)$ is defined as in \eqref{eq:project-half-flow-line}, by projecting the half-flow line. It is bijective by Lemma \ref{th:stable-unstable}(i).
\end{proof}

\begin{lemma} \label{th:tilde}
Near an ``expected boundary point'', consider the larger space $\widetilde{\AT}_{w,m}(x_-,c_+^{\mathrm{max}})$ where the condition $u(z_m) \in D$ has been dropped (but note that the Cauchy-Riemann equation satisfied by $u$ still depends on $z_m$, through our choice of auxiliary data). This larger space is smooth, having coordinates $(\zeta,r) \in \bC \times \bR^{\geq 0}$ where $\exp(2\pi i z_m) = 1/\zeta$ (so that the ``expected boundary point'' itself lies at the origin).
\end{lemma}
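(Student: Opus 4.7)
The plan is to identify $\widetilde{\AT}_{w,m}(x_-,c_+^{\mathrm{max}})$ near the expected boundary point $p^*$ with an explicit slice inside an extended $(\Sigma, u)$-moduli space on which $(\zeta, r)$ become manifest coordinates. First, I would choose local coordinates on $M$ near $p_0 := u^*(+\infty) \in D$ in which $D$ corresponds to $\{0\} \times \bR^{2n-2}$ and the Gromov-trick almost complex structure on $T \times M$ is of type $J^\alpha$ from \eqref{eq:j-alpha}; by Remark \ref{th:compatible} together with the consistency \ref{item:at-forget}, this restriction is compatible with the rest of our auxiliary data. Writing $u = (v_{\bC}, v_{\bR^{2n-2}})$ in these coordinates, the enhanced regularity \eqref{eq:smooth-wendl} implies that $\zeta \mapsto v_{\bC}(\zeta)/\zeta^w$ extends smoothly across $\zeta = 0$ (the coordinate corresponding to $+\infty \in T$), so $\mathrm{ev}^w_{+\infty}(u)$ depends smoothly on $(\Sigma, u)$ in families and vanishes at $p^*$ since $u^*$ has tangency $w+1$ there.

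Next, I would decouple the Morse data from $(\Sigma, u)$ via Lemma \ref{th:stable-unstable}(i): $W^s(\partial N, c_+^{\mathrm{max}})$ is a section of $\pi_{\partial N}$ over $W^s(D, c_+)$, so after trivializing $\nu D$ in a neighborhood of $p_0$ so that this section becomes the constant vector $1 \in \bC$, the coupling $\mathrm{ev}^w_{+\infty}(u) = r \cdot b(0)$ with $b(0) \in W^s(\partial N, c_+^{\mathrm{max}})$ and $r \geq 0$ becomes equivalent to
\begin{equation*}
u(+\infty) \in W^s(D, c_+), \qquad \mathrm{ev}^w_{+\infty}(u) \in \bR^{\geq 0} \subset \bC,
\end{equation*}
with $r = \mathrm{ev}^w_{+\infty}(u)$ and $b$ uniquely recovered from $u(+\infty)$. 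Near $p^*$, $\widetilde{\AT}$ is therefore identified with the locus inside the extended moduli space $\widetilde{\frakM}$ of $(\Sigma, u)$-pairs (usual tangency conditions except that $u(z_m) \in D$ is dropped, asymptote $x_-$) cut out by these two conditions; a standard Fredholm count gives $\dim \widetilde{\frakM} = 2m + \deg(x_-) + 2$, and the two cut-out conditions subtract $\deg(c_+) + 1$, yielding the claimed dimension $3$.

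Finally, I would verify that the smooth evaluation
\begin{equation*}
\Xi : \widetilde{\frakM} \longrightarrow D \times \bC, \qquad (\Sigma, u) \longmapsto \bigl(u(+\infty),\, \mathrm{ev}^w_{+\infty}(u)\bigr),
\end{equation*}
is transverse at $p^*$ to $W^s(D, c_+) \times \bR \subset D \times \bC$. Transversality in the $D$-factor is standard genericity; transversality in the $\bC$-factor is equivalent to the assertion that the higher-tangency locus $\{\mathrm{ev}^w_{+\infty}(u) = 0\}$ meets $\{u(+\infty) \in W^s(D, c_+)\}$ transversally in $\widetilde{\frakM}$ at $p^*$, which is precisely the regularity of the zero-dimensional space $\AT_{w+1, m-1}(x_-, c_+^{\mathrm{min}})$ corresponding to $p^*$ via Lemmas \ref{lem:bijectionATT} and \ref{th:expected-boundary}. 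The implicit function theorem then endows $\widetilde{\AT}$ with the claimed smooth manifold-with-boundary structure, and $(z_m, \mathrm{ev}^w_{+\infty}(u)) \mapsto (\zeta, r)$ read off the coordinates. The main obstacle is this transversality at the $r = 0$ corner: the smooth (rather than merely continuous) extension of $v_{\bC}(\zeta)/\zeta^w$ provided by \eqref{eq:smooth-wendl} is essential, since without the $J^\alpha$ refinement $\mathrm{ev}^w_{+\infty}$ would depend only continuously on moduli parameters and the implicit function theorem would break down; with it in hand, the zero-dimensional regularity of $\AT_{w+1, m-1}(x_-, c_+^{\mathrm{min}})$ translates into the submersivity needed to promote $(\zeta, r)$ to smooth coordinates on $\widetilde{\AT}$ near $p^*$.
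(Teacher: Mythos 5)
Your overall strategy --- reduce the smoothness of $\widetilde{\AT}_{w,m}(x_-,c_+^{\mathrm{max}})$ near the ``expected boundary point'' to the regularity of the zero-dimensional space $\frakT_{w+1,m-1}(x_-,c_+)$ (equivalently $\AT_{w+1,m-1}(x_-,c_+^{\mathrm{min}})$), then invoke the implicit function theorem with parameters $(\zeta,r)$ --- is essentially the paper's proof, which phrases the same reduction as a comparison of linearized operators, first on the slice $\zeta = r = 0$ and then freeing those variables.

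However, the $J^\alpha$ paragraph is wrong, in two independent ways. First, you cannot arrange for the Gromov-trick almost complex structure on $T \times M$ to be of type \eqref{eq:j-alpha} by choosing local coordinates on $M$: the almost complex structure \eqref{eq:Gromov} is determined by the auxiliary data $(J_{s,t},H_{s,t})$, and whether it has the restricted form \eqref{eq:j-alpha} near $\tilde{D}$ is a constraint on those data, not something a diffeomorphism of $M$ can manufacture. Remark~\ref{th:compatible} only identifies the symplectic form for which a given $J^\alpha$ is compatible, and \ref{item:at-forget} is a consistency-of-families condition; neither lets you recast a general $J$ in the $J^\alpha$ form. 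Second --- and this makes the first issue harmless --- \eqref{eq:smooth-wendl} is actually irrelevant to the smoothness of $(\Sigma,u) \mapsto \mathit{ev}_{+\infty}^w(u)$. That statement concerns the extension of $v_{\bC}(z)/z^w$ in the \emph{source} variable $z$ across the tangency point, which is what enters Lemma~\ref{th:winding-number-argument-2} because there one evaluates $u$ at the moving marked point $z_m$. By contrast $\mathit{ev}_{+\infty}^w(u)$ is a fixed-order derivative at the fixed point $+\infty$; once the ambient moduli space is a smooth Fredholm-regular manifold, this evaluation is automatically smooth in the moduli parameters for any $J$ respecting $D$, with no $J^\alpha$ refinement required. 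The $J^\alpha$ restriction is offered in the paper only as an optional simplification of Lemma~\ref{th:at-nasty} (see the remark following it), where it upgrades the one-dimensional $\AT_{w,m}(x_-,c_+^{\mathrm{max}})$ to a genuine manifold with boundary --- it plays no role in Lemma~\ref{th:tilde}. With that paragraph deleted, your argument works in the generality the lemma asserts and matches the paper's.
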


\begin{proof}
First, look at the subspace of $\widetilde{\AT}_{w,m}(x_-,c_+^{\mathrm{max}})$ where we set $\zeta = 0$ ($z_m = +\infty$) and $r = 0$. The ``expected boundary point'' is a regular point of this zero-dimensional subspace; this statement is equivalent to the regularity of its image in $\AT_{w+1,m-1}(x_-,c_+)$, by a comparison of linearized operators. Adding back the parameters $(\zeta, r)$ then yields a larger, but still regular, space.
\end{proof}

\begin{lemma} \label{th:at-nasty}
Near an ``expected boundary point'', make $\AT_{w,m}(x_-,c_+^{\mathrm{max}})$ smaller by requiring that $r \geq \epsilon$, for generic sufficiently small $\epsilon$. The outcome is a one-manifold with finitely many boundary points. Moreover, the algebraic count of such boundary points is $\pm 1$.
\end{lemma}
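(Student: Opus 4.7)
The plan is to realize $\AT$ locally as the zero locus of a complex-valued function on the smooth chart for $\widetilde{\AT}$ provided by Lemma \ref{th:tilde}, and then to apply Lemma \ref{th:winding-number-argument} to count the points of $\AT \cap \{r = \epsilon\}$. In the coordinates $(\zeta, r) \in \bC \times \bR^{\geq 0}$ on $\widetilde{\AT}$, where $\zeta$ parametrizes the position of the marked point $z_m$ via a local coordinate on $T$ near $+\infty$, one has a smooth family $u_{\zeta, r}: T \to M$. Applying the Gromov trick yields $\tilde{J}_{\zeta, r}$-holomorphic maps $v_{\zeta, r}: T \to T \times M$. Because $X_{s,t}|_D = 0$ and $J$ respects $D$, the off-diagonal term in \eqref{eq:Gromov} vanishes along $T \times D$, so $\tilde{J}_{\zeta, r}$ satisfies the splitting condition \eqref{eq:general-almost-complex}. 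In local coordinates identifying $T \times D$ with $\{0\} \times \bR^{2n}$ near the expected boundary point, the defining condition $u_{\zeta, r}(z_m) \in D$ of $\AT \subset \widetilde{\AT}$ becomes $v_{\zeta, r}^\bC(\zeta) = 0$.

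The multiplicity hypotheses of Lemma \ref{th:winding-number-argument} then follow: at $(\zeta, r) = (0, 0)$ the marked point coincides with $+\infty$, giving $\mu_{+\infty}(u_{0,0}) = w+1$, while for $r > 0$ the jet condition $\mathit{ev}_{+\infty}^w(u_{\zeta, r}) = r \cdot b(0) \neq 0$ forces $\mu_{+\infty} = w$ exactly, by \eqref{eq:nonvanishing}. The regularity assumption of Lemma \ref{th:winding-number-argument} will be secured by Sard's theorem applied to the smooth map $\widetilde{\AT} \to \bR^{\geq 0}$, $(\zeta, r) \mapsto r$: for generic small $\epsilon > 0$, the fiber $\{r = \epsilon\}$ intersects $\{v_{\zeta, r}^\bC(\zeta) = 0\}$ transversely. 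Combined with Lemma \ref{th:tilde}, this shows that $\AT \cap \{r \geq \epsilon\}$ is locally a smooth $1$-manifold whose boundary consists of finitely many points at $r = \epsilon$.

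Applying Lemma \ref{th:winding-number-argument} to the family $v_{\zeta, r}$ then yields that the signed topological count of those boundary points is $+1$. The Floer-theoretic orientation on $\AT_{w,m}(x_-, c_+^{\mathrm{max}})$, built from coherent orientations of determinant lines and the unstable manifold of $c_+^{\mathrm{max}}$, may differ from this topological sign by an overall factor, giving the final count $\pm 1$ stated in the lemma. The main obstacle will be tracking how the auxiliary data $b$ varies with $(\zeta, r)$ in $\widetilde{\AT}$; this is controlled through Lemma \ref{th:tilde} and the observation that, for $r > 0$, the jet incidence condition determines $b(0) = \mathit{ev}_{+\infty}^w(u)/r$ uniquely, so that the entire moduli problem is captured by the two-variable equation $v_{\zeta, r}^\bC(\zeta) = 0$ on the smooth chart.
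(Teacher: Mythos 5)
Your proposal follows essentially the same route as the paper: work in the smooth chart $(\zeta,r)$ on $\widetilde{\AT}_{w,m}(x_-,c_+^{\mathrm{max}})$ from Lemma \ref{th:tilde}, pass to the Gromov graph to realize $\AT$ locally as the zero locus of the complex-valued function $v_{\zeta,r}^{\bC}(\zeta)$, apply the winding-number Lemma \ref{th:winding-number-argument}, and allow an orientation-dependent overall sign. Two small points are worth flagging. First, Sard's theorem should not be applied to the coordinate function $r$ on all of $\widetilde{\AT}$ (every positive value is then regular, so this gives nothing); rather, it should be applied to the restriction of $r$ to the zero locus $\{v_{\zeta,r}^{\bC}(\zeta)=0,\ r>0\}$, which is a $1$-manifold by the standing transversality assumption \ref{item:at-main}, and it is the genericity of $\epsilon$ as a regular value of \emph{that} restricted map which yields the simple-root condition entering Lemma \ref{th:winding-number-argument}. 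Second, the paper pins down which sign occurs: it is the sign of the expected boundary point viewed as an element of the zero-dimensional space $\AT_{w+1,m-1}(x_-,c_+^{\mathrm{min}})$, rather than a floating orientation constant; this precise identification is what makes the cancellations in \eqref{eq:grij} come out right, even though for the bare statement of the lemma your ``$\pm 1$'' is sufficient.
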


\begin{proof}
Denote the maps in $\widetilde{\AT}_{w,m}(x_-,c_+^{\mathrm{max}})$ by $u_{\zeta,r}$. Inside that space, $\AT_{w,m}(x_-,c_+^{\mathrm{max}})$ is the subset formed by $(0,0)$ and all those $(\zeta,r)$, $r>0$, such that 
\begin{equation}
u_{\zeta,r}(z_m) \in D.
\end{equation}
Recall that $\zeta$ is just $z_m$ in certain local coordinates near $+\infty \in T$. Hence, after applying the Gromov trick to reduce to pseudo-holomorphic maps, the structure of the boundary obtained by cutting off at $r = \epsilon$ is precisely that described by Lemma \ref{th:winding-number-argument}. There, the algebraic count was given as $+1$; but depending on the sign of the ``expected boundary point'' as an element of $\AT_{w+1,m-1}(x_-,c_+)$, the Floer-theoretic orientation of $\widetilde{\AT}_{w,m}(x_-,c_+^{\mathrm{max}})$ will be equal or opposite to the standard orientation of the local coordinates $\bC \times \bR^{\geq 0}$; hence there's an additional sign that occurs here.
\end{proof}

\begin{remark}
Readers who find the local structure near ``expected boundary points'' disturbingly complicated may want to pursue the following instead. There is a global analogue of \eqref{eq:j-alpha} involving a choice of unitary connection on $\nu D$, which yields a class of compatible almost structures with restricted behaviour near $D$ (compatibility works because one can take the curvature of the connection to be $(2\pi/i)(\omega|D)$, see Remark \ref{th:compatible}; or one could drop that condition and work with tame almost complex structures). For our Cauchy-Riemann equations, we then ask that locally around $D$, the families of almost complex structures $(J_z)_{z \in T}$ should, near $z = +\infty$, be $z$-independent and of such restricted type; and similarly, that the inhomogeneous term should vanish near $z = +\infty$. Then, one can replace the use of Lemma \ref{th:winding-number-argument} in the proof of Lemma \ref{th:at-nasty} with Lemma \ref{th:winding-number-argument-2}: as a consequence, the one-dimensional spaces $\AT_{m,w}(x_-,c_+^{\mathrm{max}})$ become manifolds with boundary, with the ``expected boundary points'' as their genuine boundaries. Ultimately, either approach yields the same counting formula \eqref{eq:grij}.
\end{remark}

Our spaces have obvious compactifications $\overline{\AT}_{w,m}(x_-,c_+^{\partial N})$. Their structure is easy to analyze, by dimension-counting, and bearing in mind that bubbling of holomorphic spheres is a priori ruled out by energy considerations. We only record the outcome:

\begin{lem} \label{lem:at1}
(i) In the zero-dimensional case, we have $\AT_{w,m}(x_-,c_+^{\partial N}) = \overline{\AT}_{w,m}(x_-,c_+^{\partial N})$.

(ii) In the one-dimensional case, $\overline{\AT}_{w,m}(x_-,c_+^{\partial N}) \setminus \AT_{w,m}(x_-,c_+^{\partial N})$ consists of two kinds of points:
\begin{itemize} \itemsep.5em
\item A pseudo-gradient trajectory splits off from $b$. On the cylinder component, we still have pairwise distinct marked points, none of which lies at $+\infty$.

\item The Riemann surface splits into two pieces, one a cylinder and the other a thimble. The marked points are still pairwise distinct, and (for those on the thimble) none of them lie at $+\infty$.
\end{itemize}
In both cases, the local structure near a point $\overline{\AT}_{w,m}(x_-,c_+^{\partial N}) \setminus \AT_{w,m}(x_-,c_+^{\partial N})$ is that of a one-manifold with boundary.
\end{lem}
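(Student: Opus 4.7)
The plan is to adapt the dimension-counting strategy of Proposition \ref{th:bs} to the enlarged moduli space $\AT_{w,m}$, taking advantage of two simplifications peculiar to the low-energy setting. First I would show that sphere bubbling is entirely absent. Since $x_-$ has approximate action exactly $-(w+m)$, the energy identity \eqref{eq:thimble-energy} combined with the filtered bounds from Lemma \ref{th:filtered-thimble} forces $u$ to have essentially minimal energy, leaving no symplectic area available for any nonconstant pseudo-holomorphic sphere. This eliminates all the bubbling strata \ref{item:t-bubbling}, \ref{item:t-bubbling-2}, \ref{item:t-messy-dimension} that complicated Proposition \ref{th:bs}.

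With bubbling ruled out, the only surviving Gromov-Floer-Morse degenerations of a sequence in $\AT_{w,m}(x_-,c_+^{\partial N})$ are: Floer cylinders splitting off the $-\infty$ end of the thimble, Morse trajectories splitting off from $b$, collisions among the marked points of $\Sigma$, and marked points escaping to $+\infty$. The regularity assumptions \ref{item:at-main}, \ref{item:at-collision} give each collision codimension $\geq 2$, and each cylinder or Morse-trajectory breaking codimension exactly $1$. The dimension comparison at the heart of Lemma \ref{th:at0}(ii), namely \eqref{eq:diff-dim}, gives each marked point escaping to $+\infty$ codimension $\geq 2$, with the single exception of the ``expected boundary'' configurations occurring when $c_+^{\partial N}=c_+^{\mathrm{max}}$ and the total dimension is $1$. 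Those expected boundary configurations, however, are themselves points of $\AT_{w,m}$ (they satisfy \eqref{eq:jet-incidence} via \eqref{eq:jet-incidence-3}), and so do not contribute to $\overline{\AT}_{w,m}\setminus\AT_{w,m}$ at all.

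Part (i) is now immediate: in dimension zero no codimension-$\geq 1$ degeneration can occur, so $\overline{\AT}_{w,m}=\AT_{w,m}$. For part (ii), exactly one codimension-one breaking may occur, and the classification above forces it to be of one of the two announced types; applying the same dimension count to the principal thimble component of the limit shows that its marked points must stay pairwise distinct and remain away from $+\infty$.

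The main analytical step, where I expect the technical effort to concentrate, is the local one-manifold-with-boundary structure at these codimension-one degenerations. For Morse-trajectory breakings this is the standard gluing theorem for pseudo-gradient trajectories on $\partial N$; the jet incidence condition \eqref{eq:jet-incidence} is unaffected because the gluing happens at the opposite end of $b$. For cylinder-splittings at $-\infty$, this is standard Floer gluing for the thimble and the auxiliary cylinder, and the incidence condition is likewise untouched since it is supported at $+\infty$. In both cases the gluing carries no new wrinkles compared to the corresponding steps in the proofs of Propositions \ref{th:floer} and \ref{th:bs}, so no techniques beyond those already deployed earlier in the section are needed.
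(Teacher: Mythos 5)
Your proposal is correct and takes essentially the same approach as the paper, which only states the lemma after a one-sentence remark that it follows ``by dimension-counting, and bearing in mind that bubbling of holomorphic spheres is a priori ruled out by energy considerations.'' You have fleshed out exactly that argument: the low-energy constraint from Lemma \ref{th:filtered-thimble} plus the energy identity \eqref{eq:thimble-energy} bounds $E(u)$ below the minimal area of a nonconstant sphere, killing all bubbling strata; the collision and escape-to-$+\infty$ strata are codimension $\geq 2$ by \ref{item:at-collision} and \eqref{eq:diff-dim}; the ``expected boundary points'' belong to $\AT_{w,m}$ itself and so are irrelevant to the set $\overline{\AT}_{w,m}\setminus\AT_{w,m}$; and the remaining codimension-one degenerations are exactly Morse breaking and cylinder splitting, with the local manifold-with-boundary structure coming from standard gluing (the jet incidence condition being supported at the opposite end of the curve, as you note). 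This matches the paper's intent.
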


Counting points in zero-dimensional moduli spaces $\AT_{w,m}(x_-,c_+^{\partial N})$ defines operations
\begin{align} \label{eq:thetawm} 
\mathit{at}_{w,m}: \mathit{CM}^*(\partial N) \longrightarrow \mathit{Gr}^{-m-w}\mathit{CF}^{*-2m}(m+w). 
\end{align}
The previous geometric considerations translate into the following algebraic properties, involving the maps between Morse complexes constructed in Section \ref{sec:morse}. Lemma \ref{lem:bijectionATT} says that:
\begin{equation} \label{eq:atpi}
\mathit{at}_{w,m} \circ \pi_{\partial N}^*= Gr^{-m-w}t_{w,m}.
\end{equation} 
Next, we use Lemma \ref{lem:at1} combined with Lemma \ref{th:at-nasty} (which means that we are applying the standard signed-count-of-boundary-points-is-zero argument after cutting out a neighbourhood of each ``expected boundary point''). The outcome is
\begin{equation} \label{eq:grij}
\sum_{i+j=m} (\mathit{Gr}^{-i}d_i) \circ \mathit{at}_{w,j} =
\mathit{at}_{w,m} \circ d_{\partial N} + \mathit{at}_{w+1,m-1} \circ \pi^*_{\partial N} \pi_{\partial N,*},
\end{equation}
where it's understood that the last term is omitted for $m = 0$. Because of \eqref{eq:grij}, the $\mathit{at}_{w,m}$ can be combined into a chain map from the complex \eqref{eq:q-deform-dn} to that in \eqref{eq:g-complex}, for any $w<0$:
\begin{equation} \label{eq:thetaw}
\begin{aligned}
& a_w : CM(\partial N)_q \longrightarrow G^{-w}, \\
& a_w(c^{\partial N} q^k) = \sum_{m\geq 0} q^{k+m} \mathit{at}_{w+k,m}(c^{\partial N}).  
\end{aligned}
\end{equation}
Furthermore, \eqref{eq:atpi} says that this fits into a commutative diagram
\begin{equation} \label{eq:negative-diagram}
\xymatrix{
CM(D) \ar[d]_-{\eqref{eq:qdefo}} \ar[drr]^-{\mathit{Gr}^{-w}t_{C_q,w}} \\
CM(\partial N)_q \ar[rr]_-{\eqref{eq:thetaw}} && G^{-w}
}
\end{equation}

\begin{proposition} \label{ref:thmnegativeaction} 
The map \eqref{eq:thetaw} is a quasi-isomorphism.
\end{proposition}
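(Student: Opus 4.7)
The strategy is a standard spectral sequence argument with respect to the $q$-adic filtration on both sides. On $CM(\partial N)_q$, let $F^{\geq j}$ be the submodule of elements divisible by $q^j$; the differential $d_{\partial N,q}=d_{\partial N}+q\,\pi_{\partial N}^*\pi_{\partial N,*}$ weakly raises $q$-degree, so this is a filtration by subcomplexes with associated graded $\bigoplus_{j\geq 0}q^j(CM(\partial N),d_{\partial N})$. On the codomain $G^{-w}=\prod_{k\geq w}q^{k-w}\mathit{Gr}^{-k}CF(k)$, reindex by $j=k-w$ and filter again by the power of $q$; from \eqref{eq:graded-differential} the only components of the differential on $G^{-w}$ that contribute are the $\mathit{Gr}^{-m}d_m$ for $m\geq 0$, all of which weakly raise $q$-degree, so this is again a filtration by subcomplexes with associated graded $\bigoplus_{j\geq 0}q^j(\mathit{Gr}^{-(w+j)}CF(w+j),d_0)$. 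Since $q$ has degree $2$ and both $CM(\partial N)$ and each $\mathit{Gr}^{-k}CF(k)$ are concentrated in nonnegative degrees (Lemma~\ref{th:satisfy-epsilon-bound}(ii)), in any fixed total degree only finitely many filtration levels contribute on either side, so the filtrations are bounded in each degree and the spectral sequence argument reduces the statement to a quasi-isomorphism claim on associated gradeds.

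Reading off the definition \eqref{eq:thetaw}, the leading-order ($m=0$) term of $a_w$ with respect to the $q$-filtration sends $c^{\partial N}q^k\mapsto q^k\,\mathit{at}_{w+k,0}(c^{\partial N})$, which by \eqref{eq:grij} restricted to $m=0$ is a chain map. Thus the induced map on associated gradeds decomposes as the direct sum, over $j\geq 0$, of the maps
\begin{equation*}
\mathit{at}_{w+j,0}:(CM(\partial N),d_{\partial N})\longrightarrow(\mathit{Gr}^{-(w+j)}CF(w+j),d_0).
\end{equation*}
It is therefore enough to show that $\mathit{at}_{w',0}$ is a quasi-isomorphism for each $w'\geq w\geq 1$.

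The remaining task is a Morse--Bott comparison at a single slope $\sigma_{w'}\in(w',w'+1)$. Starting from the model Hamiltonian of Lemma~\ref{th:satisfy-epsilon-bound}, the generators of $\mathit{Gr}^{-w'}CF(w')$ come from perturbing a Morse--Bott family of $(-w')$-fold covered $S^1$-orbits on a level set of $h$; that family is canonically identified with the unit circle bundle $\partial N$, and after choosing the Hamiltonian to be induced by a small Morse perturbation coming from $(f_{\partial N},X_{\partial N})$, the low-action Floer complex is chain-isomorphic (up to higher-order corrections one can remove by a further small perturbation) to $CM(\partial N)$. In this model, $\mathit{at}_{w',0}$ is the Piunikhin--Salamon--Schwarz map associated with the Morse--Bott family, realised via thimbles with $w'$-fold tangency to $D$ at $+\infty$ coupled to a half-flow-line on $\partial N$ by \eqref{eq:jet-incidence}; an adiabatic limit argument, as carried out in \cite{ganatra-pomerleano20}, identifies it with the identity on $CM(\partial N)$ up to chain homotopy. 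The main obstacle in the argument is this last step: importing the quasi-isomorphism from \cite{ganatra-pomerleano20} while matching our setup for the parameter space $\frakT_{w',0}$, the jet evaluation \eqref{eq:u-jet}, and the Morse data of Section~\ref{section:Morsecircle}; once it is in place, the filtration argument above is formal.
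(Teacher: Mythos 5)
Your argument is correct and follows essentially the same route as the paper: both filter by powers of $q$, identify the associated graded of $a_w$ with the collection of maps $\mathit{at}_{w+k,0}: \mathit{CM}^*(\partial N) \to \mathit{Gr}^{-w-k}\mathit{CF}(w+k)$, and then invoke \cite[Theorem 4.30]{ganatra-pomerleano20} for these ``low-energy log PSS maps.'' The Morse--Bott heuristic you sketch before the citation is not logically needed (the cited theorem carries the full weight there), while your degreewise-boundedness observation is a correct and slightly more explicit justification of the completeness hypothesis the paper uses for its filtration comparison.
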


\begin{proof} 
The map is filtered with respect to the (decreasing, bounded above, complete) filtration by powers of $q$ on both sides. The associated graded with respect to this filtration consists of
\begin{equation}
\mathit{at}_{w+k,0}: \mathit{CM}^*(\partial N) \longrightarrow
\mathit{Gr}^{-w-k} \mathit{CF}(w+k)
\end{equation}
for all $k \geq 0$. By definition these are the low-energy versions of the log PSS maps studied in \cite{ganatra-pomerleano20}, which are shown to be quasi-isomorphisms in \cite[Theorem 4.30]{ganatra-pomerleano20}. 
\end{proof}

Proposition \ref{ref:thmnegativeaction}, Lemma \ref{qdeformedisD} and \eqref{eq:negative-diagram} together imply the second part of Theorem \ref{th:key}.

\subsection{Approximate action $0$\label{subsec:zero-action}}
For this, we use almost complex structures and inhomogeneous terms as in Section \ref{subsec:classical-thimble}, again with an extra consistency condition:
\begin{enumerate}[label=(AS\arabic*)] \itemsep.5em
\item
\label{item:as-consistency}
Take the embedding $\frakT_{1,m-1} \rightarrow \frakS_m$ which adds a marked point at $+\infty$. The data should be should be chosen consistently with this embedding.
\end{enumerate}
As in the definition of $s_{C_q}$ we consider pairs $(\Sigma,u)$, where $\Sigma \in \frakS_m$ and $u: T \rightarrow M$ satisfies \eqref{eq:intersect-sigma}; but we only allow only thimble maps where the limit $x_-$ has approximate action $-m$, which is what enters into the graded piece \eqref{eq:graded-s} of $s_{C_q}$. On the blowup $N$, choose a Morse function $f_N$ and pseudo-gradient vector field as in Section \ref{subsection:Morseblowup}. Given a critical point $c_+^N$, we consider half-flow lines $b$ converging to that point, joined to $u$ by requiring that
\begin{equation} \label{eq:pi-incidence}
u(+\infty) = \pi_N(b(0)).
\end{equation}
Denote the space of such $(\Sigma,u,b)$ by $\AS_m(x_-,c_+^N)$. By Corollary \ref{th:project-stable}, the condition \eqref{eq:pi-incidence} implies that $u(+\infty) \in W^s(M,c_+)$, where $c_+ = \pi_N(c_+^N)$. By replacing $b$ with the flow half-line in $M$ which starts at $u(+\infty)$, one gets a map
\begin{equation} \label{eq:as-s}
\AS_m(x_-,c_+^N) \longrightarrow \frakS_m(x_-,c_+).
\end{equation}
Furthermore, another look at Corollary \ref{th:project-stable}(i) shows that:

\begin{lem} \label{th:as-0}
Let $c_+$ be a critical point of $f_M$ lying in $M \setminus D$, and which therefore corresponds uniquely to some $c_+^N$. Then the map \eqref{eq:as-s} is an isomorphism.
\end{lem}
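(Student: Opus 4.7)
The plan is to construct an explicit inverse to \eqref{eq:as-s}, using only Corollary \ref{th:project-stable}(i). Given $c_+ \in M \setminus D$, let $c_+^N \in N \setminus \partial N$ be the unique corresponding critical point of $f_N$.

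First I would analyze where the datum $(\Sigma,u,b')$ of an element of $\frakS_m(x_-,c_+)$ lands. The incidence condition $u(+\infty) = b'(0)$ together with $b' \in W^s(M,c_+)$ forces $u(+\infty) \in W^s(M,c_+)$. By Corollary \ref{th:project-stable}(i), $W^s(M,c_+) \subset M \setminus D$, so in particular $\mu_{+\infty}(u) = 0$ and there is a well-defined lift $\pi_N^{-1}(u(+\infty)) \in N \setminus \partial N$. The same Corollary says that $\pi_N$ restricts to a diffeomorphism $W^s(N,c_+^N) \stackrel{\iso}{\to} W^s(M,c_+)$, so there is a unique point $p \in W^s(N,c_+^N)$ with $\pi_N(p) = u(+\infty)$, and then a unique half-flow line $b$ in $N$ with $b(0) = p$ and limit $c_+^N$. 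The assignment $(\Sigma,u,b') \mapsto (\Sigma,u,b)$ is the candidate inverse.

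Next I would verify that this is indeed the two-sided inverse of \eqref{eq:as-s}. In one direction, starting from $(\Sigma,u,b) \in \AS_m(x_-,c_+^N)$, Corollary \ref{th:project-stable}(i) gives $b(0) \in W^s(N,c_+^N) \subset N \setminus \partial N$, so $\pi_N \circ b$ is a genuine half-flow line in $M \setminus D$ with limit $c_+$, starting at $\pi_N(b(0)) = u(+\infty)$; running the construction above on $(\Sigma,u,\pi_N\circ b)$ returns the point $p = b(0)$ (by uniqueness of the lift inside $W^s(N,c_+^N)$) and hence $b$ itself. In the other direction, the composition is the identity by construction. Both maps clearly respect the smooth structures, because $\pi_N$ restricts to a diffeomorphism on stable manifolds.

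I do not expect a real obstacle here: once the statement of Corollary \ref{th:project-stable}(i) is in hand, the proof is a pure bookkeeping exercise and involves no Floer-theoretic input beyond what is already packaged into $\AS_m$ and $\frakS_m$. The only small point that warrants care is confirming that $u(+\infty)$ automatically avoids $D$ (so that the lift via $\pi_N$ is unambiguous and lives in $N \setminus \partial N$); this is what the hypothesis $c_+ \in M \setminus D$ is doing for us, via the containment $W^s(M,c_+) \subset M \setminus D$.
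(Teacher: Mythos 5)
Your proposal is correct and is essentially the paper's argument, just spelled out: the paper's entire proof is the remark that the isomorphism follows from ``another look at Corollary \ref{th:project-stable}(i),'' and your write-up unpacks exactly that. The key observation you isolate --- that $W^s(M,c_+) \subset M \setminus D$, so $u(+\infty)$ lies in the region where $\pi_N$ is a diffeomorphism, and the stable manifolds $W^s(N,c_+^N)$ and $W^s(M,c_+)$ correspond under $\pi_N$ --- is precisely what makes the forward and inverse maps well defined and mutually inverse.
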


Transversality questions for $\AS_m(x_-,c_+^N)$ can be viewed as making the evaluation map $(\Sigma,u) \longmapsto u(+\infty)$ transverse to $\pi_N|W^s(c_+^N): W^s(c_+^N) \rightarrow M$. Recall that if $c_+^N$ lies in $\partial N$, then $W^s(c_+^N)$ is a manifold with boundary $\partial W^s(c_+^N) = W^s(c_+^N) \cap \partial N = W^s(\partial N,c_+^N)$ (Lemma \ref{th:n0}). Let's look at the transversality issue on the boundary, and more precisely this situation:

\begin{lemma} \label{th:boundary-regularity}
Take $c_+^N$ lying on $\partial N$, and consider the subspace 
\begin{equation} \label{eq:as-subspace}
\{(\Sigma,u,b) \;:\; \text{$\Sigma$ consists of distinct points, and }
b(0) \in \partial N\} \subset \AS(x_-,c_+^N).
\end{equation}
Assuming generic choices of auxiliary data, this will be regular; and hence, points in that subspace are smooth boundary points of $\AS(x_-,c_+^N)$.
\end{lemma}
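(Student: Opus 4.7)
The plan begins with an explicit identification of the subspace. Because $b(0) \in \partial N$ forces $\pi_N(b(0)) \in D$, the incidence condition $u(+\infty) = \pi_N(b(0))$ forces $u(+\infty) \in D$; since $u^{-1}(D) = \Sigma$ and the marked points are distinct, exactly one of them must lie at $+\infty$. Via the consistency condition \ref{item:as-consistency}, the pair $(\Sigma, u)$ is then an element of the $\frakT_{1,m-1}$-moduli space (multiplicity-one tangency to $D$ at $+\infty$, together with $m-1$ further distinct marked points elsewhere). Meanwhile, since $X_N$ restricts along $\partial N$ to $X_{\partial N}$ and $c_+^N = c_+^{\partial N}$ lies in $\partial N$, the half-trajectory $b$ is a negative half-flow line of $X_{\partial N}$ with $b(0) \in W^s(\partial N, c_+^{\partial N})$. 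Thus \eqref{eq:as-subspace} is exhibited as a fibre product
\begin{equation*}
\frakT_{1,m-1}^{\circ}(x_-) \;\times_D\; W^s(\partial N, c_+^{\partial N}),
\end{equation*}
where $\circ$ indicates distinct marked points, the first factor maps to $D$ via $\mathrm{ev}_{+\infty}(\Sigma,u) = u(+\infty)$, and the second maps via $\pi_{\partial N}$.

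Next I would establish regularity of this fibre product. The factor $\frakT_{1,m-1}^{\circ}(x_-)$ is regular by the analogue of condition \ref{item:t-main}, so $\mathrm{ev}_{+\infty}$ is a well-defined smooth map to $D$. Lemma \ref{th:stable-unstable} identifies $\pi_{\partial N}(W^s(\partial N, c_+^{\partial N}))$ with the stable submanifold $W^s(D,c_+) \subset D$ (up to at most a lower-dimensional piece in the $c_+^{\mathrm{min}}$ case), which by Lemma \ref{th:tangent} agrees with the transverse intersection $W^s(M,c_+) \cap D$ under assumption \eqref{eq:stable-transverse}. A standard Sard--Smale argument on the universal moduli space, perturbing the Cauchy--Riemann data on $T$ in a small open set disjoint from all the marked points (hence compatible with the tangency constraint at $+\infty$; cf.\ \cite{cieliebak-mohnke07}), then makes $\mathrm{ev}_{+\infty}$ transverse to $W^s(D,c_+)$. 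The resulting fibre product has dimension $\deg(x_-) - \deg(c_+^{\partial N}) + 2m - 1$, one less than the interior main stratum of $\AS_m(x_-,c_+^N)$, as required for a boundary stratum.

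Finally I would establish the smooth-boundary-point claim by a local analysis parallel to Section \ref{subsec:negative-action}. Near a point of \eqref{eq:as-subspace} I would consider the enlarged space $\widetilde{\AS}_m$ in which the position of the would-be marked point $z_m$ varies freely near $+\infty$ with a complex coordinate $\zeta \in \bC$, and $b(0)$ varies freely in $W^s(N,c_+^N)$ with the collar coordinate $\rho \in [0,\epsilon]$ from \eqref{eq:collar}; a transverse-cutting argument as in Lemma \ref{th:tilde} shows that this enlarged space is smooth in the variables $(\zeta,\rho)$. Inside it, $\AS_m$ is cut out by the normal-to-$D$ component of $u_\zeta(+\infty) = \pi_N(b_\rho(0))$, a single equation in one complex unknown. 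Applying the Gromov trick and then Lemma \ref{th:winding-number-argument} (or Lemma \ref{th:winding-number-argument-2} under the $J^\alpha$-type hypothesis of the remark following Lemma \ref{th:at-nasty}) yields, for each sufficiently small $\rho \geq 0$, a unique (algebraic, respectively smooth) solution $\zeta = \zeta(\rho)$ with $\zeta(0) = 0$. The resulting one-parameter family gives the required manifold-with-boundary chart, with \eqref{eq:as-subspace} as the locus $\{\rho = 0\}$.

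The hard part is this final local analysis: it is not obvious a priori why the naive three real normal coordinates $(\zeta,\rho) \in \bC \times \bR^{\geq 0}$ should collapse to a single one-parameter family. The coupling is forced by the incidence condition $u(+\infty) = \pi_N(b(0))$ together with the structure of $\pi_N$ along $\partial N$, which collapses the $S^1$-fibre to a point in $D$, and it is precisely the winding-number count of Lemma \ref{th:winding-number-argument}, applied in parallel with the proof of Lemma \ref{th:at-nasty}, that pins down the matching family.
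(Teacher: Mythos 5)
The regularity half of your argument (the fibre-product description together with Sard--Smale to make $\mathrm{ev}_{+\infty}$ transverse) is a correct and legitimate reformulation of what the paper does. The paper instead writes down the linearized operator \eqref{eq:n-extended-linearized} directly and checks its surjectivity by a two-step decomposition: (a) standard transversality-of-evaluation covers everything in the target except the $\nu_{u(z_m)}D$ factor; (b) because $\mu_{+\infty}(u)=1$, the map $u$ meets $D$ transversally at $z_m=+\infty$, so differentiating with respect to $z_m \in T_{z_m}T$ covers the missing $\nu_{u(z_m)}D$ factor while contributing nothing to $T_{u(+\infty)}M$ (moving the marked point does not move $+\infty$). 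Your first two paragraphs reach the same conclusion. (As a small aside: in the $c_+^{\mathrm{min}}$ case $\pi_{\partial N}|_{W^s(\partial N,c_+^{\mathrm{min}})}$ is not a diffeomorphism onto $W^s(D,c_+)$ ``up to a lower-dimensional piece''; by Lemma \ref{th:stable-unstable}(ii) it is an $S^1$-minus-a-section bundle, i.e.\ a submersion of relative dimension one. The dimension count of the fibre product still works out, but your parenthetical is misleading.)

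The third paragraph, which you flag as ``the hard part,'' is where you go wrong. You are treating the $\AS$ boundary as if it were analogous to the ``expected boundary points'' of $\AT_{w,m}(x_-,c_+^{\mathrm{max}})$ in Section~\ref{subsec:negative-action}, and importing the winding-number machinery of Lemmas~\ref{th:winding-number-argument}, \ref{th:tilde}, \ref{th:at-nasty}. But these two situations are genuinely different, and the paper says so explicitly: in the $\AT$ case ``in the standard Banach space setup \ldots the `expected boundary points' are not regular,'' which is precisely why the winding-number argument is needed there (the jet-incidence constraint $\mathit{ev}_{+\infty}^w(u)=r\cdot b(0)$ degenerates as $r\to 0$). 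By contrast, the whole point of Lemma~\ref{th:boundary-regularity} is that in the $\AS$ case the boundary points \emph{are} regular in the standard sense: the boundary parameter $\rho$ is the collar coordinate of $W^s(N,c_+^N)$, a genuine boundary-defining function on the Morse side, and the incidence condition $u(+\infty)=\pi_N(b(0))$ is cut out transversally (both in the interior, by \ref{item:as-main}, and on the boundary, by Lemma~\ref{th:boundary-regularity}). Once you have surjectivity of \eqref{eq:n-extended-linearized}, the manifold-with-boundary structure is a direct consequence of the implicit function theorem; there is no extra one-complex-variable root-counting problem, because the multiplicity of $u^{-1}(D)$ near $+\infty$ is always $1$ (transversal), not jumping from $w$ to $w+1$. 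Your claim that ``it is not obvious a priori why the naive three real normal coordinates $(\zeta,\rho)$ should collapse to a single one-parameter family'' is based on a misreading of the local picture: the coupling of $\zeta$ to $\rho$ is exactly what regularity of the linearization delivers. The actually nontrivial step of the proof is step (b) above --- identifying the role of $T_{z_m}T$ in covering $\nu_{u(z_m)}D$ --- not a winding-number count.
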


\begin{proof}
The assumption on $b(0)$ requires one point of $\Sigma$ to lie at $+\infty$; let's write $\Sigma = (z_1,\dots,z_m)$ with $z_m = +\infty$. The linearization of the equations defining \eqref{eq:as-subspace} is an operator
\begin{equation} \label{eq:n-extended-linearized}
\xymatrix{
\scrE_{\partial N} = W^{2,2}(u^*TM) \oplus T_{z_1}T \oplus \cdots \oplus T_{z_m}T \oplus T_{b(0)}W^s(\partial N,c_+^N)
\ar[d] \\
\scrF_{\partial N} = W^{1,2}(u^*TM) \oplus \nu_{u(z_1)}D \oplus \cdots \oplus \nu_{u(z_m)}D \oplus T_{u(+\infty)}M.
}
\end{equation}
The notation needs some explaining (including an apology for the overlap between $T = \text{\it thimble}$ and $T = \text{\it tangent spaces}$). The $W^{2,2}(u^*TM) \rightarrow W^{1,2}(u^*TM)$ component is the standard linearized Cauchy-Riemann operator. On the domain, the direct sum of the $T_{z_k}M$ equals $T_{\Sigma}\frakS_m$; and $T_{b(0)}W^s(\partial N,c_+^N)$ expresses the freedom to move $b$ (only within the boundary). On the target space, the $\nu_{u(z_k)} D$ components measure the failure of an infinitesimal deformation to preserve $u(z_k) \in D$; and the $T_{u(+\infty)}M$ component linearizes \eqref{eq:pi-incidence}. Even though in our situation $z_m = +\infty$, we have kept distinguishing those points notationally, as they enter into the definition of the moduli space in different ways: notably, restricting \eqref{eq:n-universal-operator} to $T_{z_m}T$ gives a map with image in $\nu_{u(z_m)}D$ (the derivative of $u$) but with trivial $T_{u(+\infty)}M$-component, because moving $z_m$ doesn't affect the condition \eqref{eq:pi-incidence}.

To apply the classical Palais-Smale argument, one considers a universal version of \eqref{eq:n-extended-linearized}, which includes a suitable infinite-dimensional space of auxiliary data:
\begin{equation} \label{eq:n-universal-operator}
\text{\it (infinitesimal deformations of the data)} \oplus \scrE_{\partial N} \longrightarrow \scrF_{\partial N}.
\end{equation}
Standard transversality-of-evaluation shows that the composing \eqref{eq:n-universal-operator} with the obvious inclusions and projections yields a surjective map
\begin{equation} \label{eq:omit-m}
\begin{aligned}
& \text{\it (infinitesimal deformations of the data)} \oplus W^{2,2}(u^*TM) 
\\ & \qquad
\hookrightarrow \text{\it (infinitesimal deformations of the data)} \oplus \scrE_{\partial N}
\xrightarrow{\eqref{eq:n-universal-operator}} \scrF_{\partial N} 
\\ & \qquad
\twoheadrightarrow
W^{1,2}(u^*TM) \oplus \nu_{u(z_1)}D \oplus \cdots \oplus \nu_{u(z_{m-1})}D \oplus T_{u(+\infty)}M.
\end{aligned}
\end{equation}
For this, it is important that the evaluation points are pairwise distinct, which is why we have omitted the $\nu_{u(z_m)}D$ factor. Equivalently, if we restrict \eqref{eq:n-universal-operator} to {\it (infinitesimal deformations of the data)} $\oplus W^{2,2}(u^*TM)$, then its image equals the kernel of
\begin{equation} \label{eq:difference-z}
\scrF_{\partial N} \twoheadrightarrow \nu_{u(z_m)}D \oplus T_{u(+\infty)}M 
\xrightarrow{(\xi,\eta) \mapsto [\xi-\eta]} \nu_{u(z_m)}D.
\end{equation}
On the other hand, since only one of the points of $\Sigma$ lies at $+\infty$, we have $\mu_{+\infty}(u) = 1$, which means that the map $u$ intersects $D$ transversally at that point. Hence, the composition
\begin{equation}
T_{z_m}T \hookrightarrow \scrE_{\partial N} \xrightarrow{\eqref{eq:n-universal-operator}}
\scrF_{\partial N} \xrightarrow{\eqref{eq:difference-z}} \nu_{u(z_m)}D
\end{equation}
is onto. The two last-mentioned facts together imply surjectivity of \eqref{eq:n-universal-operator}.
\end{proof}

With that in mind, we can impose the following conditions:
\begin{enumerate}[label=(AS\arabic*)] \itemsep.5em
\parindent0em \parskip.5em \setcounter{enumi}{1}
\item \label{item:as-main}
{\em (Main stratum, includes a marked point at $+\infty$)}
Consider the subspace of $\AS_{w,m}(x_{-},c_{+}^N)$ where the points of $\Sigma$ are pairwise distinct.
We assume that this space is regular, with boundary where $b(0) \in \partial N$. Its dimension will then be $\operatorname{deg}(x_{-})-\operatorname{deg}(c_{+}^N)+2m$.
\item \label{item:as-collision}
{\em (Collision, no marked point at $+\infty$)}
This is the direct analogue of \ref{item:at-collision}, including the assumption that no marked point should lie at $+\infty$.
\end{enumerate}

\begin{lemma} \label{th:as1}
Consider spaces $\AS_m(x_-,c_+^N)$ of dimension $\leq 1$.

(i) Everywhere in such a space, $\Sigma$ consists of $m$ distinct points.

(ii) Suppose that the dimension is $0$, or $c_+^N$ lies in the interior, or $c_+^N = c_+^{\mathrm{min}}$ is a fibrewise minimum on $\partial N$; then no point of $\Sigma$ can lie at $+\infty$, hence \eqref{eq:pi-incidence} happens in $M \setminus \partial M$.
\end{lemma}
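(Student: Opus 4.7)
The plan is to mimic the proof of Lemma~\ref{th:at0}, combining the consistency condition \ref{item:as-consistency} with the dimension bounds \ref{item:t-main} and \ref{item:t-collision} from Section~\ref{section:thimbleswithtangency}. The idea is that a configuration in $\AS_m(x_-,c_+^N)$ with $k \geq 1$ marked points coinciding at $+\infty$ determines, after projecting the half-flow line $b$ to $D$ via $\pi_{\partial N}$, a configuration in $\frakT_{k, m-k}(x_-, c_+)$ where $c_+ = \pi_{\partial N}(c_+^{\partial N}) \in D$. The new ingredient compared to Lemma~\ref{th:at0} is the extra $S^1$-fibre freedom for $b(0)$ when $c_+^N \in \partial N$: by Lemma~\ref{th:stable-unstable}(ii) the fibres of $\pi_{\partial N}\colon W^s(\partial N, c_+^{\min}) \to W^s(D, c_+)$ are arcs in $S^1$, adding one dimension, while by Lemma~\ref{th:stable-unstable}(i) the fibres of $W^s(\partial N, c_+^{\max}) \to W^s(D, c_+)$ are points, adding nothing.

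For part (ii), the interior case is immediate: Corollary~\ref{th:project-stable}(i) ensures $W^s(N, c_+^N)$ lies entirely in $N \setminus \partial N$, so $b(0) \notin \partial N$ and hence $u(+\infty) = \pi_N(b(0)) \notin D$, which forces $+\infty \notin \Sigma$. In the remaining cases, suppose for contradiction that $k \geq 1$ marked points lie at $+\infty$. A direct calculation, using $\deg(c_+^{\min}) = \deg(c_+)$ together with the one-dimensional fibre contribution, gives expected dimension $\dim \AS_m(x_-, c_+^N) - 2k$ when $c_+^N = c_+^{\min}$, while $\deg(c_+^{\max}) = \deg(c_+) + 1$ combined with the point fibre gives $\dim \AS_m - 2k + 1$ when $c_+^N = c_+^{\max}$. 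Under any of the hypotheses of (ii) --- $\dim \AS_m = 0$, or $c_+^N$ in the interior, or $c_+^N = c_+^{\min}$ with $\dim \AS_m \leq 1$ --- the expected dimension of the base $\frakT_{k, m-k}(x_-, c_+)$ is strictly negative for every $k \geq 1$, so \ref{item:t-main} and \ref{item:t-collision} force $\frakT_{k, m-k}(x_-, c_+) = \emptyset$, and hence also the relevant stratum of $\AS_m$.

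For part (i), collisions of marked points away from $+\infty$ fall under \ref{item:as-collision} and contribute codimension $\geq 2$ in $\AS_m$, hence are empty when $\dim \AS_m \leq 1$. Collisions involving $+\infty$ are handled by the same projection argument, now combined with \ref{item:t-collision} on $\frakT_{k,m-k}(x_-,c_+)$ to absorb coincidences among the remaining $m - k$ points; the resulting expected dimensions remain negative under $\dim \AS_m \leq 1$. The only configuration not ruled out by this count is $c_+^N = c_+^{\max}$ with $\dim \AS_m = 1$ and $k = 1$, but that stratum already parametrizes configurations with pairwise distinct marked points, which is consistent with the assertion of (i). The main delicate point throughout is the $S^1$-fibre bookkeeping in the $c_+^{\min}$ case; once that is set up via Lemma~\ref{th:stable-unstable}, the rest is arithmetic on dimension formulas.
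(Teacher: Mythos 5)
Your proposal is correct and follows essentially the same route as the paper: project the half-flowline $b$ to $D$ via $\pi_{\partial N}$, land in $\frakT_{k,m-k}(x_-,c_+)$, and compare dimensions, using \textbf{(AS1)--(AS2)} for the non-boundary collisions. You spell out a bit more detail than the paper does (notably the general $k \geq 1$ case, where the paper only writes $\frakT_{1,m-1}$ and leaves the iteration as in Lemma~\ref{th:at0} implicit, and the explicit distinction between the $c^{\mathrm{min}}$ and $c^{\mathrm{max}}$ degree shifts), but the mechanism is the same.
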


\begin{proof}
This is the counterpart of Lemma \ref{th:at0}, and the argument is basically similar, so we will omit most of it. For part (ii), if a point lies at $+\infty$ then the same procedure as in \eqref{eq:as-s} maps $(\Sigma,u,b)$ to an element of $\frakT_{1,m-1}(x_-,c_+)$; but the dimension of that space is negative.
\end{proof}

\begin{lemma} \label{lem:bijectionATT2}
Consider a space $\AS_m(x_-,c_+^{\mathrm{min}})$ of dimension $0$. Then \eqref{eq:as-s} is an isomorphism.
\end{lemma}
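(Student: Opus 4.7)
The argument parallels that of Lemma \ref{lem:bijectionATT}, with an added dimension count to pick out the correct fibrewise critical point.

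\emph{Injectivity.} A point $(\Sigma,u,b)\in \AS_m(x_-,c_+^{\mathrm{min}})$ maps to $(\Sigma,u,\bar b)$, where $\bar b=\pi_N\circ b$ (using Corollary~\ref{th:project-stable}(i) and the fact that the flow of $X_N$ preserves $N\setminus\partial N$, since $X_N$ is tangent to $\partial N$). By Lemma~\ref{th:as1}(ii), $u(+\infty)\in M\setminus D$. Hence $b(0)$ is the unique preimage of $u(+\infty)$ under the diffeomorphism $\pi_N\colon N\setminus \partial N\to M\setminus D$; since $b$ is determined by $b(0)$ and the flow, the map is injective.

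\emph{Surjectivity.} Let $(\Sigma,u,\bar b)\in \frakS_m(x_-,c_+)$, where $c_+=\pi_N(c_+^{\mathrm{min}})\in D$. The same dimension argument as in the proof of Lemma~\ref{th:at0}(ii), this time invoking \ref{item:s-goes-to-infinity} (which would give dimension $\le -2$ were any marked point at $+\infty$), shows that no marked point of $\Sigma$ lies at $+\infty$; in particular $u(+\infty)\in M\setminus D$. Lift this point uniquely to $b(0)\in N\setminus\partial N$ and define $b\colon [0,\infty)\to N$ by the negative flow of $X_N$. The trajectory $b$ stays in $N\setminus\partial N$, and because $\pi_N$ conjugates $X_N$ to $X_M$ on the interior, $\pi_N\circ b=\bar b$. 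Compactness of $N$ together with standard convergence of Morse flows forces $b$ to converge to a critical point of $f_N$ in $\pi_N^{-1}(c_+)\cap\partial N=\{c_+^{\mathrm{min}},c_+^{\mathrm{max}}\}$.

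\emph{Ruling out $c_+^{\mathrm{max}}$.} If the limit were $c_+^{\mathrm{max}}$, the triple $(\Sigma,u,b)$ would lie in $\AS_m(x_-,c_+^{\mathrm{max}})$. By the degree computation underlying \eqref{eq:decomposition} one has $\mathrm{deg}(c_+^{\mathrm{max}})=\mathrm{deg}(c_+)+1$, so the main stratum of $\AS_m(x_-,c_+^{\mathrm{max}})$ has virtual dimension
\[
\mathrm{deg}(x_-)-\mathrm{deg}(c_+^{\mathrm{max}})+2m=\bigl(\mathrm{deg}(x_-)-\mathrm{deg}(c_+)+2m\bigr)-1=-1,
\]
which by the regularity assumption \ref{item:as-main} is empty; the collision and bubbling strata are of even lower dimension and therefore also empty. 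Consequently $b$ converges to $c_+^{\mathrm{min}}$, producing the desired preimage.

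The main ingredient that could go wrong is the last step: one needs the regularity in \ref{item:as-main} to hold not only on the main stratum but also on the boundary components coming from marked points at $+\infty$, so that negative virtual dimension genuinely implies emptiness. That is precisely what Lemma~\ref{th:boundary-regularity} was set up to provide, so there is no real obstruction.
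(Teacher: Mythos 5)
Your proof is correct and follows essentially the same route as the paper's. The only cosmetic difference is in the surjectivity step: the paper cites Lemma \ref{th:n1} directly to identify $W^s(M,c_+)\cap(M\setminus D)$ with the restriction of $W^s(N,c_+^{\mathrm{min}})\cup W^s(N,c_+^{\mathrm{max}})$ to the interior, whereas you re-derive that identification by running the lifted flow line and observing that it must converge to one of the two critical points over $c_+$; this is precisely the content of Lemma \ref{th:n1} (and ultimately Lemma \ref{th:stable-unstable}), so there is no real divergence. Your closing remark about Lemma \ref{th:boundary-regularity} is correct as a reassurance but not strictly needed, since the regularity hypothesis \ref{item:as-main} is already phrased to cover the boundary where $b(0)\in\partial N$, so a negative dimension count there kills the entire main stratum of $\AS_m(x_-,c_+^{\mathrm{max}})$ including its boundary.
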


\begin{proof}
This is the counterpart of Lemma \ref{lem:bijectionATT}. Because of Lemma \ref{th:as1}(ii), $u$ determines $b$ uniquely in this situation, hence \eqref{eq:as-s} is injective. Conversely, take a point of $\frakS_m(x_-,c_+)$. As before, no point of $\Sigma$ can lie at $+\infty$ for dimension reasons, hence $u(+\infty)$ lies in $W^s(M,c_+) \cap (M \setminus D)$, which by Lemma \ref{th:n1} is identified with $(W^s(N,c_+^{\mathrm{min}}) \cup W^s(N,c_+^{\mathit{max}}) \cap (N \setminus \partial N)$. Since $\AS_m(x_-,c_+^{\mathrm{max}}) = \emptyset$ for dimension reasons, the only option is that $u(+\infty) \in W^s(c_+^{\mathrm{min}})$, which yields a point in $\AS_m(x_-,c_+^{\mathrm{min}})$.
\end{proof}

The counterpart of Lemma \ref{th:expected-boundary}, which follows in the same way from Lemma \ref{th:stable-unstable}(i), is:

\begin{lemma} \label{lem:as-boundary}
For a one-dimensional space $\AS_m(x_-,c_+^{\mathrm{max}})$, there is a bijection between the boundary $\partial \AS_m(x_-,c_+^{\mathrm{max}})$ and $\frakT_{1,m-1}(x_-,c_+)$ (or equivalently $\AT_{1,m-1}(x_-,c_+^{\mathrm{min}})$, in view of Lemma \ref{lem:bijectionATT}).
%
\end{lemma}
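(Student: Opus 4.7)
The strategy is to directly match the data on the two sides, exploiting the consistency condition \ref{item:as-consistency} for the Cauchy-Riemann equations and the section behavior recorded in Lemma \ref{th:stable-unstable}(i). First I would characterize the boundary points: by \ref{item:as-main} (the only source of boundary in a regular one-dimensional $\AS_m$), points of $\partial\AS_m(x_-,c_+^{\mathrm{max}})$ are exactly those triples $(\Sigma,u,b)$ in which $b(0) \in \partial N$. For such a triple, the incidence condition \eqref{eq:pi-incidence} forces $u(+\infty) = \pi_N(b(0)) = \pi_{\partial N}(b(0)) \in D$, so $\mu_{+\infty}(u) \geq 1$; equivalently, at least one marked point of $\Sigma$ lies at $+\infty$. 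Combined with Lemma \ref{th:as1}(i), which ensures $\Sigma$ consists of $m$ pairwise distinct points, this forces exactly one marked point at $+\infty$ and hence $\mu_{+\infty}(u) = 1$.

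Next I would use \ref{item:as-consistency} to interpret $(\Sigma,u)$: writing $\Sigma = \Sigma' + \{+\infty\}$ with $\Sigma' \in \frakT_{1,m-1}$, the continuation map equation inherited from the embedding $\frakT_{1,m-1} \hookrightarrow \frakS_m$ coincides with the one underlying $u$, so $(\Sigma',u)$ solves precisely the equation appearing in the definition of $\frakT_{1,m-1}(x_-,c_+)$, with $c_+ := \pi_N(c_+^{\mathrm{max}})$. It remains to convert the half-flow line $b$ in $\partial N$ into a half-flow line $b'$ in $D$. Setting $b' = \pi_{\partial N} \circ b$, equation \eqref{eq:flow-projection} shows $b'$ is a negative half-flow line of $X_D$ with $\lim_{s \to \infty} b'(s) = c_+$ and $b'(0) = \pi_{\partial N}(b(0)) = u(+\infty)$, which is exactly the Morse-theoretic datum of an element of $\frakT_{1,m-1}(x_-,c_+)$. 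This defines the map $\partial\AS_m(x_-,c_+^{\mathrm{max}}) \to \frakT_{1,m-1}(x_-,c_+)$.

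For the inverse, given $(\Sigma',u,b') \in \frakT_{1,m-1}(x_-,c_+)$, I add a marked point at $+\infty$ to produce $\Sigma \in \frakS_m$, and lift $b'$ to $\partial N$. By Lemma \ref{th:stable-unstable}(i), $\pi_{\partial N}$ restricts to a diffeomorphism $W^s(\partial N, c_+^{\mathrm{max}}) \xrightarrow{\cong} W^s(D, c_+)$, so there is a unique point $\tilde{b}(0) \in W^s(\partial N, c_+^{\mathrm{max}})$ lying over $b'(0) = u(+\infty)$; flowing backwards by $-X_{\partial N}$ from $\tilde{b}(0)$ produces a unique $b$ converging to $c_+^{\mathrm{max}}$ and satisfying $\pi_{\partial N}(b(0)) = u(+\infty) = \pi_N(b(0))$. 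The triple $(\Sigma,u,b)$ lies in $\partial\AS_m(x_-,c_+^{\mathrm{max}})$, and the two constructions are clearly mutually inverse.

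\textbf{Main obstacle.} There is no deep obstacle here once the setup is unwound; the only substantive points are (a) verifying that the Cauchy-Riemann equation on $\AS_m$ restricts, at its boundary stratum with one marked point at $+\infty$, to the equation used in $\frakT_{1,m-1}$ — which is exactly enforced by \ref{item:as-consistency} — and (b) invoking the section property of $W^s(\partial N, c_+^{\mathrm{max}})$ over $W^s(D,c_+)$ to get a canonical lift of the half-flow line. One should also double-check that dimensions match (both sides are $0$-dimensional when $\mathrm{dim}\,\AS_m(x_-,c_+^{\mathrm{max}}) = 1$, since $\mathrm{deg}(c_+^{\mathrm{max}}) = \mathrm{deg}(c_+) + 1$), and that no further degenerations contribute to the boundary, which follows from the bubbling-free energy regime we are in and from Lemma \ref{th:as1}.
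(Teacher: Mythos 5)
Your proposal is correct and takes essentially the same approach as the paper: project the half-flow line via $\pi_{\partial N}$ to get the forward map, and invoke the section property of $W^s(\partial N, c_+^{\mathrm{max}}) \to W^s(D,c_+)$ from Lemma \ref{th:stable-unstable}(i) for bijectivity. The paper's proof is just a one-line reference to the parallel argument for Lemma \ref{th:expected-boundary}; your version spells out the details (boundary characterization via \ref{item:as-main}, forcing exactly one marked point at $+\infty$, the role of \ref{item:as-consistency}) that the paper leaves implicit.
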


The Gromov compactification $\overline{\AS}_{m}(x_{-},c_{+}^{N})$ of spaces of dimension $\leq 1$ adds points in exactly the same way as in Lemma \ref{lem:at1}. We use the zero-dimensional spaces to define operations
\begin{equation}
\mathit{CM}^*(N) \longrightarrow \mathit{Gr}^{-m} \mathit{CF}^{*-2m}(m).
\end{equation}
In parallel with \eqref{eq:atpi}, Lemma \ref{lem:bijectionATT2} translates into
\begin{equation} \label{eq:thetabreaking2}
\mathit{as}_m \circ \pi_N^* = \mathit{Gr}^{-m}(s_m).
\end{equation}
Counting boundary points in one-dimensional spaces $\overline{\AS}_m(x_-,c_+^{\mathrm{max}})$ (those from Lemma \ref{lem:as-boundary}, as well as those added by the compactification) yields the following counterpart of \eqref{eq:grij}:
\begin{equation} \label{eq:identitythetavss}
\sum_{i+j=m} \mathit{Gr}^{-i}d_i \circ \mathit{as}_j = \mathit{as}_j \circ d_N + 
\mathit{at}_{1,m-1} \circ \pi^*_{\partial N} \pi_{\partial N,*} i_{\partial N}^*.
\end{equation}
Therefore, the following is a chain map from \eqref{eq:n-q} to the $K = 0$ case of \eqref{eq:g-complex}:
\begin{equation} \label{eq:theta0def}
\begin{aligned}
& a_0: \mathit{CM}(N)_q \longrightarrow G^0, \\
& a_0(c^N) = \sum_m q^m \mathit{as}_m(c^N), \\[-.5em]
& a_0(c^{\partial N} q^w) = \sum_m q^{w+k} \mathit{at}_{w,m}(c^{\partial N}) \text{ for $w>0$.}
\end{aligned}
\end{equation}
Moreover, by \eqref{eq:identitythetavss} this fits into a commutative diagram
\begin{equation} \label{eq:zero-diagram}
\xymatrix{
\mathit{CM}(M) \ar[d]_-{\pi_{N}^*} \ar[drr]^-{\eqref{eq:graded-s}} \\
\mathit{CM}(\partial N)_q \ar[rr]_-{\eqref{eq:theta0def}} && G^{0}
}
\end{equation}

\begin{proposition} \label{prop:theta0quasiiso}
The map \eqref{eq:theta0def} is a quasi-isomorphism. 
\end{proposition}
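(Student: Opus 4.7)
The strategy will mirror that of Proposition \ref{ref:thmnegativeaction}: filter both complexes by powers of $q$, identify the associated graded maps with low-energy PSS-type maps, and invoke \cite[Theorem 4.30]{ganatra-pomerleano20}. First I would verify that $a_0$ is filtered with respect to the (decreasing, bounded above) $q$-filtrations on $\mathit{CM}(N)_q$ and $G^0 = \prod_{k \geq 0} q^k \mathit{Gr}^{-k} \mathit{CF}(k)$. Inspecting \eqref{eq:theta0def}: for $c \in \mathit{CM}(N)$ the image $a_0(c) = \sum_m q^m \mathit{as}_m(c)$ starts at level $q^0$, while for $c^{\partial N} q^w$ with $w \geq 1$ the image starts at level $q^w$. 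The piece $q \pi_{\partial N}^* \pi_{\partial N,*} i_{\partial N}^*$ of $d_{N,q}$ raises the $q$-power, so the differential is compatible with the filtration. Convergence of the resulting spectral sequence in each fixed total degree follows from the degree bounds on Floer and Morse generators, exactly as in Lemma \ref{th:filtered-q-telescope}.

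Next I would compute the associated graded. At level $q^0$, by Lemma \ref{th:satisfy-epsilon-bound}(ii) and \eqref{eq:in-between} every one-periodic orbit of $\bar{H}_0$ (slope in $(0,1)$) has approximate action zero, so $\mathit{Gr}^0 \mathit{CF}(0) = \mathit{CF}(0)$, and the induced map is $\mathit{as}_0: \mathit{CM}(N) \to \mathit{CF}(0)$. At level $q^w$ for $w \geq 1$, the induced map is $\mathit{at}_{w,0}: \mathit{CM}(\partial N) \to \mathit{Gr}^{-w}\mathit{CF}(w)$, which is precisely the low-energy log PSS map that appeared at the corresponding stage in the proof of Proposition \ref{ref:thmnegativeaction}, and is a quasi-isomorphism by \cite[Theorem 4.30]{ganatra-pomerleano20}.

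The main work, and the key new ingredient beyond Proposition \ref{ref:thmnegativeaction}, is showing that $\mathit{as}_0$ is itself a quasi-isomorphism. The map counts pairs consisting of a thimble disjoint from $D$ and a Morse half-flow line in $N$, joined by $u(+\infty) = \pi_N(b(0))$. Since $\pi_N$ induces a diffeomorphism $N \setminus \partial N \cong M \setminus D$, and $N$ deformation retracts onto this interior, $\mathit{CM}(N)$ is a model for $H^*(M \setminus D)$. For $\bar{H}_0$ of slope $\sigma_0 \in (0,1)$, this is the $w = 0$ case of the log PSS map from \cite[\S 4]{ganatra-pomerleano20}, which establishes exactly the required quasi-isomorphism; alternatively, it is a direct adaptation of the classical PSS argument, using the fact that the relevant thimble moduli spaces are transverse and compact by Proposition \ref{th:pss} (in the low-energy regime, bubbling is ruled out by \eqref{eq:thimble-energy}).

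Given that every associated graded map is a quasi-isomorphism and the spectral sequences converge, a standard comparison theorem shows $a_0$ itself is a quasi-isomorphism. The main obstacle will be pinning down, in the form required here, that $\mathit{as}_0$ is a quasi-isomorphism; but once one matches our thimble moduli spaces with the moduli of \cite{ganatra-pomerleano20} (where there is no restriction imposed on the multiplicity of intersection with $D$ at $+\infty$, so the $w = 0$ log PSS map is just the PSS map with a cohomological model that sees both bulk and boundary critical points of $N$), the argument completes in the same way as Proposition \ref{ref:thmnegativeaction}. Combined with Lemma \ref{lem:NqdeformedisM} and the commutative diagram \eqref{eq:zero-diagram}, this also finishes the proof of the first half of Theorem \ref{th:key}.
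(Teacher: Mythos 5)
Your proposal is correct and follows essentially the same route as the paper: filter both sides by powers of $q$, identify the associated graded pieces as $\mathit{as}_0$ (at level $q^0$) and $\mathit{at}_{w,0}$ (at levels $q^w$, $w\geq 1$), show the former is a classical PSS quasi-isomorphism for $M\setminus D$ and invoke \cite[Theorem 4.30]{ganatra-pomerleano20} for the latter, then conclude by the filtration comparison theorem. The paper states this more tersely, describing $\mathit{as}_0$ simply as ``a version of the classical Piunikhin-Salamon-Schwarz map for the manifold $M\setminus D$'' rather than as a $w=0$ log PSS map (your second, alternative characterization is the one the paper uses); otherwise the two arguments coincide.
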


\begin{proof} 
This follows from essentially the same argument as in Proposition \ref{ref:thmnegativeaction}.  Namely, the map respects the $q$-filtrations, and its associated graded consists of
\begin{align}
\label{eq:classical-pss}
& \mathit{as}_0: \mathit{CM}^*(N) \longrightarrow \mathit{Gr}^0 \mathit{CF}(0), \\
\label{eq:as-before}
& \mathit{at}_{w,0}: \mathit{CM}^*(\partial N) \longrightarrow \mathit{Gr}^{-w}\mathit{CF}(w) \text{ for $w>0$.}
\end{align}
Here \eqref{eq:classical-pss} is a version of the classical Piunikhin-Salamon-Schwarz map for the manifold $M \setminus D$, hence a quasi-isomorphism; the other pieces \eqref{eq:as-before} are quasi-isomorphisms by \cite[Theorem 4.30]{ganatra-pomerleano20}. 
\end{proof}

Combining Proposition \ref{prop:theta0quasiiso}, Lemma \ref{lem:NqdeformedisM}, and \eqref{eq:zero-diagram} yields the first part of Theorem \ref{th:key}.


\section{The equivariant theory\label{sec:equivariant}}
The constructions from Sections \ref{sec:sh} and \ref{sec:thimbles} have $S^1$-equivariant (with respect to loop rotation) extensions, based on versions of the previously considered moduli spaces with added parameters; the added parameters can be thought of as representing cycles in $BS^1$. From a geometric or analytic viewpoint  there are no new issues here, and the same applies to most of the material in the sections after this one. For that reason, details will from now on be given only where they seem particularly important; otherwise, the exposition will be reduced to its structural skeleton.

\subsection{The equivariant differential} \label{sec:equivariantdiff}
The initial piece of the equivariant differential is the BV or loop-rotation operator, a chain map
\begin{equation} \label{eq:big-delta-1}
d_{C_q}^1: C_q \longrightarrow C_q[-1],
\end{equation}
constructed as follows.
\begin{itemize} \itemsep.5em
\item 
The first ingredient of \eqref{eq:big-delta-1} are maps
\begin{equation} \label{eq:big-delta-m}
\xymatrix{
\mathit{CF}^{*-1-2m}(w+m) &&
\ar[ll]^-{\displaystyle d_m^1}_-{\includegraphics[valign=c]{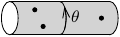}} \mathit{CF}^*(w)
}
\end{equation}
which interact with the previously defined differentials $d_m$ as follows:
\begin{equation}
\sum_{i+j=m} d_i d_j^1 + d_i^1 d_j = 0.
\end{equation}
The maps \eqref{eq:big-delta-m} are defined using parameter spaces of cylinders with marked points and a marked circle, where the circle (drawn with an arrow above) is decorated with an angle $\theta \in S^1$. The specific property of this construction is that, as the cylinder breaks up into pieces, those on the left of the angle-decorated component are rotated by $-\theta$, while those on the right are not; see Figure \ref{fig:twisted-gluing} for an example.
\item
The second ingredient is a version of the first one, where we do not divide by translation. The outcome are maps
\begin{equation} \label{eq:big-delta-m-dag}
\xymatrix{
\mathit{CF}^{*-2-2m}(w+m+1) &&
\ar[ll]^-{\displaystyle d_m^{1,\dag}}_-{\includegraphics[valign=c]{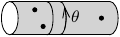}} \mathit{CF}^*(w)
}
\end{equation}
which satisfy the more complicated equation
\begin{equation}
\sum_{i+j=m} d_i d_j^{1,\dag} - d_i^{1,\dag} d_j - d_i^\dag d_j^1 + d_i^1 d_j^\dag = 0.
\end{equation}
The picture \eqref{eq:big-delta-m-dag} may be a bit confusing: following our usual habit, we have recorded the breaking of translation-invariance by formally drawing a circle on our cylinder. That circle does not carry any angle, and its relative position to the actual angle-decorated circle is arbitrary (in particular, the two can coincide). In the simplest case $m = 0$, the relevant parameter space is $\bR \times S^1$ (relative $s$-position of the two circles, and angle).
\end{itemize}
\begin{figure}
\begin{centering}
\includegraphics{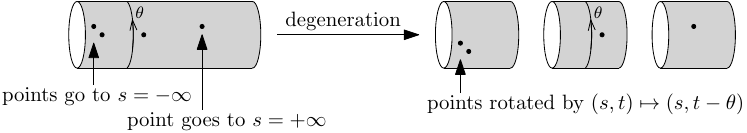}
\caption{\label{fig:twisted-gluing}An example of a degeneration from the parameter space underlying \eqref{eq:big-delta-m}, explaining how the angle $\theta$ affects the limiting configuration.}
\end{centering}
\end{figure}
One then defines \eqref{eq:big-delta-1} by setting, for $x \in \mathit{CF}^*(w)$,
\begin{equation} \label{eq:bv-components}
\begin{aligned}
& d^1_{C_q}(x) = \sum_m q^m d^1_m(x), \\
& d^1_{C_q}(\eta x) = \sum_m q^m (-\eta d^1_m(x) + d^{1,\dag}_m(x)).
\end{aligned}
\end{equation}

For the full equivariant differential, one uses cylinders that carry any number $l \geq 1$ of circles $\{s = \sigma^i\}$, with $\sigma^1 \leq \cdots \leq \sigma^l$, decorated with angles $\theta^1,\dots,\theta^l$. This approach was introduced, with slightly different language, in \cite[Section 4.3]{ganatra23}, and has been used widely since then (e.g.\ \cite[Section 4.1]{li24} or \cite[Sections 5.2c--5.2d]{pomerleano-seidel23}). We will not draw the pictures here, but only summarize the outcome.
\begin{itemize}
\item The relevant generalizations of \eqref{eq:big-delta-m} and \eqref{eq:big-delta-m-dag} are of the form
\begin{align}
\label{eq:dml} & d_m^l: \mathit{CF}^*(w) \longrightarrow \mathit{CF}^{*+1-2l-2m}(w+m), \\
& d_m^{l,\dag}: \mathit{CF}^*(w) \longrightarrow \mathit{CF}^{*-2l-2m}(w+m+1).
\end{align}
\end{itemize}
For notational simplicity, we also include $d_m$ and $d_m^\dag$ as the special case $l = 0$. Add up those components to endomorphisms $d^l_{C_q}$ as in \eqref{eq:bv-components}. The equivariant differential on 
\begin{equation}
C_{u,q} = C[[u,q]]
\end{equation}
extends $d_{C_q}$ with higher order $u$ terms,
\begin{equation}
d_{C_{u,q}} = \sum_{l \geq 0} u^l d^l_{C_q}.
\end{equation}

\subsection{Equivariant thimble maps}
We return to the situations from Section \ref{subsec:classical-thimble} and \ref{section:thimbleswithtangency}.
\begin{itemize} \itemsep.5em
\item 
Generalizing \eqref{eq:d-t} (which is the special case $l = 0$) one defines, for all $m,l \geq 0$, maps
\begin{equation} \label{eq:equivariant-d-s}
\xymatrix{
\mathit{CF}^{*-2m-2l}(m) 
&&& \ar[lll]_-{\includegraphics{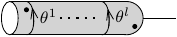}}^{\displaystyle s_m^l} \mathit{CM}^*(M)
}
\end{equation}
This is the thimble version of \eqref{eq:dml}, meaning that the thimbles come with $l$ angle-decorated circles as well as $m$ marked points. The counterpart of \eqref{eq:d-s-morse} is
\begin{equation}
\sum_{\substack{i+j = m \\ u+v = l}} d_i^u s_j^v = s_m^l d_M.
\end{equation}

\item 
There is a parallel generalization of \eqref{eq:tt}, using tangency conditions and a Morse function on the divisor. It's hardly necessary to draw a picture, but here it is, for completeness:
\begin{equation} \label{eq:equivariant-d-t}
\xymatrix{
\mathit{CF}^{*-2m-2l}(w+m)
&&& \ar[lll]_-{\!\!\!\includegraphics[valign=c]{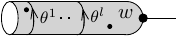}}^{\displaystyle t_{w,m}^l} \mathit{CM}^*(D)
}
\end{equation}
%
\end{itemize}
One combines \eqref{eq:equivariant-d-s} and \eqref{eq:equivariant-d-t} into equivariant extensions of \eqref{eq:total-s-map} and \eqref{eq:t-chain}:
\begin{equation} \label{eq:equivariant-quasi-iso}
\begin{aligned}
& s_{C_{u,q}} = \sum_{l,m} u^l q^m s_m^l: \mathit{CM}^*(M) \longrightarrow C_{u,q}, \\
& t_{C_{u,q},w} = \sum_{l,m} u^l q^m t_{w,m}^l : \mathit{CM}^*(D) \longrightarrow C_{u,q}. 
\end{aligned}
\end{equation}
Extend these $(u,q)$-linearly, and then take their direct sum. The outcome is the chain level map underlying \eqref{eq:equivariant-main}. By a $u$-filtration argument, Theorem \ref{th:main} implies that this is a quasi-isomorphism, which proves Corollary \ref{th:equivariant-main}.

\subsection{A brief look at the parameter spaces\label{subsec:angle-decorated}}
We consider two sample cases, the first being \eqref{eq:dml} (see \cite[Sections 5.2d and 5.3c]{pomerleano-seidel23} for a more detailed explanation of a closely related construction). The underlying parameter space (for $m+l > 0$) is
\begin{equation} \label{eq:d-space}
\frakD_m^l = \big(\Theta^l \times \mathit{Sym}_m(\bR \times S^1)\big) \;/\; \bR,
\end{equation}
where 
\begin{equation} \label{eq:delta-l}
\Theta^l = \{ \sigma^1 \leq \cdots \leq \sigma^l\} \times (S^1)^l 
\end{equation}
parametrizes the positions ($\sigma^i \in \bR$) of the angle-decorated circles, as well as their angles (the remaining variables, which we write at $\theta^i \in S^1$). The group $\bR$ acts by translation on the $\sigma^i$, leaving the $\theta^i$ fixed. As usual, the $l = 0$ case reduces to the previously used parameter spaces underlying the Floer differential. The choice of inhomogeneous data is set up as follows:
\begin{enumerate} [label=($\Delta$\arabic*)]  \itemsep.5em
\item \label{item:rotated-data}
For any point in $\frakD_m^l$, the associated data are chosen so that at the $s \ll 0$ end of the cylinder, they agree with the rotated Floer data $(\bar{H}_{m+w, t-(\theta^1 + \cdots + \theta^l)},\bar{J}_{m+w, t-(\theta^1 + \cdots + \theta^l)})$ (whereas there is no such rotation at the $s \gg 0$ end).

\item \label{item:coincident-circles}
The boundary stratum where $\sigma^i = \sigma^{i+1}$ comes with a map to $\frakD_m^{l-1}$, obtained by passing to $\theta^i + \theta^{i+1}$. We ask that the data are compatible with that map (in other words, on that boundary stratum, they are pulled back from the choices for $\frakD_m^{l-1}$). By definition, the pullback data are invariant under the $S^1$-action which increases $\theta^i$ and decreases $\theta^{i+1}$. As a consequence of that, these boundary components will not contribute to moduli spaces of dimension $\leq 1$.
\end{enumerate}
The compactification of the parameter space is, as a set,
\begin{equation}
\bar\frakD_m^l = \coprod_{\substack{r \geq 1 \\ m^1 + \cdots + m^R = m \\ l^1 + \cdots + l^R = l}} \frakD_{m^1}^{l^1} \times \cdots \times \frakD_{m^R}^{l^R}.
\end{equation}
The topology (or maybe more precisely, the identification of boundary strata with products of lower-dimensional moduli spaces) involves angle-twisting. Namely, as a sequence in $\frakD_m^l$ approaches the $(m^1,\dots,m^R,l^1,\dots,l^R)$ stratum, we take the naive $i$-th component of the limit and rotate it in $(-t)$-direction, by the sum of the angles in the components to its right (in particular, the $R$-th component is never rotated). The consistency condition for inhomogeneous data also follows that idea, and that is compatible with how we have set up \ref{item:rotated-data} above.

Let's carry over the previous discussion to the slightly more complicated case of \eqref{eq:equivariant-d-s}, where the relevant parameter space is
\begin{equation}
\frakS_m^l = \big(\{ \sigma^1 \leq \cdots \leq \sigma^l \} \times (S^1)^l \times \mathit{Sym}_m(T)\big) \;/\; \bR,
\end{equation}
for $\sigma^i \in (-\infty,+\infty]$ (for $\sigma^i = +\infty$, one can imagine the circle to have shrunk into the point at the tip of the thimble, but it still carries an angle). To make this compatible with the Riemann surface structure of the thimble at $+\infty$, we use the differentiable structure on the parameter space in which $\exp(-\sigma^i) \in [0,\infty)$ is the coordinate. The compactification is
\begin{equation}
\bar\frakS_m^l = \coprod_{\substack{R \geq 1 \\ m^1 + \cdots + m^R = m \\ l^1 + \cdots + l^R = l}} \frakD_{m^1}^{l^1} \times \cdots \times \frakD_{m^{R-1}}^{l^{R-1}} \times \frakS_{m^R}^{l^R},
\end{equation}
with angle-twisting as before. Concerning the data underlying the associated Cauchy-Riemann equations, we have:
\begin{enumerate} [label=($\Sigma$\arabic*)]  \itemsep.5em
\item As in \ref{item:rotated-data} for $s \ll 0$.
\item \label{item:sigma2}
We also have the analogue of \ref{item:coincident-circles} when $\sigma^i = \sigma^{i+1}$, pulling back data from $\frakS_m^{l-1}$.
\item \label{item:sigma3}
On the boundary stratum where $\sigma^l = +\infty$, we rotate the thimble (with its marked points) by $-\theta^l$, and then pull back the data from $\frakS_m^{l-1}$ by the map which forgets $\sigma^l$. As a consequence, the pullback data are invariant under the $S^1$-action which rotates the thimble, and simultaneously adds the same angle to $\theta^l$. Again, the effect is that those boundary strata cannot contribute to moduli spaces of dimension $\leq 1$.
\end{enumerate}
Of course, one has to check that the various conditions do not contradict each other at higher-codimension corner strata; see Figure \ref{fig:multiple-rotations} for an example.
\begin{figure}
\begin{centering}
\includegraphics{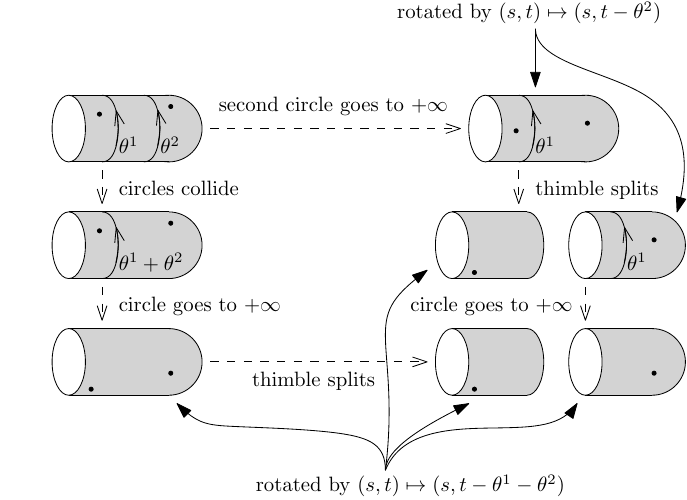}
\caption{\label{fig:multiple-rotations}
An example of \ref{item:sigma2} and \ref{item:sigma3}, showing how the inhomogeneous data are pulled back from thimbles with less data.}
\end{centering}
\end{figure}

\subsection{Compatibility with the action filtration}
Take Hamiltonians as in Lemma \ref{th:filtered-q-telescope}, and extend the condition on continuation maps imposed there to the equivariant differential. The outcome is that $d_{C_{u,q}}$ is compatible with the filtration $F^{\geq K}C_{u,q} = F^{\geq K}C_{q}[[u]]$ (which is again exhaustive, and bounded below in each degree). Similarly, one can arrange that the equivariant thimble maps take values in suitable pieces of the filtration. We will only need one such property, namely that 
\begin{equation} \label{eq:equivariant-nonnegative-energy}
s_{C_{u,q}}: \mathit{CM}(M)[u,q] \longrightarrow F^{\geq_0}C_{u,q} \subset C_{u,q}.
\end{equation}
A $u$-filtration argument based on the corresponding non-equivariant property, for which see the discussion following \eqref{eq:filtered-total-thimble}, shows that \eqref{eq:equivariant-nonnegative-energy} is a quasi-isomorphism. On the cohomology level, we therefore get a commutative diagram
\begin{equation} \label{eq:filtered-equivariant-thimble}
\xymatrix{
H^*(M)[u,q] \ar@{^{(}->}[d] \ar[rr]_-{\iso} && H^*(F^{\geq 0}C_{u,q}) \ar[d] \\
H^*(M)[u,q] \oplus \bigoplus_{w \geq 1} H^*(D)[u]z^w \ar[rr]_-{\iso} 
&& H^*(C_{u,q}) = \mathit{SH}^*_{u,q}(M,D).
}
\end{equation}

\begin{proof}[Proof of Lemma \ref{th:equivariant-q-torsion}]
Take a cocycle $x \in C_{u,q}$. Since multiplication by $q$ increases the action by $1$, see \eqref{eq:q-action}, there is some $r \geq 0$ such that $q^rx \in F^{\geq 0}C_{u,q}$. In particular, the cohomology class $q^r[x]$ can be lifted to $H^*(F^{\geq 0}C_{u,q})$. The desired result then follows by carrying over that fact from the right column of \eqref{eq:filtered-equivariant-thimble} to the left one.
\end{proof}

\section{Operations\label{sec:operations}}
This section discusses a cap-product-type endomorphism on deformed symplectic cohomology. Geome\-trically, this involves parameter spaces where one of the marked points is singled out, and its position is constrained, still keeping the incidence condition with $D$. We investigate the interaction of this operation with the BV operator and, via thimble maps, relate it to the quantum product with $[D]$. This serves mainly as a toy model for the equivariant theory in the subsequent section (the same expository strategy was used in \cite[Sections 4--5]{seidel18}). 

\subsection{The cap product endomorphism} 
The most basic operation,
\begin{equation}
\label{eq:iota-endomorphism} 
a_{C_q}: C_q \longrightarrow C_q,
\end{equation}
is defined as follows.
\begin{itemize}
\itemsep.5em
\item 
For every $w,m \geq 0$ we construct a map
\begin{equation} \label{eq:iota-m-map}
\xymatrix{
\mathit{CF}^{*-2m}(w+m+1) 
&& \ar[ll]_-{\includegraphics[valign=c]{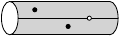}}^-{\displaystyle a_m} \mathit{CF}^*(w)
}
\end{equation}
such that
\begin{equation} \label{eq:iotam-equation}
\sum_{i+j = m} d_i a_j - a_i d_j = 0.
\end{equation}
The construction involves solutions of the usual kind of continuation map equations, having intersection number $m+1$ with $D$, and where one of the intersection points (drawn in white above) is singled out. The position of the distinguished point is constrained to lie on the line $\bR \times \{0\} = \{t = 0\} \subset \bR \times S^1$. In other words, the preimage of $D$ is written as a degree $m$ divisor $\Sigma$ plus one additional point $(s_*,0)$. The space of pairs $(\Sigma,(s_*,0))$ is then divided by translation, to form the parameter space. Equivalently, one can break translation invariance by putting the distinguished point at $(0,0)$, and then the parameter space is just $\mathit{Sym}_m(\bR \times S^1)$. 

\item There is the usual variant construction, where one does not divide by translation; because of the distinguished marked point, the parameter space is now $\mathit{Sym}_m(\bR \times S^1) \times \bR$. We use that to define maps
\begin{equation}
\label{eq:lambdam-map-dagger}
\xymatrix{
\mathit{CF}^{*-2m-1}(w+m+2) 
&& \ar[ll]_-{\includegraphics[valign=c]{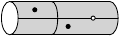}}^-{\displaystyle a_m^\dag} \mathit{CF}^*(w)
}
\end{equation}
such that
\begin{equation} \label{eq:iotam-equation-2}
\sum_{i+j=m} d_i a_j^\dag + d_i^\dag a_j + a_j^\dag d_j - a_j d_j^\dag = 0.
\end{equation}
\end{itemize}
For $x \in \mathit{CF}^*(w)$, one then sets
\begin{equation} \label{eq:define-a}
\begin{aligned}
& a_{C_q}(x) = \sum_m q^m a_m(x), \\
& a_{C_q}(\eta x) = \sum_m q^m (\eta a_m(x) + a_m^\dag(x)).
\end{aligned}
\end{equation}
The following is a version of \cite[Proposition 4.5 and (5.37)]{seidel18}:

\begin{proposition} \label{th:iota-and-differential}
There is a chain homotopy involving $a_{C_q}$ and the BV operator,
\begin{equation} \label{eq:differentiating-the-differential}
d_{C_q}^1 a_{C_q} - a_{C_q} d_{C_q}^1 \htp \partial_q d_{C_q}.
\end{equation}
\end{proposition}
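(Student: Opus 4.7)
I will prove \eqref{eq:differentiating-the-differential} by constructing an explicit chain homotopy $h_{C_q}: C_q \to C_q$ of degree $-2$. The starting observation is that at the chain level, $\partial_q d_{C_q}$ has a direct geometric interpretation as an ``unconstrained'' variant of $a_{C_q}$: if in the definition of $a_m$ one drops the constraint that the distinguished marked point lies on $\{t = 0\}$ and lets $(s_*, t_*)$ range freely over $\bR \times S^1$ (still modulo $\bR$-translation), the resulting operation $\tilde a_m : \mathit{CF}(w) \to \mathit{CF}(w+m+1)$ equals $(m+1)\, d_{m+1}$ on the nose---by the combinatorial identity that distinguishing one of the $m+1$ unordered intersection points of a generic $d_{m+1}$-solution counts each such solution $(m+1)$ times. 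Summing over $m$ gives $\tilde a_{C_q} = \partial_q d_{C_q}$, so the proposition reduces to producing a chain homotopy between $\tilde a_{C_q}$ and $d_{C_q}^1 a_{C_q} - a_{C_q} d_{C_q}^1$.

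For the homotopy, for each $m \geq 0$ I will introduce a parameter space $\frakH_m$ of cylinders $\bR \times S^1$ carrying: $m$ unordered marked points; a distinguished marked point at $(s_*, t_*)$; and an angle-decorated circle $\{s = \sigma\}$ with angle $\theta \in S^1$ subject to the compatibility $\theta = t_*$; all modulo $\bR$-translation. Inhomogeneous Cauchy-Riemann data are chosen as in Sections \ref{subsec:dm} and \ref{subsec:angle-decorated}, with the rotation convention \ref{item:rotated-data} applied to the left of the angle-decorated circle. This gives $\dim \frakH_m = 2m + 2$, so that $h_m : \mathit{CF}(w) \to \mathit{CF}(w+m+1)$ has degree $-2m-2$ and $q^m h_m$ degree $-2$. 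A parallel non-translation-invariant variant $\frakH_m^\dag$ produces the $\eta$-component. Counting isolated solutions defines operations $h_m, h_m^\dag$, assembled into $h_{C_q}$ by the same template as \eqref{eq:define-a}.

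The codimension-one strata of $\bar\frakH_m$ fall into three contributing families. First, \emph{cylinder-breaking} strata yield the standard homotopy terms $\sum_{i+j=m}(d_i h_j - h_i d_j)$ together with their $\dag$-variants. Second, the strata where the angle-decorated circle escapes to $\pm\infty$ (with the distinguished marked point remaining in the opposite component) produce $d_{C_q}^1 a_{C_q} - a_{C_q} d_{C_q}^1$, with the sign difference dictated by which side of the cylinder retains the circle. Third, the collision stratum $\sigma = s_*$, where the angle-decorated circle sweeps past the distinguished marked point: the compatibility $\theta = t_*$ means that the rotation jump of the inhomogeneous data across the circle is exactly tuned to the $t$-coordinate of the distinguished point, so in the limit the angle-decorated circle becomes ``invisible'' and the equation is indistinguishable from the one defining $\tilde a_m$; this stratum therefore contributes $\tilde a_{C_q} = \partial_q d_{C_q}$. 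Strata involving coincidences of ordinary marked points, or the $S^1$-symmetric sub-strata coming from \ref{item:coincident-circles}, are of codimension $\geq 2$ and do not contribute. Summing signed boundary contributions to zero on one-dimensional components of $\bar\frakH_m$ yields \eqref{eq:differentiating-the-differential}.

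The principal obstacle, and the step requiring the most delicate analysis, is the third bullet: verifying that the collision stratum $\sigma = s_*$ is canonically identified with the moduli space of $\tilde a_m$, with the correct $(m+1)$-fold multiplicity and sign. This requires a local analytic model showing that, near $(\sigma,\theta) = (s_*, t_*)$, the family of Cauchy-Riemann operators on $\frakH_m$ degenerates precisely to the family used to define $\tilde a_m$; the rotation discontinuity across the circle must be shown to be a gauge artefact of the parameter space description and not to leave any residual constraint on the limiting map. Once that identification, together with the orientation comparison between the Floer-theoretic sign of $\frakH_m$ and the natural orientation of $\tilde a_m = (m+1) d_{m+1}$, is pinned down, transversality and compactness for $\bar\frakH_m$ follow the templates established in Sections \ref{subsec:dm}--\ref{section:thimbleswithtangency} and \ref{subsec:angle-decorated}, and the boundary count closes.
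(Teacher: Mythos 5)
Your starting observation---that $\partial_q d_{C_q}$ is realized geometrically by an unconstrained distinguished intersection point, via an $(m+1)$-to-$1$ forgetful map onto $\frakD_{m+1}$---is exactly the mechanism the paper exploits through \eqref{eq:marked-forget}. Your cylinder-breaking strata are also unproblematic. But the single parameter space $\frakH_m$ with the constraint $\theta = t_*$ is genuinely different from what the paper uses, and it does not actually produce the claimed boundary contributions. There are two closely related gaps.

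First, consider the codimension-one stratum where the circle escapes to the unrotated end of the cylinder (say $\sigma \to -\infty$ with $s_*$ fixed at $0$). The split-off cylinder on the output side carries the angle-decorated circle, and the main cylinder carries the distinguished point at $(0, t_*)$ in unrotated coordinates. Since $t_* = \theta$, and $\theta$ ranges over $S^1$, the main cylinder is not the $a_m$-parameter space (which pins the distinguished point at $(0,0)$); and the coupling $\theta = t_*$ between the two components means the degeneration is a \emph{fibre product} over $S^1$ rather than a product of moduli spaces for previously defined operations. So this face does not contribute $d^1_{C_q} a_{C_q}$ (or $a_{C_q} d^1_{C_q}$), and the boundary count does not close. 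The paper avoids this by using different constraints on the two sides: $\{t=0\}$ to the right of the circle (for $a^{1,1}_m$) and $\{t=\theta\}$ to the left (for $a^{1,0}_m$); each constraint becomes the $a_m$ constraint in the rotated coordinates of the relevant split-off piece, so both circle-escape faces factor cleanly.

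Second, the locus $\sigma = s_*$ is \emph{interior} to $\frakH_m$, not a boundary. Your own ``gauge artefact'' reasoning suggests the Cauchy--Riemann data is continuous there, in which case the two sides cancel and this stratum contributes nothing; if instead you mean the data to be discontinuous, you have left the reason unspecified and the standard boundary-of-a-one-manifold argument no longer applies without further justification. Either way, nothing in $\frakH_m$ produces a genuine codimension-one face carrying $\partial_q d_{C_q}$. What the paper does instead is introduce an extra piece $\frakB^1_m$ where the angle is lifted to $\theta^{\mathit{lift}} \in [0,1]$ and the distinguished point (sitting on the circle) has lifted $t$-coordinate $t_*^{\mathit{lift}} \in [\theta^{\mathit{lift}},1]$; the genuine boundary face $\theta^{\mathit{lift}}=0$ is where the $t$-constraint becomes vacuous, and the forgetful map \eqref{eq:marked-forget} produces $\partial_q d_{C_q}$ with the correct multiplicity. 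Your constraint $\theta = t_*$ is precisely the face $\theta^{\mathit{lift}} = t_*^{\mathit{lift}}$ of that triangle, so you have retained only the boundary of $\frakB^1_m$ that glues to $a^{1,0}_m$, and dropped the interior where the $\partial_q d_{C_q}$ term is created. Repairing both gaps essentially forces you to reintroduce the paper's three-piece decomposition $a^{1,1}_{C_q} + a^{1,0}_{C_q} + b^1_{C_q}$.
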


\begin{corollary}
On the cohomology level, $d_{C_q}^1$ and $a_{C_q}$ commute after multiplying with $2q$.
\end{corollary}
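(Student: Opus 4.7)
The approach is to combine Proposition~\ref{th:iota-and-differential} with a general grading argument. First, multiplying the chain homotopy in that proposition by the central element $2q$ reduces the corollary to showing that the chain map $2q\,\partial_q d_{C_q}$ vanishes on cohomology. To establish this, I would introduce the operator $N$ on $C_q$ acting by multiplication by the total $\bZ$-degree, and use the decomposition $N = 2q\partial_q + N_0$, where $N_0$ records the internal (Floer) degree; this decomposition is valid because $q$ has degree $2$.

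Since $d_{C_q}$ is homogeneous of degree $+1$, one has $[N, d_{C_q}] = d_{C_q}$ as operators on $C_q$. A direct computation from the expansion $d_{C_q} = \sum_m q^m D_m$ (collecting the operators by powers of $q$) identifies the commutator $[2q\partial_q, d_{C_q}]$ with the operator $2q\,\partial_q d_{C_q}$ obtained by applying $\partial_q$ to the coefficients of $d_{C_q}$, which matches the interpretation of $\partial_q d_{C_q}$ used in Proposition~\ref{th:iota-and-differential}. Splitting $[N, d_{C_q}] = d_{C_q}$ accordingly yields
\begin{equation*}
2q\,\partial_q d_{C_q} \;=\; d_{C_q} - [N_0, d_{C_q}].
\end{equation*}

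The conclusion then follows cocycle-by-cocycle: for $x \in C_q$ with $d_{C_q}(x) = 0$, one computes
\begin{equation*}
[N_0, d_{C_q}](x) \;=\; N_0\bigl(d_{C_q}(x)\bigr) - d_{C_q}(N_0 x) \;=\; -d_{C_q}(N_0 x),
\end{equation*}
so that $2q\,\partial_q d_{C_q}(x) = d_{C_q}(N_0 x)$ is a boundary. Hence $2q\,\partial_q d_{C_q}$ induces the zero operator on cohomology, and combined with the first paragraph this gives the corollary. I do not foresee any substantial obstacle; the only point requiring slight care is the identification $[2q\partial_q, d_{C_q}] = 2q\,\partial_q d_{C_q}$, which is a routine $q$-derivation computation.
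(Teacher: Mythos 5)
Your argument is correct and is essentially the same as the paper's: the paper introduces the $q$-linear operator $\Gamma_{C_q}$ (multiplication by the internal Floer degree, i.e.\ your $N_0$) and asserts the identity $d_{C_q}\Gamma_{C_q} - \Gamma_{C_q} d_{C_q} = 2q\partial_q(d_{C_q}) - d_{C_q}$, which is exactly what you derive by splitting the total degree operator as $N = 2q\partial_q + N_0$ and using $[N,d_{C_q}] = d_{C_q}$. Your decomposition makes the ``elementary calculation'' the paper gestures at explicit, but the key idea — exhibiting $2q\,\partial_q d_{C_q}$ as a commutator with a degree operator, hence nullhomotopic on cocycles — is the same.
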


\begin{proof}[Proof of Corollary]
Take the $q$-linear endomorphism $\Gamma_{C_q}$ of $C_q$ which multiplies $\mathit{CF}^k(w)$ by $k$, and similarly $\eta \mathit{CF}^k(w)$ by $(k-1)$. An elementary calculation, using only the degrees of various components of the differential, shows that
\begin{equation}
d_{C_q}\Gamma_{C_q} - \Gamma_{C_q} d_{C_q} =  2q\partial_q(d_{C_q}) - d_{C_q}.
\end{equation}
Hence, if we take a cocycle in $C_q$, its image under $2q\partial_q(d_{C_q})$ is always a coboundary. In view of Proposition \ref{th:iota-and-differential}, this implies the desired result.
\end{proof}

The homotopy from Proposition \ref{th:iota-and-differential} is a sum of three pieces,
\begin{equation} \label{eq:ab-sum}
a^{1,1}_{C_q} + a^{1,0}_{C_q} + b^1_{C_q}: C_q \longrightarrow C_q[-2],
\end{equation}
each of which is fundamentally similar to $a_{C_q}$ itself. The relevant parameter spaces are versions of those in \cite[Sections 5.2e--5.2f]{pomerleano-seidel23}, as follows.
\begin{itemize} \itemsep.5em
\item Consider cylinders with an angle-decorated circle, and where a distinguished point lies on the part of the line $\{t = 0\}$ to the right (larger $s$ values) of that circle. As usual, there is a variant without translation-invariance. Denote the resulting operations by 
\begin{equation} \label{eq:a11}
\xymatrix{
\mathit{CF}^{*-2-2m}(w+m+1) &&
\ar[ll]^-{\displaystyle a_m^{1,1}}_-{\includegraphics[valign=c]{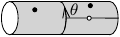}} \mathit{CF}^*(w)
}
\end{equation}
and
\begin{equation} \label{eq:a11-dag}
\xymatrix{
\mathit{CF}^{*-3-2m}(w+m+2) &&
\ar[ll]^-{\displaystyle a_m^{1,1,\dag}}_-{\includegraphics[valign=c]{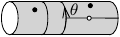}} \mathit{CF}^*(w)
}
\end{equation}
One combines them as in \eqref{eq:define-a} to get $a_{C_q}^{1,1}$.
\end{itemize}
Here, as the distinguished marked point goes to $+\infty$, it splits off a cylinder carrying that point anywhere on the line $\{t = 0\}$, which is how the relationship with $d_{C_q}^1 a_{C_q}$ is established.
\begin{itemize}
\item
We switch to the distinguished marked point lying to the left of the angle-decorated circle, but now constraining it to the line $\{t = \theta\}$ instead. Obligatory pictures:
\begin{equation} \label{eq:a10}
\xymatrix{
\mathit{CF}^{*-2-2m}(w+m+1) &&
\ar[ll]^-{\displaystyle a_m^{1,0}}_-{\includegraphics[valign=c]{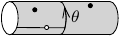}} \mathit{CF}^*(w)
}
\end{equation}
and
\begin{equation} \label{eq:a10-dag}
\xymatrix{
\mathit{CF}^{*-3-2m}(w+m+2) &&
\ar[ll]^-{\displaystyle a_m^{1,0,\dag}}_-{\includegraphics[valign=c]{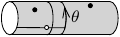}} \mathit{CF}^*(w)
}
\end{equation}
\end{itemize}
This time, as the distinguished marked point goes to $-\infty$, we split off a cylinder which (because of the way the data are setup on the compactification of the parameter space, in parallel with those for the BV operator) carries a distinguished point on the line $\{t = 0\}$; this is the origin of the term $a_{C_q} d_{C_q}^1$ in \eqref{eq:differentiating-the-differential}.
\begin{itemize}
\item
In our final moduli spaces, the angle is lifted to $\theta^{\mathit{lift}} \in [0,1]$. The distinguished marked point lies on the angle-decorated circle, and the coordinate $t_*$ which is the nontrivial part of its position also comes with a lift $t_*^{\mathit{lift}} \in [\theta^{\mathit{lift},1}]$. As usual, there are two versions:
\begin{align} \label{eq:bm11}
&
\xymatrix{
\mathit{CF}^{*-2-2m}(w+m+1) &&
\ar[ll]^-{\displaystyle b_m^{1}}_-{\includegraphics[valign=c]{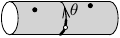}} \mathit{CF}^*(w)
}
\\ \label{eq:bm11-anchor}
& 
\xymatrix{
\mathit{CF}^{*-3-2m}(w+m+2) &&
\ar[ll]^-{\displaystyle b_m^{1,\dag}}_-{\includegraphics[valign=c]{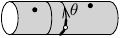}} \mathit{CF}^*(w)
}
\end{align}
\end{itemize}
These parameter spaces have three kinds of boundary strata of particular interest. When the distinguished marked point satisfies $t_*^{\mathit{lift}} = \theta^{\mathit{lift}}$, the geometry agrees with one of the boundary strata in the definition of $a_m^{1,0}$ or $a_m^{1,0,\dag}$; and for $t_*^{\mathit{lift}} = 1$, correspondingly with $a_m^{1,1}$ or $a_m^{1,1,\dag}$. In the sum \eqref{eq:ab-sum}, this will cause the contributions of those boundary strata to cancel. The third stratum is where $\theta^{\mathit{lift}} = 0$, in which case the $t$-coordinate of the distinguished marked point is unconstrained. We can then forget the circle, as well as the fact that the marked point was distinguished in the first place, the outcome being a cylinder with $(m+1)$ points; and we arrange that the continuation map data are compatible with that forgetting operation, which explains the appearance of $\partial_q d_{C_q}$ in \eqref{eq:differentiating-the-differential}.

The last-mentioned property is sufficiently important to deserve a little more discussion. The parameter space underlying \eqref{eq:bm11} is explicitly given by
\begin{equation}
\frakB_m^{1} = \big( \mathit{Sym}_m(\bR \times S^1) \times \bR \times \{ 0 \leq \theta^{\mathit{lift}} \leq t_*^{\mathit{lift}} \leq 1\} \big) / \bR.
\end{equation}
Write $\partial_{\theta^{\mathit{lift}}=0}\frakB_m^{1}$ for the boundary stratum where $\theta^{\mathit{lift}} = 0$. Via the forgetting process outlined above, one defines a map to the parameter space from Section \ref{subsec:dm}:
\begin{equation} \label{eq:marked-forget}
\begin{aligned}
& \partial_{\theta^{\mathit{lift}}=0}\frakB_m^{1} \longrightarrow \frakD_{m+1}, \\
& (\Sigma, s_*, \theta^{\mathit{lift}}=0, t_*^{\mathit{lift}}) \longmapsto \Sigma + (s_*,t_*);
\end{aligned}
\end{equation}
by $+$ we mean adding the distinguished marked point $(s_*,t_*)$ to the existing divisor. This is an $(m+1)$-fold cover over the open dense subset in $\frakD_{m+1}$ where the marked points are pairwise distinct and have nonzero $t$-coordinate. We require that the data setting up the continuation map equations should be compatible with \eqref{eq:marked-forget}. Assuming suitable genericity properties, the effect is to have $(m+1)d_{m+1}$ appearing in the equation arising from one-dimensional moduli spaces, which is one of the terms in $\partial_q d_{C_q}$. The same discussion applies to the parameter space $\frakB_m^{1,\dag}$ for \eqref{eq:bm11-anchor} and its map to $\frakD_{m+1}^\dag$. This concludes our outline of Proposition \ref{th:iota-and-differential}.

\subsection{The thimble revisited\label{subsec:switch-to-cycles}}
In a technical departure, we now replace Morse functions with pseudo-cycles, since those arise more naturally in Gromov-Witten theory. This unfortunately means we have to retread some of the previous trajectory. Pick a pseudo-cycle $c_P: P \rightarrow M$, assumed to be transverse to $D$; this means that the map $c_P$ is transverse to $D$, and additionally that the restriction to $c_P^{-1}(D)$ is a pseudo-cycle in $D$. Write $|P|$ for the codimension with respect to $M$. 
\begin{itemize} \itemsep.5em
\item In the construction from Section \ref{subsec:classical-thimble}, we now consider pairs $(u,p)$ where $p \in P$, with the adjacency condition
\begin{equation} \label{eq:p-incidence}
u(+\infty) = c_P(p). 
\end{equation}
This yields elements
\begin{equation} \label{eq:s-m-of-p}
\mathit{CF}^{|P|-2m}(m) \ni s_m(P)\; \includegraphics[valign=c]{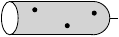} \hspace{.5em} P
\end{equation}
for which the analogue of \eqref{eq:d-s-morse} is simply 
\begin{equation} \label{eq:d-p-cycle}
\sum_{i+j=m} d_i s_j(P) = 0.
\end{equation}
By summing over all $m$, we therefore get a cocycle
\begin{equation}
s_{C_q}(P) = \sum_{m \geq 0} q^m s_m(P) \in C_q^{|P|}.
\end{equation}
On cohomology, under the usual map that relates pseudo-cycles and Morse cocycles, this is equivalent to \eqref{eq:total-s-map}; we will not discuss the proof of that fact, since it follows a standard pattern.

\item
Similarly, given a pseudo-cycle $R$ in $D$, one can adapt the construction from Section \ref{section:thimbleswithtangency} to pairs consisting of a map $u$ with $w$-fold tangency to $D$ at $+\infty$, and a point $r \in R$, satisfying the same incidence condition as before (but which now takes place in $D$):
\begin{equation} \label{eq:r-incidence}
u(+\infty) = c_R(r).
\end{equation}
This defines, for any $w>0$ and $m \geq 0$, elements
\begin{equation} \label{eq:twpseudoversion}
\mathit{CF}^{|R|-2m}(w+m) \ni t_{w,m}(R) \;
\includegraphics[valign=c]{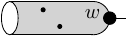} R
\end{equation}
(here, the codimension $|R|$ is with respect to $D$). They satisfy the counterpart of \eqref{eq:d-p-cycle}, and we can assemble them into cocycles
\begin{equation} 
t_{C_q,w}(R) \in C_q^{|R|}.
\end{equation}
\end{itemize}
In fact, we will only be using the $w = 1$ case of the latter construction.

\begin{lemma} \label{th:replace-t-by-s}
Take a pseudo-cycle $R$ in $D$, and a perturbation $P$ of that pseudo-cycle into $M$, which is transverse to $D$. Then 
\begin{equation} \label{eq:s-is-t}
q [t_{C_q,1}(R)] = [s_{C_q}(P)] \in
\mathit{SH}_q^{|P| = |R|+2}(M,D).
\end{equation}
\end{lemma}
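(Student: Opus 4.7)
\emph{Strategy.} Since $P$ is a perturbation of $\iota\circ c_R:R\to D\subset M$, I would realize it as the endpoint of a smooth homotopy $c_P^\epsilon:P\to M$, $\epsilon\in[0,1]$, with $c_P^0=\iota\circ c_R$, $c_P^1=c_P$, and with $c_P^\epsilon$ transverse to $D$ for every $\epsilon\in(0,1]$ (concretely, set $c_P^\epsilon=\exp(\epsilon\nu)$ for a generic section $\nu$ of $(\nu D)|_R$). For each $m\geq 0$ I will introduce an extended moduli space
\begin{equation*}
\frakH_m(x_-)=\bigl\{(\epsilon,\Sigma,u,p)\,:\,\epsilon\in[0,1],\ \Sigma\in\mathit{Sym}_m(T),\ u^{-1}(D)=\Sigma,\ u(+\infty)=c_P^\epsilon(p),\ \textstyle\lim_{s\to-\infty}u=x_-\bigr\},
\end{equation*}
equipped with $(\epsilon,\Sigma)$-dependent continuation-map data agreeing with those already chosen for $s_m$ at $\epsilon=1$ and with those for $t_{1,m-1}$ at $\epsilon=0$. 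At $\epsilon=0$ the condition $c_P^0(p)\in D$ automatically forces $+\infty\in\Sigma$ with multiplicity one, so the $\epsilon=0$ slice is the moduli space underlying $t_{1,m-1}(R)$, while the $\epsilon=1$ slice is the one underlying $s_m(P)$. The plan is to read off from the boundary of the one-parameter family a primitive $h_{C_q}\in C_q^{|P|-1}$ satisfying $d_{C_q}(h_{C_q})=s_{C_q}(P)-q\,t_{C_q,1}(R)$.

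\emph{Boundary count.} I would then analyze the compactified one-dimensional components of $\frakH_m(x_-)$, namely those with $\deg(x_-)=|P|-2m$. After generic choices, the codimension-one boundary consists of: (i) the $\epsilon=1$ face, contributing $s_m(P)$; (ii) the $\epsilon=0$ face, contributing $t_{1,m-1}(R)$ with opposite sign; (iii) Floer cylinder breaking at $s\to-\infty$, contributing $\sum_{i+j=m}d_i\,h_j$; (iv) splitting of $\mathit{Sym}_m(T)$ as in \eqref{eq:bar-s}, giving the remaining telescope contributions (including the $d_i^\dagger$ terms hit by the $\eta$-piece of the differential). Here $h_m\in\mathit{CF}^{|P|-1-2m}(m)$ is the signed count of the zero-dimensional $\frakH_m(x_-)$ with $\deg(x_-)=|P|-1-2m$. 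Items (iii)–(iv) assemble into the $m$-th component of $d_{C_q}(h_{C_q})$, where $h_{C_q}=\sum_m q^m h_m$, while (i)–(ii) give $s_m(P)-t_{1,m-1}(R)$. Weighting by $q^m$ and summing yields $d_{C_q}(h_{C_q})=s_{C_q}(P)-q\,t_{C_q,1}(R)$; the $q$-shift matches automatically because at $\epsilon=0$ the divisor $\Sigma$ still has total degree $m$ (one intersection absorbed into the tangency at $+\infty$), so that count contributes to the $q^m$-coefficient of $q\cdot t_{C_q,1}(R)=\sum_m q^m\,t_{1,m-1}(R)$.

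\emph{Main obstacle.} The hard part will be verifying that the $\epsilon=0$ locus is a genuine smooth codimension-one boundary of $\frakH_m$, in bijection (with correct signs) with the zero-dimensional moduli space $\frakT_{1,m-1}(x_-,R)$ defining $t_{1,m-1}(R)$. This is a direct analogue of the ``expected boundary point'' analysis of Lemmas \ref{th:expected-boundary}–\ref{th:at-nasty}: applying the Gromov trick near $+\infty\in T$, the condition $u(+\infty)=c_P^\epsilon(p)\in D$ becomes a one-parameter deformation of a tangency condition for the normal component of a pseudo-holomorphic graph, and Lemma \ref{th:winding-number-argument} (or its $J^\alpha$-enhanced version, Lemma \ref{th:winding-number-argument-2}) supplies the algebraic count $+1$ as $\epsilon\to 0^+$. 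Sphere bubbling is excluded a priori because $s_m(P)$ and $t_{1,m-1}(R)$ both sit in $\mathit{Gr}^{-m}\mathit{CF}(m)$ of the approximate action filtration from Lemma \ref{th:filtered-q-telescope}, so the thimbles appearing in $\frakH_m$ are constrained to the same low-energy regime as in Section \ref{subsec:negative-action}. Once these local and compactness issues are secured, $h_{C_q}$ is a well-defined element of $C_q^{|P|-1}$ witnessing $[s_{C_q}(P)]=q[t_{C_q,1}(R)]$ in $\mathit{SH}_q^{|P|}(M,D)$.
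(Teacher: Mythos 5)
You take a different route from the paper. The paper avoids any homotopy: it compares the linearized operators \eqref{eq:r-extended-linearized} and \eqref{eq:p-extended-linearized}, shows via Lemma \ref{th:operator-comparison} that they are equivalent, and concludes that the moduli spaces for $t_{1,m}(R)$ and a ``degenerate'' $s_{m+1}(P)$ with $P=R$ literally coincide with the same regularity. A small perturbation $R\rightsquigarrow P$ then leaves the zero-dimensional counts unchanged, giving a chain-level identity; the resulting difficulty of bounding the perturbation uniformly over all $m$ is bypassed by arranging the Floer complexes to be bounded below (Lemma \ref{th:satisfy-epsilon-bound}(ii)), so that only finitely many $m$ contribute. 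Your homotopy-of-pseudo-cycles argument is exactly the ``proper solution'' the paper names at the end of Section \ref{subsec:perturb} but does not carry out; it produces a chain homotopy rather than a chain-level identity, and avoids the all-$m$ issue because one fixed homotopy handles every $m$. You also correctly identify Lemma \ref{th:winding-number-argument} (via the template of Lemma \ref{th:at-nasty}) as the tool governing the $\epsilon = 0$ face.

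There are nevertheless two errors. The claim that ``$s_m(P)$ and $t_{1,m-1}(R)$ both sit in $\mathit{Gr}^{-m}\mathit{CF}(m)$'' is false: by \eqref{eq:filtered-s-t} these chains live only in the filtration piece $F^{\geq -m}\mathit{CF}(m)$, and the restriction to the lowest graded piece (together with the resulting low-energy bound that excludes bubbling a priori) is a device introduced only in Sections \ref{subsec:negative-action}--\ref{subsec:zero-action}, for the specific purpose of computing the associated graded complexes $G^K$. For the unrestricted thimble maps appearing in Lemma \ref{th:replace-t-by-s}, sphere bubbling can occur and must be shown to have codimension $\geq 2$ in your $\epsilon$-parametrized moduli spaces, in particular near the degenerate face $\epsilon=0$ where the incidence constraint collapses into $D$; that check is missing. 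The smaller slip is in item (iv): since $h_{C_q}=\sum_m q^m h_m$ lives entirely in the non-$\eta$ summand $\bigoplus_w \mathit{CF}(w)[[q]]$, the differential $d_{C_q}$ applied to it produces no $d_i^\dag$ terms, and items (iii) and (iv) of your boundary list are in fact the same phenomenon (cylinder splitting as in \eqref{eq:bar-s}).
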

%

For further discussion, see Section \ref{subsec:perturb}. Eventually, that Lemma will be used to remove $t_{C_q,1}$ from our formulae, up to $q$-torsion.

\begin{itemize}
\item We will also need a simple instance of Gromov-Witten invariants. Namely, for any $m \geq 0$ and any $P$ as before, define a pseudo-cycle $g_m(P)$, with $|g_m(P)| = |P|-2m$, by considering pseudo-holomorphic spheres with three distinguished points, which have intersection number $m+1$ with $D$. The first point goes through $D$; the second one through $P$ in the sense of \eqref{eq:p-incidence}; and we use evaluation at the third point to define our pseudo-cycle. Here's the picture for easier memorization (as usual, all intersection points with $D$ are drawn as dots, with the distinguished one in white):
\begin{equation}
g_m(P) \;\includegraphics[valign=c]{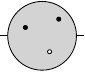}\; P
\end{equation}
\end{itemize}
In cohomology, summing over $m$ yields an instance of the small quantum product, minus its classical cup-product part:
\begin{equation} \label{eq:small-quantum-2}
\sum_m q^m [g_m(P)] = q^{-1}\big([D] \ast_q [P] - [D \cap P]\big) \in (H^*(M)[[q]])^{|P|}.
\end{equation}
On the level of actual cycles, the following, which is proved by standard methods, will be useful (see Section \ref{subsec:gw}):

\begin{lemma} \label{th:again-transverse}
For a generic choice of almost complex structure which respects $D$, the pseudo-cycle $g_m(P)$ is again transverse to $D$.
\end{lemma}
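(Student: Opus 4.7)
The plan is a routine Sard--Smale transversality argument for a Gromov--Witten type moduli space, with one twist: enough freedom to vary $J$ must remain after restricting to the class $\mathcal{J}_D$ of compatible almost complex structures respecting $D$.

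I would first set up the universal moduli space $\widetilde{\mathcal{M}}$ of tuples $(J, u, p)$ where $J \in \mathcal{J}_D$, $u: \bC P^1 \to M$ is $J$-holomorphic in a fixed class $A$ with $A \cdot D = m+1$, the parametrization of the domain is pinned down by sending the three marked points to standard positions, $p \in P$, and the incidences $u(z_1) \in D$, $u(z_2) = c_P(p)$ hold. Split $\widetilde{\mathcal{M}} = \widetilde{\mathcal{M}}_{\mathrm{out}} \sqcup \widetilde{\mathcal{M}}_{\mathrm{in}}$ according to whether $u(\bC P^1) \not\subset D$ or $u(\bC P^1) \subset D$. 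On the open stratum, a simple $u$ has a dense subset of injective points outside $D$; since $J \in \mathcal{J}_D$ is unconstrained away from $D$, the standard Cieliebak--Mohnke argument \cite{cieliebak-mohnke07} gives surjectivity of the universal linearized Cauchy--Riemann operator, whence regularity of the simple part of the fibre $\mathcal{M}'_J$ for generic $J$. Transversality of $\mathrm{ev}_{z_3}$ to $D$ comes from the same kind of perturbation: one varies $J$ in a small neighbourhood of a generic point of $u(\bC P^1) \setminus D$ to move $u(z_3)$ in any direction transverse to $D$. Multiply covered maps are absorbed into strata of lower expected dimension in the usual way.

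Next I would handle the ``in'' stratum $\widetilde{\mathcal{M}}_{\mathrm{in}}$, on which $\mathrm{ev}_{z_3}$ automatically lands in $D$ and so cannot be transverse to it directly. Using $c_1(M) = [D]$ one has $c_1(TD)[A_D] = c_1(M)[A] - A \cdot D = (m+1) - (m+1) = 0$, so three-pointed $J$-holomorphic spheres in class $A$ contained in $D$ form a moduli of dimension $2n - 2$. The constraint $u(z_2) = c_P(p)$ forces $p \in c_P^{-1}(D)$, which is a codimension-$2$ sub-pseudo-cycle of $P$; fibering over $D$ reduces the ``in'' stratum to dimension $2n - 2 - |P|$, i.e.\ $2m + 2$ less than the open-stratum dimension $2n + 2m - |P|$. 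Since $m \geq 0$ this codimension is always $\geq 2$, so the ``in'' stratum contributes only to the higher-codimension boundary of $g_m(P)$ as a pseudo-cycle, and does not obstruct transversality to $D$.

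The main technical obstacle I expect is controlling the Gromov compactification: sequences of open-stratum curves can degenerate by bubbling off components contained in $D$, and one must check no such degenerations form genuine codimension-one boundary of $\mathrm{ev}_{z_3}^{-1}(D)$. For this one applies the standard pseudo-cycle boundary estimate; under monotonicity each nonconstant sphere component has strictly positive first Chern class, so every bubbling stratum sits in codimension $\geq 2$ of $\mathcal{M}'$. This reduces the claim to the simple-and-not-in-$D$ transversality established in the first step.
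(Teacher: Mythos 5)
Your proposal has the right overall shape, and most of the ingredients match the paper's proof sketch in \S\ref{subsec:gw}: Sard--Smale for the open stratum, the observation that $c_1(TD)$ pairs trivially with sphere classes in $D$ (so curves in $D$ form a lower-dimensional stratum), and monotonicity to control bubbling. Your dimension count for curves in $D$ is also correct: their codimension in $g_m(P)$ is $2m+2$, not $2m+4$ as the paper's (G1) states (the constraint $v(0,0)\in D$ becomes vacuous when $v$ lies in $D$, which reduces the codimension by $2$).

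There is, however, a genuine gap in the final step. ``Transverse to $D$'' in the sense of \S\ref{subsec:switch-to-cycles} is a two-part condition: (a) the evaluation map $\mathrm{ev}_{-\infty}$ is transverse to $D$, and (b) its restriction to $\mathrm{ev}_{-\infty}^{-1}(D)$ is itself a pseudo-cycle \emph{in} $D$. For (b), the Gromov boundary strata of $\overline{\frakG}_m(P)$ whose evaluation lands in $D$ must have codimension $\geq 4$ in $g_m(P)$ — i.e.\ codimension $\geq 2$ relative to $\mathrm{ev}_{-\infty}^{-1}(D)$ — not merely $\geq 2$. Your monotonicity argument (``each nonconstant bubble has $c_1 \geq 1$, hence codimension $\geq 2$'') yields exactly $\geq 2$ and nothing more, so it establishes that $g_m(P)$ is a pseudo-cycle but does not establish transversality to $D$. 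The same is true of your ``in''-stratum bound: $2m+2 \geq 2$ stops short of $\geq 4$ precisely when $m = 0$. (That case is in fact saved by a separate observation which you do not make: if $v \cdot D = 1$ and $v(0,0)\in D$, positivity of intersections forbids $v(-\infty)\in D$ unless $v \subset D$, so $\mathrm{ev}_{-\infty}^{-1}(D)\cap\frakG_0(P) = \emptyset$ and there is nothing to check.)

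The paper's proof supplies the missing codimension precisely through the case analysis (G1)--(G4), distinguishing in each case the subcase where $v(-\infty)$ lands in $D$ and extracting an \emph{extra} codimension $2$ there: in (G2) and (G4) the sphere or chain through $z_-$ then acquires a forced additional intersection with $D$; in (G3) the transversality of $P$ to $D$ is invoked directly. It also reduces arbitrary bubble trees to simple chains of controlled length (an auxiliary point you wave at with ``absorbed into strata of lower expected dimension''). These case-specific refinements are what your argument would need to close the gap.
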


\subsection{The cap product on the thimble\label{subsec:thimble-cap}}
With those preliminaries in mind, we continue the main thread of our discussion.
\begin{itemize}
\item Take the thimble with one distinguished point, whose position must satisfy $t = 0$ (and, as usual, $m$ other points in arbitrary position). We impose an incidence condition with the pseudo-cycle $P$ at $+\infty$, and with $D$ at the $(m+1)$ marked points, including the distinguished one. For each $m \geq 0$, this gives rise to chains
\begin{equation} \label{eq:alpha-elements}
\mathit{CF}^{|P|-2m-1}(m) \ni y_m(P)\;
\includegraphics[valign=c]{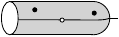}\hspace{-1em} \hspace{1em} P
\end{equation}
which one combines as usual into $y_{C_q}(P) \in C_q^{|P|+1}$.
\end{itemize}

\begin{lemma} \label{th:alpha-property}
The chains \eqref{eq:alpha-elements} satisfy
\begin{equation} \label{eq:alpha-property}
\sum_{i+j = m} d_i y_j(P) +  a_i(s_j(P))= 
t_{1,m}(D \cap P) + \sum_{i+j = m}
s_i(g_j(P))
\end{equation}
\end{lemma}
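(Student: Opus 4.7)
The plan is to apply the usual ``$\partial = 0$'' argument to the one-dimensional moduli spaces $\overline{\frakY}_m(x_-, P)$ underlying $y_m(P)$, where $\mathrm{deg}(x_-) = |P| - 2m$. A point of $\frakY_m(x_-, P)$ is a tuple $(\Sigma, s_*, u, p)$ with $\Sigma \in \mathit{Sym}_m(T)$ the regular marked points, $s_* \in \bR$ the position of the distinguished marked point $(s_*, 0)$, $u$ a solution of the relevant thimble Cauchy-Riemann equation satisfying $u^{-1}(D) = \Sigma + \{(s_*, 0)\}$ and $\lim_{s \to -\infty} u = x_-$, and $p \in P$ subject to $u(+\infty) = c_P(p)$. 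Equation \eqref{eq:alpha-property} will follow from vanishing of the signed count of boundary points of $\overline{\frakY}_m(x_-, P)$, once the boundary strata are classified.

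The codimension-one strata split into three types. First, Floer breaking at $s \to -\infty$ peels off a cylinder from the top of the thimble. Depending on whether the distinguished marked point sits on the broken-off cylinder or remains on the thimble, this contributes $a_i(s_j(P))$ (cylinder carries the distinguished marked point plus $i$ regular points, thimble is as in $s_j(P)$) or $d_i(y_j(P))$ (cylinder is a standard Floer trajectory with $i$ regular points, thimble retains the distinguished marked point, as in $y_j(P)$), summed over $i+j = m$. These two contributions produce the left-hand side of \eqref{eq:alpha-property}.

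Second, the distinguished marked point can drift to $+\infty$ in the thimble ($s_* \to +\infty$) without any bubble forming. In the limit, $+\infty$ absorbs the distinguished marked point, giving the thimble tangency of order one with $D$ at $+\infty$. The incidence $u(+\infty) = c_P(p)$ combined with $u(+\infty) \in D$ forces $p \in c_P^{-1}(D)$, and the $P$-incidence is upgraded to incidence with the pseudo-cycle $D \cap P \subset D$. This is exactly the moduli space defining $t_{1,m}(D \cap P)$. Third, the distinguished marked point can drift to $+\infty$ while simultaneously a sphere bubble $v$ forms there. By stability of marked-point configurations, $v$ must carry both the distinguished marked point and the $P$-incidence; writing $v \cdot D = j + 1$ (the $+1$ is the distinguished marked point, while $j$ of the original regular marked points may migrate onto $v$), the bubble is a sphere with three distinguished points (intersection with $D$, $P$-incidence, and the attachment node), which is exactly the configuration defining a representative of $g_j(P)$. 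The thimble retains $i = m - j$ regular marked points and an incidence at $+\infty$ with (this representative of) the pseudo-cycle $g_j(P)$, which by Lemma \ref{th:again-transverse} is itself transverse to $D$; this contributes $s_i(g_j(P))$. Summing over $i+j = m$ yields the right-hand side of \eqref{eq:alpha-property}.

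The main technical obstacle is excluding all other codimension-one degenerations. The most important of these to rule out are: a regular (non-distinguished) marked point drifting to $+\infty$, which forces the extra codim-2 condition $u(+\infty) \in D$ without a compensating free parameter, hence codim $\geq 2$; sphere bubbling at interior points or at $+\infty$ without any marked point following, which is codim $\geq 2$ by Lemma \ref{th:bubble} and the analogue of \ref{item:t-bubbling}; longer bubble chains at $+\infty$, which after the simplification of Lemma \ref{th:simplify} give codim $\geq 2$ via Lemma \ref{th:chain-dimension-2}, using the transversality of $P$ to $D$; and bubbles entirely contained in $D$, or bubbles carrying the $P$-incidence but not the distinguished marked point. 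All of these follow by direct variants of the dimension arguments in \ref{item:t-collision}--\ref{item:t-messy-dimension}. Once this enumeration is complete, the standard coherent orientations on thimbles and cylinders produce the signs as written in \eqref{eq:alpha-property}.
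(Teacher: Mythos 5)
Your proposal is correct and follows essentially the same approach as the paper's own proof sketch in Section 8.3: the same parameter space $\frakY_m$ with the $s_*$-variable, the same enumeration of codimension-one boundary strata (cylinder splitting at $-\infty$ with or without the distinguished point, giving the left-hand side; $s_* \to +\infty$ without bubbling giving $t_{1,m}(D \cap P)$; and $s_* \to +\infty$ with a sphere bubble carrying the distinguished point plus migrating marked points, giving $s_i(g_j(P))$), and the same reliance on Lemmas \ref{th:again-transverse} and \ref{th:distinct-sphere-points} type considerations to make the Gromov-Witten pseudocycle term well-defined. The only minor imprecision is attributing the fact that the bubble carries both the distinguished point and the $P$-incidence to ``stability''---the real mechanism, as in the paper, is the codimension count for the full limiting data $(\Sigma, u, \Pi, v, p)$, which is codimension one even though the naive limit in the parameter space lies in a deeper stratum---but this does not affect the correctness of the argument.
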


On the left hand side of \eqref{eq:alpha-property}, a cylinder splits off at $-\infty$; and in the second term, the distinguished point moves into that cylinder. On the right hand side, the first term corresponds to the distinguished point reaching $+\infty$; in the second term, one then additionally has sphere bubbling happening at that point (technically, that term makes sense thanks to Lemma \ref{th:again-transverse}). We postpone further discussion to Section \ref{subsec:alpha}. As a consequence, one gets the following version of the classical relation \cite{piunikhin-salamon-schwarz95} between the cap product on Hamiltonian Floer cohomology and the quantum product on ordinary cohomology.

\begin{corollary} \label{th:pss-iota}
There is a commutative diagram involving \eqref{eq:iota-endomorphism} and 
(the $q$-linear extension of) \eqref{eq:total-s-map}:
\begin{equation}
\xymatrix{
H^*(M)[[q]] \ar[rr]^{s_{C_q}} \ar[d]_-{[D] \ast_q}
&& \mathit{SH}_q^*(M,D) \ar[d]^-{q\,a_{C_q}} 
\\
H^{*+2}(M)[[q]] \ar[rr]^-{s_{C_q}}
&& \mathit{SH}_q^{*+2}(M,D). 
}
\end{equation}
\end{corollary}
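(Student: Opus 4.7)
The plan is to package the $m$-indexed chain identities of Lemma \ref{th:alpha-property} into a single equation in $C_q$, pass to cohomology, and identify the resulting terms using Lemma \ref{th:replace-t-by-s} and the quantum product identity \eqref{eq:small-quantum-2}. Since all of the Floer-theoretic content has been absorbed into the preceding lemmas, the corollary reduces to a formal algebraic manipulation.

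First I would sum the identity of Lemma \ref{th:alpha-property} over all $m \geq 0$ with weights $q^m$. On the left one obtains $d_{C_q}\, y_{C_q}(P) + a_{C_q}\, s_{C_q}(P)$, using the formulas for $d_{C_q}$ in Section \ref{subsec:sh} and for $a_{C_q}$ in \eqref{eq:define-a}; note that both $y_m(P)$ and $s_m(P)$ lie in the non-$\eta$ part of $C_q$, so only the components $d_i$ and $a_i$ (rather than $d_i^\dag$ or $a_i^\dag$, and with no identity-map contribution) contribute. On the right one gets
\begin{equation}
t_{C_q,1}(D \cap P) \;+\; \sum_{j \geq 0} q^j\, s_{C_q}(g_j(P)),
\end{equation}
where the chains $s_{C_q}(g_j(P))$ are well-defined thanks to the transversality of each $g_j(P)$ with $D$ provided by Lemma \ref{th:again-transverse}.

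Next, passing to cohomology the term $d_{C_q}\, y_{C_q}(P)$ drops out. Multiplying the result by $q$ and applying \eqref{eq:small-quantum-2} in the form $q \sum_j q^j [g_j(P)] = [D] \ast_q [P] - [D \cap P]$, together with Lemma \ref{th:replace-t-by-s} applied to $R = D \cap P$ (whose perturbation into $M$ may be taken to be $D \cap P$ itself), one arrives at
\begin{equation}
q \cdot a_{C_q}[s_{C_q}(P)] \;=\; [s_{C_q}(D \cap P)] \;+\; s_{C_q}\bigl([D] \ast_q [P]\bigr) \;-\; s_{C_q}([D \cap P]).
\end{equation}
The two $[D\cap P]$ contributions cancel, leaving the desired equality $q \cdot a_{C_q}[s_{C_q}(P)] = s_{C_q}([D] \ast_q [P])$, which is exactly the commutativity asserted by the diagram.

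There is no real obstacle beyond the bookkeeping above; the only point worth mild care is the implicit identification between the cohomology class of a cocycle built from a pseudo-cycle (as in Section \ref{subsec:switch-to-cycles}) and the image of the corresponding class in $H^*(M)$ under the Morse-model map $s_{C_q}$ of \eqref{eq:total-s-map}. This is the standard bridge between pseudo-cycle and Morse-theoretic representatives and was already invoked when passing to the pseudo-cycle formalism.
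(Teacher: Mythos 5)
Your proposal is essentially the same as the paper's: sum the chain identities of Lemma \ref{th:alpha-property} over $m$ with weights $q^m$, pass to cohomology so the $d_{C_q}\, y_{C_q}(P)$ term drops out, multiply by $q$, invoke \eqref{eq:small-quantum-2} and Lemma \ref{th:replace-t-by-s}, and cancel the two $[D\cap P]$ contributions. That is exactly the argument in the text.

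One slip worth fixing: the parenthetical ``whose perturbation into $M$ may be taken to be $D\cap P$ itself'' is false, and for the same reason the expression $[s_{C_q}(D\cap P)]$ in your displayed equation is not well-defined. The pseudo-cycle $D\cap P$ is contained in $D$, so it is not transverse to $D$, and the chains $s_m(\cdot)$ of Section \ref{subsec:switch-to-cycles} are only defined for pseudo-cycles in $M$ transverse to $D$. You must use an honest perturbation $P'$ of $D\cap P$ into $M$, transverse to $D$; Lemma \ref{th:replace-t-by-s} then gives $q\,[t_{C_q,1}(D\cap P)] = [s_{C_q}(P')]$, and since $[P'] = [D\cap P]$ in $H^*(M)$ one has $[s_{C_q}(P')] = s_{C_q}([D\cap P])$ on cohomology. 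With that correction, the cancellation you describe goes through unchanged and the proof is complete.
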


\begin{proof}[Proof of the Corollary]
The relations \eqref{eq:alpha-property} yield the following cohomology level equation:
\begin{equation} 
[t_{C_q,1}(D \cap P)] + s_{C_q}( q^{-1}([D] \ast_q [P] - [D \cap P])) - [a_{C_q}(s_{C_q}(P))] = 0.
\end{equation}
Multiplying by $q$ leads to cancellation between the first and third term, due to Lemma \ref{th:replace-t-by-s}.
\end{proof}

\subsection{Proof sketch for Lemma \ref{th:replace-t-by-s}\label{subsec:perturb}}
Consider a point $(\Sigma,u,r)$ in a moduli space underlying $t_{1,m}(R)$, with the added condition that the divisor $\Sigma = \{z_1,\dots,z_m\}$ is a collection of $m$ pairwise distinct points, none of them equal to $+\infty$ (for a generic choice made in the construction, this will be true in moduli spaces of dimension $\leq 1$). 
Regularity of $(\Sigma,u,r)$ means surjectivity of an operator
\begin{equation} \label{eq:r-extended-linearized}
\xymatrix{
\scrE_R = W^{2,2}(u^*TM) \oplus T_{z_1}T \oplus \cdots \oplus T_{z_m}T \oplus T_rR
\ar[d] \\
\scrF_R = W^{1,2}(u^*TM) \oplus \nu_{u(z_1)}D \oplus \cdots \oplus \nu_{u(z_m)}D \oplus T_{u(+\infty)}M
}
\end{equation}
The notation here largely follows \eqref{eq:n-extended-linearized}. On the target space, the the $T_{u(+\infty)}M$-component combines two roles; its part normal to $D$ measures the failure of $u(+\infty) \in D$, and its part tangent to $D$ the failure of \eqref{eq:r-incidence} as an incidence condition inside $D$. 

Take the family of continuation map data over $\frakT_{1,m} = \mathit{Sym}_m(T)$ underlying $t_{1,m}(R)$, and choose data on $\mathit{Sym}_{m+1}(T)$ which are consistent with the embedding $\frakT_{1,m} \rightarrow \mathit{Sym}_{m+1}(T)$ which adds a point at $+\infty$. Let's use these to define a degenerate version of $s_{m+1}(P)$, for $P = R$ (degenerate because the pseudo-cycle fails to satisfy our usual requirement of transversality to $D$). Take a point $(\Sigma, u, r)$ in the resulting moduli space, and assume that $\Sigma = \{z_1,\dots,z_m, z_{m+1} = +\infty\}$ for pairwise disjoint points (it is automatically true that the divisors which occur in the moduli space must contain $+\infty$). This time, regularity means invertibility of 
\begin{equation} \label{eq:p-extended-linearized}
\xymatrix{
\scrE_P = W^{2,2}(u^*TM) \oplus T_{z_1}T \oplus \cdots \oplus T_{z_m}T \oplus T_{z_{m+1}}T \oplus T_rR 
\ar[d] \\
\scrF_P = W^{1,2}(u^*TM) \oplus \nu_{u(z_1)}D \oplus \cdots \oplus \nu_{u(z_m)}D \oplus \nu_{u(z_{m+1})}D \oplus T_{u(+\infty)}M.
}
\end{equation}
In comparison with \eqref{eq:r-extended-linearized}, the domain has been enlarged by $T_{z_{m+1}}T$, corresponding to the freedom of moving the new marked point $z_{m+1}$ away from $+\infty$. Correspondingly, in the target space, there is a new copy of the normal bundle expressing the constraint $u(z_{m+1}) \in D$, which is independent of \eqref{eq:p-incidence}. The relation between the two operators is expressed by a commutative diagram
\begin{equation}
\xymatrix{
0 \ar[r] & 
\scrE_R \ar[r] \ar[d]_-{\eqref{eq:p-extended-linearized}} &
\scrE_P \ar[r] \ar[d]_-{\eqref{eq:r-extended-linearized}} &
T_{+\infty}T \ar[d]^-{\eqref{eq:small-map}} \ar[r] & 0
\\
0 \ar[r] & 
\scrF_R \ar[r]_-{\eqref{eq:graph-inclusion}} &
\scrF_P \ar[r]_-{\eqref{eq:graph-projection}} &
\nu_{u(+\infty)}D \ar[r] & 
0
}
\end{equation}
The top row consists of the obvious inclusions and projection. We also have:
\begin{align}
\label{eq:graph-inclusion}
& 
\parbox{34em}{The injective map $\scrF_R \rightarrow \scrF_P$ where the extra component $\nu_{u(z_{m+1})}D$ is obtained by projection from $T_{u(+\infty)}M$.}
\\
\label{eq:graph-projection}
& 
\parbox{34em}{The map $\scrF_P \rightarrow \nu_{u(+\infty)}D$ that takes the $\nu_{u(z_{m+1})}D$ component of $\scrF_P$ and subtracts from it the projection of the $T_{u(+\infty)}M$ component.}
\\
\label{eq:small-map}
&
\parbox{34em}{The derivative of $u$ at $+\infty$, projected to the normal bundle.}
\end{align}
Under our assumptions, where $u$ intersects $D$ with multiplicity $1$ at $+\infty$, the map \eqref{eq:small-map} is an isomorphism (we have seen this argument before, in the proof of Lemma \ref{th:boundary-regularity}). Therefore: 

\begin{lemma} \label{th:operator-comparison}
The operator \eqref{eq:p-extended-linearized} is onto if and only if \eqref{eq:r-extended-linearized} is. Moreover, in that case, the kernels of the two operators are the same (under the inclusion $\scrE_R \subset \scrE_P$).
\end{lemma}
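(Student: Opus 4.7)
The plan is to recognise the statement as a routine consequence of the snake lemma, applied to the commutative diagram with exact rows displayed just before the lemma, and to do the minimal geometric bookkeeping needed to justify exactness and commutativity.

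First I would verify the two pieces of diagram chase data. Exactness of the top row is immediate: $\scrE_P$ is obtained from $\scrE_R$ by adjoining the summand $T_{z_{m+1}}T = T_{+\infty}T$, so $\scrE_R$ is exactly the kernel of the projection onto that summand. For the bottom row, the maps \eqref{eq:graph-inclusion} and \eqref{eq:graph-projection} read, in coordinates, as $(w,\xi_1,\ldots,\xi_m,\eta)\mapsto (w,\xi_1,\ldots,\xi_m,\pi(\eta),\eta)$ and $(w,\xi_1,\ldots,\xi_m,\zeta,\eta)\mapsto \zeta-\pi(\eta)$, where $\pi\colon T_{u(+\infty)}M\to \nu_{u(+\infty)}D$ is the projection; the composition is clearly zero, the second map is visibly surjective, and the kernel of the second equals the image of the first by direct inspection. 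Commutativity of the left square follows because, on $\scrE_R\subset \scrE_P$ one keeps $z_{m+1}=+\infty$ fixed, so the linearised Cauchy-Riemann operator imposes the condition $u(z_{m+1})\in D$ only through the normal projection of its $T_{u(+\infty)}M$-component. For the right square, the only nontrivial component of the linearisation on $T_{z_{m+1}}T$ lands in $\nu_{u(z_{m+1})}D$, and equals (by the standard formula for how evaluation conditions linearise) the derivative $Du(+\infty)$ projected to the normal bundle; moving $z_{m+1}$ has no effect on \eqref{eq:p-incidence}, so the $T_{u(+\infty)}M$-contribution vanishes and the composition $\eqref{eq:graph-projection}\circ\eqref{eq:p-extended-linearized}$, restricted to $T_{z_{m+1}}T$, agrees with \eqref{eq:small-map}.

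Next I would invoke the snake lemma to obtain the six-term exact sequence
\begin{equation*}
0\to\ker\eqref{eq:r-extended-linearized}\to\ker\eqref{eq:p-extended-linearized}\to\ker\eqref{eq:small-map}\to\operatorname{coker}\eqref{eq:r-extended-linearized}\to\operatorname{coker}\eqref{eq:p-extended-linearized}\to\operatorname{coker}\eqref{eq:small-map}\to 0.
\end{equation*}
By the discussion in the excerpt, \eqref{eq:small-map} is an isomorphism (its being onto is the transverse intersection of $u$ with $D$ at $+\infty$, which holds because the intersection multiplicity there is exactly $1$; both source and target are one-dimensional complex vector spaces, so it is also injective). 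Hence both outer terms in the snake sequence vanish, which forces $\ker\eqref{eq:r-extended-linearized}\hookrightarrow \ker\eqref{eq:p-extended-linearized}$ to be an isomorphism and $\operatorname{coker}\eqref{eq:r-extended-linearized}\to \operatorname{coker}\eqref{eq:p-extended-linearized}$ to be an isomorphism as well. The first gives the kernel statement; the second gives the equivalence of surjectivity.

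There is no real obstacle here: the content of the lemma is almost entirely algebraic once the commutative diagram is in place, and the only geometric input is the identification of the rightmost vertical map with \eqref{eq:small-map}, which was established earlier. The step most likely to require care in a formal write-up is simply the bookkeeping of the $T_{u(+\infty)}M$-component in \eqref{eq:graph-inclusion} and \eqref{eq:graph-projection} to confirm that the bottom row is genuinely exact as an abstract sequence of finite-dimensional spaces (on top of the infinite-dimensional $W^{k,2}$-factors, which contribute identically to both operators and do not affect the chase).
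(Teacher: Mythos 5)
Your argument is correct and is exactly the paper's (implicit) argument: the paper sets up the commutative diagram with exact rows, observes that \eqref{eq:small-map} is an isomorphism because $u$ meets $D$ transversally at $+\infty$, and then states the lemma with a ``Therefore:'', leaving the snake-lemma chase to the reader. Your write-up simply makes that diagram chase explicit, including the (correct) observation that the infinite-dimensional $W^{k,2}$-summands appear identically in both rows and so play no role.
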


The upshot is that one can in fact define $s_{m+1}(P)$ for $P = R$ in this way, using the transversality theory for $t_{1,m}(R)$ and Lemma \ref{th:operator-comparison} to establish the necessary regularity results; and that the outcome agrees with $t_{1,m}(R)$.

After that, Lemma \ref{th:replace-t-by-s} is proved simply by slightly perturbing $R$, which because of regularity does not change the count of points in zero-dimensional moduli spaces. There is an additional wrinkle here: the desired result requires considering all $m$ simultaneously, but the naive argument involving a small perturbation can only work for finitely many $m$ at a time. The proper solution is to replace the perturbation by a homotopy of pseudo-cycles, and to introduce another associated moduli space. However, there's a simpler workaround, which will do for us: one can arrange for the Floer complex to be bounded below, by Lemma \ref{th:satisfy-epsilon-bound}(ii), and then only finitely many $m$ can contribute anyway, because of the power $q^{m+1}$ involved.

\begin{remark}
It seems that the argument above could be used to define $s_{C_q}(P)$ for any pseudo-cycle $P$ in $M$, irrespectively of how it intersects $D$, thus rendering Lemma \ref{th:again-transverse} unnecessary. (Presumably, this approach could be also applied in Section \ref{subsec:classical-thimble}, making the use of an arbitrary Morse function possible, which means \eqref{eq:stable-transverse} could be dropped.) We have not explored all the details, since there seems to be no major gain to balance the increased technical complexity. 
\end{remark}

\subsection{Proof sketch for Lemma \ref{th:again-transverse}\label{subsec:gw}} Let $S = (\bR \times S^1) \cup \{\pm \infty\}$ be the Riemann sphere. Take an almost complex structure $J$ which respects $D$. For $m \geq 0$, define $\frakG_m(P)$ to be the space of pairs $(v,p)$, where $p \in P$ and $v: S \rightarrow M$ is a $J$-holomorphic map with the following properties. It has intersection number $(m+1)$ with $D$; is not multiply-covered; is not contained in $D$; and satisfies $v(0,0) \in D$, as well as $v(+\infty) = c_P(p)$. For generic $J$, the space $\frakG_m(P)$ is smooth of dimension $2(n+m) -|P|$, and its evaluation map at $-\infty$ is transverse to $D$. Look at the Gromov compactification, by stable maps with three marked points $(z_{\pm} = \pm\infty,\,z_* = (0,0))$. For a point of $\overline{\frakG}_m(P) \setminus \frakG_m(P)$, consider the following possible phenomena:
\begin{enumerate}[label=(G\arabic*)] \itemsep.5em
\item
Suppose that 
the images of $z_{\pm}$ lie on the same simple $J$-holomorphic sphere, which is contained in $D$. This gives codimension $2m+4 \geq 4$.
the codimension is 2m+4.

\item \label{item:g2}
Similarly, suppose that 
the images of $z_{\pm}$ lie on the same simple $J$-holomorphic sphere, which is not contained in $D$ but has Chern number $\leq m$. This gives codimension $\geq 2$, and codimension $\geq 4$ if the image of $\zeta_-$ is contained in $D$.

\item \label{eq:infinities-collide}
Suppose that the images of $z_{\pm}$ agree (for instance, because they lie on the same constant component of the limiting stable curve). That image point lies on our pseudo-cycle or its limiting set, and also on a simple $J$-holomorphic sphere of Chern number $\leq m+1$. This gives codimension $\geq 2$; and if the point additionally lies in $D$, codimension $\geq 4$ (here, we are using the fact that $P$ is transverse to $D$).

\item
If none of the previous cases applies, there must be a nonempty chain of $K \geq 2$ simple pairwise distinct $J$-holomorphic spheres $(v^1,\dots,v^K)$, as in \eqref{eq:chain-incidence}, with $\sum_k v^k \cdot D \leq m+1$, such that $v^1(-\infty)$ agrees with the image of $z_-$, and $v^K(+\infty)$ with the image of $z_+$. This leads to the same conclusion as in \ref{item:g2}.
\end{enumerate}
By inspection, one sees that the evaluation at $-\infty$ on $\frakG_m(P)$ indeed satisfies Lemma \ref{th:again-transverse}. A further transversality argument shows that:

\begin{lemma} \label{th:distinct-sphere-points}
Suppose that we additionally assume that $v^{-1}(D)$ consists of $(m+1)$ distinct points. This still gives the same pseudo-cycle (in the sense that the subset we have removed has codimension $\geq 2$).
\end{lemma}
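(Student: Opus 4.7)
The plan is to stratify $\frakG_m(P)$ by partitions $\Pi = (k_1 \geq \cdots \geq k_\ell)$ of $m+1$, where the stratum $\frakG_m^\Pi(P)$ consists of $(v,p)$ for which $v^{-1}(D)$ has exactly $\ell$ distinct points carrying local intersection multiplicities $k_1,\dots,k_\ell$. The trivial partition $(1,1,\dots,1)$ with $\ell = m+1$ singles out the open subset we wish to retain, and what must be proved is that every other stratum has real codimension at least $2$ in $\frakG_m(P)$.

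First I would invoke the transversality-with-tangency framework of \cite{cieliebak-mohnke07} to compute dimensions. For a generic $J$ respecting $D$, the unparametrized moduli of simple $J$-holomorphic spheres in classes $C$ with $C \cdot D = m+1$ has dimension $2n-6+2(m+1)$; imposing the tangency pattern $\Pi$ is a constraint of real codimension $2\sum_i(k_i-1) = 2(m+1-\ell)$, so the resulting stratum has dimension $2n-6+2\ell$. Adding the free marked points $z_\pm$ contributes $+4$; the condition $v(z_*)\in D$ forces $z_*$ to coincide with one of the $\ell$ tangency points, a discrete choice contributing $0$ (since $v$ cannot meet $D$ with total multiplicity greater than $c_1(v)=m+1$); and the incidence $v(z_-)=c_P(p)$ with $p\in P$ subtracts $|P|$. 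Altogether $\dim \frakG_m^\Pi(P) = 2n-2+2\ell-|P|$. For the main stratum $\ell=m+1$ this recovers $2(n+m)-|P| = \dim \frakG_m(P)$, agreeing with the dimension used in the proof of Lemma~\ref{th:again-transverse}; for every other partition the codimension is $2(m+1-\ell) \geq 2$, as required.

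The hard part will be arranging this transversality uniformly across all strata, since the non-trivial ones involve higher-order tangency constraints rather than the simple transverse intersections appearing in Lemma~\ref{th:again-transverse}. This is a direct application of \cite{cieliebak-mohnke07}, essentially already deployed in the regularity assumptions underlying $s_m$ and $t_{w,m}$ in Sections~\ref{subsec:dm} and~\ref{section:thimbleswithtangency}: the freedom to perturb $J$ away from $D$, together with the existence of injective points, suffices to make each $\frakG_m^\Pi(P)$ smooth of the expected dimension for a dense set of $J$. Once this is established, the subset removed from $\frakG_m(P)$ by imposing distinctness of intersection points is a locally finite union of smooth strata, each of codimension $\geq 2$.

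Finally, I would check that the pseudo-cycle property survives this restriction. The Gromov compactification analysis carried out in the proof of Lemma~\ref{th:again-transverse} already shows that $\overline{\frakG}_m(P) \setminus \frakG_m(P)$ has codimension $\geq 2$; since the non-generic strata contribute only additional closed subsets of codimension $\geq 2$ to the omega-limit set, the evaluation map at $z_+$ restricted to the locus where $v^{-1}(D)$ consists of $m+1$ distinct points is still a pseudo-cycle of the same dimension, and represents the same homology class as $g_m(P)$. That is exactly the assertion of the lemma.
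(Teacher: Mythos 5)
Your proposal is correct, and it fills in what the paper leaves to a one-line remark ("A further transversality argument shows that..."). The approach — stratify by the partition $\Pi$ of $m+1$ recording the tangency multiplicities, compute the expected dimension of each stratum, and invoke Cieliebak--Mohnke transversality to achieve it — is exactly what the paper has in mind, consistent with the regularity conditions \ref{item:d-collision}, \ref{item:s-collision}, \ref{item:t-collision} used throughout.

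Two small remarks on the bookkeeping (neither affects the conclusion). First, you wrote "the incidence $v(z_-)=c_P(p)$"; in the paper's conventions the incidence with $P$ happens at $z_+ = +\infty$, while $z_- = -\infty$ is the point used for the evaluation pseudo-cycle. Second, when you "add the free marked points $z_\pm$," it is worth saying that because $v$ is simple with $c_1(v)=m+1>0$ (so non-constant), the choice of parametrization matching $z_\pm$ and $z_*$ to points on the unparametrized sphere is free once $z_*$ is pinned to one of the $\ell$ tangency points, so the $+4$ dimensions for $z_\pm$, together with the discrete $\ell$-fold choice for $z_*$, exactly recover the parametrized count — which is why the $\ell = m+1$ stratum reproduces $\dim\frakG_m(P) = 2(n+m)-|P|$. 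Your codimension formula $2(m+1-\ell)\geq 2$ for $\ell\leq m$ then gives the claim. Finally, your point that the boundary $\overline{\frakG}_m(P)\setminus\frakG_m(P)$ was already shown to have codimension $\geq 2$ in (G1)--(G4) is the right way to see that the restricted locus is still a pseudo-cycle representing the same class.
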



\subsection{Proof sketch for Lemma \ref{th:alpha-property}\label{subsec:alpha}}
The operation \eqref{eq:alpha-elements} uses a parameter space
\begin{equation} \label{eq:x-space}
\frakY_m = \mathit{Sym}_m(T) \times (-\infty,+\infty],
\end{equation}
where the last factor, which we denote by $s_*$, determines the position of the distinguished marked point. As usual, one chooses data underlying a Cauchy-Riemann equation on $T$ which vary over $\frakY_m$. We impose the following assumptions (which don't interfere with transversality or contradict each other, hence can be satisfied at the same time).
\begin{enumerate}[label=(Y\arabic*)] 
\itemsep.5em
\item Take the subset of $\frakY_m$ where the distinguished marked point lies at $+\infty$. On that subset, we want the data to agree with those used to define $t_{1,m}(P \cap D)$.

\item \label{item:sub-symmetric-product}
For any $i < m$, Consider the subspace $\mathit{Sym}_i(T) \hookrightarrow \frakY_m$ obtained by adding $m-i$ times the point $+\infty$ to the divisor, and also setting $s_* = +\infty$. On that subset, we want to impose two conditions. First of all, for the relevant families of almost complex structures, we want that at $+\infty$ to be independent of $i$ and of where we are in $\mathit{Sym}_i(T)$. Moreover, this almost complex structure, denoted simply by $J$, will be suitable for defining the Gromov-Witten pseudo-cycle $g_{m-i}(P)$. Having done that, the data on the entire thimble should be that used to define $s_i(g_{m-i}(P))$. 
\end{enumerate}
Let $\frakY_m(x_-,P)$ be the associated moduli space of parametrized solutions of the Cauchy-Riemann equation, with the usual incidence condition \eqref{eq:p-incidence}. As usual, the main stratum consists of those $(\Sigma,u,s_*,p)$ where the points of $\Sigma$ are distinct, and not equal to either $\infty$ or to the distinguished point $(s_*,0)$, for $s_*<\infty$. We are concerned with what happens as $s_* \rightarrow +\infty$. The most straightforward behaviour is that the limit remains within $\frakY_m(x_-,P)$, with no bubbling or splitting. Of course, in that case the maps satisfy $u(+\infty) \in D \cap P$. One in fact recovers the space underlying $t_{1,m}(D \cap P)$, which explains the appearance of that term in \eqref{eq:alpha-property}.

There is another type of limiting configurations which appears in \eqref{eq:alpha-property}, for geometrically slightly less obvious reasons. For that, we consider a sequence $(\Sigma_k,u_k,s_{*,k},p_k)$ in the main stratum of the moduli space, such that:
\begin{enumerate}[label=(Y\arabic*)] 
\setcounter{enumi}{2} \itemsep.5em
\item $p_k \rightarrow p \in P$.

\item The $\Sigma_k$ converge to the union of: a collection $\Sigma$ of $i<m$ distinct points in $T \setminus \{+\infty\}$; and the point $+\infty$, with multiplicity $(m-i)$.

\item $s_{*,k} \rightarrow +\infty$. Moreover, if we look at the sequence of translations of the cylinder by $-s_{*,k}$, then in the limit, we get a configuration $\Pi$ consisting of the distinguished point $(0,0)$ together with $(m-i)$ distinct points in $(\bR \times S^1) \setminus \{(0,0)\}$. (Those are the limits of the points in $\Sigma_k$ which originally converged to $+\infty$, so we are making a finer assumption on how that convergence works).

\item The sequence $u_k$ converges to a limit $u$ together with a bubble $v: S \rightarrow M$. Here, $v$ is a simple pseudo-holomorphic map and satisfies
\begin{equation} \label{eq:uv-conditions}
v^{-1}(D) = \Pi + \{(0,0)\},\; v(+\infty) = c_P(p),\; v(-\infty) = u(+\infty).
\end{equation}
\end{enumerate}
In the parameter space \eqref{eq:x-space}, the limit point lies in the stratum $\mathit{Sym}_i(T) \times \{+\infty\}$, of codimension $2i+1 \geq 3$, since that limit does not take $\Pi$ into account. Nevertheless, the full limiting data $(\Sigma,u,\Pi,v,p)$ are easily seen to have codimension $1$. Moreover, in the case where the main stratum is $1$-dimensional, a gluing argument shows that such limits represent boundary points of a suitable compactification. Above, we have formulated the sphere part of the limiting space as a parametrized moduli space of $(\Pi,v,p)$ satisfying the first two conditions in \eqref{eq:uv-conditions}. Nevertheless, see Lemma \ref{th:distinct-sphere-points}, evaluation $v(-\infty)$ reproduces the Gromov-Witten pseudocycle $g_{m-i}(P)$ (more precisely, it reproduces it up to pieces that are of codimension $\geq 2$, hence irrelevant).

There are two other obvious codimension $1$ degenerations, which correspond to the first and last term in \eqref{eq:alpha-property}. They correspond to the stratum of $\bar\frakY_m(x_-,P)$ where a cylinder has split off on the left; that cylinder may contain the distinguished point, giving rise to the last term, or it may not, giving rise to the first term. The end of the argument consists in checking that all other phenomena that appear in the moduli space and its compactification have codimension $\geq 2$. We will not discuss that in detail, as it closely parallels such issues encountered earlier on.

\section{The connection\label{sec:connection}}
This section proves the last of the results stated at the beginning of the paper, Proposition \ref{th:connection}. By design, both the definition of the objects involved, and their use, are extensions of material in Section \ref{sec:operations}. 

\subsection{Definition\label{subsec:define-connection}}
The connection on $S^1$-equivariant deformed symplectic cohomology was defined in \cite[Section 5.3f]{pomerleano-seidel23}. We summarize the definition here, with the minor adjustments demanded by our adoption of the telescope construction for symplectic cohomology. The formal structure is that one constructs a $(u,q)$-linear map
\begin{equation} \label{eq:r-map}
\begin{aligned}
& \alpha_{C_{u,q}}: C_{u,q} \longrightarrow C_{u,q}, \\
& d_{C_{u,q}}\alpha_{C_{u,q}} - \alpha_{C_{u,q}} d_{C_{u,q}} = u\partial_q d_{C_{u,q}}.
\end{aligned}
\end{equation}
Here $\partial_q d_{C_{u,q}}$ is the derivative of the equivariant boundary operator with respect to the standard basis of $C_{u,q}$ (for any pair of one-periodic orbits, one takes the $q$-derivative of the relevant coefficient in $d_{C_{u,q}}$). The equation \eqref{eq:r-map} is equivalent to saying that
\begin{equation}
\nabla_{u\partial_q} = u\partial_q + \alpha_{C_{u,q}}
\end{equation}
is an endomorphism of the chain complex; this endomorphism underlies the connection on cohomology \cite[Definition 5.3.10]{pomerleano-seidel23}. Digging a bit into the details:
\begin{itemize} \itemsep.5em
\item
for each $m,w \geq 0$, $l \geq 0$, and $0 \leq i \leq l$, one defines operations
\begin{equation} \label{eq:equivariant-a}
\xymatrix{
\mathit{CF}^{*-2l-2m}(w+m+1) &&
\ar[ll]^-{\displaystyle a_m^{l,i}}_-{\includegraphics[valign=c]{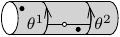}} \mathit{CF}^*(w)
}
\end{equation}
For $m = 0$ this reduces to \eqref{eq:iota-m-map}, and for $m = 1$ to \eqref{eq:a11}, \eqref{eq:a10}. The general case is defined by having $l$ angle-decorated circles, as in the definition of the equivariant differential; and requiring the distinguished marked point to satisfy
\begin{equation} \label{eq:s-interval}
(s_*,t_*) \in \begin{cases} (-\infty,\sigma^1] \times \{\theta^1 + \cdots + \theta^l\} & i = 0, \\
[\sigma^i,\sigma^{i+1}] \times \{\theta^{i+1} + \cdots + \theta^l\} & 1 \leq i < l, \\
[\sigma^l,\infty) \times \{0\} & i = l.
\end{cases}
\end{equation}

\item
We also have, for $1 \leq i \leq l$, operations
\begin{equation} \label{eq:equivariant-b}
\xymatrix{
\mathit{CF}^{*-2l-2m}(w+m+1) &&
\ar[ll]^-{\displaystyle b_m^{l,i}}_-{\includegraphics[valign=c]{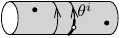}} \mathit{CF}^*(w)
}
\end{equation}
which reduce to \eqref{eq:bm11} for $l = 1$. As in that special case, the definition involves lifting the angle $\theta^i$ to $\theta^{i,\mathit{lift}}$, and then asking that
\begin{equation} \label{eq:b-position}
(s_*,t_*) \in \{\sigma^i\} \times [\theta^{i,\mathit{lift}},1].
\end{equation}
\end{itemize}
Let $\alpha_m^l$ be the sum of $a_m^{l,i}$ and $b_m^{l,i}$ over all $i$ (for $l = 0$, this means we set $r_m^l = a_m$). It satifies
\begin{equation} \label{eq:derivative-of-d}
\sum_{\substack{i+j=m \\ u+v = l}} d_i^u \alpha_j^v - \alpha_i^u d_j^v = 
\begin{cases} 
(m+1) d_{m+1}^{l-1} & l > 0, \\
0 & l = 0.
\end{cases}
\end{equation}
The mechanism which produces the right hand side is the same as in the $l = 1$ case, involving the stratum were $\theta^{i,\mathit{lift}} = 0$ and a forgetful map from that to the parameter space underlying $d_{m+1}^{l-1}$. As a consequence, on the subcomplex $(\bigoplus_w \mathit{CF}^*(w))[[u,q]] \subset C_{u,q}$, one can define 
\begin{equation}
\alpha_{C_{u,q}}(x) = \sum_{m,l} u^l q^m \alpha_m^l,
\end{equation}
and this will satisfy \eqref{eq:r-map}. Of course, this is not the entire picture: there are variants $a_m^{l,i,\dag}$ and $b_m^{l,i,\dag}$ defined according to the usual principle, and which complete the construction of the connection on the entire complex $C_{u,q}$. 

\subsection{Differentiating the thimble maps}
Following the same strategy as in Section \ref{subsec:thimble-cap}, we introduce analogues of the previous moduli space for the thimble surface, with an incidence constraint at $+\infty$ to a pseudo-cycle $P$ transverse to $D$.
\begin{itemize} \itemsep.5em
\item
For each $m,l \geq 0$ and $0 \leq i \leq l$, we define operations
\begin{equation} \label{eq:equivariant-alpha}
\mathit{CF}^{|P|-2m-2l-1}(m+1) \ni y_m^{l,i}(P) \;
\includegraphics[valign=c]{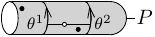}
\end{equation}
For $m = 0$ this reduces to \eqref{eq:alpha-elements}. The general case has $l$ angle-decorated circles, which are allowed to shrink to the point $+\infty$. The distinguished marked point is required to be in position \eqref{eq:s-interval}, with the only modification that for $i = l$ it can also reach $+\infty$.

\item
Also for $m,w \geq 0$, $l \geq 1$, and $1 \leq i \leq l$, we have
\begin{equation} \label{eq:equivariant-beta}
\mathit{CF}^{|P|-2m-2l-1}(m+1) \ni z_m^{l,i}(P) \;
\includegraphics[valign=c]{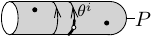}
\end{equation}
This mimics \eqref{eq:equivariant-b} on the thimble, which means that the distinguished marked point must satisfy \eqref{eq:b-position}.
\end{itemize}
Let $\xi_m^l(P)$ be the sum of all $y_m^{l,i}(P)$ and $z_m^{l,i}(P)$. This satisfies
\begin{equation} \label{eq:derivative-of-thimble}
\sum_{\substack{i+j=m \\ u+v = l}} d_i^u \xi_j^v + \alpha_i^u s_j^v(P)
= \sum_{i+j=m} s_i^l(g_j(P)) + t_{1,m}^l(D \cap P)
-
\begin{cases} 
(m+1) s_{m+1}^{l-1}(P) & l>0, \\
0 & l = 0.
\end{cases}
\end{equation}
On the left hand side of \eqref{eq:derivative-of-thimble}, a cylinder split off at $-\infty$; the distinguished marked point can either remain on the thimble (first term) or move into the cylinder (second term). So far, this has been entirely analogous to \eqref{eq:derivative-of-d}, and that also applies to the last term on the right hand side. The remaining terms of \eqref{eq:derivative-of-thimble} (the first two on the right) express the same kind of behaviour as in \eqref{eq:alpha-property}, which is indeed the $l = 1$ special case of \eqref{eq:derivative-of-thimble}. Both times, all $l$ angle-decorated circles stay in the thimble. One could think that there are other limits, in which some angle-decorated circles go to $+\infty$. However, in that case the angle becomes irrelevant (meaning that one makes the choices of data compatible with a map to a lower-dimensional space, which forgets the angle), which means that these phenomena are of codimension $\geq 2$. 

For the overall expression 
\begin{equation}
\xi_{C_{u,q}}(P) = \sum_{m,l} u^l q^m \xi_m^l(P), 
\end{equation}
the equation \eqref{eq:derivative-of-thimble} means that
\begin{equation}
d_{C_{u,q}}\xi_{C_{u,q}}(P) + \alpha_{C_{u,q}} s_{C_{u,q}}(P)
= s_{C_{u,q}}\big(\sum_m q^m g_m(P)\big) + t_{C_{u,q},1}(D \cap P) -
u \partial_q s_{C_{u,q}}(P).
\end{equation}
On the cohomology level, in view of \eqref{eq:small-quantum-2}, we get
\begin{equation} 
\label{eq:equation-for-connection}
u\nabla_q s_{C_{u,q}}(P) = s_{C_{u,q}}(q^{-1}([D] \ast_q [P] - [D \cap P])) + t_{1,C_{u,q}}(D \cap P).
\end{equation}
The equivariant analogue of Lemma \ref{th:replace-t-by-s}, which follows exactly the same strategy with added equivariant parameters, is:

\begin{lemma} \label{th:equivariant-replace-t-by-s}
Take a pseudo-cycle $R$ in $D$, and a perturbation $P$ of that pseudo-cycle into $M$, which is transverse to $D$. Then 
\begin{equation} \label{eq:s-is-t-2}
q [t_{C_{u,q},1}(R)] = [s_{C_{u,q}}(P)] \in
\mathit{SH}_{u,q}^{|P| = |R|+2}(M,D).
\end{equation}
\end{lemma}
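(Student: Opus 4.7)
The plan is to mirror the strategy of Lemma \ref{th:replace-t-by-s} from Section \ref{subsec:perturb}, carrying the equivariant parameters along uniformly throughout. The first step is to set up the equivariant moduli spaces $\frakT^l_{1,m}$ and $\frakS^l_{m+1}$ with consistent Floer data, extending the consistency condition \ref{item:at-forget} to the equivariant setting: the embedding $\frakT^l_{1,m} \hookrightarrow \frakS^l_{m+1}$ that adds a marked point at $+\infty$ to the divisor should be compatible with the chosen data, including the angle-twisting conventions of \ref{item:rotated-data}, \ref{item:coincident-circles}, \ref{item:sigma2} and \ref{item:sigma3}. This should be unproblematic since the operation of adding a point at $+\infty$ commutes with all operations on the equivariant parameters.

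The core step is the linearized operator comparison. Take a point of the moduli space underlying $t^l_{1,m}(R)$, consisting of $(\Sigma, u, r)$ together with equivariant parameters (circle positions $\sigma^1\leq\cdots\leq\sigma^l$ and angles $\theta^1,\dots,\theta^l$), and its image in the degenerate version of $s^l_{m+1}(P)$ for $P = R$, obtained by adjoining $+\infty$ as the $(m{+}1)$-st point of $\Sigma$. The equivariant linearized operators acquire the tangent space $T\Theta^l$ as an additional direct summand on the source side, but the target spaces are unchanged from the non-equivariant operators \eqref{eq:r-extended-linearized} and \eqref{eq:p-extended-linearized}. The commutative diagram with exact rows from Section \ref{subsec:perturb} therefore persists, with the rightmost vertical map still the isomorphism induced by $u$ having intersection multiplicity $1$ at $+\infty$; the equivariant version of Lemma \ref{th:operator-comparison} then follows, yielding simultaneous surjectivity with identical kernels.

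With this in place, one can define a degenerate version of $s^l_{m+1}(R)$ whose zero-dimensional configurations coincide with those of $t^l_{1,m}(R)$ (including signs), and then argue that a small generic perturbation of $R$ into a pseudo-cycle $P$ in $M$ transverse to $D$ preserves these counts. Summing over $m$ and $l$ with the appropriate powers of $u$ and $q$, and multiplying by $q$, yields \eqref{eq:s-is-t-2} on the cochain level, hence on cohomology.

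The main obstacle I anticipate is the familiar technical point that a single small perturbation of $R$ can only be made effective for finitely many $(m,l)$ at once. This will be handled exactly as in Section \ref{subsec:perturb}: arrange the underlying Floer complexes to be bounded below via Lemma \ref{th:satisfy-epsilon-bound}(ii), so that in any fixed cohomological degree only finitely many $(m,l)$ contribute to $t_{C_{u,q},1}(R)$ or to $s_{C_{u,q}}(P)$, since both $u$ and $q$ have degree $2$ and the $(m,l)$ component of $s_{C_{u,q}}(P)$ carries the factor $u^l q^{m+1}$.
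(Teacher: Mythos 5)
Your proposal is correct and takes essentially the same approach as the paper, which simply states that Lemma \ref{th:equivariant-replace-t-by-s} ``follows exactly the same strategy [as Lemma \ref{th:replace-t-by-s}] with added equivariant parameters.'' You have spelled out that strategy faithfully: the equivariant consistency condition, the operator comparison (with $T\Theta^l$ added symmetrically to the source side so the diagram and isomorphism at multiplicity $1$ persist), and the degree-bound workaround for the perturbation argument.
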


After multiplying \eqref{eq:equation-for-connection} by $q$ and applying \eqref{eq:s-is-t-2}, one gets Proposition \ref{th:connection}.

\section{Pulling out the marked points\label{sec:pullout}}

This final section explains how to go from the framweork used in this paper (Riemann surfaces with added marked points, whose images go through the divisor $D$) to that in \cite{pomerleano-seidel23} and other papers involving more abstract deformations of symplectic cohomology (Riemann surfaces with added punctures, where one inserts a Maurer-Cartan element in the symplectic cochain $L_\infty$-algebra at the punctures). In our application, the statement is made simpler by the fact that the Maurer-Cartan element has only one nonzero term (the $q$-linear one), for degree reasons. The equivalence between the two approaches is intuitively plausible; and indeed, the basic strategy is the obvious one of stretching the surfaces near the marked points. Nevertheless, the moduli spaces that occur in the construction may have wider applications. For that reason, our discussion focuses on setting up those spaces, while subsequent steps receive a much shorter shrift.

\subsection{Maurer-Cartan spaces\label{subsec:woodward}}
These spaces are a variant of the complexified multiplihedra from \cite[Section 2.3]{woodward15}. For $m \geq 1$, consider pairs consisting of an ordered collection of not necessarily distinct points $(z_1,\dots,z_m)$ in $\bC$ and a constant one-form $\alpha = a \, \mathit{dz}$, with $a > 0$. Two such pairs are identified if they are related by translation and real rescaling, so the parameter space is
\begin{equation} \label{eq:ordered-configuration}
\MC_m = (\bC^m \times \bR^{>0}) / (\bC \rtimes \bR^{>0}) \iso 
\bC^m/\bC \iso \bC^{m-1}.
\end{equation}
The strata in the compactification $\overline{\MC}_m$ are labeled by (isomorphism classes of) trees $T$ with $(m+1)$ semi-infinite edges, of which one is singled out, and the others labeled by $\{1,\dots,m\}$. Our convention is that the distinguished semi-infinite edge is an output, oriented towards infinity, and all other edges are oriented towards that output; in particular, the other $m$ semi-infinite edges are inputs, oriented away from infinity. The main condition is that the edges pointing towards any given vertex should either all be semi-infinite (in which case the vertex is called a leaf), or all finite. The unique vertex adjacent to the output is called the root; it can be a leaf only in the degenerate case of a single-vertex tree, which corresponds to the interior of our moduli space. All non-leaf vertices must have valence $|v| \geq 3$, while for leaves the condition is $|v| \geq 2$. The corresponding stratum is 
\begin{equation} \label{eq:m-t-space}
\MC_T \iso \prod_{v \text{ leaf}} \MC_{|v|-1} \times \prod_{v \text{ not a leaf}} \FM_{|v|-1},
\end{equation}
where 
\begin{equation} \label{eq:fm}
\FM_m = \mathit{Conf}_{m}^{\mathrm{ord}}(\bC)/(\bC \rtimes \bR^{>0}),
\end{equation}
with $\mathit{Conf}^{\mathrm{ord}}$ for ordered configuration space. In the occurrence of these factors in \eqref{eq:m-t-space}, it is better to say that the points of each configuration are labeled by the incoming edges at that vertex. Intuitively, it is useful to think of the $\MC_{|v|-1}$ factors as being at finite scale (let's say, normalized to $a = 1$), and the $\mathit{Conf}_{|v|-1}^{\mathrm{ord}}$ at infinite scale (obtained as limits where one shrinks a sequence of finite scale configurations more and more). The codimension of \eqref{eq:m-t-space} is the number of non-leaf vertices. In particular, the codimension one strata are
\begin{equation} \label{eq:mc-codim-1}
\MC_{m_1} \times \cdots \times \MC_{m_r} \times \FM_r
\end{equation}
for any partition of $\{1,\dots,m\}$ into $r \geq 2$ subsets of sizes $m_1,\dots,m_r \geq 1$. This notation inflates the number of such strata somewhat (different orderings of the partition $(m_1,\dots,m_r)$ give rise to isomorphic trees, hence describe the same stratum).

\begin{remark}
The construction above is closely related to the well-known Fulton-MacPherson compactification $\overline{\FM}_m$, $m \geq 2$, of \eqref{eq:fm}. Indeed, one can write
\begin{equation} \label{eq:mc-fm}
\overline{\MC}_m = 
\MC_m \sqcup \Big( \coprod
\MC_{m_1} \times \cdots \times \MC_{m_l} \times \overline{\FM}_l \Big) / S_l,
\end{equation}
where the disjoint union is over all partitions of $\{1,\dots,m\}$ into $l \geq 2$ subsets of sizes $(m_1,\dots,m_l)$; and $S_l$ is the symmetric group, acting freely on such partitions (so it exchanges different strata, making up for the fact that the partitions are not naturally ordered). In terms of \eqref{eq:m-t-space}, what we have done in \eqref{eq:mc-fm} is simply to collect all the non-leaf vertices into one expression.
\end{remark}

\begin{remark}
Let's clarify the relation with complexified multiplihedra. We allow the points in \eqref{eq:ordered-configuration} to coincide, whereas in \cite{woodward15} they would split off into a separate limiting configuration (at scale zero, so to speak). We illustrate this in Figure \ref{fig:multiplihedra} by looking at part of the real locus of $\overline{\MC}_3$, comparing it with the multiplihedron from \cite[Figure 8]{mau-woodward08}. A second departure from \cite{woodward15} is that in the non-leaf factors of \eqref{eq:m-t-space}, we do not quotient out by rotations. This leads to the appearance of Fulton-MacPherson spaces rather than Deligne-Mumford spaces in \eqref{eq:mc-fm} (and makes sense because our intended application is to symplectic cohomology, rather than Gromov-Witten theory).
\end{remark}
\begin{figure}
\begin{centering}
\includegraphics{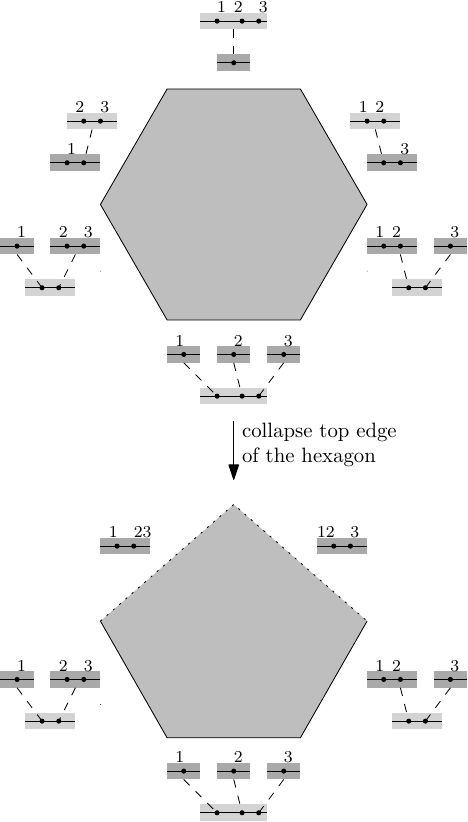}
\caption{\label{fig:multiplihedra}The multiplihedron $\bar{M}_{1,3}$ (in the notation from \cite{mau-woodward08}, at the top) compared to the corresponding part of $\overline{\MC}_3(\bR)$ (bottom). The darker components are those where one does not divide by rescaling. The dotted edges of the pentagon belong to the interior $\MC_3$ (and the corner where they meet is simply where the three numbered points coincide).}
\end{centering}
\end{figure}

Consider a point in a stratum \eqref{eq:m-t-space}. In each factor, choose a representing configuration, and in the case of the leaves, let that representative have $a = 1$ (meaning $\alpha = \mathit{dz}$). When gluing the pieces together, one has a parameter $\lambda_e > 0$ for each finite edge $e$; the geometric datum at the source vertex of $e$ is rescaled by $\lambda_e$ before being inserted into the corresponding datum at the target vertex. However, these gluing parameters are not independent of each other, because we will require that the one-forms inherited from the leaves should agree after gluing:
\begin{equation} \label{eq:equal-gluing} \parbox{34em}{
take a vertex of the tree. The product of gluing parameters along the path from a leaf to that vertex must be the same for all leaves.}
\end{equation}
The standard way of encoding such constraints goes as follows. Take the abelian group $G_T = \bZ^{\mathit{Ed}_{\mathit{fin}}(T)}$ generated by finite edges, and let $G_{T,\geq 0} \subset G_T$ be the obvious nonnegative (unital) monoid. Any path in $T$ connecting two leaves gives an element of $G_T$ (by counting the edges where the path direction is compatible with the orientation of the tree as $+1$, and the others with $-1$). Those elements generate a subgroup $R_T \subset G_T$. Write $Q_T = G_T/R_T$, and let $Q_{T,\geq 0} \subset Q_T$ be the image of $G_{T,\geq 0}$. The space of permitted gluing parameters, including the degenerate situation where only some components are glued, can be written as 
\begin{equation} \label{eq:gluing-corner}
\mathit{Hom}(Q_{T,\geq 0},(\bR^{ \geq 0},\cdot)). 
\end{equation}
One can reformulate the situation as follows. For each vertex $v$, take a path going from some leaf to $v$, and the corresponding element in $Q_T$. These elements are zero if $v$ is a leaf, and the others give a basis of $Q_T$; so we get
\begin{equation} \label{eq:non-leaf}
Q_T \iso \bZ^{\mathit{Ve}(T)}/\bZ^{\mathit{Ve}_{\mathit{leaf}}(T)} \iso
\bZ^{\mathit{Ve}_{\mathit{non-leaf}}(T)}.
\end{equation}
(The notation in \eqref{eq:non-leaf} is for the sets of all vertices, leaf vertices, and non-leaf vertices, respectively.) In these terms, an edge $e$ is given by the difference between its endpoint and starting point, so $Q_{T,\geq 0}$ is the submonoid of \eqref{eq:non-leaf} generated by such differences.

\begin{example} \label{th:mau}
(This is \cite[Example 6.3]{mau-woodward08}, reproduced here to help explain our terminology.) Consider the tree from Figure \ref{fig:4-tree}. We have
\begin{equation}
Q_T = \bZ e_1 \oplus \cdots \oplus \bZ e_6 / (e_1-e_5, e_4-e_6, e_1+e_2-e_3-e_4)
\end{equation}
which is freely generated by $a = e_1 = e_5$, $b = e_1+e_2 = e_3+e_4 = \cdots$, $c = e_4 = e_6$. In those terms, the generators of $Q_{T,\geq 0}$ are $x_1 = a$, $x_2 = b-a$, $x_3 = b-c$, $x_4 = c$, with the relation $x_1+x_2 = x_3+x_4$. The gluing parameters \eqref{eq:gluing-corner} are correspondingly
\begin{equation}
\{ (\lambda_1,\lambda_2,\lambda_3,\lambda_4) \in (\bR^{\geq 0})^4 \;:\; \lambda_1\lambda_2 = \lambda_3\lambda_4\}.
\end{equation}
This is the set of real nonnegative points of a three-dimensional toric variety. By the moment map $(\lambda_2^2-\lambda_1^2, \lambda_4^2-\lambda_3^2, \lambda_1^2 + \lambda_3^2)$,
it is mapped homeomorphically to the cone in $\bR^3$ spanned by $(1,0,0)$, $(0,1,0)$, $(-1,0,1)$, $(0,-1,1)$, which is an infinite four-sided pyramid.
\begin{figure}
\begin{centering}
\includegraphics{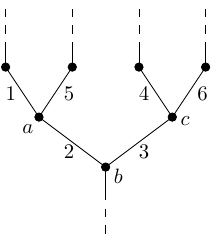}
\caption{\label{fig:4-tree}The tree from Example \ref{th:mau}.}
\end{centering}
\end{figure}
\end{example}

\begin{lemma}
Take an arbitrary element $q = \sum_v q_v\,v \in Q_T$, where following \eqref{eq:non-leaf} the sum is over non-leaf vertices, $q_v \in \bZ$. If $q \in Q_{T,\geq 0}$, the following holds:
\begin{equation} \label{eq:sub-tree-inequalities}
\parbox{34em}{for any rooted sub-tree (a sub-tree of $T$ containing the root), the sum of the $q_v$ over all $v$ which are vertices of the sub-tree is nonnegative.}
\end{equation}
\end{lemma}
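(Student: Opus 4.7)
The plan is to reduce the claim to checking the inequality on a generating set of the monoid $Q_{T,\geq 0}$, and then to verify it for each generator by a direct combinatorial argument about rooted sub-trees.

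First I would unpack the identification \eqref{eq:non-leaf} concretely on edges. Every finite edge $e$ of $T$ has a source $v_1$ and a target $v_2$, with $v_2$ lying on the unique oriented path from $v_1$ toward the root. Since all edges point toward the output, and a leaf is incident to only one finite (or semi-infinite output) edge, $v_2$ is always a non-leaf, while $v_1$ may or may not be a leaf. Using the basis of $Q_T$ given by choosing, for each non-leaf $v$, any leaf-to-$v$ path (which is well-defined in $Q_T$ by construction of $R_T$), one obtains
\[
[e] = \begin{cases} v_2 - v_1 & \text{if $v_1$ is a non-leaf,} \\ v_2 & \text{if $v_1$ is a leaf.} \end{cases}
\]
By definition, $Q_{T,\geq 0}$ is the image of $G_{T,\geq 0}$, so it is the submonoid of $Q_T$ generated (over $\bZ_{\geq 0}$) by these elements $[e]$. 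Hence any $q \in Q_{T,\geq 0}$ can be written as $q = \sum_e n_e [e]$ with $n_e \in \bZ_{\geq 0}$.

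Next I would check \eqref{eq:sub-tree-inequalities} for each generator $[e]$ and every rooted sub-tree $T' \subset T$. Denote by $\Phi_{T'}(q) = \sum_{v \in T'} q_v$ the functional appearing in the statement, restricted to non-leaf vertices of $T'$. In the case $v_1$ is a leaf, $\Phi_{T'}([e]) = [v_2 \in T'] \in \{0,1\}$, which is nonnegative. In the case both $v_1, v_2$ are non-leaves, $\Phi_{T'}([e]) = [v_2 \in T'] - [v_1 \in T']$, and here I use the defining property of a rooted sub-tree: if $v_1 \in T'$, then the path from $v_1$ toward the root is contained in $T'$, and since $v_2$ is the immediate successor of $v_1$ on that path, $v_2 \in T'$. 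So $[v_1 \in T'] = 1$ forces $[v_2 \in T'] = 1$, giving $\Phi_{T'}([e]) \geq 0$ in every case.

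Finally, by linearity $\Phi_{T'}(q) = \sum_e n_e \Phi_{T'}([e]) \geq 0$, which is the asserted inequality. The only point that requires a small amount of care is the well-definedness of the functional: one must check that $\Phi_{T'}$ vanishes on the relations $R_T$ that define $Q_T$, i.e.\ on leaf-to-leaf paths; but each such path enters a vertex as often as it exits it (except at its leaf endpoints, which contribute nothing to $\Phi_{T'}$ since leaves are not among the basis vectors), so this is automatic. I do not expect any genuine obstacle here; the only mild subtlety is the asymmetric treatment of leaf vs.\ non-leaf sources of edges, which is handled by the case split above.
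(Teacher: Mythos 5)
Your proof is correct and takes the same approach as the paper, which simply asserts "This is clear, since the generators of $Q_{T,\geq 0}$ satisfy those inequalities." You have merely unpacked that one-line argument: write $q$ as a nonnegative combination of edge generators $[e]$, verify the inequality for each $[e]$ via the case split on whether the source of $e$ is a leaf, and observe that the key case ($v_1$ a non-leaf contained in $T'$ forcing $v_2 \in T'$) follows from connectedness of the rooted sub-tree.
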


\begin{proof}
This is clear, since the generators of $Q_{T,\geq 0}$ satisfy those inequalities.
\end{proof}

\begin{lemma} \label{th:inequalities}
Take a nonzero element $q \in Q_T$ satisfying \eqref{eq:sub-tree-inequalities}. Then, one can subtract one of the generators of $Q_{T,\geq 0}$ from $q$, so that the outcome still satisfies \eqref{eq:sub-tree-inequalities}.
\end{lemma}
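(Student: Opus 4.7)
The plan is to locate a non-leaf vertex $v_0$ with $q_{v_0}>0$ and a child $c$ of $v_0$ in the tree, and then subtract the generator $[v_0]-[c]$ of $Q_{T,\geq 0}$ (interpreting $[c]=0$ if $c$ is a leaf). Applying \eqref{eq:sub-tree-inequalities} to the whole of $T$ gives $\sum_v q_v \geq 0$, which together with $q\neq 0$ forces some non-leaf $v_0$ with $q_{v_0} > 0$ to exist.

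The central auxiliary notion is that of a \emph{tight} rooted sub-tree: one for which $f(T') := \sum_{v\in T'} q_v$ vanishes. Since $f$ is additive on vertex sets, tight sub-trees are closed under intersection and union, so whenever at least one tight sub-tree contains $v_0$ there is a smallest such, call it $T_1$. The strategy is to choose $c$ so that every tight sub-tree containing $v_0$ also contains $c$: granted this, the only sub-trees $T'$ on which subtracting $[v_0]-[c]$ lowers $f$ by $1$ are those with $v_0\in T'$ and $c\notin T'$, and none of these can be tight, so they all satisfy $f(T')\geq 1$ and the inequality is preserved.

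If no tight sub-tree contains $v_0$, any child $c$ of $v_0$ will do (and $v_0$ has one, since every non-leaf vertex has valence $\geq 3$, hence at least two children). Otherwise, the non-trivial step is to exhibit a child of $v_0$ inside $T_1$: were $v_0$ maximal in $T_1$, then $T_1\setminus\{v_0\}$ would itself be a rooted sub-tree (one notes $v_0 \neq r$, since otherwise $T_1 = \{r\}$ would force $q_r=0$) with $f = -q_{v_0}<0$, contradicting \eqref{eq:sub-tree-inequalities}. Hence some $w\in T_1$ lies strictly above $v_0$, and the child $c$ of $v_0$ on the path from $w$ down to the root belongs to $T_1$ (since $T_1$ contains that whole path) and is automatically non-leaf, because the parent of a non-leaf vertex is always non-leaf. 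The main delicate step is precisely this extraction of a non-leaf child of $v_0$ from $T_1$; everything else is routine bookkeeping about how $f$ responds to subtracting a single generator, together with the elementary observation that tight sub-trees form a sublattice under intersection and union.
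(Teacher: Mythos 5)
Your strategy is genuinely different from the paper's. The paper splits into two cases: if all coefficients of $q$ are nonnegative, pick any $v_1$ with $q_{v_1}>0$ and subtract the generator of any incoming edge (the result stays coefficientwise nonnegative, so the inequalities are trivial); if some $q_{v_0}<0$, subtract the generator of $v_0$'s unique outgoing edge and observe that the only endangered sub-trees (those containing the parent $v_1$ but not $v_0$) stay nonnegative because adjoining $v_0$ only lowers the sum. You instead always start from a vertex $v_0$ with $q_{v_0}>0$ and push a unit down to a carefully chosen child $c$, using the sublattice of tight rooted sub-trees to control which inequalities can degrade. That is a coherent alternative, at the cost of the extra lattice-of-tight-sub-trees machinery that the paper avoids.

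However, there is a gap at the pivotal step. You claim the child $c\in T_1$ ``is automatically non-leaf, because the parent of a non-leaf vertex is always non-leaf.'' That implication runs in the wrong direction: it would tell you that $v_0$ is non-leaf if $c$ is (which you already know), not that $c$ is non-leaf. If $w$ happens to be a leaf that is a direct child of $v_0$, you would be forced to take $c=w$ a leaf. And the claim is essential: if $c$ is a leaf, then $[c]=0$ in $Q_T$, so subtracting $[v_0]-[c]=[v_0]$ lowers $f(T')$ by $1$ for \emph{every} rooted sub-tree $T'$ containing $v_0$, regardless of whether $c\in T'$; in particular $f(T_1)$ drops from $0$ to $-1$ and \eqref{eq:sub-tree-inequalities} fails. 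The correct reason $c$ must be non-leaf is the minimality of $T_1$: a leaf child of $v_0$ would be a maximal vertex of $T_1$, so removing it would yield a strictly smaller rooted sub-tree with the same $f$-value (a leaf contributes nothing), still tight and still containing $v_0$, contradicting the choice of $T_1$. Equivalently, one can sharpen your ``$v_0$ is not maximal in $T_1$'' step to ``$v_0$ has a non-leaf child in $T_1$'': were all its children in $T_1$ leaves, deleting $v_0$ together with those leaf children would produce a rooted sub-tree with $f=-q_{v_0}<0$, contradicting \eqref{eq:sub-tree-inequalities}. With either repair, your argument goes through.
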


\begin{proof}
{\em Suppose that the coefficients in $q$ are all nonnegative}. By assumption, $q_{v_1} > 0$ for at least one (non-leaf) $v_1$. Take any edge which goes from another vertex $v_0$ to $v_1$, and subtract the corresponding generator. The outcome still has nonnegative coefficients, hence trivially satisfies \eqref{eq:sub-tree-inequalities}.

{\em Now, suppose that $q_{v_0} < 0$ for some $v_0$} (which by definition can't be a leaf; it also can't be the root, since that coefficient is always nonnegative). Consider the unique edge starting at $v_0$, with its other endpoint denoted by $v_1$. Subtract the generator corresponding to that edge, forming $q' = q - v_1 + v_0$. If we have a rooted subtree of $T$ which either contains none of $(v_0,v_1)$, or both of them, the associated sum of coefficients for $q'$ is the same as that of $q$, hence still nonnegative. If we have a rooted subtree which contains $v_1$ but not $v_0$, the corresponding sum of coefficients for $q'$ is 
\begin{equation}
\Big(\sum_{\substack{v \text{ in our}\\ \text{subtree}}} q_v\Big) - 1
\geq \Big(\sum_{\substack{v \text{ in our}\\ \text{subtree}}} q_v\Big) + q_{v_0};
\end{equation}
the sum on the right corresponds to a larger rooted subtree of $T$, hence is still nonnegative by assumption.
\end{proof}

\begin{lemma} \label{th:inequalities-are-enough}
The monoid $G_{T,\geq 0} \subset G_T$ is characterized by \eqref{eq:sub-tree-inequalities}.
\end{lemma}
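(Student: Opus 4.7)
The strategy is to iterate Lemma~\ref{th:inequalities}: starting from an element $q \in Q_T$ that satisfies \eqref{eq:sub-tree-inequalities}, I repeatedly subtract a generator of $Q_{T,\geq 0}$ (an edge $v_1-v_0$) to obtain a new element still satisfying \eqref{eq:sub-tree-inequalities}. If this process reaches $0$ in finitely many steps, we have written $q$ as a nonnegative integer combination of edge generators, placing it in $Q_{T,\geq 0}$; the reverse inclusion was already observed in the paragraph preceding Lemma~\ref{th:inequalities}.

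The only real task is proving termination. I intend to use the single weighted measure
\begin{equation*}
\widetilde W(q) \;=\; \sum_v q_v\,\bigl(H + 1 - \operatorname{depth}(v)\bigr),
\end{equation*}
with the sum over non-leaf vertices, where $H$ is the maximum depth of a non-leaf vertex. Two elementary properties suffice. First, $\widetilde W(q) \geq 0$ whenever $q$ satisfies \eqref{eq:sub-tree-inequalities}, which follows from exchanging the order of summation,
\begin{equation*}
\widetilde W(q) \;=\; \sum_{k=0}^{H}\;\sum_{\operatorname{depth}(v)\leq k} q_v,
\end{equation*}
since each inner sum is the $q$-weight of the rooted subtree $\{v:\operatorname{depth}(v)\leq k\}$ and is therefore nonnegative by hypothesis. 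Second, every step of Lemma~\ref{th:inequalities} lowers $\widetilde W$ by at least one: subtracting an edge generator $v_1-v_0$ with $v_0$ a non-leaf gives $\Delta \widetilde W = \operatorname{depth}(v_1)-\operatorname{depth}(v_0) = -1$, while subtracting one with $v_0$ a leaf (so that $v_0 = 0$ in $Q_T$ and only $q_{v_1}$ changes) gives $\Delta \widetilde W = -(H+1-\operatorname{depth}(v_1)) \leq -1$. Since $\widetilde W(q)$ is a nonnegative integer and strictly decreases at every step, the iteration terminates in at most $\widetilde W(q)$ steps.

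The main (mild) obstacle is the choice of measure: more obvious candidates like $\sum_v |q_v|$ or the unweighted total $\sum_v q_v$ are not strictly monotone, because case~(a) of Lemma~\ref{th:inequalities} with a non-leaf $v_0$ merely transports a unit of positive mass from $v_1$ one step deeper to $v_0$, leaving both of those quantities unchanged. The weighting $H+1-\operatorname{depth}(v)$ is chosen precisely to penalize such deepening by exactly one unit, while remaining expressible as a positive combination of the rooted-subtree sums from \eqref{eq:sub-tree-inequalities}. That these two requirements — strict decrease under all steps of Lemma~\ref{th:inequalities} and nonnegativity under \eqref{eq:sub-tree-inequalities} — can be met by a single, uniform weight is the only subtle point; once it is in place, the rest of the proof is book-keeping.
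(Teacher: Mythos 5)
Your proof is correct and follows essentially the same strategy as the paper's: iterate Lemma~\ref{th:inequalities} and argue termination by exhibiting a decreasing measure. The only difference is in detail: the paper observes that after each step all rooted-subtree sums weakly decrease with at least one decreasing strictly (and concludes termination from there, somewhat tersely), whereas you build a single explicit weight $\widetilde W$ — a positive combination of the depth-cutoff subtree sums — that strictly drops by at least one at every step; this is a clean and slightly more concrete packaging of the same idea.
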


\begin{proof}
Start with a nonzero element of $G_T$ satisfying \eqref{eq:sub-tree-inequalities}, and apply Lemma \ref{th:inequalities}. An inspection of the proof shows that for the new element constructed there, the coefficient sum over any rooted sub-tree does not increase, and must decrease for at least one sub-tree. By an easy argument starting with the root, the only element for which all inequalities are equalities is zero. Hence, after finitely many iterations, we must reach zero. We have then written our original element of $G_T$ as a sum of generators of $G_{T,\geq 0}$.
\end{proof}

\begin{example}
In the example from Figure \ref{fig:4-tree}, the inequalities \eqref{eq:sub-tree-inequalities} are
\begin{equation}
q_b \geq 0, \;\; q_a + q_b \geq 0, \;\; q_b + q_c \geq 0, \;\; q_a + q_b + q_c \geq 0.
\end{equation}
In terms of \eqref{eq:gluing-corner}, these four inequalities correspond to the edges of the pyramid.
\end{example}

Lemma \ref{th:inequalities-are-enough} implies that the monoid $Q_{T,\geq 0}$ is toric (sharp and saturated) \cite[Definition 3.1]{joyce16}, which by definition means that \eqref{eq:gluing-corner} is a generalized corner. Using that as the main ingredient, one proves (compare \cite[Proposition 6.2]{mau-woodward08}):

\begin{lemma} \label{th:rejoyce}
$\overline{\MC}_m$ is a compact manifold with generalized corners, in the sense of \cite{joyce16}.
\end{lemma}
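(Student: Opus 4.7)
The plan is to construct local charts around each point of $\overline{\MC}_m$ and verify they fit together into a smooth atlas with generalized corners in Joyce's sense. A point $p \in \overline{\MC}_m$ lies in a unique stratum $\MC_T$, as in \eqref{eq:m-t-space}. Near $p$, the stratum contributes a smooth parameter of dimension $\dim \MC_T$. Transverse to the stratum, we must describe neighborhoods in the larger compactification; by the combinatorics of degenerations (a vertex of $T$ can ``resolve'' into a subtree, which corresponds to turning on one of the gluing parameters), the local transverse structure is exactly the corner \eqref{eq:gluing-corner}, namely $X_{Q_{T,\geq 0}} = \mathit{Hom}(Q_{T,\geq 0},(\bR^{\geq 0},\cdot))$.

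First I would establish the model charts: by the arguments immediately preceding the Lemma (Lemmas \ref{th:inequalities} and \ref{th:inequalities-are-enough}), $Q_{T,\geq 0}$ is sharp and saturated, hence toric in the sense of \cite[Definition 3.1]{joyce16}, so $X_{Q_{T,\geq 0}}$ is a generalized corner in Joyce's sense. The candidate chart near $p$ is then an open neighborhood in $\MC_T \times X_{Q_{T,\geq 0}}$, where $\MC_T$ provides the ``interior'' coordinates on the stratum and the $X_{Q_{T,\geq 0}}$ factor provides the gluing. The chart map into $\overline{\MC}_m$ glues the pieces of the stable configuration at $p$ using any given $\lambda \in X_{Q_{T,\geq 0}}$: the constraint \eqref{eq:equal-gluing} is built into the definition of $Q_T = G_T/R_T$, so any homomorphism $\lambda: Q_{T,\geq 0} \to (\bR^{\geq 0},\cdot)$ assigns consistent gluing scales to the finite edges.

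Next I would verify the compatibility of overlapping charts. If $p'$ is another point in a (possibly more degenerate) stratum $\MC_{T'}$ lying in the closure of $\MC_T$, there is a contraction of trees $T' \to T$ (collapsing some non-leaf vertices of $T'$ onto leaves or other non-leaf vertices of $T$); this induces a morphism of monoids $Q_{T,\geq 0} \to Q_{T',\geq 0}$ and correspondingly an inclusion $X_{Q_{T',\geq 0}} \hookrightarrow X_{Q_{T,\geq 0}}$ onto the boundary stratum where the gluing parameters along the contracted edges vanish. Combined with the obvious identifications of the stratum factors \eqref{eq:m-t-space}, this gives the transition maps, which are smooth in the generalized-corner sense (both sides being built out of multiplication of gluing parameters, hence monomial maps between toric local models). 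Compactness is clear from the explicit description, since each configuration can be normalized so that the diameter of the finite-scale factors and the size of the gluing parameters are bounded.

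The main obstacle is the bookkeeping for the transition maps: one has to check that when the same point has descriptions coming from two different nearby strata (equivalently, when the tree $T$ is slightly different because some coincidence has just occurred or some cluster is just starting to split off), the two corner structures agree. This reduces to verifying that the associated monoid morphism $Q_{T,\geq 0} \to Q_{T',\geq 0}$ (for a contraction $T' \to T$) is a morphism of toric monoids, which follows from the tree-contraction description and Lemma \ref{th:inequalities-are-enough}. Everything else (dimension count $\dim \MC_T + |\mathit{Ve}_{\text{non-leaf}}(T)|$ works out to $2(m-1)$, the Hausdorff and second-countable properties, etc.) is routine. This closely parallels the treatment in \cite[Proposition 6.2]{mau-woodward08}, the only difference being that our non-leaf factors are Fulton-MacPherson spaces rather than Deligne-Mumford spaces, which is immaterial for the corner structure since those extra rotational factors contribute only to the smooth interior coordinates.
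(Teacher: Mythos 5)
Your argument is the paper's intended one: the key input is Lemma \ref{th:inequalities-are-enough}, which makes $Q_{T,\geq 0}$ a toric monoid in the sense of \cite[Definition 3.1]{joyce16}, so that $\mathit{Hom}(Q_{T,\geq 0},(\bR^{\geq 0},\cdot))$ is a generalized corner, and the paper then defers the chart-and-transition bookkeeping to a comparison with \cite[Proposition 6.2]{mau-woodward08}, which is exactly what you supply. One direction slip in your third paragraph (not affecting the overall logic): the chart $\MC_T \times X_{Q_{T,\geq 0}}$ around a point of $\MC_T$ extends outward to nearby \emph{less} degenerate strata (those obtained from $T$ by turning on gluing parameters), not to more degenerate ones, so in your discussion of overlaps the roles of $T$ and $T'$ should be exchanged, and the induced map between local models is a monomial projection from the larger corner to the smaller one rather than an inclusion.
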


The action of the symmetric group $S_m$ on $\MC_m$ extends to the compactification. Generally, this action permutes boundary strata. The subgroup $S_T \subset S_m$ which preserves a stratum \eqref{eq:m-t-space} is the group of those automorphisms of the tree $T$ which fix the output edge (this becomes a subgroup of $S_m$ by thinking of its action on the other $m$ semi-infinite edges).

\begin{lemma} \label{th:codim-2-non-free}
The subset of points of $\MC_T$ fixed by any nontrivial element of $S_T$ has codimension $\geq 2$ in that stratum.
\end{lemma}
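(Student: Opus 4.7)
The plan is to analyze a nontrivial $\sigma \in S_T$ through its action on the product decomposition \eqref{eq:m-t-space}, and show that the fixed locus $\MC_T^\sigma \subset \MC_T$ is either empty or of real codimension at least two. The key dichotomy is whether $\sigma$ induces a nontrivial permutation on some $\FM$ factor (which forces $\MC_T^\sigma = \emptyset$) or only on some $\MC$ factor (which gives real codimension $\geq 2$).

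I would begin with a case analysis based on the following structural observation: the parent of any non-root vertex is non-leaf, since the edge from a child to its parent is finite rather than semi-infinite, and non-leaves carry $\FM$ factors. Any nontrivial $\sigma$ then falls into at least one of: (A) $\sigma$ induces a nontrivial permutation on the set of children of some non-leaf vertex $w$; or (B) $\sigma$ fixes every vertex and every finite edge, but nontrivially permutes the semi-infinite input edges at some leaf vertex $v$. If $\sigma$ were in neither case, it would fix every vertex (since every non-root vertex is a child of a non-leaf), every finite edge, and every semi-infinite input, making it the identity. Case (A) subsumes any permutation of vertices: tracking a moved vertex back along its root-path to the last fixed ancestor produces a non-leaf at which $\sigma$ permutes children nontrivially.

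For Case (A), I would verify that any nontrivial permutation $\tau \in S_k$ has empty fixed locus on $\FM_k = \mathit{Conf}_k^{\mathrm{ord}}(\bC) / (\bC \rtimes \bR^{>0})$. A fixed representative would satisfy $z_{\tau(i)} = \lambda z_i + \mu$ with $\lambda > 0, \mu \in \bC$; iterating around a nontrivial cycle of $\tau$ forces either $\lambda = 1$ with $\mu = 0$ (giving $z_{\tau(i)} = z_i$) or all cycle points to coincide at the unique fixed point of $z \mapsto \lambda z + \mu$, each contradicting distinctness of the configuration. Hence $\MC_T^\sigma = \emptyset$ in Case (A). For Case (B), the induced permutation on $\MC_{|v|-1}$ acts on configurations that may have coincident points, and the same fixed-representative ansatz shows the fixed locus consists of configurations constant on each $\tau$-orbit: if $\tau$ partitions the $|v|-1$ inputs into $j < |v|-1$ orbits, the fixed locus has complex dimension $j-1$ inside $\MC_{|v|-1} \iso \bC^{|v|-2}$, yielding real codimension $2(|v|-1-j) \geq 2$, which is inherited by $\MC_T$ through the product structure.

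The main obstacle is largely bookkeeping in the case analysis—particularly the repeated use of the fact that parents are always non-leaf, which is what funnels vertex-level permutations into a nontrivial action on some $\FM$ factor. The fixed-point computations themselves are elementary, but they depend crucially on the feature that $\bC \rtimes \bR^{>0}$ contains only positive real rescalings (no rotations or complex scalings), which is exactly what prevents a nontrivial permutation of distinct points from being realized by the group action, and thereby forces the $\FM$ fixed loci to be empty.
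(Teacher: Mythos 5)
Your proposal is correct and follows essentially the same approach as the paper's proof: both hinge on the two facts that $S_r$ acts freely on $\mathit{Conf}_r^{\mathrm{ord}}(\bC)/(\bC \rtimes \bR^{>0})$ (forcing $\sigma$ to fix every vertex, hence to reduce to a permutation of semi-infinite edges at leaves) and that the action on the leaf factors $\MC_{|v|-1}$ has fixed locus of codimension $\geq 2$. Your explicit affine-iteration and codimension computations just spell out what the paper leaves implicit.
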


\begin{proof}
The action of $S_r$ on $\mathit{Conf}_r^{\mathrm{ord}}(\bC)/(\bC \rtimes \bR^{>0})$ is free. Hence, if an element $\sigma \in S_T$ has nontrivial fixed point set, it must act trivially on the set of edges that end at the root vertex. By propagating that argument upwards along the tree, one sees that $\sigma$ acts trivially on vertices, and only permutes the semi-infinite edges adjacent to each leaf. In other words, our element must lie in $\prod_{v \text{ leaf}} S_{|v|-1} \subset S_T$, and it's sufficient to consider the action of that group on $\prod_{v \text{ leaf}} \MC_{|v|-1}$; there, the statement is obvious.
\end{proof}

\begin{remark}
For our purpose, it would be possible to work with the unordered version $\MC_m/S_m$, whose compactification is also a manifold with generalized corners (one can see that from the analysis of the group action performed above). The quotient has less boundary strata; each such stratum is 
\begin{equation}
\MC_T/S_T = 
\frac{
\prod_{v \text{\rm { }leaf}} (\MC_{|v|-1}/\Sigma_{|v|-1}) \times \prod_{v \text{\rm { }not a leaf}} \mathit{Conf}_{|v|-1}^{\mathrm{ord}}(\bC)/(\bC \rtimes \bR^{>0})
}
{ 
S_T/(\prod_{v \text{\rm { }leaf}} S_{|v|-1})
},
\end{equation}
with the group $S_T/(\prod_{v \text{\rm { }leaf}} S_{|v|-1})$ acting freely. Ultimately, because of Lemma \ref{th:codim-2-non-free}, working with $\MC_m/S_m$ leads to the same Floer-theoretic data as working with $\MC_m$ and dividing by $m!$ (where one can avoid having actual denominators by using $S_m$-invariant data). We have preferred the latter formulation, since having an inductive structure of boundary strata as products \eqref{eq:m-t-space} is a more familiar kind of setup.
\end{remark}

\subsection{Extraction spaces\label{sec:extractionspaces}}
We now introduce a version of the previous moduli spaces for the cylinder. Here, since the cylinder does not carry a rescaling automorphism, the scale (assumed to be bounded below, because of our intended application) will essentially be a separate variable. Fix $m>0$. The basic objects are pairs of $z_1,\dots,z_m \in \bR \times S^1$ and $\alpha = a \,\mathit{dz}$, where $\mathit{dz} = \mathit{ds} + i\mathit{dt}$ is the standard complex one-form on the cylinder, and $a \geq 1$. We consider such pairs up to translation in $s$-direction. The parameter space is therefore
\begin{equation}
\EX_m = \big( (\bR \times S^1)^m \times [1,\infty) \big)/\bR =
\bR^{m-1} \times (S^1)^m \times [1,\infty).
\end{equation}
The structure of the compactification $\overline{\EX}_m$ is straightforward as long as the scale variable $a$ stays bounded: in that case, the original cylinder can split into several ones, each of which carries the same scale, giving a boundary stratum which is a fibre product
\begin{equation} \label{eq:p-split}
\EX_{m_1} \times_{[1,\infty)} \EX_{m_2} \times_{[1,\infty)} \cdots 
\times_{[1,\infty)} \EX_{m_p}. 
\end{equation}
There is one such stratum for each ordered partition of $\{1,\dots,m\}$ into $p$ subsets of sizes $m_1,\dots,m_p$. Now let's look at limits as $a \rightarrow \infty$, assuming for simplicity that the cylinder itself does not split (which means that the marked points on it remain in a bounded subset, up to an overall translation). There is a limiting principal component, which is a cylinder with $r \geq 1$ pairwise distinct marked points (again up to translation in the $\bR$-direction). We choose a partition $m = m_1 + \cdots + m_r$, and for each $j$, a tree $T_j$ with $j+1 \geq 2$ semi-infinite edges, as in Section \ref{subsec:woodward}. The corresponding stratum is 
\begin{equation} \label{eq:p-bubble}
\EX_{T_1,\dots,T_r} = \prod_j \MC_{T_j} \times 
\mathit{Conf}_{r}^{\mathrm{ord}}(\bR \times S^1)/\bR;
\end{equation}
its codimension is the number of non-leaf vertices in all the trees, plus one (because of the condition $a=\infty$). The general situation combines that with cylinder-splitting, which means that the trees are attached to one of several cylindrical components. The number of extra (more than $1$) cylinder components gets added to the codimension. See Figures \ref{fig:extractihedra-0} and \ref{fig:extractihedra-1} for examples. In particular, the codimension one boundary strata are:
\begin{enumerate}[label=(EX\arabic*)] \itemsep.5em
\item \label{item:scale-1}
$(\bR \times S^1)^m/\bR$, which appears when the scale reaches $a = 1$.

\item \label{item:equal-scale-splitting}
$\EX_{m_1} \times_{[1,\infty)} \EX_{m_2}$, the $p = 2$ case of \eqref{eq:p-split}. More precisely, there is one such stratum for each decomposition of $\{1,\dots,m\}$ into two nonempty subsets.

\item \label{item:mc-bubble}
The case of \eqref{eq:p-bubble} in which each tree has a single vertex, leading to $a=\infty$ and
\begin{equation} \label{eq:leaf-or-root}
\prod_{j=1}^r \MC_{m_j} \times \mathit{Conf}_r^{\mathrm{ord}}(\bR \times S^1)/\bR.
\end{equation}
\end{enumerate}
\begin{figure}
\begin{centering}
\includegraphics{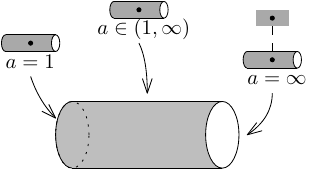}
\caption{\label{fig:extractihedra-0}The space $\overline{\EX}_1$ is an annulus (note the overall picture represents the moduli space, while the smaller parts are the relevant Riemann surfaces with marked points). On the interior, the variables are the scale $a \geq 1$ and the $t$-coordinate of the marked point on the cylinder. As $a \rightarrow \infty$ a new factor $\MC_1 = \mathit{point}$ appears in the relevant case of \eqref{eq:leaf-or-root}. This motivates the word ``extraction'' in the name, as the $a \rightarrow \infty$ limit causes the marked point to pop out of the cylinder.}
\end{centering}
\end{figure}%
\begin{figure}
\begin{centering}
\includegraphics{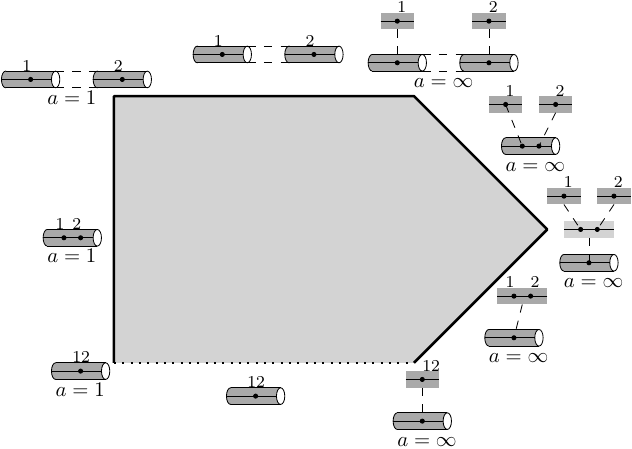}
\caption{\label{fig:extractihedra-1}A piece of $\overline{\EX}_2$, with the configurations associated to its boundary sides and corners. More precisely, this is the closure of the (codimension $2$) subset of $\EX_2$ where $z_k = (s_k,t_k)$ with $t_k = 0$ and $s_1 \leq s_2$. The dotted line at the bottom is not part of $\partial\overline{\EX}_2$, it's just where we stopped drawing because $s_1 = s_2$. Note also the unique component that's divided by rescaling, at the rightmost vertex.}
\end{centering}
\end{figure}%

The discussion of the basic properties of these spaces follows the same pattern as in Section \ref{subsec:woodward}. Consider a point in a stratum \eqref{eq:p-bubble}. Take the disjoint union of the $T_j$, connect all their output edges to a new vertex, and add a new output to that vertex, which yields a combined tree $T$ with $(m+1)$ semi-infinite edges. The gluing parameters $\lambda_e$ are associated to the finite edges of $T$, with the same constraints \eqref{eq:gluing-corner} as before (the scale parameter $a$ after gluing is the inverse product of all the $\lambda_e$ along a path from a leaf to the root of $T$). More generally, if the cylinder splits, we have additional parameters for gluing the cylindrical pieces to each other. By borrowing the arguemnt from our previous discussion of $Q_{T,\geq 0}$, one gets:

\begin{lemma}
$\overline{\EX}_m$ is a compact generalized manifold with corners.
\end{lemma}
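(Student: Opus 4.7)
The plan is to follow the same strategy as for Lemma \ref{th:rejoyce}: identify the monoid of gluing parameters around each point of a boundary stratum, show it is toric (sharp and saturated), and then invoke Joyce's framework \cite{joyce16}. The novelty compared with $\overline{\MC}_m$ is that we must combine the tree-type degenerations encoded by the $\MC_{T_j}$ factors with the cylinder-splitting parameters at finite scale, and that the single scale parameter $a \in [1,\infty]$ is shared across all components.

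First, I would describe a general boundary stratum explicitly. It is labeled by: an ordered decomposition of the cylinder into $p \geq 1$ pieces (with ordered partition $m = n_1 + \cdots + n_p$ of the marked points); for each cylindrical piece $c$, a tree $T_1^{(c)}, \ldots, T_{r_c}^{(c)}$ attached at pairwise distinct points on that piece (with $n_c = \sum_j |T_j^{(c)} \cap \{\text{inputs}\}|$); together with the condition $a = \infty$ whenever any such tree is non-trivial. Given such a stratum, the local gluing parameters are: one parameter $\lambda_e > 0$ for each finite edge $e$ of each $T_j^{(c)}$; one parameter $\mu_c > 0$ for joining the $c$-th to the $(c+1)$-st cylindrical piece ($c = 1, \dots, p-1$); and possibly the parameter $1/a \in [0,1]$ itself. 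First I would write down the monoid $Q_{\geq 0}$ generated by these, subject to the constraints (i) that along every cylindrical piece and within every tree, gluing parameters multiply consistently (as in \eqref{eq:equal-gluing}), and (ii) that the resulting overall scale agrees with $a$. Quotienting the free abelian group $G = \bZ^{\{\text{finite edges}\} \sqcup \{\mu_c\} \sqcup \{1/a\}}$ by these relations gives an ambient group $Q$, and $Q_{\geq 0}$ is the image of the obvious monoid $G_{\geq 0}$.

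Next, I would show that $Q_{\geq 0}$ is characterized by a set of linear inequalities on $Q$, generalizing Lemma \ref{th:inequalities-are-enough}. Concretely, I would parametrize $Q$ by a basis indexed by the non-leaf tree vertices of all the $T_j^{(c)}$ together with the cylinders themselves (up to one global translation), and the inequalities would be: (a) for each rooted sub-tree inside some $T_j^{(c)}$, the sum of the corresponding coefficients is non-negative, exactly as in \eqref{eq:sub-tree-inequalities}; (b) for each $c$, the cumulative ``cylinder coefficient'' from the leftmost cylinder to $c$ is non-negative; (c) the scale coefficient is non-negative. The main work is the analogue of Lemma \ref{th:inequalities}: given a non-zero element of $Q$ satisfying these inequalities, subtract one generator and preserve the inequalities, yielding a finite algorithm that expresses it as a sum of generators of $Q_{\geq 0}$. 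The two-stage argument in the proof of Lemma \ref{th:inequalities}, handling positive coefficients and negative coefficients separately, ports over essentially verbatim, once we note that the cylinder and scale variables fit into the same rooted-tree-inequality pattern (attaching the whole picture to a formal common root encoded by the scale).

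Given this, $Q_{\geq 0}$ is toric in the sense of \cite{joyce16}, so the local model $\mathrm{Hom}(Q_{\geq 0},(\bR^{\geq 0},\cdot))$ for the gluing corner is a generalized corner, and one glues these local charts together by the standard inductive procedure; compactness is immediate since the underlying set is a quotient of a compact space. The hard part I expect is the combinatorial bookkeeping in step (2): separating what is genuinely new (the $a \to \infty$ limit merging with cylinder splitting at finite scale) from what is already contained in Lemmas \ref{th:inequalities}--\ref{th:inequalities-are-enough}, and keeping track of the global constraint that the scale is shared between components. Once the combinatorics is set up correctly, the analytic side (constructing smooth charts and checking compatibility at corners via the standard stretching of the neck) is routine and is essentially the same as in \cite{mau-woodward08, joyce16}.
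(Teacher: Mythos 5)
Your proposal takes essentially the same approach as the paper, and the key idea is correct: the gluing-parameter monoid at each boundary stratum should be shown to be toric (sharp and saturated) by the same tree-inequality argument as Lemmas \ref{th:inequalities}--\ref{th:inequalities-are-enough}. Where the paper is slightly slicker: it reduces the problem directly to the already-proven case by forming a single combined tree $T$, with a new root vertex representing the cylinder region (attaching all the $T_j$ from all cylindrical components to that one root), so the constraint \eqref{eq:gluing-corner} is literally the same as before — no re-derivation of the inequality characterization is needed; and the cylinder-splitting parameters $\mu_c$ are then genuinely unconstrained, contributing only an independent free factor $(\bZ_{\geq 0})^{p-1}$ to the monoid (a product of toric monoids is toric). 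Your remark about "attaching the whole picture to a formal common root encoded by the scale" is exactly this observation, though phrased as a modification of the proof rather than a reduction to the earlier result. Two small things to tidy up if you write it out: (i) the scale $1/a$ is a derived quantity (the product of $\lambda_e$ along any leaf-to-root path), not an independent generator, so there is no separate relation "the resulting overall scale agrees with $a$" to impose; (ii) at $a = \infty$, every marked point on the limiting cylinder carries a tree (possibly the trivial $\MC_1$), so the condition is "$a=\infty$ iff trees are present," not "whenever a tree is non-trivial."
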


A stratum-by-stratum analysis shows the analogue of Lemma \ref{th:codim-2-non-free}:

\begin{lemma} \label{th:codim-2-non-free-2}
Inside each stratum of $\overline{\EX}_m$, the subset which is fixed by any nontrivial element of the subgroup of $S_m$ preserving that stratum has codimension $\geq 2$.
\end{lemma}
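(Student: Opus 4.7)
My plan is to mirror the argument of Lemma \ref{th:codim-2-non-free} stratum-by-stratum, propagating triviality of the $\sigma$-action from the cylindrical ``spine'' of the stratum outwards to the tips of the attached bubble trees. Fix a stratum $\Sigma \subset \overline{\EX}_m$. By combining \eqref{eq:p-split} and \eqref{eq:p-bubble}, this stratum is a fibre product (over the shared scale variable, when $a<\infty$) of factors of three types: cylindrical factors $\mathit{Conf}_r^{\mathrm{ord}}(\bR \times S^1)/\bR$, one for each cylinder component; internal-vertex factors $\mathit{Conf}_{|v|-1}^{\mathrm{ord}}(\bC)/(\bC \rtimes \bR^{>0})$, one for each non-leaf vertex of each attached tree $T_j$; and leaf factors $\MC_{|v|-1}$. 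The stabilizer $S_\Sigma \subset S_m$ consists of combinatorial automorphisms of the entire decorated configuration (cylinder ordering plus the trees), fixing the distinguished output structure.

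The first step will be to show that if $\sigma \in S_\Sigma$ fixes any point of $\Sigma$, then it acts trivially on every cylindrical factor and on every internal-vertex factor. For the cylindrical factors, I record that the $S_r$-action on $\mathit{Conf}_r^{\mathrm{ord}}(\bR \times S^1)/\bR$ is free: if a translation $\tau(z) = z + s_0$ realizes a permutation $\pi \in S_r$ on an ordered configuration of distinct points, then applying $\tau$ to any $\pi$-cycle of length $k$ yields a length-$k$ arithmetic progression in the $s$-coordinates with $k s_0 = 0$ in $\bR$, forcing $s_0=0$ and hence $k=1$. For the internal vertex factors, freeness of the $S_{|v|-1}$-action on $\mathit{Conf}_{|v|-1}^{\mathrm{ord}}(\bC)/(\bC \rtimes \bR^{>0})$ is exactly what is used in Lemma \ref{th:codim-2-non-free}. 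Starting at the roots and arguing upwards along each tree $T_j$ exactly as in that Lemma, these two freeness statements together force $\sigma$ to act trivially on every vertex of every tree and to preserve the ordering of the cylindrical components. Consequently $\sigma$ lies in the product subgroup $\prod_{v \text{ leaf}} S_{|v|-1} \subset S_\Sigma$ of fibrewise permutations of the semi-infinite edges at leaves.

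The second step reduces the problem to the known one. The fixed locus of such a $\sigma$ in $\Sigma$ is the product of: all cylindrical factors (contributing full dimension), all internal-vertex factors (contributing full dimension), and the fixed loci of the leaf factors $\MC_{|v|-1}$ under the action of $\sigma|_v \in S_{|v|-1}$. If $\sigma$ is nontrivial, then $\sigma|_v$ is nontrivial for at least one leaf $v$, and Lemma \ref{th:codim-2-non-free} applied to that leaf factor gives a fixed locus there of codimension $\geq 2$ inside $\MC_{|v|-1}$. Since contributions from different factors add, the fixed locus in $\Sigma$ has codimension $\geq 2$.

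The part I expect to require the most care is the initial combinatorial bookkeeping: writing down precisely what $S_\Sigma$ is for a general stratum involving both cylinder-splitting (which permits reordering of cylindrical components only up to their intrinsic ordering on $\bR$) and bubbling at $a = \infty$, and then verifying that the inductive ``propagate triviality from the root outwards'' procedure from Lemma \ref{th:codim-2-non-free} carries over verbatim when the root lies on a cylindrical factor rather than on an $\MC$ factor. Once that framework is in place, the dimension count is immediate, so there is no analytic obstacle beyond the two freeness statements recorded above.
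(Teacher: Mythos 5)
Your overall strategy --- mirror Lemma~\ref{th:codim-2-non-free} by propagating triviality outward from the cylindrical spine into the bubble trees --- is the right one, and indeed the paper's proof is a one-line pointer to that Lemma. However, Step~1 of your plan contains a genuine gap that affects a large class of strata, including the open stratum itself. You identify the cylindrical factors of an arbitrary stratum with $\mathit{Conf}_r^{\mathrm{ord}}(\bR\times S^1)/\bR$ and invoke freeness of the $S_r$-action there. That identification is correct only on the $a=\infty$ strata of type \eqref{eq:p-bubble}/\ref{item:mc-bubble}, where the limiting cylinder carries pairwise distinct marked points (the attaching points of the $\MC$ trees). On all the other strata --- the open stratum, the $a=1$ boundary \ref{item:scale-1}, and the cylinder-splitting strata \eqref{eq:p-split}/\ref{item:equal-scale-splitting} at finite $a$ --- the cylindrical factor is $\EX_{m_i}=\bigl((\bR\times S^1)^{m_i}\times[1,\infty)\bigr)/\bR$ (or its $a=1$ face $(\bR\times S^1)^{m_i}/\bR$), which \emph{contains the diagonals}, and the $S_{m_i}$-action there is not free: a transposition $(jk)$ fixes the whole wall $\{z_j=z_k\}$. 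Your conclusion that a fixed $\sigma$ must lie in $\prod_{v\ \mathrm{leaf}}S_{|v|-1}$ therefore fails for these strata; on the open stratum there are no leaf factors at all, yet nontrivial $\sigma\in S_m$ certainly have fixed points.

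The lemma is nonetheless true, and the repair is short. For a nontrivial $\sigma$ fixing a point of $(\bR\times S^1)^{m_i}/\bR$, the cycle argument you already use forces the translation parameter $s_0$ to vanish (iterate around a nontrivial $k$-cycle to get $ks_0=0$) and hence forces the $k$ points in each nontrivial cycle to coincide, so the fixed locus has codimension $\sum_c 2(k_c-1)\geq 2$. This is exactly the ``obvious'' bound used for the leaf factors $\MC_{|v|-1}$ in Lemma~\ref{th:codim-2-non-free}. So in the $\overline{\EX}_m$ setting one should treat finite-scale cylindrical factors the way Lemma~\ref{th:codim-2-non-free} treats $\MC$ leaf factors (direct codimension bound, no freeness), and reserve the configuration-space freeness argument for the $a=\infty$ cylindrical factors and the interior Fulton--MacPherson vertices. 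With that case distinction in place, your propagation step and the final dimension count go through.
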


In parallel with our usual strategy, we also need a version of the parameter spaces where one does not divide by translation,
\begin{equation}
\EX_m^\dag = (\bR \times S^1)^m \times [1,\infty).
\end{equation}
In the compactification $\overline{\EX}_m^\dag$, when the cylinder splits into several ones, all those except for the original are divided by translation (and each still carries the same scale). The counterpart of the stratum \eqref{eq:p-bubble}, for the case when the marked points remain in a bounded subset, is
\begin{equation} \label{eq:p-bubble-2}
\EX_{T_1,\dots,T_r}^\dag = \prod_j \MC_{T_j} \times \mathit{Conf}_r^{\mathrm{ord}}(\bR \times S^1).
\end{equation}
The general $a \rightarrow \infty$ boundary strata are products of one \eqref{eq:p-bubble-2} factor and an arbitrary number of \eqref{eq:p-bubble} factors, with the latter ones corresponding to other cylindrical components. One notable difference is that now, the empty cylinder with no marked points ($m = 0$) is allowed, with $\overline{\EX}_0^\dag = [1,\infty]$ and no bubbling happening as $a \rightarrow \infty$. This affects the structure of the higher-dimensional moduli spaces as well, because the empty cylinder can appear as a component in the limit when all the marked points go to infinity, either on the left or right; see Figure \ref{fig:extractihedra-2} for an example.
\begin{figure}
\begin{centering}
\includegraphics{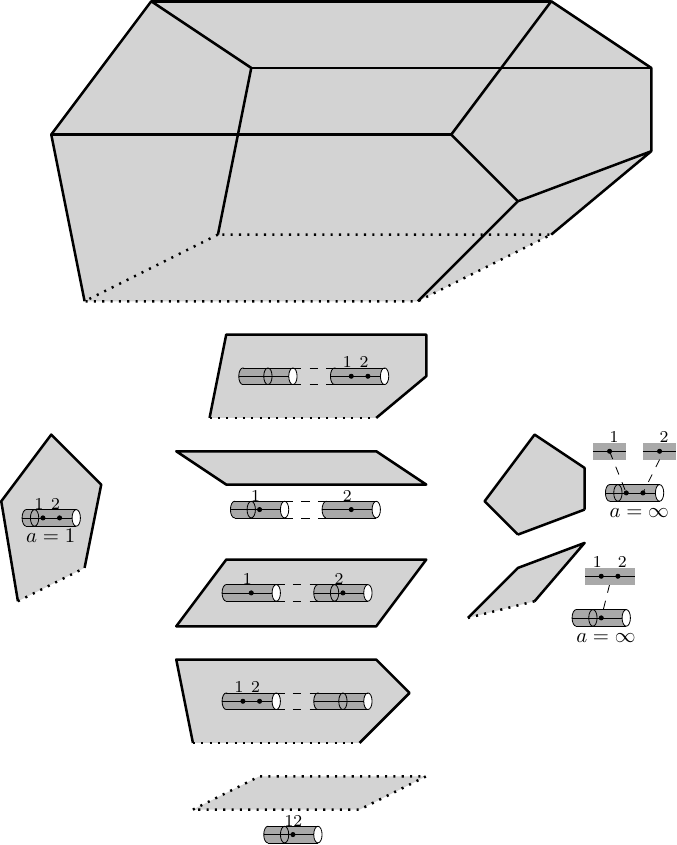}
\caption{\label{fig:extractihedra-2}A piece of $\overline{\EX}_2^\dag$, chosen in parallel with Figure \ref{fig:extractihedra-1}. For simplicity, only the surfaces associated to codimension $1$ boundary faces have been listed. As usual, the principal cylinder components (those not divided by translation) are distinguished by drawing an extra circle on them. Those principal cylinders can be empty (carry no marked points). Two of the pentagonal faces are copies of $\overline{\EX}_2$, with such an empty cylinder inserted on the left or right.}
\end{centering}
\end{figure}%

\subsection{Deformations by Maurer-Cartan elements\label{subsec:punctures}}
We will modify the construction of deformed symplectic cohomology from \cite[Section 5.3e]{pomerleano-seidel23}, so that it takes place on the telescope complex from Section \ref{subsec:sh}. To lighten the notation, we want to use the same sequence of Hamiltonians throughout; the simplest way to do that is to assume that the slopes $\sigma_w$ of the Hamiltonians $\bar{H}_w$, for $w \geq 1$, satisfy
\begin{equation} 
\sigma_w = w \sigma_1 \;\; \text{for some irrational $\sigma_1>1$.}
\end{equation} 
The downside is that this makes no sense for $w = 0$. For that reason, we will start the telescope complex $C$ with $\mathit{CF}(1)$. This is largely irrelevant, since the labeling of slopes by natural numbers is somewhat arbitrary (and the quasi-isomorphism type of the telescope complex is in any case independent of any initial segment). 

For $m \geq 2$, take $(z_1,\dots,z_m) \in \mathit{Conf}_m^{\mathrm{ord}}(\bC)$, as well as $w_1,\dots,w_m \geq 1$. Set $w_- = w_1 + \cdots + w_m$. Equip the punctured plane 
\begin{equation} \label{eq:punctured-plane}
S = \bC \setminus \{z_1,\dots,z_m\} 
\end{equation}
with cylindrical ends 
\begin{equation} \label{eq:fm-ends}
\left\{
\begin{aligned}
& (-\infty,0] \times S^1 \longrightarrow S && \text{ around $\infty$}, \\
& [0,\infty) \times S^1 \rightarrow S && \text{ around $z_1,\dots,z_m$.}
\end{aligned}
\right.
\end{equation}
These ends should be chosen as in \cite[Equation (5.2.3)]{pomerleano-seidel23}. This corresponds to a specific convention for the ``asymptotic markers'': 
\begin{itemize}
\item (\em Aligned asymptotic markers \cite[Figure 5.2]{pomerleano-seidel23})
At the $z_k$, the markers point in negative real direction; at $\infty$, the marker points in direction of the path $[0,1) \rightarrow \bC \cup \{\infty\}$, $r \mapsto -1/r$. 
\end{itemize}
A choice of auxiliary inhomogeneous data consists of a family of almost complex structures 
\begin{equation} \label{eq:j-data}
(J_z)_{z \in S}
\end{equation}
and a one-form with values in Hamiltonians,
\begin{equation} \label{eq:k-data}
K \in \Omega^1(S,\smooth(M,\bR)).
\end{equation}
The general $J_z$ should preserve $D$; additionally, over the ends \eqref{eq:fm-ends} they should equal $\bar{J}_{w_k}$ around $z_k$, and $\bar{J}_{w_-}$ around $\infty$. Similarly, \eqref{eq:k-data} must have the general property that for each $\xi \in TS$, $K(\xi)$ respects $D$; and over the ends it should equal $\bar{H}_{w_k} \mathit{dt}$ near $z_k$, respectively $\bar{H}_{w_-} \mathit{dt}$ near infinity.

For each $w_1,\dots,w_m$, we make choices of ends \eqref{eq:fm-ends} and data \eqref{eq:j-data}, \eqref{eq:k-data} universally over the space $\FM_m$ from \eqref{eq:fm}. This is subject to two constraints: one is equivariance with respect to the symmetric group $S_m$, and the other is consistency with respect to gluing surfaces together. The equivariance condition is unproblematic insofar as transversality is concerned, because $S_m$ acts freely. Take periodic orbits $x_k$ of $\bar{H}_{w_k}$, $k = 1,\dots,m$, and $x_-$ of $\bar{H}_{w_-}$, lying outside $D$; we consider solutions of Cauchy-Riemann equations with those periodic orbits as limits, and which also remain disjoint from $D$. Write the resulting moduli space as $\FM_m(x_-,x_1,\dots,x_m)$. The intersection and winding number arguments from Section \ref{subsec:basics} (in this case, corresponding to the special situation from Example \ref{th:gromov-limit-2}) can be extended to this construction. The outcome is that in the compactification $\overline{\FM}_m(x_-,x_1\dots,x_m)$, all one-periodic orbits that appear lie outside $D$; the maps never intersect $D$; and there is no sphere bubbling. Counting points in zero-dimensional moduli spaces yields operations
\begin{equation} \label{eq:linfinitybigraded} 
\ell^m: \mathit{CF}^*(w_1)\otimes \mathit{CF}^*(w_2)\otimes \cdots \otimes \mathit{CF}^*(w_m) \longrightarrow \mathit{CF}^{*+1-2m}(w_-),
\end{equation}
which are graded symmetric with respect to permutation of inputs. Set
\begin{equation} \label{eq:lie}
L = \bigoplus_{w \geq 1} \mathit{CF}(w).
\end{equation}
The maps \eqref{eq:linfinitybigraded}, combined with the Floer differential $\ell^1 = d_0$, make $L[1]$ into an $L_\infty$-algebra (see e.g.\ \cite[Equation (3.2.3)]{pomerleano-seidel23} for sign conventions). 

\begin{remark} \label{th:linfty-remark}
The (operadic) relation between Fulton-Macpherson spaces and $L_\infty$-algebras is classical. For the implementation in symplectic cohomology, using Hamiltonians with infinite slope, see e.g.\ \cite[Proposition 5.3.2]{pomerleano-seidel23} (of course, this is also part of the much more general theory of operations in \cite{abouzaid-groman-varolgunes22}). The construction given here is only partial, because it does not include continuation maps between different slopes; what one should really do is to extend the $L_\infty$-structure to the telescope complex, by introducing further parameter spaces (which would also destroy the bigrading present in $L$). This is carried out in \cite{el-alami-sheridan24a}.
\end{remark}

Since we are working with integer coefficients, the following divisibility property is useful:

\begin{lemma} \label{th:divisible}
Consider the subspace $(L^{\otimes m})^{S_m}$ fixed by the symmetric group $S_m$ (acting with Koszul signs). The restriction of $\ell^m$ to that subspace is divisible by $(m!)$.
\end{lemma}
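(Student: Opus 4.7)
\textbf{Proof sketch for Lemma \ref{th:divisible}.} The plan is to exploit the freeness of the $S_m$-action on the parameter space $\FM_m$ together with the $S_m$-equivariance of the chosen Floer data. Concretely, each moduli space will acquire a free action by the stabilizer of the input orbits, and a symmetric tensor will decompose into $S_m$-orbits whose contributions are each individually divisible by $m!$.

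First I would set up the equivariance framework. The symmetric group $S_m$ acts on $\mathit{Conf}_m^{\mathrm{ord}}(\bC)$, hence freely on $\FM_m$, by permuting the labels of the punctures. As the $S_m$-action is free, one can choose the universal families of ends \eqref{eq:fm-ends} and of inhomogeneous data \eqref{eq:j-data}, \eqref{eq:k-data} to be $S_m$-equivariant (e.g.\ by making choices on a fundamental domain and transporting them). The upshot is that for any tuple $\mathbf{x}=(x_1,\dots,x_m)$ of orbits and any $x_-$, each $\sigma\in S_m$ induces a canonical diffeomorphism
\[
\FM_m(x_-,x_1,\dots,x_m) \;\xrightarrow{\;\iso\;}\; \FM_m(x_-,x_{\sigma(1)},\dots,x_{\sigma(m)}),
\]
multiplying oriented counts by the Koszul sign $\epsilon(\sigma;\mathbf{x})$; this is exactly the graded-symmetry of $\ell^m$ asserted after \eqref{eq:linfinitybigraded}.

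Next I would decompose a symmetric tensor $\eta\in (L^{\otimes m})^{S_m}$ according to $S_m$-orbits of basis tuples $\mathbf{x}=(x_1,\dots,x_m)$. Writing $c_{\mathbf{x}}$ for the coefficients, the invariance of $\eta$ under the stabilizer $H=\mathrm{Stab}(\mathbf{x})\subset S_m$ forces $c_{\mathbf{x}}=\epsilon(\sigma;\mathbf{x})\,c_{\mathbf{x}}$ for every $\sigma\in H$. Thus either $c_{\mathbf{x}}=0$ (in which case the orbit contributes nothing), or the Koszul character $\epsilon(\cdot;\mathbf{x})|_{H}$ is trivial. In the second case the orbit contribution can be written as
\[
c_{\mathbf{x}}\sum_{[\sigma]\in S_m/H}\epsilon(\sigma;\mathbf{x})\,\sigma\cdot(x_1\otimes\cdots\otimes x_m),
\]
and applying $\ell^m$ (with graded-symmetric Koszul signs cancelling) yields $c_{\mathbf{x}}\,[S_m:H]\,\ell^m(x_1,\dots,x_m)$.

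Finally, I would use freeness to extract a further factor of $|H|$. Since $S_m$ acts freely on $\FM_m$, so does $H$; by equivariance of the data, this lifts to a free action of $H$ on $\FM_m(x_-,x_1,\dots,x_m)$ (any fixed point would project to a fixed point downstairs). Because $\epsilon(\cdot;\mathbf{x})|_H$ is trivial, this action preserves the Floer-theoretic orientations on isolated solutions, hence the signed count of points is divisible by $|H|$. Combining, each orbit's contribution is divisible by $[S_m:H]\cdot|H|=m!$, and summing over orbits gives the claim.

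The main obstacle is the sign bookkeeping: one must verify that the natural action of $H$ on the determinant line bundle of the linearized operator, obtained from permuting asymptotic markers in an $S_m$-equivariant family, coincides with multiplication by the Koszul sign $\epsilon(\sigma;\mathbf{x})$. Granted the graded symmetry of $\ell^m$ (already built into the construction), this is automatic; but explicitly it amounts to a diagram chase comparing the two orientation schemes, which is where the bulk of the genuine work lies.
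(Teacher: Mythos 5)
Your proposal is correct and is essentially the same argument as the paper's. Both proofs decompose a symmetric tensor along $S_m$-orbits of basis tuples (the paper constructs the corresponding basis explicitly, via the sum-over-shuffles normalization), extract the factor $[S_m:G]=m!/|G|$ from summing over coset representatives, and then obtain the remaining factor $|G|$ from the free action of the stabilizer $G$ (your $H$) on the zero-dimensional moduli spaces $\FM_m(x_-,x_1,\dots,x_m)$. You are a bit more explicit than the paper about the sign point — that the triviality of the Koszul character on the stabilizer is exactly what makes the free action orientation-preserving, and that the alternative is that the contribution vanishes identically — which the paper handles by noting the expression \eqref{eq:average} is zero when two equal odd-degree inputs occur.
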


\begin{proof}
Take an $m$-tuple of periodic orbits $(x_1,\dots,x_m)$. Let $G \subset S_m$ be the group which preserves that $m$-tuple ($G \iso S_{m_1} \times \cdots \times S_{m_k}$ for $m_1+\cdots+m_k = m$, depending on the coincidences between the $x_i$). Consider the expression
\begin{equation} \label{eq:average}
\frac{1}{|G|} \sum_{\sigma \in S_m} (-1)^\S\, x_{\sigma(1)} \otimes \cdots \otimes x_{\sigma(m)},
\end{equation}
where $\S$ is the standard Koszul sign. If there are two equal $x_i$ of odd degree, the expression is zero. Otherwise, it is an element of $(L^{\otimes m})^{S_m}$. One can show its integrality by rewriting the formula as a sum over shuffles, which are permutations such that $\sigma(i) < \sigma(j)$ whenever $x_i = x_j$ for any $i<j$; this combines the terms in \eqref{eq:average} into groups of $|G|$ each. The nonzero elements \eqref{eq:average} form a basis for $(L^{\otimes m})^{S_m}$. Applying the $L_\infty$-operation to such a basis element yields
\begin{equation} \label{eq:ell-g}
\frac{m!}{|G|}\, \ell^m(x_1,\dots,x_m).
\end{equation}
Since the underlying zero-dimensional moduli spaces $\FM_m(x_-,x_1,\dots,x_m)$ carry a free action of $G$, $\ell^m(x_1,\dots,x_m)$ is divisible by $|G|$, hence \eqref{eq:ell-g} is a multiple of $(m!)$.
\end{proof}

The next construction involves two parameter spaces (for $m>0$)
\begin{equation} \label{eq:c-c-hat}
\begin{aligned} 
& \frakC_m = \mathit{Conf}_m^{\mathrm{ord}}(\bR \times S^1)/\bR, \;\; m \geq 1, \\
& \frakC_m^\dag = \mathit{Conf}_m^{\mathrm{ord}}(\bR \times S^1), \;\; m \geq 0,
\end{aligned}
\end{equation}
and their Fulton-MacPherson style compactifications. Fix $w_+,w_1,\dots,w_m \geq 1$, and set $w_- = w_+ + w_1 + \cdots + w_m$, respectively $w_-^\dag = w_+ + w_1 + \cdots + w_m+1$.. Take a point configuration. For the resulting punctured cylinder $Z = (\bR \times S^1) \setminus \{z_1,\dots,z_m\}$, one adopts the following convention:
\begin{itemize}
\item {(\em $S^1$-invariant asymptotic markers \cite[Figure 5.3]{pomerleano-seidel23})}
The asymptotic markers at $\pm\infty$ point in direction of the paths $[0,1) \rightarrow \bR \times S^1$, $r \mapsto (\pm 1/r,0)$. At the points $z_k$, the markers point towards $-\infty$.
\end{itemize}
The corresponding choices of ends are written down in \cite[Equations (5.2.8)--(5.2.10)]{pomerleano-seidel23}. One also chooses analogues of \eqref{eq:j-data} and \eqref{eq:k-data}, which over the ends reduce to: $(\bar{H}_{w_k} \mathit{dt}, \bar{J}_{w_k})$ near the $z_k$; $(\bar{H}_{w_+} \mathit{dt}, \bar{J}_{w_+})$ near $+\infty$; and the same for either $w_-$ or $w_-^\dag$ near $-\infty$, depending on the type of moduli space. These should satisfy the same conditions as before, namely $S_m$-equivariance and consistency; but where consistency now refers to gluing in Fulton-MacPherson configurations at the $z_k$, as well as to gluing several cylinders together at $\pm\infty$. Moreover, in the case of $\frakC_m^\dag$, we want to use the same data as in the definition of the continuation map $d_0^\dag$.

As before, we only consider maps $u: Z \rightarrow M$ whose limits lie outside $D$, and which avoid $D$ altogether. The outcome of counting points in zero-dimensional moduli spaces are operations
\begin{align}
\label{eq:cm-operation} & 
c^{m+1}: \mathit{CF}^*(w_1) \otimes \cdots \mathit{CF}^*(w_m) \otimes \mathit{CF}^*(w_+) \longrightarrow \mathit{CF}^{*+1-2m}(w_-), \\
\label{eq:cm-dag-operation} & 
c^{m+1,\dag}: \mathit{CF}^*(w_1) \otimes \cdots \mathit{CF}^*(w_m) \otimes \mathit{CF}^*(w_+) \longrightarrow \mathit{CF}^{*-2m}(w_-^\dag).
\end{align}
To these, we add $c^1 = d_0$ (recall also that $c^{1,\dag} = d_0^\dag$, by construction). These operations are graded symmetric, and they satisfy the following equations. First,
\begin{equation}
\begin{aligned}
& 0 = \sum_{k,\sigma} (-1)^\S \, c^{m-k+2}(\ell^k(x_{\sigma(1)},\dots,x_\sigma{(k)}),x_{\sigma(k+1)},\dots,x_{\sigma(m)},x) \\
&  + \sum_{k,\sigma} (-1)^{\S + |x_{\sigma(1)}| + \cdots + |x_{\sigma(k)}|} c^{k+1}(x_{\sigma(1)},\dots,x_{\sigma(k)}, c^{m-k+1}(
x_{\sigma(k+1)},\dots,x_{\sigma(m)},x)); \\
\end{aligned}
\end{equation}
Here, the first sum is over all $1 \leq k \leq m$ and all $(k,m-k)$-shuffles $\sigma$, which means $\sigma(1) < \cdots < \sigma(k)$ and $\sigma(k+1) < \cdots < \sigma(m)$; and the second sum is similar, but allowing $k = 0$ as well. The second equation is similar, but slightly more complicated:
\begin{equation}
\begin{aligned}
& 0 = \sum_{k,\sigma} (-1)^\S c^{m-k+2,\dag}(\ell^k(x_{\sigma(1)},\dots,x_{\sigma(k)}),x_{\sigma(k+1)},\dots,x_{\sigma(m)},x) \\
&  - \sum_{k,\sigma} (-1)^\S c^{k+1}(x_{\sigma(1)},\dots,x_{\sigma(k)}, c^{m-k+1,\dag}(x_{\sigma(k+1)},\dots,x_{\sigma(m)},x)) \\ & 
+ \sum_{k,\sigma} (-1)^{\S + |x_{\sigma(1)}| + \cdots + |x_{\sigma(k)}|} c^{k+1,\dag}(x_{\sigma(1)},\dots,x_{\sigma(k)}, c^{m-k+1}(x_{\sigma(k+1)},\dots,x_{\sigma(m)},x)).
\end{aligned}
\end{equation}
The combination of these operations makes the telescope complex $C$ into an $L_\infty$-module over $L[1]$ (see \cite[Equation (3.4.9)]{pomerleano-seidel23} for sign conventions). We denote this module structure by $c^{m+1}_C$, where $c^1_C = d_C$. On the subcomplex $\bigoplus_m \mathit{CF}(m)$, it is straightforwardly given by \eqref{eq:cm-operation}; on the other part one has, for $m>0$,
\begin{equation}
c^{m+1}_C(x_1,\dots,x_m,\eta x) = (-1)^{|x_1|+\cdots+|x_m|} (
c^{m+1,\dag}(x_1,\dots,x_m,x) -\eta c^{m+1}(x_1,\dots,x_m,x)).
\end{equation}

\begin{remark}
Recall that the differential $d_C$ includes $-\mathit{id}: \eta \mathit{CF}(w) \rightarrow \mathit{CF}(w)$. In the $L_\infty$-module equation, this gives rise to terms
\begin{equation}
\begin{aligned}
& 
(-1)^{|x_1|+\cdots+|x_m|} c^{m+1}_C(x_1,\dots,x_m,c^1_C(\eta x)) \\ & \qquad =
-(-1)^{|x_1|+\cdots+|x_m|} c^{m+1}(x_1,\dots,x_m,x) + \cdots
\end{aligned}
\end{equation}
and
\begin{equation}
\begin{aligned}
&
c^1_C(c^{m+1}_C(x_1,\dots,x_m, \eta x)) \\ & \qquad 
= -(-1)^{|x_1|+\cdots+|x_m|} c^1_C( \eta c^{m+1}(x_1,\dots,x_m,x)) \\ & \qquad
= (-1)^{|x_1|+\cdots+|x_m|} c^{m+1}(x_1,\dots,x_m,x) + \cdots
\end{aligned}
\end{equation}
which cancel each other.
\end{remark}

The analogue of Lemma \ref{th:divisible}, with the same proof, is:

\begin{lemma} \label{th:divisible-2}
The restriction of $c^{m+1}_C$ to $(L^{\otimes m})^{S_m} \otimes C$ is divisible by $(m!)$.
\end{lemma}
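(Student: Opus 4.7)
The plan is to reproduce the argument for Lemma \ref{th:divisible} essentially verbatim, with the role of $\FM_m$ played by the parameter spaces $\frakC_m$ and $\frakC_m^\dag$ from \eqref{eq:c-c-hat}. The $S_m$-action is on the first $m$ tensor factors only; the last factor $x \in C$ plays the role of a passive parameter under permutations.

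First I fix an $m$-tuple of one-periodic orbits $(x_1,\dots,x_m)$ and let $G \subset S_m$ be the stabilizer subgroup, which takes the form $S_{m_1} \times \cdots \times S_{m_k}$ where $(m_1,\dots,m_k)$ records the multiplicities of coincidences among the $x_i$. As in the proof of Lemma \ref{th:divisible}, the symmetrized expressions
\begin{equation*}
\frac{1}{|G|}\sum_{\sigma \in S_m} (-1)^{\S}\, x_{\sigma(1)} \otimes \cdots \otimes x_{\sigma(m)}
\end{equation*}
are integral (by rewriting as a sum over shuffles) and form a basis of $(L^{\otimes m})^{S_m}$ over those orbit-tuples with no repeated odd-degree orbit (the rest give zero). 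Applied to such a basis element tensored with $x$, the operation $c^{m+1}_C$ yields $(m!/|G|)\,c^{m+1}_C(x_1,\dots,x_m, x)$.

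It then suffices to show that $c^{m+1}_C(x_1,\dots,x_m, x)$ is divisible by $|G|$. For this I use that $G$ acts on the Floer moduli spaces underlying \eqref{eq:cm-operation} and \eqref{eq:cm-dag-operation} by permuting the labels of the first $m$ interior punctures $z_1,\dots,z_m$ of the cylinder, and that this action is free: since $z_1,\dots,z_m$ are pairwise distinct points of $\bR \times S^1$, any nontrivial permutation necessarily moves the configuration. (Compare Lemma \ref{th:codim-2-non-free-2}, though here we need only the much easier statement about free actions on the interior.) The $S_m$-equivariance of the auxiliary data, which was imposed when setting up the operations, ensures that this set-theoretic action extends to an action on moduli space, and the signs are arranged so that counting orbits gives $c^{m+1}_C(x_1,\dots,x_m,x)/|G| \in \bZ$.

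The only mild subtlety, and what I would treat as the main point to check carefully, is the matching of signs: the graded symmetry of $c^{m+1}$ and $c^{m+1,\dag}$ under permutations of their first $m$ inputs (including Koszul signs from the $\eta$-component in $C$) must agree with the sign $(-1)^{\S}$ picked up when $G$ acts on the oriented moduli space. This is however a standard check, identical in structure to the $\ell^m$ case, and no genuinely new geometric input is required.
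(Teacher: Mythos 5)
Your proposal is correct and matches the paper's intent exactly: the paper simply states that Lemma \ref{th:divisible-2} follows ``with the same proof'' as Lemma \ref{th:divisible}, and you have spelled out that transfer — same basis of $(L^{\otimes m})^{S_m}$, same stabilizer subgroup $G$, and the observation that $G$ acts freely on the underlying zero-dimensional moduli spaces (now over $\frakC_m$, $\frakC_m^\dag$ rather than $\FM_m$) — so no substantive difference from the paper's argument.
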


\begin{definition} \label{th:mc}
A Maurer-Cartan element in $L$ is a collection of elements $g_m \in \mathit{CF}^{2-2m}(m)$, $m \geq 1$, which satisfy the sequence of equations 
\begin{equation} \label{eq:maurer-cartan}
\sum_{\substack{r \geq 1 \\ m_1 + \cdots + m_r = m\!\!\!}} (1/r!)\, \ell^k(g_{m_1},\dots,g_{m_r}) = 0.
\end{equation}
Note that $\sum_{m_1+\cdots+m_r = m} g_{m_1} \otimes \cdots \otimes g_{m_r} \in L^{\otimes r}$ is $S_r$-invariant. By Lemma \ref{th:divisible}, this means that the equation \eqref{eq:maurer-cartan} takes place in $L$, without denominators.
\end{definition}

The grading is that required by our application: if $|q| = 2$ as usual, then
\begin{equation}
g_q = \sum_m q^m g_m \in qL[[q]]
\end{equation}
is a Maurer-Cartan element for $L[1]$ in the standard (degree $1$) sense. One can use such an element to define a deformation $C_g$ of $C$, by equipping $C[[q]]$ with the differential
\begin{equation} \label{eq:d-g}
\begin{aligned}
& d_g(x) = \sum_{r \geq 0} (1/r!)\, c^{r+1}_C(g_q,\dots,g_q,x) \\
& \qquad
= \sum_{\substack{r \geq 0\\ m_1,\dots,m_r\!\!\!}} (q^{m_1+\cdots+m_r}/r!) \, c^{r+1}_C(g_{m_1},\dots,g_{m_r},x). 
\end{aligned}
\end{equation}
As before, Lemma \ref{th:divisible-2} ensures that the denominators are only apparent: the formula yields a well-defined differential on $C[[q]]$.

\begin{remark}
We have gone some length to make sure that everything works integrally, to match the setup in the earlier parts of the paper. Obviously, readers happy with $\bQ$-coefficients (or indeed $\bC$-coefficients as in \cite{pomerleano-seidel23}) can skip all of that.
\end{remark}



\subsection{Extracting marked points\label{subsection:finiteslope}}
Take the Maurer-Cartan spaces from Section \ref{subsec:woodward}. For any point in $\MC_m$ choose a representative $(z_1,\dots,z_m,\alpha)$. We equip the complex plane with an end at infinity, chosen according to the same conventions as in Section \ref{subsec:punctures}, and also with data $(K,J)$. These choices, carried out smoothly over $\MC_m$, are subject to the usual conditions: invariance with respect to the action of $S_m$; and consistency with the previous choices of data on $\FM_m$, with respect to \eqref{eq:mc-fm} (while the technical setup is a little different than before, because $\overline{\MC}_m$ is a manifold with generalized corners, it is unproblematic to define and construct consistent data in that context). Given a periodic orbit $x_-$ of $\bar{H}_m$ lying outside $D$, one considers the space $\MC_m(x_-)$ which consists of a point in $\MC_m$, represented by $(z_1,\dots,z_m, \alpha)$, and a map $u: \bC \rightarrow M$ asymptotic to $x_-$ over the end, satisfying the associated Cauchy-Riemann equation as well as the equality (as divisors)
\begin{equation} \label{eq:mc-intersect}
u^{-1}(D) = z_1 + \cdots + z_m.
\end{equation}
To repeat for clarity: this time, the points $z_i$ are not removed from the surface (unlike the constructions from Section \ref{subsec:punctures}, but rather used for an intersection condition, as in previous parts of the paper. 

In the compactification $\overline{\MC}_m(x_-)$ of these spaces, one gets components which correspond to the factors in \eqref{eq:m-t-space}, as well as additional Floer cylinders. By a topological argument as in Section \ref{subsec:basics}, all one-periodic orbits that occur will lie outside $D$; all components labeled by $\MC$ spaces will satisfy the analogue of \eqref{eq:mc-intersect}; and all other components are disjoint from $D$. As for transversality, we first treat the strata in $\MC_m$ which have nontrivial isotropy with respect to the symmetric group; because of \eqref{eq:mc-intersect}, they also involve higher orders of tangency with $D$. Since those strata have codimension $\geq 2$, they will not contribute to spaces $\MC_m(x_-)$ of dimension $\leq 1$. Over the rest of $\MC_m$, one can achieve regularity while keeping $S_m$-equivariance. The outcome is that the compactified spaces $\overline{\MC}_m(x_-)$ of dimension $\leq 1$ are smooth, and carry a free action of the symmetric group. Hence, if we count points in zero-dimensional moduli spaces $\MC_m(x_-)$, the outcome
\begin{equation} \label{eq:tilde-g}
\tilde{g}_m \in \mathit{CF}^{2-2m}(m)
\end{equation}
is divisible by $(m!)$. We write 
\begin{equation} \label{eq:geometric-mc}
g_m = \tilde{g}_m/(m!).
\end{equation}

\begin{lemma}
The geometrically defined \eqref{eq:geometric-mc} form a Maurer-Cartan element, in the sense of Definition \ref{th:mc}.
\end{lemma}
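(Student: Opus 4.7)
The strategy is the standard one: interpret the Maurer-Cartan equation \eqref{eq:maurer-cartan} as the signed boundary count of one-dimensional moduli spaces $\overline{\MC}_m(x_-)$, for $x_-$ a one-periodic orbit of $\bar{H}_m$ outside $D$ with $\mathrm{deg}(x_-) = 3-2m$ (so that $\MC_m(x_-)$ has expected dimension $1$). The whole argument takes place in $M \setminus D$: by the topological/winding-number considerations of Section \ref{subsec:basics} (here in the version without slope variation, since all ends carry slopes of the form $\sigma_w = w\sigma_1$), every limiting one-periodic orbit lies outside $D$, no sphere bubbling can occur, and the intersection divisor on the principal component remains pointwise controlled by the position of the $z_i$.

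The boundary of $\overline{\MC}_m(x_-)$ then has two types of contributions. First, Floer cylinder breaking at $-\infty$ contributes $\ell^1(\tilde g_m) = d_0 \tilde g_m$. Second, degeneration in $\overline{\MC}_m$ reaches a codimension-one stratum of the form \eqref{eq:mc-codim-1}: such a stratum is indexed by an \emph{unordered} set partition of $\{1,\dots,m\}$ into $r\geq 2$ nonempty blocks $S_1,\dots,S_r$, the $j$-th leaf factor $\MC_{|S_j|}$ contributes a zero-dimensional moduli space whose signed count is $\tilde g_{|S_j|}$, and the root factor $\FM_r$ combines these by $\ell^r$. The consistency of data chosen on $\overline{\MC}_m$ (as in Section \ref{subsec:woodward}) with the previous choices on $\FM_m$ via \eqref{eq:mc-fm} is what makes this identification valid. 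Regularity/isolation of these boundary strata, and the absence of boundary contributions from higher-codimension strata, rests on Lemma~\ref{th:codim-2-non-free}: strata with nontrivial $S_m$-isotropy are of codimension $\geq 2$ and so cannot appear in a one-dimensional moduli space, and on the remaining strata one can arrange $S_m$-equivariant transversality.

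Collecting boundary contributions, $0 = \ell^1(\tilde g_m) + \sum_{r\geq 2}\sum_{\text{unordered partitions}}\ell^r(\tilde g_{|S_1|},\dots,\tilde g_{|S_r|})$. Rewriting the sum over unordered partitions as $(1/r!)$ times the sum over ordered tuples $(S_1,\dots,S_r)$ of disjoint nonempty subsets (legal because $\ell^r$ is graded symmetric and, by the degree constraint $\mathrm{deg}(\tilde g_{m_j}) = 2 - 2m_j$ is even, all Koszul signs are trivial), and noting that the number of ordered tuples with prescribed sizes $(m_1,\dots,m_r)$ is the multinomial coefficient $m!/(m_1!\cdots m_r!)$, one obtains
\begin{equation*}
0 \;=\; \ell^1(\tilde g_m) \;+\; \sum_{r\geq 2}\frac{1}{r!}\sum_{m_1+\cdots+m_r=m}\frac{m!}{m_1!\cdots m_r!}\,\ell^r(\tilde g_{m_1},\dots,\tilde g_{m_r}).
\end{equation*}
Substituting $\tilde g_{m_j} = m_j!\, g_{m_j}$ from \eqref{eq:geometric-mc} and dividing by $m!$ produces precisely \eqref{eq:maurer-cartan}.

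The main obstacle is the combinatorial/symmetry bookkeeping just sketched: one must be careful that the geometric boundary sum naturally runs over unordered partitions (since $\overline{\MC}_m$ is built from trees up to isomorphism), while the algebraic form of the $L_\infty$-equation is written in terms of ordered tuples with a $1/r!$ prefactor. The divisibility statement provided by Lemmas~\ref{th:divisible} and the free $S_m$-action (Lemma~\ref{th:codim-2-non-free}) is what makes the division by $m!$ legitimate over $\bZ$, and it is the only place where working integrally (rather than over $\bQ$) requires attention. Everything else -- the analytic part (energy/Gromov compactness, absence of $D$-intersections on auxiliary components, regularity) -- is a direct transcription of arguments already developed in Sections \ref{sec:sh} and \ref{subsec:punctures}.
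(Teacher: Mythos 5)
Your proposal is correct and follows essentially the same route as the paper's proof: analyze the codimension-one boundary of one-dimensional moduli spaces $\overline{\MC}_m(x_-)$, identify the Floer-breaking and $\FM$-degeneration contributions, and reconcile the unordered-partition count with the $1/r!$ normalization in \eqref{eq:maurer-cartan} via the multinomial coefficient $m!/(m_1!\cdots m_r!)$. The extra remarks you add (the explicit degree constraint $\mathrm{deg}(x_-)=3-2m$, the observation that Koszul signs are trivial since all $g_m$ sit in even degree, and the appeal to Lemma \ref{th:codim-2-non-free} for isotropy strata) are all consistent with the paper and only make the bookkeeping more explicit.
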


\begin{proof}
As standard in such contexts, the argument is based on looking at the codimension $1$ boundary strata \eqref{eq:mc-codim-1}. When such a stratum occurs as limit in $\overline{\MC}_m(x_-)$, each $\MC_{m_k}$ factor corresponds to a map $\bC \rightarrow M$ satisfying the analogue of \eqref{eq:mc-intersect}, while the $\FM_r$ factor corresponds to a map from an $r$-punctured plane \eqref{eq:punctured-plane} to $M \setminus D$. As already pointed out after \eqref{eq:mc-codim-1}, one has to be careful not to overcount the strata: the number of partitions of $\{1,\dots,m\}$ into $r \geq 2$ ordered subsets of sizes $m_1 + \cdots + m_r = m$ is $m!/(m_1! \cdots m_r!)$, but one has to divide by renumbering the subsets. The outcome, also including the obvious limit where a Floer trajectory split off over the end (as the term $r = 1$, with $\ell^1 = d_0$ the Floer differential), is
\begin{equation}
\sum_{\substack{r \geq 1 \\ \!\!\! m_1+\cdots+m_r = m \!\!\!}} \frac{m!}{m_1! \cdots m_r! r!} \ell^r(\tilde{g}_{m_1},\dots,\tilde{g}_{m_r}) = 0;
\end{equation}
which is equivalent to \eqref{eq:maurer-cartan} for \eqref{eq:geometric-mc}.
\end{proof}

\begin{remark} \label{th:integral}
The $m = 1$ case of \eqref{eq:maurer-cartan} just says that 
\begin{equation}
g_1 = \tilde{g}_1 \in \mathit{CF}(1)^0
\end{equation}
is a Floer cocycle. By using a parametrized version of the construction, one can show that its cohomology class, $[g_1] \in \mathit{HF}(1)$, is independent of all choices. Indeed, it agrees with a special instance of the maps \eqref{eq:tt}. Namely, $t_{1,0}: \mathit{CM}^*(D) \rightarrow \mathit{CF}^*(1)$ is a chain map, by a special case of \eqref{eq:t-map-equation}, and $[g_1]$ is the image of the identity class under that map. Equivalently, in terms of the pseudo-cycle definition from Section \ref{subsec:switch-to-cycles}, one has
\begin{equation} \label{eq:gin-and-tonic}
[g_1] = [t_{1,0}(D)].
\end{equation}
The image of $[g_1]$ under $\mathit{HF}^*(1) \rightarrow \mathit{SH}^*(M \setminus D)$ was called the Borman-Sheridan class in \cite{pomerleano-seidel23}.
\end{remark}

\begin{remark} \label{th:simplify-mc}
One can simplify the situation by assuming, as in Lemma \ref{th:satisfy-epsilon-bound}(ii), that all the $\mathit{CF}^*(w)$ are concentrated in degrees $\geq 0$. Then, a degree zero cocycle is entirely determined by its cohomology class; and the higher order terms of the Maurer-Cartan element are automatically zero. In other words, the Maurer-Cartan element is totally characterized by its property \eqref{eq:gin-and-tonic}. Obviously, vanishing of the higher order terms also simplifies many of the formulae that we are encountering, such as \eqref{eq:d-g}. We have avoided relying on this shortcut in our construction, because parts of the general argument developed here could be useful in other circumstances, where those degree considerations don't apply; but it was used in \cite{pomerleano-seidel23} to bypass the full construction of the Maurer-Cartan element.
\end{remark}

\begin{theorem} \label{th:pullout}
The deformation $C_g$ of the telescope given by the Maurer-Cartan element \eqref{eq:geometric-mc} is isomorphic to $C_q$ (as defined in Section \ref{subsec:sh}, except for our current convention of starting with $w = 1$).
\end{theorem}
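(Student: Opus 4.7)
The plan is to use the extraction spaces $\overline{\EX}_m$ and $\overline{\EX}_m^{\dag}$ from Section~\ref{sec:extractionspaces} to build an explicit chain-level isomorphism $\Phi: C_q \to C_g$. The geometric motivation is exactly the one suggested by Figures~\ref{fig:extractihedra-0}--\ref{fig:extractihedra-2}: at scale $a = 1$ the marked points sit on the cylinder, matching the intersection-with-$D$ picture from Section~\ref{subsec:sh}; as $a \to \infty$ the marked points ``extract'' to form trees of $\MC$ components attached to a principal configuration cylinder, which matches the punctured-surface-with-Maurer-Cartan-insertion picture of Section~\ref{subsec:punctures}.

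First, I would choose smooth families of Floer data over $\overline{\EX}_m$ and $\overline{\EX}_m^{\dag}$, $S_m$-equivariantly, with the following interface conditions: on the $a = 1$ face \ref{item:scale-1} they restrict to the data used to define $d_m$ (resp.\ $d_m^\dag$) from Section~\ref{subsec:dm}; on the equal-scale-splitting faces \ref{item:equal-scale-splitting} they factor through products of the same type of data (giving compositions); and on each $a = \infty$ stratum \eqref{eq:p-bubble}--\eqref{eq:p-bubble-2}, the $\MC_{T_j}$ leaf factors carry the data used to build $\tilde g_{m}$ in \eqref{eq:tilde-g}, the non-leaf $\FM$ vertices carry the data used to build the operations $\ell^r$, and the principal $\mathit{Conf}_r^{\mathrm{ord}}(\bR\times S^1)/\bR$ (or $\mathit{Conf}_r^{\mathrm{ord}}(\bR\times S^1)$) component carries the data used to build $c^{r+1}$ (resp.\ $c^{r+1,\dag}$). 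Counting isolated maps from cylinders to $M\setminus D$ satisfying $u^{-1}(D)=z_1+\cdots+z_m$ with these parametrized equations defines, for each $m \geq 0$, linear operations $\phi_m: \mathit{CF}(w) \to \mathit{CF}(w+m)$ and $\phi_m^\dag: \mathit{CF}(w) \to \mathit{CF}(w+m+1)$, which I assemble into $\Phi = \sum_m q^m\phi_m$ (with the usual $\phi_m^\dag$ correction on the $\eta$-summands), in complete parallel to \eqref{eq:deformed-telescope}. The $S_m$-symmetry, together with Lemma~\ref{th:codim-2-non-free-2}, ensures that the non-free strata are high-codimension and do not affect counts.

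Then, applying the ``count boundary points of $1$-dimensional moduli spaces'' principle: boundaries of type \ref{item:scale-1} produce the terms $\Phi\,d_{C_q}$; boundaries of type \ref{item:equal-scale-splitting} cancel in the usual way against inductively defined lower-order pieces; boundaries of type \ref{item:mc-bubble} (and their mixed variants, where the cylinder splits further) reproduce, after symmetrization over partitions and using \eqref{eq:geometric-mc}, the expression $d_g\,\Phi$ defined by \eqref{eq:d-g}. This yields the chain-map relation $d_g\,\Phi = \Phi\,d_{C_q}$. Finally, both complexes have underlying graded module $C[[q]]$ carrying the $q$-filtration, and by construction $\phi_0$ is the identity on $C$ (since $\overline{\EX}_0^\dag = [1,\infty]$ and no nontrivial data variation occurs); hence $\Phi$ is upper-triangular with identity diagonal with respect to the $q$-filtration, and so is a chain-level isomorphism.

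The main obstacle, as usual in this circle of ideas, is the simultaneous coherence of the auxiliary data across nested boundary strata of $\overline{\EX}_m$ and $\overline{\EX}_m^\dag$: these are manifolds with generalized corners whose strata are labeled by pairs of (ordered partitions of the cylinder, trees at each marked point), and the consistency conditions must mesh with the previously chosen data on $\frakD_m,\frakD_m^\dag, \FM_m, \frakC_m, \frakC_m^\dag$ and on the $\MC_m$ parameter spaces. The combinatorial bookkeeping of symmetric-group factors, matching the $1/r!$ in \eqref{eq:d-g} and the $(m!)$ divisibilities from Lemmas~\ref{th:divisible}--\ref{th:divisible-2}, is where integrality considerations get delicate; this is handled exactly as in the proof that \eqref{eq:geometric-mc} is a Maurer-Cartan element, i.e.\ by counting ordered partitions and then quotienting by the symmetry of the unordered tree structure.
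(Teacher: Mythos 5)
You correctly identify that the extraction spaces are the right geometric bridge, but your claim that \emph{``boundaries of type \ref{item:equal-scale-splitting} cancel in the usual way against inductively defined lower-order pieces''} is precisely the step that fails, and the paper says so explicitly: it first sets up exactly your candidate map (as $\mathit{ex}_{C_q}$ in \eqref{eq:dead-end}, using $\mathit{ex}_m$ and $\mathit{ex}_m^\dag$), and then notes that ``our luck runs out at $q^2$.'' The problem is that the equal-scale-splitting face $\EX_{m_1}\times_{[1,\infty)}\EX_{m_2}$ is a \emph{fibre product} over $[1,\infty)$ (the two cylinders carry the \emph{same} scale $a$), whereas a composition such as $\mathit{ex}_{m_1}\circ\mathit{ex}_{m_2}$ would correspond to a full product $\EX_{m_1}\times\EX_{m_2}$ (independent scales). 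For $m\ge 2$ those faces contribute terms in the codimension-one boundary count that are not compositions of previously defined operations, and there is no inductively defined ``lower-order piece'' for them to cancel against; so $\mathit{ex}_{C_q}$ is a chain map only modulo $O(q^2)$.

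The paper's fix is the scale-ordered construction: instead of a single $\overline{\EX}_m$, one works with
\begin{equation}
\frakO_m \subset \coprod_{\substack{r\geq 1\\ m_1+\cdots+m_r=m}} \EX_{m_1}\times\cdots\times\EX_{m_r}
\end{equation}
restricted to $a_1\geq\cdots\geq a_r$, together with its moduli-space analogue $\frakO_m(x_-,x_+)$ given by composable $r$-tuples of $\EX$-solutions. In a one-dimensional $\frakO_m(x_-,x_+)$, the face where two successive free scales coincide ($a_k=a_{k+1}$, type \ref{item:o-equal}) has \emph{the same limiting configuration} as the equal-scale-splitting face inside one $\EX_{m_k}$ factor (type \ref{item:o-split}), and these two boundary contributions cancel each other. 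The remaining faces are: the last scale reaching $a_r=1$ (which feeds the $d_{C_q}$-side), $a_1\to\infty$ with $\MC$-bubbling (which feeds the $d_g$-side via the Maurer-Cartan element), and ordinary Floer breaking on the left or right. The resulting operations $o_m = \tilde o_m/(m!)$ and $o_m^\dag$ then satisfy the relations \eqref{eq:o-equation}, \eqref{eq:o-dag-equation}, and assembling them into $o_{C_q}$ gives the chain map; the $\mathit{id}+O(q)$ and filtration argument at the end of your proposal is then fine. In short: your framework and endpoint are right, but you need the scale-ordered spaces $\frakO_m$, $\frakO_m^\dag$ to kill the spurious fibre-product boundaries, not a cancellation argument within $\overline{\EX}_m$ alone.
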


This uses the extraction spaces $\EX_m$ and their variants $\EX_m^\dag$. As in the definition of the Maurer-Cartan element, we equip the cylinders parametrized by $\EX_m$ with ends and auxiliary data, modelled near $+\infty$ on $(\bar{H}_{w_+}, \bar{J}_{w_+})$, and near $-\infty$ on either $(\bar{H}_{w_-}, \bar{J}_{w_-})$ for $w_- = w_+ + m$, or $(\bar{H}_{w_-^\dag}, \bar{J}_{w_-^\dag})$ for $w_-^\dag = w_+ + m + 1$. While the basics are by now repetitious, we want to make sure that the consistency conditions are understood. For simplicity, we look at them principally for codimension $1$ boundary faces (adding occasional remarks about higher codimension behaviour, but without aiming for completeness).
\begin{itemize} \itemsep.5em
\item As a special case, for $\EX_0^\dag$ we always equip the cylinder with the same data as for $\frakD_0^\dag$, irrespective of the scale $a$. (Looking slightly ahead, $a$-independence means that there can't be isolated points in the associated moduli spaces, hence the contribution is zero; in spite of that, we need to have this case set up for consistency reasons.)

\item (This is the situation \ref{item:scale-1} in Section \ref{sec:extractionspaces}.)
The boundary of the (uncompactified) extraction spaces occurs where $a = 1$. Dividing by the symmetric group yields maps
\begin{align}
& \partial \EX_m \longrightarrow \frakD_m = \mathit{Sym}_m(\bR \times S^1)/\bR, \\
\label{eq:ex-quotient}
& \partial \EX_m^\dag \longrightarrow \frakD_m^\dag = \mathit{Sym}_m(\bR \times S^1).
\end{align}
and all data are supposed to be pulled back from those previously chosen to define the differentials $d_m$, $d_m^\dag$. 

\item \ref{item:equal-scale-splitting}
When the cylinder splits into two (or in higher codimension, several) cylinders, carrying equal finite scales, we inherit the data inductively from each of the pieces.

\item \ref{item:mc-bubble}
When the scale goes to $\infty$, the principal component belongs to a configuration space of (distinct) points on the cylinder, and the associated punctured cylinder carries data inherited from our previous choices for the spaces \eqref{eq:c-c-hat}. The other components belong to $\MC$ spaces, and inherit those choices (in particular, unlike what happened to the cylinder, we do not remove the marked points on those components). Finally (in higher codimension) there are other components which are $\FM$ spaces, and again already have choices of auxiliary data prescribed on the associated punctured planes \eqref{eq:punctured-plane}.
\end{itemize}
One has associated spaces $\EX_m(x_-,x_+)$ and $\EX_m^\dag(x_-,x_+)$ of solutions of Cauchy-Riemann equations. These spaces can be used to define maps
\begin{align} \label{eq:ex-operations}
& \mathit{ex}_m: \mathit{CF}^*(w_+) \longrightarrow \mathit{CF}^{*-2m}(w_-), \\
\label{eq:ex-dag-operations}
& \mathit{ex}_m^\dag: \mathit{CF}^*(w_+) \longrightarrow \mathit{CF}^{*-2m-1}(w_-^\dag).
\end{align}

\begin{example}
Let's look at the spaces for $m = 1$, and how they can be applied to solving Theorem \ref{th:pullout} at first order in $q$. We have
\begin{equation}
\begin{aligned}
& \EX_1 = (\bR \times S^1)/\bR \times [1,\infty) = S^1 \times [1,\infty), \\
& \partial \EX_1 = S^1 \times \{1\} = \frakD_1, \\
& \overline{\EX}_1 \setminus \EX_1 = S^1 \times \{\infty\} = \frakC_1 \times \MC_1. 
\end{aligned}
\end{equation}
Algebraically, the outcome is that
\begin{equation} \label{eq:ex-equation}
d_0 \mathit{ex}_1 - \mathit{ex}_1 d_0 = d_1 - c^2(g_1,\cdot).
\end{equation}
In parallel,
\begin{align}
\label{eq:ex-1-dag}
& \EX_1^\dag = \bR \times S^1 \times [1,\infty), \\
\label{eq:ex-1-dag-boundary}
& \partial \EX_1^\dag = \frakD_1^\dag,
\end{align}
whose compactification comes with additional codimension $1$ boundary strata
\begin{align}
\label{eq:c1mc1}
& \bR \times S^1 \times \{\infty\} = \frakC_1^\dag \times \MC_1, \\
\label{eq:d0ex1}
& \EX_0^\dag \times_{[1,\infty)} \EX_1 = \frakD_0^\dag \times \EX_1, \\ 
\label{eq:ex1d0}
& \EX_1 \times_{[1,\infty)} \EX_0^\dag = \EX_1 \times \frakD_0^{\dag}.
\end{align}
In the last two cases, our way of writing the spaces took into account the fact that the data chosen on $\EX_0^\dag$ are scale-independent. The outcome is that
\begin{equation} \label{eq:ex-dag-equation}
d_0 \mathit{ex}_1^\dag + \mathit{ex}_1^\dag d_0 = c^{2,\dag}(g_1,\cdot) - d_1^\dag +
d_0^\dag\, \mathit{ex}_1 - \mathit{ex}_1\, d_0^\dag.
\end{equation}
This example is so simple that one can readily explain the origin of the signs (something that we have otherwise avoided here, because of the lengthy bookkeeping involved; see \cite[Section 5.2g]{pomerleano-seidel23} for the $L_\infty$-operations). If one writes the coordinates on \eqref{eq:ex-1-dag} as $(s_1,t_1,a)$, then for the obvious orientations, the identification \eqref{eq:ex-1-dag-boundary} is orientation-reversing, as the outwards pointing normal vector is $(0,0,-1)$; whereas \eqref{eq:c1mc1} which arises as $a \rightarrow +\infty$, is orientation-preserving. Similarly, \eqref{eq:d0ex1} arises as $s_1 \rightarrow +\infty$ and is therefore orientation-preserving, while \eqref{eq:ex1d0} is $s_1 \rightarrow -\infty$ and orientation-reversing; which matches the right hand side of \eqref{eq:ex-dag-equation}. Resuming the main discussion, define 
\begin{equation} \label{eq:dead-end}
\begin{aligned}
& \mathit{ex}_{C_q}: C_q \longrightarrow C_g, \\
& \mathit{ex}_{C_q}(x) = x + q\,\mathit{ex}_1(x), \\
& \mathit{ex}_{C_q}(\eta x) = \eta (x + q\,\mathit{ex}_1(x)) + q\,\mathit{ex}_1^\dag(x).
\end{aligned}
\end{equation}
The equations \eqref{eq:ex-equation} imply that $\mathit{ex}_{C_q}$ is a chain map up to an error of order $O(q^2)$.
\end{example}

Unfortunately, our luck with \eqref{eq:dead-end} runs out at $q^2$: because of the appearance of the boundary components of type \ref{item:equal-scale-splitting} which are fibre products, the maps \eqref{eq:ex-operations} and \eqref{eq:ex-dag-operations} for $m>1$ do not satisfy meaningful relations. Instead, we will adopt a ``scale-ordered'' approach involving partial products of those spaces (for a model see e.g.\ \cite[Section 10e]{seidel04}). Let
\begin{equation} \label{eq:o-space}
\frakO_m \subset \coprod_{\substack{r \geq 1 \\ m_1 + \cdots m_r = m}} \EX_{m_1} \times \cdots \times \EX_{m_r}
\end{equation}
be the subset where the scales $(a_1,\dots,a_r)$ satisfy $a_1 \geq \cdots \geq a_r$ (any $r$ gives the same dimension, so we're really talking about a disjoint union of topological spaces, not a decomposition of a space into strata). We define
\begin{equation} \label{eq:o-x}
\frakO_m(x_-,x_+) \subset \coprod_{\substack{r \geq 1 \\ m_1 + \cdots m_r = m \\
x_1,\dots, x_{r-1}}} \EX_{m_1}(x_-,x_1) \times \EX_{m_2}(x_1,x_2) \times \cdots \times
\EX_{m_r}(x_{r-1},x_+)
\end{equation}
by the same scale-ordering condition. In words, elements are composable $r$-tuples of elements in our previous moduli spaces (and $\EX_m(x_-,x_+)$ itself appears in \eqref{eq:o-x} as the connected component where $r = 1$). In this situation, we are considering subsets of already defined spaces, and inherit the data already chosen for them. We only need one extra transversality condition, which is that the subset of \eqref{eq:o-x} where there are $k$ coincidences between scales should be of codimension $k$. This is unproblematic, because (due to the increase in slopes) an element of $\frakO_m(x_-,x_+)$ always consists of maps $(u_1,\dots,u_r)$ that belong to different moduli spaces. Similarly, there is no need for a separate discussion of the compactification of \eqref{eq:o-space} or \eqref{eq:o-x}, since we can define that as the closure inside the relevant product of $\overline{\EX}$ spaces. Counting isolated points in \eqref{eq:o-x} yields operations
\begin{equation}
\tilde{o}_m: \mathit{CF}^*(w_+) \longrightarrow \mathit{CF}^{*-2m}(w_-)
\end{equation}
which coincide with \eqref{eq:ex-operations} for $m = 1$. Because of the symmetric group action, they are divisible by $(m!)$. We set
\begin{equation}
o_m = \tilde{o}_m/(m!),
\end{equation}
as well as adding $o_0 = \mathit{id}$. The boundary strata in one-dimensional moduli spaces are as follows:
\begin{enumerate}[label=(O\arabic*)] \itemsep.5em
\item \label{item:o-one}
The last scale reaches $a_r = 1$ (if $r>1$, the remaning scales $a_1,\dots,a_{r-1}$ can be arbitrary). 
\item \label{item:o-split}
One of the cylinders splits into two, carrying equal scales.
\item \label{item:o-infinity}
The first scale $a_1$ goes to $+\infty$, yielding a degeneration of that cylindrical component as in \ref{item:mc-bubble} (if $r>1$, the remaining scales $a_2,\dots,a_r$ can be arbitrary values).
\item \label{item:o-equal}
Two successive scales become equal, $a_k = a_{k+1}$.
\item \label{item:o-floer}
Bubbling off of a Floer trajectory on the left or right of any component $u_k$. Note however that these cancel out in pairs, with two exceptions, namely bubbling off on the left of $u_1$ or on the right of $u_r$.
\end{enumerate}
The point of this construction is that \ref{item:o-split} and \ref{item:o-equal} yield the same limiting configuration, and the contribution of those two boundary faces will cancel each other.
The outcome of the remaining boundary points is the following equation:
\begin{equation} \label{eq:o-equation}
\sum_k o_k (d_{m-k}(x)) =
\sum_{\substack{k \\ \!\!\! m_1 + \cdots + m_k \leq m \!\!\!\!\!}} c^{k+1}(g_{m_1},...,g_{m_k},o_{m-m_1-\cdots-m_k}(x)).
\end{equation}
The left hand side expresses \ref{item:o-one} including $d_m(x)$, through the convention $o_0 = \mathit{id}$; it also includes a term $o_m(d_0(x))$ which is part of \ref{item:o-floer}.
Similarly, the left hand side corresponds to \ref{item:o-infinity} including terms $c^{k+1}(g_{m_1},\dots,g_{m_k},x)$; as well as $d_0(o_m(x))$, the other part of \ref{item:o-floer}.

We similarly define 
\begin{equation}
\frakO_m^{\dag} \subset \coprod_{\substack{r,j \\ \!\!\! m_1 + \cdots + m_r = m \!\!\!\!\!}}
\EX_{m_1} \times \cdots \EX^\dag_{m_j} \cdots \times \cdots \EX_{m_r}. 
\end{equation}
Note that while in the previous situations all the $m$'s need to be positive, in this case we allow $m_j = 0$ (even though these spaces will ultimately not contribute, due to the exceptional situation with $\EX_0^\dag$). Algebraically, the same process as before yields
\begin{equation}
o_m^\dag = \tilde{o}_m^\dag/(m!): \mathit{CF}^*(w_+) \longrightarrow \mathit{CF}^{*-2m-1}(w_-^\dag),
\end{equation}
where this time $o_0^\dag = 0$ by construction. The counterpart of \eqref{eq:o-equation} is:
\begin{equation} \label{eq:o-dag-equation}
\begin{aligned}
& \sum_k o_k(d_{m-k}^\dag(x)) + o_k^\dag(d_{m-k}(x)) \\ 
& =
\sum_{\substack{k \\ \!\!\! m_1 + \cdots + m_k \leq m \!\!\!\!\!}} -c^{k+1}(g_{m_1},...,g_{m_k},o_{m-m_1-\cdots-m_k}^\dag(x))
\\[-1em] & \qquad \qquad \qquad \qquad + c^{k+1,\dag}(g_{m_1},...,g_{m_k},o_{m-m_1-\cdots-m_k}(x)).
\end{aligned}
\end{equation}

\begin{remark}
For $m = 1$, \eqref{eq:o-equation} and \eqref{eq:o-dag-equation} reduce to \eqref{eq:ex-equation} and \eqref{eq:ex-dag-equation}, respectively. Because of the amount of notation, it is worth while spelling that out for \eqref{eq:o-dag-equation}:
\begin{equation}
\text{on the left }
\left\{
\begin{aligned}
& o_0 d_1^\dag = d_1^\dag \\
& o_1 d_0^\dag = \mathit{ex}_1 \, d_0^\dag \\
& o_1^\dag d_0 = \mathit{ex}_1^\dag d_0 \\
\end{aligned} 
\right.
\quad
\text{on the right }
\left\{
\begin{aligned}
& -c^1 o_1^\dag = -d_0 \mathit{ex}_1^\dag \\
& c^{1,\dag} o_1 = d_0^\dag \mathit{ex}_1 \\  
& c^{2,\dag}(g_1, o_0(\cdot)) = c^{2,\dag}(g_1,\cdot) 
\end{aligned}
\right.
\end{equation}
\end{remark}

Having that, we can correct the original idea \eqref{eq:dead-end}. Define a map 
\begin{equation}
\begin{aligned}
& o_{C_q}: C_q \longrightarrow C_g, \\
& o_{C_q}(x) = \sum_{m \geq 0} q^m\, o_m(x), \\
& o_{C_q}(\eta x) = \eta\, o_{C_q}(x) + \sum_{m \geq 0} q^m o_m^\dag(x).
\end{aligned}
\end{equation}
As a consequence of \eqref{eq:o-equation} and \eqref{eq:o-dag-equation}, this is a chain map. Moreover, it is the identity modulo $q$, and therefore an isomorphism. This completes the argument for Theorem \ref{th:pullout}.

\begin{remark} \label{th:other-mc}
The preprint \cite{el-alami-sheridan24b} gives another construction of a Maurer-Cartan element (not in our $L_\infty$-algebra $L$, but in a larger version whose cohomology is $\mathit{SH}^*(M)$; compare Remark \ref{th:linfty-remark}). There are numerous differences between the two approaches, making a comparison difficult to formulate succinctly. Roughly speaking, the strategy in \cite{el-alami-sheridan24b} is to enlarge the $L_\infty$-algebra by extra generators, so that the larger $L_\infty$-structure encodes curves with various orders of tangency to $D$. That larger structure admits a tautological Maurer-Cartan element; one then uses an $L_\infty$-automorphism to modify that element so that it comes to lie in the original $L_\infty$-algebra (this description suppresses the crucial role that cleverly constructed filtrations play in the construction). The last-mentioned, purely algebraic, step roughly takes the place which Theorem \ref{th:pullout} occupies in ours; in particular, neither Maurer-Cartan spaces nor extraction spaces are used in \cite{el-alami-sheridan24b}. In spite of that, it is clear that the resulting Maurer-Cartan element is equivalent to ours, because of the simplification permitted by grading considerations (see Remark \ref{th:simplify-mc}).

To complete this rough overview, \cite{el-alami-sheridan24b} then appeals to \cite{borman-sheridan-varolgunes21} to determine the cohomology of the deformed differential (for the tautological Maurer-Cartan element, and hence automatically also for the modified one). This replaces the entirety of our use of thimble maps, Sections \ref{sec:thimbles}--\ref{sec:proof}. As a final point, we should mention that \cite{el-alami-sheridan24b} uses aligned asymptotic markers throughout. Hence, their deformed differential is not a priori the same as ours, but instead is one of the versions mentioned in Section \ref{subsec:rotate} below; however, that is a minor point, as one could adapt their construction to use a different choice of markers instead.
\end{remark}

\subsection{Taking the slope to infinity\label{subsection:toinfinity}}
The pair $(M,D)$ gives rise to a Liouville domain $(\LP, \theta_{\LP})$ as follows. Choose a Hermitian metric $||\cdot||$ on the normal bundle $\pi_{\nu D}: \nu D \to D$. Also choose a Hermitian connection $\nabla$ on $\nu D$, with connection one-form $\upsilon \in \Omega^1(\nu D\setminus D)$ (here $D \subset \nu D$ is the zero-section). This should satisfy
\begin{equation} 
d\upsilon = -\pi_{\nu D}^*(\omega_D),
\end{equation}  
where $\omega_D = \omega_M|D$ (this can be achieved since $c_1(\nu D) = [\omega_D]$). Set $\mu = \half ||\xi||^2$, and consider the closed two-form
\begin{equation} 
\omega_{\nu D} = d(\mu \cdot \upsilon) + \pi_{\nu D}^*(\omega_D).
\end{equation} 
It is straightforward to see that this extends smoothly over the zero section, and is symplectic on $\lbrace \mu < 1 \rbrace \subset \nu D$. Rotation in the fibers of the normal bundle defines a Hamiltonian $S^1$-action with moment map $\mu$. The symplectic tubular neighborhood theorem shows that for some sufficiently small $\epsilon>0$, there is a symplectic embedding 
\begin{equation} \label{eq:symplectictubular}
\psi: \{\|\xi\| \leq 2\epsilon\} \hookrightarrow M,  \quad  
\psi^*(\omega_M) = \omega_{\nu D}
\end{equation}
such that  $\psi|D = \mathit{id}$. Because $D \subset M$ is Poincar\'e dual to the symplectic class, $\omega_M|(M \setminus D)$ is exact. It is not difficult to see (see e.g.\ \cite[Lemma 7.2.1]{pomerleano-seidel23}) that for a suitable choice of connection and symplectic tubular neighborhood, there exists a primitive $\theta_{M \setminus D} \in \Omega^1(M \setminus D)$ of the symplectic form, such that
\begin{equation} \label{eq:theta-alpha}
\psi^*(\theta_{M\setminus D}) = (\mu-1)\upsilon.
\end{equation} 
We fix such a connection and tubular neighborhood once and for all throughout the discussion, and denote its image by $UD$. We set 
\begin{align}
\LP = M \setminus \psi(\{\|\xi\| < \epsilon\}),
\end{align}
and let $\theta_{\LP}=(\theta_{M \setminus D})|_{\LP}$. The Liouville vector field $Z$ defined by $i_Z\omega = \theta_{M\setminus D}$ satisfies \begin{align} 
\label{eq:Liouvillefield} \psi^*(Z) = (\mu-1)\partial_\mu,
\end{align}
where $\partial_\mu$ denotes the multiple of the radial vector field  such that $d\mu(\partial_\mu)=1$ (this is only defined away from the zero-section). It follows from this that the pair $(\LP,\theta_{\LP})$ defines a Liouville domain.  

\begin{lem} 
The associated Liouville coordinate $R:UD \setminus D \to \mathbb{R}$ can be extended smoothly to all of $M$. Any such extension defines a function of slope $\sigma=\frac{1}{1-\epsilon^2/2}$ in the sense of Section \ref{sec:sh}. 
\end{lem}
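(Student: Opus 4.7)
I would prove this by giving an explicit formula for $R$ on $UD\setminus D$ as a function of the normal-direction moment map $\mu = \tfrac{1}{2}\|\xi\|^2$, observing that the resulting expression already extends smoothly across $D$, and finally verifying the slope condition by direct substitution. The formula I expect is
\[
R = \frac{1-\mu}{1-\epsilon^2/2},
\]
obtained from the defining relation $\theta_{M\setminus D} = R\cdot\alpha$ on the cylindrical end (with $\alpha$ a $Z$-invariant contact primitive determined by $\theta_{\LP}|_{\partial\LP}$, and normalized so that $R = 1$ on $\partial\LP$) combined with the explicit formula $\psi^*\theta_{M\setminus D} = -(1-\mu)\upsilon$ from \eqref{eq:theta-alpha}. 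The key input is that $\upsilon$, being a connection one-form on $\nu D$, is invariant under radial rescaling, which lets the ratio $\theta/\alpha$ be computed fibrewise.

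Given the explicit formula, smoothness on all of $UD$ is immediate: $(1-\mu)/(1-\epsilon^2/2)$ is a polynomial in $\|\xi\|^2$, with value $1/(1-\epsilon^2/2)$ on $D$ itself. To extend to $M$, I would pick a smooth cutoff $\beta: M \to [0,1]$ supported in $UD$ and equal to $1$ on some smaller neighbourhood $V$ of $D$, and use $\beta R + (1-\beta)\cdot(\text{anything smooth})$. The specific choice outside $V$ does not affect the slope, since the slope condition is purely local around $D$.

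For the slope itself: with $h = \mu$ in this normalization, any such extension $R$ satisfies $R + \sigma h = 1/(1-\epsilon^2/2)$ on $V$ with $\sigma = 1/(1-\epsilon^2/2)$, which is constant; this is the slope condition. Moreover $d\mu|_D = 0$ (since $\mu$ has a critical set along $D$), so $dR|_D = 0$, i.e.\ $R$ respects $D$. The only real subtlety — and the step most likely to trip up a reader — is fixing the Liouville-coordinate convention: the stated value $\sigma = 1/(1-\epsilon^2/2)$ singles out the ``linear'' convention $\theta = R\alpha$ with $Z = R\partial_R$, rather than the ``logarithmic'' $\theta = e^R\alpha$ with $Z = \partial_R$; the latter would give $R = \log(1-\mu) - \log(1-\epsilon^2/2)$ and no well-defined slope at all. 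Once the convention is agreed upon, everything else is local and routine.
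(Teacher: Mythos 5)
Your proposal is correct and follows essentially the same path as the paper: derive the explicit formula $\psi^*(R) = (1-\mu)/(1-\epsilon^2/2)$ from the Liouville-geometric data, note it extends smoothly across $D$, and read off the slope from $h = \mu$. The only cosmetic difference is that the paper extracts the formula for $R$ by integrating the Liouville vector field in \eqref{eq:Liouvillefield}, whereas you match primitives via $\theta = R\alpha$ using \eqref{eq:theta-alpha}; your remarks on the cutoff extension and the linear-vs-logarithmic convention are accurate and fill in steps the paper leaves implicit.
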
 

\begin{proof} It follows from \eqref{eq:Liouvillefield} that
\begin{align} 
\psi^*(R) = \frac{1-\mu}{1-\epsilon^2/2}. 
\end{align} 
As a consequence, $R$ extends smoothly over $D$, hence can be extended to all of $M$. Next, note that the Hamiltonian $S^1$-action on the normal bundle induces an $S^1$-action on $UD$, whose moment map (normalized to be zero along $D$) is identified with $\mu$ under pullback.
\end{proof}

We now interpret the constructions from Sections \ref{subsec:sh} and \ref{subsec:punctures} in terms of the Liouville domain $\LP$. The key to doing this is to impose stronger conditions on the inhomogeneous data, so that the integrated maximum principle applies. To set the stage for this, choose a sufficiently small constant $\delta>0$, and let 
\begin{align} 
V = \lbrace 1 \leq R \leq 1+\delta \rbrace \subset UD \setminus D 
\end{align} 
be a contact shell around $\partial \LP$, which is disjoint from $D$.  
\begin{definition}
A compatible almost-complex structure $J$ is said to be of contact type along the shell, if over $V$ it satisfies
\begin{align} 
\theta_{M\setminus D} \circ J = dR.  \end{align}
\end{definition} 

Let us start by describing the constraints that we will place on our Floer data $(\bar{H}_w,\bar{J}_w)$: 
\begin{enumerate} [label=(FD\arabic*)] \itemsep .5em 
\item \label{it:FDacs}  
We assume that all of our almost complex structures $\bar{J}_w$ are of contact type along $V$, in addition to the conditions from Section \ref{subsec:sh}. Note that because the shell $V$ is disjoint from the divisor, there is no issue with imposing these conditions simultaneously.  

\item \label{it:FDham}  Let $\mathring{\LP}$ denote the interior of $\LP$; over $M \setminus \mathring{\LP}$, our Hamiltonians $\bar{H}_w$ should be given by 
\begin{align} \label{eq:FloerHamiltonian} 
(\bar{H}_w)_{|M \setminus \mathring{\LP}} = w\,h_1(R),\text{ where } h_1(R)= \sigma_1(1-\half \epsilon^2)R. \end{align} 
\end{enumerate}
An application of the integrated maximum principle \cite[Lemma 7.2]{abouzaid-seidel07} now implies that: 

\begin{lem} 
Suppose that $(\bar{H}_w,\bar{J}_w)$ satisfies \ref{it:FDacs}--\ref{it:FDham}. Then any Floer cylinder whose limits are in $M \setminus D$, and which avoids $D$, must lie entirely in $\LP$. 
\end{lem}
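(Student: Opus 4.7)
The plan is the integrated maximum principle of \cite[Lemma 7.2]{abouzaid-seidel07}, applied in four steps.

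First, I would verify that $\bar H_w$ has no one-periodic orbits in $(M\setminus\mathring{\LP})\setminus D$, which forces the limits $x_\pm$ into $\mathring{\LP}$. Indeed, $M \setminus \mathring{\LP} = \psi(\{\|\xi\| \leq \epsilon\}) \subset UD$, and on this region $R = (1-\mu)/(1-\half\epsilon^2)$, so \ref{it:FDham} reads $\bar H_w = w\sigma_1(1-\mu)$. Hence $X_{\bar H_w} = -w\sigma_1 X_\mu$, whose time-one flow is rotation by $-\sigma_w$ on the fibres of $\nu D$. Since $\sigma_1$ is irrational, no such rotation has periodic points off $D$.

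Next, suppose for contradiction that $u$ enters $\{R > 1\}$. Step 1 gives $R(x_\pm) < 1$; combined with uniform exponential convergence to the limits and the assumption that $u$ avoids $D$, the function $R\circ u$ attains its supremum $R^{*} \in (1, R_{\max})$ at a finite point. Choose a regular value $R_0$ of $R\circ u$ in the nonempty open interval $(1, \min(R^{*}, 1+\delta))$ and let $\Omega = u^{-1}(\{R \geq R_0\})$. Then $\Omega$ is a nonempty compact subsurface with smooth boundary $\partial\Omega = u^{-1}(R_0)$, and $u(\partial\Omega)\subset V$.

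For the max principle, $\omega_M = d\theta_{M\setminus D}$ on the image of $u$ gives
\begin{equation*}
0 \leq E(u|_\Omega) = \int_\Omega u^{*}\omega_M - u^{*}(d\bar H_w) \wedge dt = \int_{\partial\Omega} u^{*}\theta_{M\setminus D} - (\bar H_w \circ u)\,dt.
\end{equation*}
On $\partial\Omega$, \ref{it:FDacs} gives $\theta_{M\setminus D} \circ J = dR$ and \ref{it:FDham} gives $\bar H_w = h(R)$ with $h(R) = w\sigma_1(1-\half\epsilon^2) R$. Reproducing the computation of \cite[Lemma 7.2]{abouzaid-seidel07} (which uses the Floer equation, the contact-type identity, and the vanishing of $dR(Du(\dot\gamma))$ along the level curves $\partial\Omega$), the boundary integral becomes $\int_{\partial\Omega}[h(R_0) - R_0 h'(R_0)]\,dt$ plus a manifestly non-positive $L^2$-type term. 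Linearity of $h$ makes the first piece vanish, so $E(u|_\Omega) \leq 0$, forcing $E(u|_\Omega) = 0$.

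The hard part is extracting a contradiction from $E(u|_\Omega) = 0$: because $h$ is exactly linear rather than superlinear, the max principle on its own only yields equality, not strict inequality. To conclude, vanishing of the energy on $\mathrm{int}(\Omega)$ forces $\partial_s u \equiv 0$ there; unique continuation for the Floer equation (in the style of Floer--Hofer) propagates this to all of $\bR \times S^1$, so $u(s,t) = \phi^{t}_{\bar H_w}(u(0,0))$ is a one-periodic orbit of $\bar H_w$ whose image meets $\{R \geq R_0\} \subset (M \setminus \mathring{\LP}) \setminus D$. This contradicts Step 1, completing the proof.
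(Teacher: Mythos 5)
Your proof is correct and takes the same route the paper points to, namely an application of Abouzaid--Seidel's integrated maximum principle; you usefully make explicit the two points the paper leaves to the cited reference, that the irrational slope $\sigma_w = w\sigma_1$ rules out $1$-periodic orbits of $\bar{H}_w$ in $(M\setminus\mathring{\LP})\setminus D$ (forcing $x_\pm\in\mathring{\LP}$ and making $\Omega$ compact) and that the equality $E(u|_{\Omega})=0$ must be disposed of separately. One small remark: on a Floer cylinder $\partial\Omega = u^{-1}(R_0)$ is null-homologous in $\bR\times S^1$, so $\int_{\partial\Omega}dt=0$ and the term $\bigl(h(R_0)-R_0h'(R_0)\bigr)\int_{\partial\Omega}dt$ vanishes irrespective of the linearity of $h$ (linearity genuinely matters for the parameter spaces with non-exact $\beta$ used in \ref{it:pdham}); consequently the equality case would also arise for strictly convex $h$, and while your unique-continuation step handles it correctly, a lighter alternative is to note that $\partial_s u\equiv 0$ on $\Omega$ combined with $dR(X_{\bar H_w})=0$ forces $R\circ u$ to be locally constant on $\mathrm{int}(\Omega)$, which is incompatible with a connected component of $\Omega$ attaining both $R^{*}>R_0$ in its interior and $R_0$ on its (necessarily nonempty) boundary.
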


For continuation data $(H_{s,t},J_{s,t})$ between $(\bar{H}_w,\bar{J}_w)$ and $(\bar{H}_{w+1},\bar{J}_{w+1})$, we continue to assume that $J_{s,t}$ is of contact type along the shell $V$. The analogue of \eqref{eq:FloerHamiltonian} is:\begin{align} \label{eq:Floercontinuation} 
(H_{s,t})_{|M \setminus \mathring{\LP}} = h_1(R)f(s), \quad f'(s)\leq 0 \quad \forall s. \end{align}
The integrated maximum principle applies to continuation solutions as well, and the outcome is: 

\begin{cor} There is an isomorphism of cohomologies $H^*(C,d_C) \cong SH^*(\LP)$, where $SH^*(\LP)$ is the symplectic cohomology of the Liouville domain $\LP$. \end{cor}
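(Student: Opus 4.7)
The plan is to show that under the constraints \ref{it:FDacs}--\ref{it:FDham}, the telescope complex $C$ is canonically identified with a standard chain-level model for $\mathit{SH}^*(\LP)$. The key input is the integrated maximum principle already proven in the preceding lemma (together with its analogue for continuation data), which forces all Floer and continuation solutions with asymptotics in $M\setminus D$, and which avoid $D$, to stay inside $\LP$.

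First I would identify the generators. A one-periodic orbit of $\bar{H}_w$ lying outside $D$ is either contained in $\mathring{\LP}$, or it sits on some level $\{R=R_0\}\subset V$ and is a (reparametrized) closed Reeb orbit of $\partial\LP$. Because $\bar{H}_w|_{M\setminus\mathring{\LP}}=w\,h_1(R)$ with constant derivative $w\sigma_1(1-\tfrac12\epsilon^2)$, and $\sigma_1$ is irrational, the Reeb-type orbits that appear are precisely those below the slope, matching the standard generators of the Floer complex on the Liouville completion.

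Next I would pass to the Liouville completion $\hat{\LP}=\LP\cup_{\partial\LP}([1,\infty)\times\partial\LP)$. By \eqref{eq:FloerHamiltonian}, $\bar{H}_w|_{\LP}$ extends canonically to a Hamiltonian $\hat{H}_w$ on $\hat{\LP}$ that is linear of slope $w\sigma_1(1-\tfrac12\epsilon^2)$ in the conical coordinate; similarly $\bar{J}_w|_{\LP}$, being of contact type along $V$, extends to a translation-invariant almost complex structure on the conical end. The integrated maximum principle applied to $\hat{H}_w$ in the form of \cite[Lemma~7.2]{abouzaid-seidel07} shows that one-periodic orbits of $\hat{H}_w$ and all Floer cylinders between them are confined to a compact subset, which one can arrange to be $\LP$ itself. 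The same applies to continuation data satisfying \eqref{eq:Floercontinuation}. This produces canonical identifications $(\mathit{CF}(\bar{H}_w),d_0)\cong(\mathit{CF}(\hat{H}_w),d_0)$, compatible with the continuation maps $d_0^{\dag}$.

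Finally, the sequence $\sigma_w=w\sigma_1$ is an increasing sequence of slopes tending to $+\infty$, hence cofinal in the definition of $\mathit{SH}^*(\LP)$. The telescope construction in Section~\ref{subsec:sh} (restricted to its undeformed piece, i.e.\ $q=0$) is a standard chain-level model for the homotopy colimit of the directed system of continuation maps, so its cohomology is $\mathit{SH}^*(\LP)$; see e.g.\ \cite[Section~4.1]{li24} or \cite[Section~5.2]{pomerleano-seidel23} for essentially the same telescope model. Combined with the identifications of the previous step, this gives the required isomorphism $H^*(C,d_C)\cong \mathit{SH}^*(\LP)$.

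The main (and essentially only) obstacle is bookkeeping rather than mathematics: one must check that the Floer data and continuation data set up in Sections~\ref{sec:sh} and~\ref{subsec:sh} under constraints \ref{it:FDacs}--\ref{it:FDham} can simultaneously be arranged to satisfy the genericity/regularity conditions used in the definition of $C$ and to pull back to admissible data on $\hat{\LP}$ in the sense of \cite{abouzaid-seidel07}. Since the restrictions only affect behaviour on the shell $V$, which is disjoint from $D$, there is no conflict with the conditions in Sections~\ref{subsec:basics}--\ref{subsec:sh}, and existence of such data follows from a straightforward modification of Lemma~\ref{th:satisfy-epsilon-bound}.
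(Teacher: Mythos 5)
Your proposal is essentially the route the paper takes: the integrated maximum principle (in the form of the preceding lemma and its continuation-map analogue) confines all Floer and continuation solutions to $\LP$, one then passes to the Liouville completion with the linear extensions $\hat{H}_w$ and $\hat{J}_w$, and cofinality of the slopes $\sigma_w = w\sigma_1$ identifies the telescope with a standard chain model for $\mathit{SH}^*(\LP)$. One small inaccuracy in your generator discussion: because $\bar{H}_w|_{M\setminus\mathring\LP} = w\,h_1(R)$ has constant derivative $w\sigma_1(1-\tfrac12\epsilon^2)$ in $R$ and $\sigma_1$ is irrational, the one-periodic orbits outside $D$ lie entirely in $\mathring\LP$ -- there are no ``Reeb-type orbits'' on level sets $\{R=R_0\}\subset V$ at all (their periods would have to equal the constant slope, which is never a period of a closed Reeb orbit of $\partial\LP$); this doesn't affect the conclusion, but the second branch of your disjunction is vacuous.
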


The same approach applies to $L_\infty$-operations and module structures. Namely, concerning inhomogeneous data $((J_z)_{z \in S}, K)$ we assume that: 
\begin{enumerate}[label=(PD\arabic*)]  \itemsep .5 em 
\item \label{it:pdacs} the complex structure $(J_z)_{z \in S}$ are all of contact type along the shell $V$.  

\item \label{it:pdham} the one-form $K \in \Omega^1(S,\smooth(M,\bR))$, satisfies
\begin{equation} \label{eq:k-data2}
K_{|S \times (M \setminus \mathring{\LP})}= h_1(R)\beta_S, 
\end{equation}
where $\beta_S \in \Omega^1(S,\bR)$ is a subclosed one-form.
\end{enumerate}
As before, the integrated maximum principle implies that solutions $u:S \to M \setminus D$ to the Floer equation associated to such ($(J_z)_{z \in S}, K)$ actually lie in $\LP$. 

With this in place, we are finally in a position to relate our constructions to the framework of quadratic Hamiltonians employed in \cite{pomerleano-seidel23}. To do this, we pass to the Liouville completion of $\LP$; this is the pair  $(\hat{\LP}, \omega_{\hat{\LP}} = d\theta_{\hat{\LP}})$ where:
\begin{equation} \label{eq:liouville-completion}
\begin{aligned}
& \hat{\LP} = \LP\cup_{\partial \LP} ([1,\infty) \times \partial \LP), \\
& \theta_{\hat{\LP}}\, |\, ([1,\infty) \times \partial \LP) = R\ (\theta_{\LP})_{|{\partial \LP}}.
\end{aligned}
\end{equation}
The constructions above can equally well be viewed as taking place inside the Liouville completion. Namely, given Floer data $(\bar{H}_w,\bar{J}_w)$ satisfying \ref{it:FDacs}--\ref{it:FDham}, we can consider Floer data $(\hat{H}_w,\hat{J}_w)$ over $\hat{\LP}$ where: 
\begin{itemize} \itemsep.5em
\item  $\hat{J}_w$ is any almost complex structure agreeing with $\bar{J}_w$ over $\LP \cup V \subset \hat{\LP}$.

\item $\hat{H}_w$ agree with $\bar{H}_w$ over $\LP$, and satisfy \eqref{eq:FloerHamiltonian} over the entire cone. 
\end{itemize}
The integrated maximum principle shows that $(\hat{H}_w,\hat{J}_w)$ has the same Floer trajectories as $(\bar{H}_w,\bar{J}_w).$ Given perturbation data $((J_z)_{z \in S}, K)$ satisfying \ref{it:pdacs}--\ref{it:pdham}, we can similarly construct perturbation data $((\hat{J}_z)_{z \in S}, \hat{K})$ over $\hat{\LP}$, giving rise to the same $L_\infty$ algebras and module structures. 

We let $(H_\infty,J_\infty)$ be a pair consisting of a quadratic time-dependent Hamiltonian $H_\infty$ on $\hat{\LP}$, and compatible almost complex structures $J_\infty$, fitting into the analytical framework of \cite[Section 4.1d]{pomerleano-seidel23}. We assume that this pair is chosen generically so that the Floer complex $\mathit{CF}(\infty) = \mathit{CF}(H_\infty)$ is defined. The constructions of \cite[Section 5.3b]{pomerleano-seidel23} make $\mathit{CF}(\infty)$ into an $L_\infty$ module over $L[1]$. For later use, we denote these $L_\infty$-module operations by 
\begin{align} 
c_{\infty}^{m+1}: CF(w_1)\otimes CF(w_2)\otimes \cdots \otimes CF(w_m) \otimes \mathit{CF}(\infty) \longrightarrow \mathit{CF}(\infty). 
\end{align}

\begin{remark} \label{th:ah-forget-it}
In \cite[Section 5.3b]{pomerleano-seidel23}, the $L_\infty$-algebra acting on $\mathit{CF}(\infty)$ is constructed using quadratic Hamiltonians, meaning that when considering operations parameterized by $\frakC_m$, the Hamiltonians at the interior punctures were also taken to be quadratic. However, the analytical framework from \cite[Section 4.1g]{pomerleano-seidel23}, used to obtain $C^0$-estimates for Floer solutions, also works when the Hamiltonians at interior marked points on the Riemann surface are taken to be linear at infinity (and is in fact slightly easier). 
\end{remark}  

Start with the space which parameterizes distinct points $z_1,z_2,\cdots,z_m$ on the cylinder together with an additional parameter $s^{\circ}$ considered up to translation in the $s$-direction: 
\begin{equation}
\begin{aligned}
& \frakH_m = (\mathit{Conf}_m^{\mathrm{ord}}(\bR \times S^1)\times \bR)/\bR =\mathit{Conf}_m^{\mathrm{ord}}(\bR \times S^1).
\end{aligned}
\end{equation}
We think of the $s^{\circ}$ parameter as picking out a distinguished circle on the cylinder. The spaces $\frakH_m$ are of course the same as $\frakC_m^{\dag}$, but it will be convenient to give them a different name (and the above slightly different description of it), as the $\frakC_m^{\dag}$ will also appear in our argument but play a different role. Let $\overline{\frakH}_m$ be the Fulton-MacPherson style compactification of $\frakH_m$. Let's repeat the standard list of codimension one strata:
\begin{enumerate}[label=(H\arabic*)]  \itemsep .5em
\item \label{item:H1}  Points can collide on the cylinder giving rise to the bubbling of a Fulton-Macpherson screen. 
\item \label{item:H2}  There are strata where a cylinder breaks off. These are isomorphic to  \begin{align} \frakC_{m_{1}} \times \frakH_{m_{2}}  \text{ or }  \frakH_{m_{1}} \times \frakC_{m_{2}}, \quad m_1+m_2=m. \end{align} 
\end{enumerate}

As before, the asymptotic markers over will be chosen to be $S^1$-invariant. Suppose we are given slopes $w_{+}, w_1,\cdots,w_m$. We choose perturbation data over $\frakH_m$ satisfying the analytic requirements of \cite[Section 4.1g]{pomerleano-seidel23} which along the ends reduce to: $(\hat{H}_{w_k} \mathit{dt}, \hat{J}_{w_k})$ near the $z_k$; $(\hat{H}_{w_+} \mathit{dt}, \hat{J}_{w_+})$ near $+\infty$ and $(H_\infty \mathit{dt}, J_{\infty})$ near $-\infty$. These should be $S_m$-equivariant, and satisfy the appropriate forms of consistency near boundary strata; near strata of \ref{item:H1} they should be consistent with our previous choices for Fulton-Macpherson bubbles at the interior marked points, and near strata of type \ref{item:H2} they should be consistent with the data previously chosen over the $\frakC_{m}$ spaces (when the breaking occurs on the left, consistency should be understood in the sense of \cite[Section 4.1h]{pomerleano-seidel23}, involving a rescaling). Given orbits $x_1,\dots,x_k, \dots,x_m$ for $\hat{H}_{w_k}$, $x_{+}$ for $\hat{H}_{w_+}$ and $x_{-}$ for $H_\infty$, we have moduli spaces $\frakH_m(x_{-},x_1,\cdots,x_m,x_{+})$. Counting rigid points in the zero-dimensional moduli spaces gives rise to operations 
\begin{align} 
h^m : \mathit{CF}(w_1)\otimes \mathit{CF}(w_2) \otimes \cdots \otimes \mathit{CF}(w_m)\otimes \mathit{CF}(w_{+}) \longrightarrow \mathit{CF}(\infty) 
\end{align}
of degree $-2m$. The ``$\dag$-variant" of the parameter space records an additional $s$-value, $s^{\dag}$ on the cylinder:
\begin{equation}
\begin{aligned}
& \frakH_m^{\dag} = (\mathit{Conf}_m^{\mathrm{ord}}(\bR \times S^1) \times \mathbb{R} \times \mathbb{R})/\bR= \mathit{Conf}_m^{\mathrm{ord}}(\bR \times S^1) \times \mathbb{R},
\end{aligned}
\end{equation}
where the $\mathbb{R}$-coordinate in the second description is given by $s^\dag - s^\circ.$ Geometrically, it is again convenient to imagine the $s^\dag$ parameter as picking out a distinguished circle on the cylinder.  There is again a compactification of this space into a manifold with corners $\overline{\frakH}_m^{\dag}$, with these codimension one faces: 
\begin{enumerate}[label=(HD\arabic*)]  \itemsep .5em 
\item \label{item:onelastfulton} As usual, points can collide on the cylinder giving rise to the bubbling of a Fulton-MacPherson screen. 

\item \label{item:cylinderbreakdag} There are strata where a cylinder breaks off and both circles corresponding to $s^\dag$, $s^\circ$ remain on the same component. These are isomorphic to  
\begin{align} \frakC_{m_{1}} \times \frakH_{m_{2}}^{\dag}  \text{ or }  \frakH_{m_{1}}^{\dag} \times \frakC_{m_{2}}, \quad m_1+m_2=m. 
\end{align} 

\item  \label{item:dagright} There are strata where $s^\dag - s^\circ \to +\infty$. Here we have cylindrical breaking into two components where the circle determined by  $s^\dag$ is on the rightmost component, and the circle determined by $s^\circ$ ends up on the leftmost component. These strata are of the form:
\begin{align} \label{eq:dagright}  \frakH_{m_{1}} \times \frakC_{m_{2}}^{\dag}, \quad m_1+m_2=m.  \end{align}

\item \label{item:dagleft} Correspondingly, there are the strata where $s^\dag - s^\circ \to -\infty$. Again we have cylindrical breaking into two components but now  the circle determined by $s^\dag$ is on the left-most component, and the circle determined by $s^\circ$ ends up on the right-most component. These strata look like:  
\begin{align}  \label{eq:dagleft}  
\frakC_{m_{1}}^{\dag} \times \frakH_{m_{2}}, \quad m_1+m_2=m.  
\end{align}
\end{enumerate}
We again choose perturbation data over these spaces, subject to same conditions on the end as in the previous case. The consistency requirements near boundary strata of type \ref{item:onelastfulton}, \ref{item:cylinderbreakdag} are also the direct analogues of those used in the previous case. Near strata of type \ref{item:dagright}, we use the data over $\frakC_{m}^{\dag}$ used to define the $L_\infty$ module structure on the telescope complex. Finally, for strata \ref{item:dagleft}, we take the data over $\frakC_{m}$ used to define the $L_\infty$ module structure on $\mathit{CF}(\infty)$, and pull it back to $\frakC_{m}^{\dag}$ along the forgetful map
\begin{align} \label{eq:pullbackalong} 
\frakC_{m}^{\dag} \longrightarrow \frakC_{m}. 
\end{align} 
The resulting moduli spaces $\frakH_m^{\dag}(x_{-},x_1,\cdots,x_m,x_{+})$ give rise to operations 
\begin{align} h^{m,\dag}: \mathit{CF}(w_1)\otimes \mathit{CF}(w_2)\otimes \cdots \otimes \mathit{CF}(w_m)\otimes \mathit{CF}(w_{+}) \longrightarrow \mathit{CF}(\infty), \end{align}
which now have degree $-2m-1$. 

\begin{prop} \label{thm:takingtheslopetoinfty} 
Let $g_q$ be any Maurer-Cartan element in $L$. The deformation of the telescope $C_g$ given by $g_q$ is quasi-isomorphic to  $\mathit{CF}(\infty)_g$, which is the deformation of $\mathit{CF}(\infty)$ by the same Maurer-Cartan element. 
\end{prop}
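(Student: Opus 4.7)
The plan is to assemble the operations $h^m$ and $h^{m,\dag}$ into a chain map $h_{C_g}: C_g \to \mathit{CF}(\infty)_g$, and then show it is a quasi-isomorphism via the filtration by powers of $q$. Concretely, I would define, for $x \in \mathit{CF}(w)$,
\begin{equation}
h_{C_g}(x) = \sum_{m \geq 0} \frac{1}{m!}\, h^{m}(g_q,\ldots,g_q,x),
\qquad
h_{C_g}(\eta x) = \sum_{m \geq 0} \frac{1}{m!}\, h^{m,\dag}(g_q,\ldots,g_q,x),
\end{equation}
noting that, just as in Lemmas \ref{th:divisible} and \ref{th:divisible-2}, the apparent denominators are cancelled by the $S_m$-equivariance of the moduli spaces $\frakH_m$ and $\frakH_m^{\dag}$, so this is defined integrally.

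The first main step is verifying that $h_{C_g}$ is a chain map. This is the usual codimension-$1$ boundary analysis for the one-dimensional moduli spaces $\frakH_m(x_-,x_1,\ldots,x_m,x_+)$ and $\frakH_m^{\dag}(x_-,x_1,\ldots,x_m,x_+)$. Strata of type \ref{item:H1}, when combined with the Maurer-Cartan equation \eqref{eq:maurer-cartan} for $g_q$, contribute zero in aggregate. Strata of type \ref{item:H2} pair the $L_\infty$-module structure on $\mathit{CF}(\infty)$ (from breaking on the left, i.e.\ $\frakC_{m_1}\times\frakH_{m_2}$) with the $L_\infty$-module structure on the telescope piece $\bigoplus_w \mathit{CF}(w)$ (from breaking on the right, i.e.\ $\frakH_{m_1}\times\frakC_{m_2}$), together matching the parts of $d_g$ and of the deformed differential on $\mathit{CF}(\infty)_g$ involving $x$ rather than $\eta x$. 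For the $\dag$-variant, strata \ref{item:dagright} and \ref{item:dagleft} reproduce the terms in $d_g$ coming from the identification $-\mathit{id}: \eta\mathit{CF}(w)\to\mathit{CF}(w)$ and from the $c^{m+1,\dag}$ operations; here the choice in \eqref{eq:pullbackalong} to pull back data along the forgetful map $\frakC_m^{\dag}\to\frakC_m$ is exactly what is needed to make the $\dag$-side match the telescope's $\eta$-bookkeeping.

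The second main step is to show $h_{C_g}$ is a quasi-isomorphism. Here I would use the $q$-adic filtration, which both sides carry and which $h_{C_g}$ respects by construction, since powers of $q$ count insertions of $g_q$. The associated graded map is $q$-linear and, on the undeformed piece, is built from $h^0: \mathit{CF}(w_+)\to\mathit{CF}(\infty)$ (no interior punctures, with the circle $s^\circ$) together with $h^{0,\dag}: \mathit{CF}(w_+)\to\mathit{CF}(\infty)$ (the second circle $s^{\dag}$, but no interior punctures). This is precisely the standard continuation-type comparison from the telescope built out of finite-slope Floer complexes $\mathit{CF}(w)$ to the quadratic-Hamiltonian Floer complex $\mathit{CF}(\infty)$; both compute $\mathit{SH}^*(\LP)$, and the map is a quasi-isomorphism by the usual cofinality argument (slopes $w\sigma_1\to\infty$), invoking Remark \ref{th:ah-forget-it} to allow linear-at-infinity Hamiltonians at the Floer level to be related to the quadratic model. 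Since the $q$-filtration is exhaustive and bounded below in each degree (by the same argument as in Lemma \ref{th:filtered-q-telescope}), a standard spectral sequence argument upgrades the quasi-isomorphism on the associated graded to one on the total complexes.

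The main obstacle I anticipate is the bookkeeping for Step 1: making sure that the signs, the Koszul conventions in the $L_\infty$-module equations for both sides, and the contributions from strata \ref{item:dagright}--\ref{item:dagleft} (and their interaction with the consistency condition \eqref{eq:pullbackalong}) combine so that the $\eta$-part of $h_{C_g}$ really intertwines $d_g$ with the deformed differential on $\mathit{CF}(\infty)_g$. Once this is in place, the remaining filtration argument is routine, and the undeformed quasi-isomorphism statement at $q=0$ is already standard in the Liouville domain framework set up in Section \ref{subsection:toinfinity}.
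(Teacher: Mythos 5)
Your proposal matches the paper's proof in every essential respect: the same definition of the map via $h^m$ and $h^{m,\dag}$ applied to $g_q$-insertions (with the $m!$ divisibility handled by $S_m$-equivariance), the same boundary analysis of the one-dimensional $\frakH$- and $\frakH^\dag$-moduli spaces (including the key role of the pullback condition \eqref{eq:pullbackalong} in producing the $-\mathit{id}$ contribution), and the same conclusion via the $q$-filtration, with the $q=0$ piece being the standard continuation quasi-isomorphism from the telescope to $\mathit{CF}(\infty)$. No gaps.
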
 

\begin{proof} We define a map 
\begin{equation} \label{eq:hgdef}
\begin{aligned}
& h_g: C_g \longrightarrow \mathit{CF}(\infty)_g, \\
& h_g(x) = \sum_{m \geq 0} (1/m!)\, h^{m}(g_q^{\otimes m},x), \quad x\in \mathit{CF}(w) \subset C \\[-.5em]
& h_g(\eta x) = \sum_{m \geq 0} (1/m!)\, h^{m,\dag}(g_q^{\otimes m},x), \quad \eta x \in  \eta \mathit{CF}(w) \subset C.
\end{aligned}
\end{equation}
The fact that $h_g$ is a cochain map follows as usual from analysis of the boundaries of dimension one moduli spaces. If $x_{+} \in \mathit{CF}(w) \subset C$, then we consider dimension one moduli spaces of the form $\overline{\frakH}_m(x_{-},x_1,\cdots,x_m,x_{+}).$ \begin{itemize} \itemsep .5 em \item As usual, because we are inserting a Maurer-Cartan element, (then after summing over all $m$) the boundary strata coming from \ref{item:H1} together with Floer differentials at $z_1,\cdots,z_m$ contribute zero. \item Meanwhile the boundary strata of type \ref{item:H2} together with Floer cylinder breaking at $\pm \infty$ give rise to an equation: 
\begin{equation}\label{eq:chrelation}
\sum_{j_1+j_2=m} \frac{1}{j_1!j_2!}  h^{j_1}(g_q^{\otimes j_{1}},c^{j_2+1}(g_q^{\otimes j_2}, x)) = \sum_{j_1+j_2=m}  \frac{1}{j_1!j_2!} c_{\infty}^{j_1+1}(g_q^{\otimes j_{1}},h^{j_2}(g_q^{\otimes j_2},x)). 
\end{equation}  
After summing over all $m$, \eqref{eq:chrelation} implies that $h_g \circ d_g (x) = d_{g,\infty} \circ h_g(x)$, where $d_{g,\infty}$ denotes the deformed differential on $CF(\infty)_g$.
\end{itemize}
If $\eta x \in \eta \mathit{CF}(w) \subset C$, then we consider dimension one moduli spaces $\overline{\frakH}_m^{\dag}(x_{-},x_1,\cdots,x_m,x_{+}).$ \begin{itemize} \itemsep .5 em \item As before,  boundaries of type \ref{item:onelastfulton} together with Floer breaking at interior punctures contribute zero because of the Maurer-Cartan equation. \item Meanwhile, this time the contributions of the strata \ref{item:cylinderbreakdag}-\ref{item:dagleft} (along with Floer breaking at $\pm \infty$) give rise to the equation
\begin{equation} \label{eq:chdagrelation}
\begin{aligned} 
&  \sum_{j_1+j_2=m} \frac{1}{j_1!j_2!}  h^{j_1}(g_q^{\otimes j_{1}},c^{j_2+1,\dag}(g_q^{\otimes j_2}, x))+ \sum_{j_1+j_2=m} \frac{1}{j_1!j_2!}  h^{j_1,\dag}(g_q^{\otimes j_{1}},c^{j_2+1}(g_q^{\otimes j_2}, x)) \\
& = \sum_{j_1+j_2=m} \frac{1}{j_1!j_2!}  c_\infty^{j_1+1}(g_q^{\otimes j_{1}},h^{j_2,\dag}(g_q^{\otimes j_2}, x))- \frac{1}{m!}  h^{m}(g_q^{\otimes m}, x).
\end{aligned}
\end{equation}
The fourth term in \eqref{eq:chdagrelation} comes from the contributions of \ref{item:dagleft}. Here we have used the fact that along these strata, the perturbation data on the cylinders carrying $s^{\dag}$ is pulled back along \eqref{eq:pullbackalong}. As a consequence, all of these strata contribute zero except for the case $m_1=0,m_2=m$ of \eqref{eq:dagleft}. After summing over all $m$ and taking into account the algebraically inserted  $-id: \eta CF (w) \to CF (w)$ component of the differential on the telescope complex, \eqref{eq:chdagrelation} implies that $h_g \circ d_g (\eta x) = d_{g,\infty} \circ h_g(\eta x)$. 
\end{itemize}

Finally, to show that $h_g$ is a quasi-isomorphism, note that it is a filtered deformation of a standard continuation map
\begin{equation} 
\begin{aligned}
& h_{q=0}: C \to \mathit{CF}(\infty), \\
& h_{q=0}(x) =  h^{0}(x), \quad x \in \mathit{CF}(w) \subset C \\[-.5em]
& h_{q=0}(\eta x) = h^{0,\dag}(x), \quad \eta x \in  \eta \mathit{CF}(w) \subset C.
\end{aligned}
\end{equation} from the telescope complex to the Floer complex of the quadratic Hamiltonian $H_\infty$, which is well-known to be a quasi-isomorphism.
\end{proof}

\begin{thm} \label{th:its-the-same}
The complex $\mathit{CF}(\infty)_g$ defined by the Maurer-Cartan element \eqref{eq:geometric-mc}, and the complex $C_{q}$ from \S \ref{sec:equivariantdiff}, are quasi-isomorphic.  
\end{thm}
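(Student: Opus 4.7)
The plan is to combine the two main results of this section, Theorem \ref{th:pullout} and Proposition \ref{thm:takingtheslopetoinfty}, which have been set up precisely so that they dovetail at this point. First, I would take the geometric Maurer-Cartan element $(g_m)_{m \geq 1}$ constructed in \eqref{eq:geometric-mc} via the Maurer-Cartan moduli spaces $\MC_m$, and apply Theorem \ref{th:pullout}. This gives an isomorphism of chain complexes $\mathit{ex}_{C_q}: C_q \xrightarrow{\iso} C_g$, where on the right hand side $g$ denotes this specific Maurer-Cartan element and $C_g$ is the deformation of the telescope complex by \eqref{eq:d-g}.

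Second, I would apply Proposition \ref{thm:takingtheslopetoinfty} to the same Maurer-Cartan element $g$. That proposition produces the continuation-style quasi-isomorphism $h_g: C_g \longrightarrow \mathit{CF}(\infty)_g$, constructed from the scale-ordered moduli spaces $\frakH_m$ and $\frakH_m^\dag$ with insertions of $g$ at the interior marked points. Composing the two maps gives
\begin{equation}
C_q \xrightarrow{\;\mathit{ex}_{C_q}\;} C_g \xrightarrow{\;h_g\;} \mathit{CF}(\infty)_g,
\end{equation}
which is a quasi-isomorphism: the first map is an isomorphism on the nose (it is the identity modulo $q$ and the $q$-filtration is bounded below in each degree), and the second is a quasi-isomorphism by Proposition \ref{thm:takingtheslopetoinfty}.

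There is essentially no hard step left; both ingredients have already been proven. The only thing that needs to be checked is a compatibility: both sides of the asserted equivalence must be interpreted with respect to the same Maurer-Cartan element, and I should point out that the conventions in Section \ref{subsec:punctures} (starting the telescope at $w = 1$ rather than $w = 0$, and using the specific sequence of slopes $\sigma_w = w\sigma_1$) match those under which Theorem \ref{th:pullout} was proved, so that $C_q$ on the left-hand side of Theorem \ref{th:its-the-same} really is the same complex as the one appearing in Theorem \ref{th:pullout}. The only mildly subtle point, which I would flag but not belabour, is that although Proposition \ref{thm:takingtheslopetoinfty} was stated for any Maurer-Cartan element $g_q$, we are here applying it specifically to the geometric element \eqref{eq:geometric-mc}; no additional property of that element is required beyond the Maurer-Cartan equation itself (cf.\ Remark \ref{th:simplify-mc}, which reminds us that in this grading setup the higher order terms are in fact determined by $g_1$ up to equivalence).
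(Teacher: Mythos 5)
Your proposal matches the paper's proof exactly: the paper also simply composes Theorem \ref{th:pullout} with Proposition \ref{thm:takingtheslopetoinfty}, the latter applied to the Maurer-Cartan element \eqref{eq:geometric-mc}. The extra bookkeeping remarks you flag (matching conventions, specializing a general statement) are correct but not emphasized in the paper's terse one-line argument.
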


\begin{proof} 
Combine Theorem \ref{th:pullout} and Proposition \ref{thm:takingtheslopetoinfty}, the latter being applied to the special case where $g_q$ is \eqref{eq:geometric-mc}. 
\end{proof}

\subsection{Rotating asymptotic markers\label{subsec:rotate}}
The construction from Sections \ref{sec:extractionspaces}--\ref{subsection:toinfinity} has a generalization, which differs in the choices of asymptotic markers (and therefore in the parametrization of the ends, which affects the Floer-theoretic data one puts on the Riemann surfaces). This does not affect the abstract topology of the compactified parameter spaces, but it changes how boundary strata are identified with products of lower-dimensional spaces.

Namely, fix $U \in \bZ$. In the situation of the spaces $\frakC_m$, we change the asymptotic marker at any marked point $z_k = (s_k,t_k) \in \bR \times S^1$ to point in a $t_k$-dependent direction, namely $-\!\exp(2\pi i U t_k)$; in other words, it rotates $U$ times as $t_k$ moves around the circle (the previously used $S^1$-invariant markers are the case $U = 0$). The tubular end around the corresponding puncture is of the form
\begin{equation} \label{eq:rotated-end}
\begin{aligned}
& [0,\infty) \times S^1 \longrightarrow (\bR \times S^1) \setminus \{(s_1,t_1),\dots,(s_m,t_m)\}, 
\\
& (s,t) \longmapsto (s_k+i t_k) - \rho_k \exp(-2\pi (s+it + iUt_k)).
\end{aligned}
\end{equation}
Here, we have written the target space as $\bR \times S^1 = \bC/i\bZ$, and $\rho_k > 0$ is a constant that can (subject to being sufficiently small) be chosen freely. The other asymptotic markers and ends, near $\pm\infty$, remain unchanged. For any fixed $U$, these choices are consistent with gluing together several cylinders. The convention also affects how one thinks of limits in $\overline{\frakC}_r$. Namely, when several marked points coalesce at some $(s_*,t_*) \in \bR \times S^1$, bubbling off into a limiting component which is punctured plane, that plane should be rotated by $-2\pi U t_*$, so that it is compatible with the gluing process using \eqref{eq:rotated-end}. The same can be applied to $\frakC_m^\dag$. The outcome is that there are infinitely many different structures $c_C^U$ of an $L_\infty$-module on $C$ (all using the same $L_\infty$-algebra structure on $L[1]$), specializing to our previous $c_C$ for $U = 0$. Given a general Maurer-Cartan element $g$ in $L$, one can therefore use it to deform $C$ in different ways, depending on the choice of $U$. Let's temporarily denote these deformations by $C^U_g$.

The same change of asymptotic markers can be applied to the extraction spaces. Hence, Theorem \ref{th:pullout} applies to any value of $U$. The consequence is that for the particular choice of Maurer-Cartan element from \eqref{eq:geometric-mc}, all the complexes $C^U_g$ are isomorphic to each other. Similarly, the change to infinite slope in Section \ref{subsection:toinfinity} goes through for all $U$. This in particular covers the case $U = 1$, where the deformed Floer complex was called $\mathit{CF}_q^{\mathrm{diag}}$ in \cite[Definition 5.3.7(i)]{pomerleano-seidel23}.

\subsection{The equivariant version}
Previously (Section \ref{subsec:angle-decorated}) we constructed the $S^1$-equivariant deformed complex $C_{q,u}$ using the spaces \eqref{eq:d-space}. One can equivalently use ordered point collections, parametrized by
\begin{equation} \label{eq:tilde-frakd}
\tilde{\frakD}_m^l = \big(\Theta^l \times (\bR \times S^1)^m\big)/\bR,
\end{equation}
as well as a version $\tilde{\frakD}_m^{l,\dag}$; and then divide the resulting point counts by $(m!)$. In a similar vein, one can use the spaces
\begin{equation} \label{eq:equi-c}
\frakC_m^l = \big(\Theta^l \times \mathit{Conf}_m^{\mathrm{ord}}(\bR \times S^1) \big)/\bR
\end{equation}
and the analogously defined $\frakC_m^{l,\dag}$ to make $C[[u]]$, with its equivariant differential, into an $L_\infty$-module over $L[1]$; this is $u$-deformed version of the structure from Section \ref{subsec:punctures}. In particular, given any Maurer-Cartan element $g$, one gets a differential on $C[[u,q]]$. Let's denote the resulting complex by $C_{u,g}$. Finally, we have the construction from \cite[Section 5.3d]{pomerleano-seidel23}, which uses the same spaces \eqref{eq:equi-c} and Maurer-Cartan elements, but applies them to infinite slope Hamiltonians at $\pm\infty$ (again, the convention in \cite{pomerleano-seidel23} is that the Hamiltonians at the punctures at $z_1,\dots,z_m \in \bR \times S^1$ also have infinite slope; but as pointed out in Remark \ref{th:ah-forget-it}, it is unproblematic to adopt finite slopes instead, since those punctures are only used to insert the Maurer-Cartan element). Let's denote the resulting complex by $\mathit{CF}(\infty)_{u,g}$. The equivariant analogue of Theorem \ref{th:its-the-same} is:

\begin{thm} \label{th:its-equivariantly-the-same}
The complex $\mathit{CF}(\infty)_{u,g}$, where $g_q$ is the Maurer-Cartan element \eqref{eq:geometric-mc}, is quasi-isomorphic to $C_{u,q}$.
\end{thm}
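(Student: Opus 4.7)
\textbf{Proof proposal for Theorem \ref{th:its-equivariantly-the-same}.}
The plan is to mimic the proof of Theorem \ref{th:its-the-same}, upgrading both intermediate steps to their $S^1$-equivariant analogues, and then concluding by a $u$-filtration argument. Concretely, I will introduce an equivariant deformation $C_{u,g}$ of the telescope complex (as already sketched just before the statement), and establish two quasi-isomorphisms
\begin{equation}
C_{u,q} \;\xleftarrow{\;\iso\;}\; C_{u,g} \;\xrightarrow{\;\htp\;}\; \mathit{CF}(\infty)_{u,g},
\end{equation}
whose composition is the desired one. For the Maurer-Cartan element \eqref{eq:geometric-mc} of interest, each of the two arrows will be constructed by equivariant versions of the moduli spaces used in Sections \ref{sec:extractionspaces}--\ref{subsection:toinfinity}.

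First I would construct the left arrow, an equivariant extension of Theorem \ref{th:pullout}. One decorates each cylindrical component of the spaces $\EX_m$, $\EX_m^\dag$ (as well as the principal cylinder in the limiting configurations at $a = \infty$) with $l \geq 0$ angle-decorated circles, in the spirit of \eqref{eq:equi-c} and the discussion in Section \ref{subsec:angle-decorated}. The consistency of data at boundary strata follows the angle-twisting conventions from \ref{item:rotated-data}--\ref{item:coincident-circles}, combined with the extraction-space consistency from Section \ref{sec:extractionspaces}; $S_m$-equivariance plus Lemma \ref{th:codim-2-non-free-2} allow us to divide by $m!$ as before. Running the scale-ordered argument of Section \ref{subsection:finiteslope} (with $o_m$ and $o_m^\dag$ now replaced by their equivariant extensions $o_m^l$, $o_m^{l,\dag}$) produces a chain map
\begin{equation}
o_{C_{u,q}}: C_{u,q} \longrightarrow C_{u,g}
\end{equation}
whose $u = 0$ reduction is the map $o_{C_q}$ of Theorem \ref{th:pullout}. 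Since $o_{C_q}$ is an isomorphism, and $o_{C_{u,q}}$ respects the $u$-adic filtration, it is an isomorphism as well.

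For the right arrow, I would analogously extend Proposition \ref{thm:takingtheslopetoinfty}. The spaces $\frakH_m$ and $\frakH_m^\dag$ get enlarged to $\frakH_m^l$, $\frakH_m^{l,\dag}$ carrying $l$ angle-decorated circles, with conventions for breaking and twisting copied from Section \ref{subsec:angle-decorated}. The boundary strata still split into the analogues of \ref{item:H1}--\ref{item:H2} and \ref{item:onelastfulton}--\ref{item:dagleft}, now with angle-twisting data inherited from the $\frakC_m^l$, $\frakC_m^{l,\dag}$ choices on both sides (finite slope and quadratic slope). Counting elements of zero-dimensional moduli spaces defines operations $h^{m,l}$, $h^{m,l,\dag}$, which assemble, exactly as in \eqref{eq:hgdef}, into a map
\begin{equation}
h_{u,g}: C_{u,g} \longrightarrow \mathit{CF}(\infty)_{u,g}.
\end{equation}
The Maurer-Cartan equation for $g_q$, together with the codimension-one boundary analysis, shows that $h_{u,g}$ is a chain map. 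Since it reduces modulo $u$ to $h_g$ from Proposition \ref{thm:takingtheslopetoinfty}, which is a quasi-isomorphism, a $u$-filtration spectral sequence argument then yields that $h_{u,g}$ is a quasi-isomorphism as well.

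The principal technical obstacle I anticipate is bookkeeping in the boundary analysis of $\frakH_m^{l,\dag}$, which combines the angle-twisting consistencies from Sections \ref{subsec:angle-decorated} and \ref{subsection:toinfinity}. In particular, one has to verify that, on strata of type \ref{item:dagleft}, the pullback convention along \eqref{eq:pullbackalong} survives in presence of angle-decorated circles (the key point being that the angle variables on the cylindrical component carrying $s^\dag$ must be forgotten in the pullback, matching the way angle-decorated circles behave when they collide in \ref{item:coincident-circles}). Once this compatibility is in place, the contribution of those strata again reduces to a single term that cancels the algebraic $-\mathit{id}$ piece of the telescope differential, exactly as in \eqref{eq:chdagrelation}. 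The rest of the argument—dimension counts, transversality, use of Lemmas \ref{th:divisible}--\ref{th:divisible-2}, and invocation of the action filtration of Section \ref{subsec:negative-action} if needed—is parallel to the non-equivariant case and requires no genuinely new ideas.
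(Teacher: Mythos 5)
Your proposal is correct and follows essentially the same strategy as the paper: factor through an equivariant version of Theorem \ref{th:pullout} (via angle-decorated extraction spaces, i.e.\ the spaces $\frakAX_{m,l}$ and $\frakAX_{m,l}^\dag$ sketched in the text) to identify $C_{u,q}$ with $C_{u,g}$, and then an equivariant version of Proposition \ref{thm:takingtheslopetoinfty} (via angle-decorated $\frakH$-spaces) together with a $u$-filtration argument to identify $C_{u,g}$ with $\mathit{CF}(\infty)_{u,g}$. The paper only spells out the first step in detail, leaving the second implicit as "following the previous pattern", which you fill in appropriately.
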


We will not give the entire proof, since it overall follows the previous pattern; but we do want to explain one component, namely how to set up the equivariant version of the extraction spaces (since that is a prototypical example of modifying that construction to include additional structure). We consider cylinders equipped with points $z_1,\dots,z_m \in \bR \times S^1$, a one-form $\alpha = a\,\mathit{dz}$ with $a \geq 1$, and angle-decorated circles \eqref{eq:delta-l}, up to translation in $\bR$-direction. The parameter space is therefore
\begin{equation} \label{eq:ax}
\frakAX_{m,l} = \big( \Theta^l \times (\bR \times S^1)^m \times [1,\infty) \big)/\bR.
\end{equation}
For $l = 0$ this reduces to $\EX_m$. The space \eqref{eq:ax} again admits a compactification to a manifold with generalized corners. When the scale variable $a$ stays bounded, the cylinders can split into pieces, all carrying the same scale, and each of which contains some number of the original $l$ angle-decorated circles. In other words, the generalization of \eqref{eq:p-split} is that we have boundary strata in $\overline{\frakAX}_{m,l}$ of the form
\begin{equation} \label{eq:ax-split}
\frakAX_{m_1,l_1} \times_{[1,\infty)} \frakAX_{m_2,l_2} \times_{[1,\infty)} \cdots 
\times_{[1,\infty)} \frakAX_{m_p,l_p}
\end{equation}
for $m_1 + \cdots m_p = m$ (and an additional choice of partition), $l_1 + \cdots + l_p = l$. As usual with $S^1$-equivariant constructions, the limiting point in \eqref{eq:ax-split} is obtained by taking the naive limit of a sequence of cylinders, and then rotating each component of that limit according to the total angle of all its components to the right (Figure \ref{fig:twisted-gluing}). The analogue of \eqref{eq:p-bubble}, when $a \rightarrow \infty$, is
\begin{equation} \label{eq:p2-bubble}
\frakAX_{T_1,\dots,T_r} = \prod_j \MC_{T_j} \times 
(\Theta^l \times \mathit{Conf}_{r}^{\mathrm{ord}}(\bR \times S^1))/\bR,
\end{equation}
meaning that the angle-decorated circles stay on the ``main'' cylindrical component. With that in mind, let's focus on the codimension one boundary strata of $\overline{\frakAX}_{m,l}$, which are of the following kinds:
\begin{enumerate}[label=(AX\arabic*)] \itemsep.5em
\item \label{item:ax-1}
The boundary stratum $\partial_{a=1}\frakAX_{m,l}$ is identified with \eqref{eq:tilde-frakd}, which is how the construction relates to $C_{u,q}$.

\item \label{item:ac-split2}
Next we have the $p = 2$ case of \eqref{eq:ax-split}, which is 
\begin{equation} \label{eq:ax-scale}
\frakAX_{m_1,l_1} \times_{[1,\infty)} \frakAX_{m_2,l_2}. 
\end{equation}
There is one such stratum for each pair consisting of: a decomposition of $\{1,\dots,m\}$ into two subsets of size $m_1,m_2$; and $l_1+l_2=l$ ($m_k=0$ is allowed, provided that the corresponding $l_k>0$).

\item \label{item:ax-bubble}
The case of \eqref{eq:p2-bubble} in which each tree has a single vertex, which yields 
\begin{equation} \label{eq:leaf-or-root2}
\prod_{j=1}^r \MC_{m_j} \times (\Theta^l \times \mathit{Conf}_{r}^{\mathrm{ord}}(\bR \times S^1) \big)/\bR.
\end{equation} 
The last factor in the product is the space used to construct the equivariant $L_\infty$-module structure, so this is how the relation with $C_{u,g}$ arises.

\item \label{item:ax-equal}
Strata $\partial_{\sigma_i=\sigma_{i+1}}\frakAX_{m,l}$ when two successive angle-decorated circles coincide. As usual in this kind of equivariant construction, one arranges that the Floer-theoretic data only depend on $\theta_i + \theta_{i+1}$.
\end{enumerate}
Parts \ref{item:ax-1}--\ref{item:ax-bubble} are as in the non-equivariant setup, while the only new ingredient \ref{item:ax-equal} will ultimately not contribute, due to the specific choices made there. The same applies to the scale-ordered versions of these spaces, defined exactly as in \eqref{eq:o-space}. The outcome is a version of \eqref{eq:o-equation}, where one simply replaces each ingredient (the differentials $d$, the $L_\infty$-module structure $c$, and the $o$ operations) with their equivariant counterparts. The same discussion applies to 
\begin{equation}
\frakAX_{m,l}^{\dag} = \Theta^l \times (\bR \times S^1)^m \times [1,\infty);
\end{equation}
the only notable change being that in the counterpart of the splitting \eqref{eq:ax-scale}, exactly one of the two factors is an $\frakAX^\dag$ space. 
%
%

Having given an overview of the construction underlying Theorem \ref{th:its-equivariantly-the-same}, we want to state one more property.

\begin{proposition}
The cohomology level isomorphism from Theorem \ref{th:its-equivariantly-the-same} identifies the connection from Section \ref{subsec:define-connection} with that from \cite[Section 5.3f]{pomerleano-seidel23}.
\end{proposition}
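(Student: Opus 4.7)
The plan is to extend the chain-level quasi-isomorphism $C_{u,q} \simeq \mathit{CF}(\infty)_{u,g}$ underlying Theorem \ref{th:its-equivariantly-the-same} to a statement intertwining the two connection operators, up to chain homotopy. Since that quasi-isomorphism is obtained as a two-step composition — first the extraction isomorphism of Theorem \ref{th:pullout} (transported to the equivariant setting), then the equivariant analogue of the continuation map $h_g$ from Proposition \ref{thm:takingtheslopetoinfty} — the proof will naturally split into two compatibility lemmas, each obtained by enriching the relevant moduli spaces with the extra data used to define the connection.

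For the first step, I would construct a version of the scale-ordered extraction space $\frakO_m$ (for both variants, with and without dagger) equipped with $l$ angle-decorated circles, as in $\frakAX_{m,l}$ from the previous subsection, together with one distinguished marked point whose position is constrained according to the pattern \eqref{eq:s-interval}--\eqref{eq:b-position}. Because the scale ordering already rules out the $\FM_k$-type ``equal-scale'' strata that would ruin naive compatibility, the codimension-one boundary analysis parallels that used to derive \eqref{eq:o-equation} and \eqref{eq:o-dag-equation}; the new features (distinguished marked point, angles) only add boundary contributions that exactly reproduce the defining operations $a_m^{l,i}$, $a_m^{l,i,\dag}$, $b_m^{l,i}$, $b_m^{l,i,\dag}$ on both sides. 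Summed with the appropriate powers of $u$ and $q$, these counts yield a chain homotopy showing that the extraction isomorphism $o_{C_{u,q}}\colon C_{u,q} \to C_{u,g}$ intertwines $\nabla_{u\partial_q}$ on the left with the corresponding Maurer-Cartan-deformed connection on $C_{u,g}$.

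For the second step, I would analogously enrich the spaces $\frakH_m$ and $\frakH_m^\dag$ from Section \ref{subsection:toinfinity} with angle-decorated circles and a distinguished marked point, using the same position conventions. Counting rigid solutions gives operations that, assembled as in \eqref{eq:hgdef} with insertions of the geometric Maurer-Cartan element \eqref{eq:geometric-mc}, provide a chain homotopy relating the connection on $C_{u,g}$ to that on $\mathit{CF}(\infty)_{u,g}$ defined in \cite[Section 5.3f]{pomerleano-seidel23}. The usual boundary strata — breaking of cylinders at $\pm\infty$, collision of interior punctures producing $L_\infty$-operations (absorbed by the Maurer-Cartan equation \eqref{eq:maurer-cartan}), and the transitions where the distinguished marked point crosses one of the angle-decorated circles or escapes into a broken-off cylinder — reassemble, after summation over $m$ and division by $m!$ (applying the integrality Lemma \ref{th:divisible-2} as needed), into precisely the relation that characterizes the [PS23] connection.

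The main technical obstacle will be the consistency of the data at higher-codimension corners where several of these phenomena occur simultaneously: an equal-scale splitting together with coalescence of angle-decorated circles together with the distinguished point running off to $\pm\infty$. The standard way to handle this, following the template of \ref{item:coincident-circles} and \ref{item:sigma3} in Section \ref{subsec:angle-decorated}, is to require that whenever two angle-decorated circles coalesce or a circle reaches a puncture, the inhomogeneous data be pulled back from a lower-dimensional space; the resulting $S^1$-invariance forces those strata to contribute zero in moduli spaces of dimension $\leq 1$. Once these pullback conventions are arranged coherently across all the parameter spaces introduced above — extraction, Maurer-Cartan, and continuation spaces all carrying angles and a distinguished marked point — the boundary cancellations fall into place and the two chain homotopies compose to give the desired cohomology-level identification of connections.
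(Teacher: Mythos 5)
Your approach matches the paper's own sketch essentially exactly: the authors likewise indicate that one enriches the extraction-space construction (and, implicitly, the continuation-to-infinite-slope step) with the same geometric data used to define the connection — angle-decorated circles and a distinguished marked point — and reads off the compatibility from boundary cancellations. The paper gives only this one-paragraph outline rather than a worked-out proof; your proposal fleshes out the same strategy, including the two-step factorization through Theorem \ref{th:pullout} and Proposition \ref{thm:takingtheslopetoinfty} and the standard pullback conventions at coalescence strata.
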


We will not give any details here, but the strategy is straightforward: both definitions use closely related parameter spaces, one having marked points on the cylinders, and the other punctures (where either the Maurer-Cartan element, or its $q$-derivative, is inserted). To relate them, one takes the construction of extraction spaces, and adds the same geometric data as in the definition of connection.


\end{document}